\documentclass[12pt]{amsart}

\usepackage{etex}
\usepackage[usenames,dvipsnames]{pstricks} 
\usepackage{epsfig}
\usepackage{graphicx,color}
\usepackage{geometry}
\usepackage[all]{xy}
\usepackage{amssymb,amscd}
\usepackage{cite}
\usepackage{fullpage}
\usepackage{marvosym}
\xyoption{poly}
\usepackage{url}
\usepackage{comment}
\usepackage{float}
\usepackage{bm}
\usepackage{multicol}
\usepackage{enumitem}

\usepackage{tikz}
\usepackage{tikz-cd}
\usetikzlibrary{decorations.markings}
\usetikzlibrary{arrows.meta}
\usetikzlibrary{shapes.geometric}
\tikzset{every loop/.style={min distance=15mm,in=0,out=90,looseness=20}}

\newtheorem{introtheorem}{Theorem}

\newtheorem{theorem}{Theorem}[section]
\newtheorem{lemma}[theorem]{Lemma}
\newtheorem{proposition}[theorem]{Proposition}
\newtheorem{corollary}[theorem]{Corollary}
\theoremstyle{definition}
\newtheorem{definition}[theorem]{Definition}

\newtheorem{notation}[theorem]{Notation}
\newtheorem{remark}[theorem]{Remark}

\newtheorem{example}[theorem]{Example}

\newtheorem*{question*}{Question}

\newtheorem*{questions*}{Questions}

\newtheorem*{steps*}{Answer/steps}

\newtheorem*{progress*}{Progress}

\newtheorem*{classification*}{Classification}

\newtheorem*{construction*}{Classification}
\newtheorem*{example*}{Example}

\newtheorem*{remark*}{Remark}
\newtheorem*{remarks*}{Remarks}
\newtheorem*{definition*}{Definition}
\newtheorem{convention}[theorem]{Convention}

\usepackage{calrsfs}
\usepackage{url}
\usepackage{longtable}
\usepackage[OT2, T1]{fontenc}
\usepackage{textcomp}
\usepackage{times}
\usepackage[scaled=0.92]{helvet}

\usepackage[colorlinks=true,
pdfstartview=FitV,
hyperindex=true,
hyperfigures=false]
{hyperref}

\renewcommand{\tilde}{\widetilde}

\newcommand{\X}{\mathcal{X}}

\DeclareMathOperator{\Res}{Res}

\DeclareMathOperator{\GL}{GL}

\DeclareSymbolFont{cyrletters}{OT2}{wncyr}{m}{n}
\DeclareMathSymbol{\Sha}{\mathalpha}{cyrletters}{"58}

\makeatletter

\def\greekbolds#1{%
 \@for\next:=#1\do{%
    \def\X##1;{%
     \expandafter\def\csname V##1\endcsname{\boldsymbol{\csname##1\endcsname}}
     }
   \expandafter\X\next;
  }
}

\greekbolds{alpha,beta,iota,gamma,lambda,nu,eta,Gamma,varsigma,Lambda}

\def\make@bb#1{\expandafter\def
  \csname bb#1\endcsname{{\mathbb{#1}}}\ignorespaces}

\def\make@bbm#1{\expandafter\def
  \csname bb#1\endcsname{{\mathbbm{#1}}}\ignorespaces}

\def\make@bf#1{\expandafter\def\csname bf#1\endcsname{{\bf
      #1}}\ignorespaces} 

\def\make@gr#1{\expandafter\def
  \csname gr#1\endcsname{{\mathfrak{#1}}}\ignorespaces}

\def\make@scr#1{\expandafter\def
  \csname scr#1\endcsname{{\mathscr{#1}}}\ignorespaces}

\def\make@cal#1{\expandafter\def\csname cal#1\endcsname{{\mathcal
      #1}}\ignorespaces} 
\def\do@Letters#1{#1A #1B #1C #1D #1E #1F #1G #1H #1I #1J #1K #1L #1M
                 #1N #1O #1P #1Q #1R #1S #1T #1U #1V #1W #1X #1Y #1Z}
\def\do@letters#1{#1a #1b #1c #1d #1e #1f #1g #1h #1i #1j #1k #1l #1m
                 #1n #1o #1p #1q #1r #1s #1t #1u #1v #1w #1x #1y #1z}
\do@Letters\make@bb   \do@letters\make@bbm
\do@Letters\make@cal  
\do@Letters\make@scr 
\do@Letters\make@bf \do@letters\make@bf   
\do@Letters\make@gr   \do@letters\make@gr
\makeatother

\def\makeop#1{\expandafter\def\csname#1\endcsname
  {\mathop{\rm #1}\nolimits}\ignorespaces}
\makeop{Hom}   \makeop{End}   \makeop{Aut}   \makeop{Isom}  \makeop{Pic} 
\makeop{Gal}   \makeop{ord}   \makeop{Char}  \makeop{Div}   \makeop{Lie} 
\makeop{PGL}   \makeop{Corr}  \makeop{PSL}   \makeop{sgn}   \makeop{Spf}
\makeop{Spec}  \makeop{Tr}    \makeop{Nr}    \makeop{Fr}    \makeop{disc}
\makeop{Proj}  \makeop{supp}  \makeop{ker}   \makeop{im}    \makeop{dom}
\makeop{coker} \makeop{Stab}  \makeop{SO}    \makeop{SL}    \makeop{SL}
\makeop{Cl}    \makeop{cond}  \makeop{Br}    \makeop{inv}   \makeop{rank}
\makeop{id}    \makeop{Fil}   \makeop{Frac}  \makeop{GL}    \makeop{SU}
\makeop{Nrd}   \makeop{Sp}    \makeop{Tr}    \makeop{Trd}   \makeop{diag}
\makeop{Res}   \makeop{ind}   \makeop{depth} \makeop{Tr}    \makeop{st}
\makeop{Ad}    \makeop{Int}   \makeop{tr}    \makeop{Sym}   \makeop{can}
\makeop{length}\makeop{SO}    \makeop{torsion} \makeop{GSp} \makeop{Ker}
\makeop{Adm}   \makeop{Mat}

\DeclareMathSymbol{\twoheadrightarrow} {\mathrel}{AMSa}{"10}

\begin{document}

\subjclass{16G20, 13E10, 14L15}
\keywords{Gelfand-Ponomarev modules, Representations of Kraft quivers, Finite group schemes killed by p}

\title{Twisted Gelfand-Ponomarev modules}

\author{Joseph Muller}
\address{(Muller)National Center for Theoretic Sciences, National Taiwan University, Taipei, Taiwan}
\email{muller@ncts.ntu.edu.tw}

\author{Chia-Fu Yu}
\address{(Yu)Institute of Mathematics, Academia  Sinica, Taipei, Taiwan}
\email{chiafu@math.sinica.edu.tw}

\date{\today}
% \keywords{Gauss problem, Hermitian lattices, abelian varieties, central leaves, mass formula}
% \subjclass{14K10 (14K15, 11G10, 11E41, 16H20)}

\begin{abstract} 
In this expository paper, given a field $K$ and two automorphisms $\sigma, \tau \in \mathrm{Aut}(K)$, we give a self-contained proof of the classification of finite dimensional $K$-vector spaces equipped with two operators $F$ and $V$, respectively $\sigma$-linear and $\tau$-linear, such that $FV = VF = 0$. This classification was originally due to the combined results of Gelfand and Ponomarev (1968), and of Kraft (1975). Following a recent suggestion of Chai (2025), we reworked their classification in light of the notion of Kraft quivers. As an application, we generalize and give an algebraic proof of a theorem by Kottwitz and Rapoport concerning the existence of $F$-crystals. 
\end{abstract}

\maketitle
\setcounter{tocdepth}{2}
\tableofcontents

\section*{Introduction}

Given a triple $(K,\sigma,\tau)$ consisting of a field $K$ together with two automorphisms $\sigma$ and $\tau$, we consider the $K$-algebra $K[F,V]_{\sigma,\tau}$ generated by two indeterminates $F$ and $V$ subject to the following relations 
\begin{align*}
    FV = VF = 0, & & F\lambda = \sigma(\lambda)F, & & V\lambda = \tau(\lambda)V,
\end{align*}
for all $\lambda \in K$. We are concerned with the classification of left $K[F,V]_{\sigma,\tau}$-modules of finite dimension over $K$, which we call \textit{twisted Gelfand-Ponomarev modules}, as this problem has first been considered and entirely solved in the case $(K,\sigma,\tau) = (\mathbb C,\mathrm{id},\mathrm{id})$ by Gelfand and Ponomarev in the second chapter of \cite{gelfand-ponomarev:1968}. As it turns out, their proof did not intrinsically rely on that peculiar choice of $(\mathbb C,\mathrm{id},\mathrm{id})$, so that the generalization to any $(K,\sigma,\tau)$ is not much of a challenge. This might have been first picked up by Kraft who, in the Appendix (``\textit{Anhang}'') of \cite{kraft:1975}, spelled out this ``mild generalization'' \footnote{in loc. cit., ``\textit{eine leichte Verallgemeinerung der Resultate von Gelfand-Ponomarev}'', p.~69.} of Gelfand and Ponomarev's proof to the case of $(K,\sigma,\sigma^{-1})$, with $K$ any field and $\sigma$ any automorphism. Specifically, Kraft's motivation was the case where $K$ is perfect of positive characteristic $p > 0$, and $\sigma: x \mapsto x^p$ is the arithmetic Frobenius. In this case, $K[F,V]_{\sigma,\sigma^{-1}}$ is nothing 
but the Dieudonné ring modulo $p$, and by Dieudonné theory, twisted Gelfand-Ponomarev modules correspond to commutative group schemes over $K$ on which multiplication by $p$ is the trivial map. Examples of such groups include $\mathrm{BT}_1$ groups, among which figure the $p$-torsion subgroups of abelian varieties over $K$. The main body of Kraft's paper \cite{kraft:1975} is precisely concerned with applications of such nature.\\

This expository paper aims at giving a self-contained account of the Appendix of \cite{kraft:1975}, with the slight generalization consisting in replacing $\sigma^{-1}$ by any automorphism $\tau$. In recent unpublished notes \cite{chai:kraft2025} (see Paragraph 6.1), Chai argued in favor of the necessity of such an exposition for different reasons. First, Kraft's original paper \cite{kraft:1975} has been hardly accessible with virtually no online presence for a long time. A scanned version is now hosted on Chai's professional website\footnote{url: \url{https://www2.math.upenn.edu/~chai/papers_pdf/kraft_v1.pdf}} (in German). Then, Kraft's proof actually relies on and assumes familiarity with the ``functorial interpretation'' of Gelfand and Ponomarev's results. This functorial framework is due to Gabriel, and has been allegedly worked out during a seminar in Bonn. While no public written trace of this seminar seems to remain, most papers dealing with classification of modules over string algebras and generalizations thereof make use of Gabriel's approach and give a close account of it; see for instance the Lemma of Section 3 of \cite{ringel:1975}\footnote{The paper \cite{ringel:1975} is claimed to be ``probably the best reference for the method'' by the author of \cite{crawley-boevey:2017}, p.~3292, referring to the ``functorial method''.}, and also \cite{crawley-boevey:2017} and \cite{clannishalgebras:2024} among others. There are also other strategies to the problem of classifying modules of $K[F,V]_{\sigma,\tau}$ (with $\sigma = \tau = \mathrm{id}$); see for instance \cite{nazarova:1975, schroer:2004, oblak:2008, bondarenko:2009, liuzhang:2011}, and Remark 1 of \cite{bondarenko:2009} for a survey of these works.\\

In this paper, for sake of clarity and in order to keep the exposition elementary, we chose not to rely on Gabriel's work and instead we adapted Gelfand and Ponomarev's arguments directly from \cite{gelfand-ponomarev:1968}. For this reason, this paper is not a translation of the Appendix of \cite{kraft:1975}, but a rework of it based on Chai's exposition in \cite{chai:kraft2025}. While our main Theorems \ref{IntroTheoremA} and \ref{IntroTheoremB} are special cases of the wider theory developped in \cite{crawley-boevey:2017} (for the case $\sigma=\tau=\mathrm{id}$) and in \cite{clannishalgebras:2024} (for general $\sigma$ and $\tau$), our exposition aims at being more introductory.\\

Let us sum up the main results. Following \cite{chai:kraft2025}, we introduce the notion of \textit{Kraft quivers}, which are directed graphs with edges labeled by $F$ or $V$ with some additional conditions; see Definition \ref{DefinitionKraftQuiver}. The idea of Kraft quivers was already informally exploited in \cite{gelfand-ponomarev:1968} and \cite{kraft:1975}, and it corresponds to the notions of ``bands'' and ``strings'' in \cite{crawley-boevey:2017} and \cite{clannishalgebras:2024}. To any Kraft quiver $\Gamma$ and to any strict $(\sigma,\tau)$-linear representation $(U,\rho)$ of $\Gamma$ (Definition \ref{DefinitionSigmaLinearRepresentation}), one can associate a finite dimensional $K$-vector space $M(\Gamma,U,\rho)$ (Definition \ref{ModuleAttachedToLinearKraftQuiver}) equipped with two operators $F$ and $V$ which are respectively $\sigma$-linear and $\tau$-linear. In fact, Kraft quivers are designed precisely so that $M(\Gamma,U,\rho)$ turns out to be a twisted Gelfand-Ponomarev module, that is, we have $FV = VF = 0$ on $M(\Gamma,U,\rho)$. The main result states that all the twisted Gelfand-Ponomarev modules arise in this way, in an essentially unique manner. 

\begin{introtheorem}[Theorem \ref{AllTwistedGFModulesComeFromKraftQuiver}]\label{IntroTheoremA}
    Let $M$ be a twisted Gelfand-Ponomarev module. There exist
    \begin{itemize}
        \item a Kraft quiver $\Gamma$ whose connected components are pairwise non-isomorphic, and all of whose circular connected components have no repetitions, and 
        \item a strict $(\sigma,\tau)$-linear representation $(U,\rho)$ on $\Gamma$,
    \end{itemize}
    such that $M \simeq M(\Gamma,U,\rho)$ as $K[F,V]_{\sigma,\tau}$-modules. If $\Gamma'$ and $(U',\rho')$ are another Kraft quiver and strict $(\sigma,\tau)$-linear representation meeting the same conditions, then there exists an isomorphism $\Gamma \simeq \Gamma'$ making $(U,\rho)$ isomorphic to $(U',\rho')$.
\end{introtheorem}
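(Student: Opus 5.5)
Existence will follow the Gelfand--Ponomarev method, adapted to semilinear operators. Uniqueness will come from functorial invariants that read off the multiplicities of each connected component.

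For existence, I argue by induction on $\dim_K M$ after first making the words precise. A word is a finite or periodic string in the letters $F$ and $V^{-1}$, possibly with inverses, read along a path of a Kraft quiver. For each word $w$ we have relations $F^{a}$ and $V^{-1}$ composed alternately, interpreted as additive relations on $M$; since $\sigma,\tau$ are automorphisms, their images and kernels are $K$-subspaces. For a word $w$ and an end, the images $w M$ and the "kernels" $w^{-1}0$ form a chain of subspaces. Following Gelfand--Ponomarev, I then order the words lexicographically (with $F$ larger than $V^{-1}$, as in loc. cit.), and for each vertex attach the subquotients
\[
  \frac{(w^+ M\cap w'^+ M)}{(\ldots)}
\]
given by pairs of a left half-word and a right half-word. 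The key lemma is the Gelfand--Ponomarev "covering" lemma: for each pair $(C,D)$ with $C, D$ half-words, the corresponding functorial subquotients split, and $M$ is the direct sum over linear Kraft quivers (strings) and circular ones (bands) of spaces $U$ on which the relation induced by a periodic word is a $(\sigma,\tau)$-semilinear bijection.

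Concretely, I pick a vertex-type $x\in M$ realizing a maximal (for the ordering) pair of half-words. I show that the string generated by $x$ spans a submodule isomorphic to $M(\Gamma_0,K,\rho_0)$ for the corresponding linear $\Gamma_0$, and that it has an $F,V$-stable complement. The complement is obtained via the splitting of the relevant filtrations, which is where strictness and semilinear bijectivity are used. For periodic words the limit subspaces $\bigcap_n w^n M$ and $\bigcup_n w^{-n}0$ stabilize by finite dimension. On their intersection modulo smaller pieces, the word induces a $\sigma^a\tau^{-b}$-semilinear automorphism. This automorphism gives the strict representation on the circular component, and it can be assumed to have no repetition by absorbing periods into $U$. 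Grouping isomorphic components, i.e.\ combining representations via direct sum of $U$'s, yields $\Gamma$ with pairwise non-isomorphic components.

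For uniqueness, I use the same functorial subquotients. For each component type, the dimension, respectively the isomorphism class of the induced semilinear space, is recovered intrinsically from $M$ as a subquotient functor evaluated at $M$. For strings the functor gives $\dim U$. For bands it gives the pair (space, semilinear automorphism) up to isomorphism, and this is exactly $(U,\rho)$ on that component up to the choice of starting vertex, which is absorbed in the isomorphism $\Gamma\simeq\Gamma'$. These functors are additive and vanish on all other components, so they determine $\Gamma$ and $(U,\rho)$.

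The main obstacle is the splitting lemma in the existence step. I need to show that the functorial subquotient for a maximal word pair lifts to a direct summand compatibly with both $F$ and $V$. I will first try Gelfand--Ponomarev's explicit lifting: choose $x$ in the intersection of the two half-word chains and extend it along the word, taking complements of the chains at each step. The semilinearity should only cost bookkeeping, since $\sigma,\tau$ are bijective.
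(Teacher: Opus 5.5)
Your plan is the Gabriel--Ringel functorial-filtration method, built on pairs of half-words. That is a different route from the paper's one-sided Gelfand--Ponomarev flag of kernels and domains of monomials in $F$ and $V^{\#}$. Your uniqueness paragraph is in the same spirit as the paper's: both read off $\Gamma$ and $(U,\rho)$ from intrinsic subquotients, once these are computed on $M(\Gamma,U,\rho)$ (Corollary \ref{CorolAllLinearKraftAreFirstKind}, Proposition \ref{AllCircularKraftAreSecondKind}, Proposition \ref{Monodromy}).

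The existence half, however, has a genuine gap. The only hard step is lifting the graded data to $F,V$-stable direct summands. You announce it (``I will first try\ldots'') but do not carry it out, and the recipe you suggest does not work as stated:
\begin{itemize}
\item Extending $x$ along the word requires choosing $V$-preimages in the $V^{\#}$-direction.
\item Complements of the chains chosen step by step are not automatically compatible with $F$ and $V$.
\end{itemize}
The paper gets this compatibility for strings by a downward induction on word length. It produces complements $\gamma(w)$ with $\mathrm{Ker}D(Fw)=\mathrm{Dom}D(V^{\#}w)\oplus\gamma(w)$, such that $F$ maps $\gamma(wF)$ injectively into $\gamma(w)$ and $V$ maps $\gamma(w)$ onto $\gamma(wV^{\#})$. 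Each $\gamma(w)$ has to be assembled as $\gamma^V(w)\oplus\gamma^{\circ}(w)\oplus\gamma^F(w)$ with $\gamma^{\circ}(w)\subseteq\mathrm{Ker}(V)$. The resulting tree of words must then still be cut into strings (Proposition \ref{DecompositionSumLinearKraftQuiver}).

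For bands the gap is more serious.
\begin{itemize}
\item Knowing that a periodic word induces a semilinear automorphism on a subquotient only describes the associated graded; it does not produce a submodule.
\item Your induction via a single element $x$ cannot handle bands. Take $\sigma=\tau=\mathrm{id}$, $V=0$ and $F$ a $2\times 2$ Jordan block with nonzero eigenvalue: the eigenline is a submodule with no $F,V$-stable complement.
\item What you need is a subspace $S_w$ with $\mathrm{Dom}D(w)^{\infty}=S_w\oplus\mathrm{Ker}D(w)^{\infty}$ on which $D(w)$ is the graph of an automorphism, with such subspaces chosen compatibly around the cycle.
\item A naive lift of a basis only gives $\iota([x])\to\iota(\overline{T}([x]))+\delta_0([x])$, with an error term in $\mathrm{Ker}D(w)^{\infty}$. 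The paper removes it with the telescoping correction $f=\sum_j\delta_j\circ\overline{T}^{-j-1}$ (Theorem \ref{WeakDecomposition1}, resting on Proposition \ref{PropStableKernelIndet}).
\item Transporting $S_w$ around the cycle also needs the stronger property $S_w\cap(\mathrm{Ker}D(w)^{\infty}+\mathrm{Indet}D(w)^{\infty})=0$ from Theorem \ref{WeakDecomposition2}; this is Lemma \ref{LemmaTechnicalSecondKind}.
\end{itemize}

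You also leave three further steps unproved:
\begin{itemize}
\item that the nonzero subquotients exhaust $\dim M$;
\item that only finite words and periodic infinite words contribute (Theorem \ref{TheoremStabilizedSequence}, Propositions \ref{PropositionWordsOfSecondKind} and \ref{WordsArePeriodic});
\item that the string part and the band part together fill $M$ (Theorem \ref{TheoremDecompositionIntoM1M2}, via $M_1\cap M_2=0$ and a dimension count).
\end{itemize}

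A minor point: $\sigma$ and $\tau$ need not commute. The monodromy is therefore semilinear for the ordered product $\xi(w)$ of $\sigma$'s and $\tau^{-1}$'s, not for $\sigma^a\tau^{-b}$.
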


The proof of Theorem \ref{IntroTheoremA} is almost identic to the proof of Gelfand and Ponomarev, which covers the integrality of Chapter 2 of \cite{gelfand-ponomarev:1968} in the case $(K,\sigma,\tau) = (\mathbb C,\mathrm{id},\mathrm{id})$. To this extent, we claim no originality regarding our proof of Theorem \ref{IntroTheoremA}, but we hope that our treatment based on the use of Kraft quivers makes their arguments somewhat clearer and more systematic. Besides, one major upshot of introducing Kraft quivers is that it allows us to classify explicitely the twisted Gelfand-Ponomarev modules which are \textit{indecomposable} (i.e. which are not the direct sum of two proper $K[F,V]_{\sigma,\tau}$-submodules). 

\begin{introtheorem}[Theorem \ref{TheoremClassificationIndecomposableGFModules}]\label{IntroTheoremB}
    Let $M$ be an indecomposable twisted Gelfand-Ponomarev module. Then $M$ is either of the first kind or of the second kind. 
    \begin{itemize}
        \item If $M$ is of the first kind, there exists a connected linear Kraft quiver $\Gamma$, unique up to isomorphism, such that $M \simeq M(\Gamma,\mathbf 1_{\Gamma})$.
        \item If $M$ is of the second kind, there exists a connected circular Kraft quiver with no repetitions $\Gamma$ and a strict indecomposable $(\sigma,\tau)$-linear representation $(U,\rho)$ such that $M \simeq M(\Gamma,U,\rho)$. The datum $(\Gamma,U,\rho)$ is unique up to isomorphism.
    \end{itemize}
\end{introtheorem}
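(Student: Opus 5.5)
Theorem \ref{IntroTheoremB} will be deduced from Theorem \ref{IntroTheoremA}, together with a separate analysis of when $M(\Gamma,U,\rho)$ is indecomposable.

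\textbf{Reduction to connected quivers.} Let $M$ be indecomposable and take $(\Gamma,U,\rho)$ as in Theorem \ref{IntroTheoremA}. By construction, $M(\Gamma,U,\rho)$ is the direct sum of the modules attached to the connected components of $\Gamma$, each with the restricted representation. It also splits along any direct sum decomposition of $(U,\rho)$, since the construction is additive in the representation. Because $M\neq 0$ is indecomposable, exactly one summand survives. Hence $\Gamma$ is connected and $(U,\rho)$ is indecomposable.

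\textbf{Linear versus circular.} If $\Gamma$ is linear, I would recall that a strict $(\sigma,\tau)$-linear representation of a linear Kraft quiver has no cycle to carry monodromy. So it is determined up to isomorphism by the dimension of $U$, which is a multiple of the trivial representation $\mathbf 1_\Gamma$. Indecomposability then forces $U\simeq \mathbf 1_\Gamma$. This is the first kind.

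If $\Gamma$ is circular, it has no repetitions by Theorem \ref{IntroTheoremA}, and $(U,\rho)$ is a strict indecomposable representation. This is the second kind.

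The two kinds are mutually exclusive. This follows from the uniqueness part of Theorem \ref{IntroTheoremA}, since the shape (linear or circular) of $\Gamma$ is an isomorphism invariant. Uniqueness in each bullet also comes from that part: a second datum $(\Gamma',U',\rho')$ meets the hypotheses of Theorem \ref{IntroTheoremA}, being connected (hence trivially pairwise non-isomorphic) and without repetitions in the circular case. So $\Gamma\simeq\Gamma'$ compatibly with the representations.

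\textbf{Converse: the modules are indecomposable.} Although the statement only asserts the classification, I would also check that each listed module really is indecomposable, so that the two kinds are meaningful.

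For the first kind, I would compute $\End(M(\Gamma,\mathbf 1_\Gamma))$ and show it is local. Here $M$ has a basis indexed by the vertices of $\Gamma$. An endomorphism must respect the filtrations by images and kernels of words in $F$ and $V$. From this, the endomorphism acts on the generator at any vertex, modulo the maximal ideal, by a scalar, and this scalar is the same across the connected quiver.

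For the second kind, the key point is that $\End$ of $M(\Gamma,U,\rho)$ surjects onto $\End(U,\rho)$ with nilpotent kernel. Indecomposability of $(U,\rho)$ then transfers to $M$.

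\textbf{Main obstacle.} The hardest step is this nilpotent-kernel argument, and it relies on the no-repetition condition. Without it, a rotation of the circle would give extra endomorphisms that are not nilpotent. I would try to prove it by showing that any endomorphism preserves the canonical functorial filtration used in the proof of Theorem \ref{IntroTheoremA}, and induces on the associated graded piece indexed by a fixed vertex an endomorphism of $U$ commuting with $\rho$.
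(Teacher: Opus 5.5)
Your argument is correct and is essentially the paper's. The paper likewise deduces the statement from Theorem \ref{AllTwistedGFModulesComeFromKraftQuiver} together with Lemma \ref{LemmaIndecomposableModules}, of which only the easy direction is actually used (splitting along connected components and along summands of $(U,\rho)$), then applies Proposition \ref{ModuleAttachedToLinearKraftQuiver} in the linear case and the uniqueness part of Theorem \ref{AllTwistedGFModulesComeFromKraftQuiver}; the link between the kind of $M$ and the shape of $\Gamma$, which you assert without comment, is supplied by Corollary \ref{CorolAllLinearKraftAreFirstKind} and Proposition \ref{AllCircularKraftAreSecondKind} (and exclusivity of the two kinds for $M\neq 0$ is immediate from the definition, so uniqueness is not needed for that).

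The converse you add is not part of the statement. The paper obtains it in Lemma \ref{LemmaIndecomposableModules} more cheaply than your endomorphism-ring sketch: it classifies both summands of a putative splitting $M=M'\oplus M''$, merges their Kraft quivers, and invokes the uniqueness in Theorem \ref{AllTwistedGFModulesComeFromKraftQuiver}.
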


We refer to the body of the text for the notions of \textit{linear} and \textit{circular} Kraft quivers (Definition \ref{DefLinCirc}), as well as for the notions of twisted Gelfand-Ponomarev modules \textit{of the first kind} and \textit{of the second kind} (Definition \ref{DefinitionTwistedGFModuleOfFirstSecondKind}). We point out that the second item of Theorem \ref{IntroTheoremB} (i.e. the case where $M$ is of the second kind) might be made more explicit depending on the choice of $(K,\sigma,\tau)$. Namely, while the proofs of Theorems \ref{IntroTheoremA} and \ref{IntroTheoremB} do not depend on this choice, the classification of strict $(\sigma,\tau)$-linear representations $(U,\rho)$ of a connected circular Kraft quiver depends significantly on $(K,\sigma,\tau)$. In fact, such a representation is entirely determined up to isomorphism by the conjugacy class of its associated \textit{monodromy operator} (Definition \ref{DefinitionMonodromy}), which is a $\Xi$-linear automorphism on a finite dimensional $K$-vector space, where $\Xi \in \mathrm{Aut}(K)$ is a non-empty product of $\sigma$'s and $\tau^{-1}$'s. If $(K,\sigma,\tau) = (\mathbb C,\mathrm{id},\mathrm{id})$, the conjugacy class of such an automorphism is classified by its Jordan normal form, cf. Proposition \ref{PropositionJordanForm}. If $K$ is algebraically closed of positive characteristic $p>0$, $\sigma: x \mapsto x^p$ is the Frobenius morphism and $\tau := \sigma^{-1}$, the Lang-Steinberg theorem states that for any $k>0$, any two $\sigma^k$-linear automorphisms are conjugate, cf. Corollary \ref{ModulesAttachedToCircularKraftQuiverAlgCl}. There might be other specific choices of $(K,\sigma,\tau)$ for which semilinear automorphisms of finite dimensional vector spaces admit a normal form, in which case Theorem \ref{IntroTheoremB} might be made slightly more explicit.\\

In the last part of this paper, we show the existence of a system of stabilized lines in a representation of a doubled circular quiver. This generalizes Theorem 6.1 of Kottwitz--Rapoport~\cite{kottwitzrapoport:2003} (also see Corollary~\ref{TheoremKottwitzRapoport}). The theorem serves as a key step in the proof of the converse of Mazur's inequality (roughly speaking, the necessary condition "the Newton polygon lying above the Hodge polygon" for the existence of $F$-crystals) for $(G,\mu, {\bfK)}$ where $G$ is a reductive group over a local field $F$ of the form $\Res_{F'/F}\GL_n$ or $\Res_{F'/F}\GSp_{2n}$ with finite unramified field extension $F'/F$, $\mu$ is a geometric minuscule cocharacter and $\bfK$ is a hyperspecial  subgroup. The proof presented here is algebraic and direct; it uses the classification result of Theorem \ref{IntroTheoremA}, while the original proof of Kottwitz--Rapoport adapts a clever algebraic geometry argument.  
We remark that such a direct proof was already spelled out by Ringel \cite{ringellecture:2005} in a lecture given in 2005, but for which no public written trace seems to remain. See Remark \ref{RemarkTheorem6.1} for more context. \\

The paper contains five Sections. Section \ref{Section1} introduces the notion of $\sigma$-linear relations and discusses certain structure theorems under more or less restrictive conditions. The contents of Section \ref{Section1} might be of independent interest, and we allowed ourselves to work with infinite dimensional relations for the sake of generality, even though we will only make use of the finite dimensional case later on. The notions developed in Section \ref{Section1} are used only in Section \ref{Section3}, so that the reader might as well start their reading directly with Section \ref{Section2}. In Section \ref{Section2}, we develop the notions of Kraft quivers and $(\sigma,\tau)$-linear representations on them, and explain how to produce twisted Gelfand-Ponomarev modules out of them. Most definitions are taken from \cite{chai:kraft2025}. In Section \ref{Section3}, we study the problem conversely. Namely starting with a twisted Gelfand-Ponomarev module, we explain how to produce a Kraft quiver together with a $(\sigma,\tau)$-linear representation. Most of the constructions and proofs in Section \ref{Section3} are adapted from Chapter 2 of \cite{gelfand-ponomarev:1968}. In Section \ref{Section4}, we check that the constructions detailed in the two previous sections are inverse of each other. All together, this concludes the proofs of Theorems \ref{IntroTheoremA} and \ref{IntroTheoremB}. Lastly, in Section \ref{Section5}, we give the proof of Theorem~\ref{MaintheoremC} and hence a new and algebraic proof of Theorem 6.1 of \cite{kottwitzrapoport:2003}.

%by using the classification result of Theorem \ref{IntroTheoremA}. Such a direct proof was already spelled out by Ringel \cite{ringellecture:2005} in a lecture given in 2005, but for which no public written trace seems to remain. See Remark \ref{RemarkTheorem6.1} for more context. 

\section*{Acknowledgements}

The authors are thankful to Ching-Li Chai for introducing this problem to them during a lecture series in the National Center for Theoretical Sciences, National Taiwan University. The authors are also grateful to William Crawley-Boevey and to Raphael Bennett-Tennenhaus for explaining their results in \cite{clannishalgebras:2024} and providing further references and suggestions. Yu is partially supported by the grants AS-IA-112-M01 and NSTC 114-2115-M-001-001.

\section{$\sigma$-linear relations} \label{Section1}
\subsection{Basic definitions}

In this section, unless stated otherwise, $K$ is any field and $\sigma$ is any automorphism of $K$. If $\sigma_1,\sigma_2$ are two automorphisms of $K$, we write $\sigma_2\sigma_1 = \sigma_2\circ\sigma_1$ for their composition. We fix a $K$-vector space $M$. 

\begin{definition}
A \textit{$\sigma$-linear relation} on $M$ is an additive subgroup $B$ of $M\oplus M$ such that, for every $(x,y) \in B$ and $\lambda \in K$, we have $(\lambda x,\sigma(\lambda)y)\in B$.
\end{definition}

In other words, a $\sigma$-linear relation is a $K$-vector subspace of $M\oplus M^{\sigma}$, where $M^{\sigma} := M\otimes_{K,\sigma} K$. This definition was seemingly first considered in the Appendix of \cite{kraft:1975}, and the special case where $K$ is a perfect field of positive characteristic and $\sigma$ is the Frobenius automorphism was used in the main body of loc. cit. When $\sigma = \mathrm{id}$, we recover the notion of \textit{linear relations} which was, as far as we can tell, originally\footnote{In \cite{maclane:1961}, the author mentions unpublished lecture notes of Puppe where the notion of linear relations was already used in the context of spectral sequences.} introduced independently in \cite{maclane:1961} and in \cite{arens:1961}.

\begin{example} There are two standard examples of $\sigma$-linear relations. 
\begin{itemize}
    \item Let $f$ be a $\sigma$-linear endomorphism of $M$. Its graph 
\begin{equation*}
    \Gamma_f := \{(x,f(x))\,|\, x \in M\}
\end{equation*}
is a $\sigma$-linear relation on $M$. In particular, the identity $\mathrm{id}$ on $M$ defines a linear relation which we denote $\mathbf 1 = \Gamma_{\mathrm{id}}$.
\item Let $N$ be a subspace of $M$. Then
\begin{equation*}
    \theta_N := N \oplus \{0\}
\end{equation*}
is a $\sigma$-linear relation on $M$ for every automorphism $\sigma$ of $K$. When $N = M$, we omit the subscript and write $\theta := \theta_M$. When $N = \{0\}$, we write $\mathbf 0 := \theta_{\{0\}}$ and refer to it as the \textit{trivial relation} on $M$.
\end{itemize}
\end{example}

In analogy with morphisms, if $B$ is a $\sigma$-linear relation on $M$ and $(x,y) \in B$, we will say that $x$ is a \textit{preimage} of $y$ by $B$, or that $y$ is an \textit{image} of $x$ by $B$. We will often write $x \xrightarrow{B} y$ instead of $(x,y) \in B$. When $B$ is clear from the context, we will also write $x \to y$ instead.

\begin{definition}
    For $i=1,2$, let $\sigma_i$ be an automorphism of $K$ and let $B_i$ be a $\sigma_i$-linear relation on $M$. The \textit{product} or \textit{composition} of $B_2$ and $B_1$ is the $\sigma_2\sigma_1$-linear relation given by
    \begin{equation*}
        B_2B_1 = \{(x,z) \in M\oplus M \,|\, \exists y \in M, x\xrightarrow{B_1} y \text{ and } y \xrightarrow{B_2} z\}.
    \end{equation*}
\end{definition}

For example, we have $\mathbf 1 B = B \mathbf 1 = B$ for every $\sigma$-linear relation $B$. It is easy to check that composition is associative, and that we have $\Gamma_g\Gamma_f = \Gamma_{g\circ f}$ whenever $f$ and $g$ are $\sigma_1,\sigma_2$-linear endomorphisms of $M$. If $(B_i)_{1\leq i \leq n}$ are $\sigma_i$-linear relations on $M$, the composition $B_n\cdots B_1$ is the $\sigma_n\cdots\sigma_1$-linear relation consisting of all pairs $(x_0,x_{n}) \in M\oplus M$ such that there exists $x_1,\ldots , x_{n-1} \in M$ satisfying $x_i \xrightarrow{B_i} x_{i+1}$ for all $0\leq i \leq n-1$. We will write 
\begin{equation*}
    x_0 \xrightarrow{B_1} x_1 \xrightarrow{B_2} \cdots \xrightarrow{B_{n-1}} x_{n-1} \xrightarrow{B_n} x_{n}
\end{equation*}
in order to describe such a situation. When $B_1 = \cdots = B_n = B$, we write $B^n$ for the product $BB\cdots B$ and we will drop the superscripts to write $x_0 \to \cdots \to x_{n}$ instead.

\begin{proposition}\label{UsefulProp}
    For $1 \leq i \leq 3$, let $\sigma_i$ be an automorphism of $K$ and let $B_i$ be a $\sigma_i$-linear relation on $M$. If $B_1 \subseteq  B_2$, then $B_3B_1 \subseteq  B_3B_2$ and $B_1B_3 \subseteq  B_2B_3$.
\end{proposition}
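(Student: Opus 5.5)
The plan is to unwind the definition of composition and check both inclusions elementwise. Since $B_1$ and $B_2$ may be relations of different semi-linearity types $\sigma_1,\sigma_2$, the inclusion $B_1\subseteq B_2$ is just an inclusion of additive subgroups of $M\oplus M$. The definition of the product $B_2B_1$ only uses the underlying sets of pairs, so the argument is purely set-theoretic. The twisting automorphisms play no role beyond guaranteeing that each product is again a semi-linear relation, which is already recorded right after the definition of the product.

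For the first inclusion, take $(x,z)\in B_3B_1$. By definition there is some $y\in M$ with $x\xrightarrow{B_1} y$ and $y\xrightarrow{B_3} z$. Because $B_1\subseteq B_2$, the pair $(x,y)$ also lies in $B_2$, so $x\xrightarrow{B_2} y\xrightarrow{B_3} z$. Hence $(x,z)\in B_3B_2$.

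For the second inclusion, take $(x,z)\in B_1B_3$. There is some $y$ with $x\xrightarrow{B_3} y\xrightarrow{B_1} z$. Replacing $(y,z)\in B_1$ by $(y,z)\in B_2$, the same intermediate $y$ witnesses $(x,z)\in B_2B_3$.

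I expect no real obstacle here. The only point requiring care is the order convention in the composition: $B_2B_1$ means apply $B_1$ first. So in $B_3B_1$ the relation $B_1$ acts on the first leg of the chain, while in $B_1B_3$ it acts on the second leg. In each case the intermediate witness $y$ is reused unchanged.
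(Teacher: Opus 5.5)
Your proposal is correct and matches the paper's proof: fix the intermediate witness $y$ for $(x,z)$ in the composite and use $B_1 \subseteq B_2$ to replace the $B_1$-step by a $B_2$-step. The paper writes out only the first inclusion and calls the second analogous; you write out both.
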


Here, $\subseteq $ denotes the inclusion as subgroups of $M\oplus M$. Equivalently, we have $B_1 \subseteq  B_2$ whenever $x \xrightarrow{B_1} y$ implies $x \xrightarrow{B_2} y$. 

\begin{proof}
    Let $(x,z) \in B_3B_1$. There exists some $y \in M$ such that 
    \begin{equation*}
    x \xrightarrow{B_1} y \xrightarrow{B_3} z.
    \end{equation*}
    It follows that $x \xrightarrow{B_2} y \xrightarrow{B_3} z$ so that $(x,z)  \in B_3B_2$ as required. Likewise, one may easily check that $B_1B_3 \subseteq  B_2B_3$.
\end{proof}

\begin{definition}
    Given a $\sigma$-linear relation $B$ on $M$, its \textit{converse} is the $\sigma^{-1}$-linear relation given by
    \begin{equation*}
        B^{\#} := \{(x,y) \in M\oplus M \,|\, (y,x) \in B\}.
    \end{equation*}
\end{definition}

In other words, $x\xrightarrow{B^{\#}} y \iff y \xrightarrow{B} x$. Clearly, we have $(B^{\#})^{\#} = B$ and if $B_1$ and $B_2$ are two $\sigma$-linear relations on $M$, then we have $(B_2B_1)^{\#} = B_1^{\#}B_2^{\#}$. If $f$ is a $\sigma$-linear automorphism of $M$, then $\Gamma_f^{\#} = \Gamma_{f^{-1}}$. 

\begin{definition}
    Let $B$ be a $\sigma$-linear relation. We define four subspaces of $M$ as follows:
    \begin{align*}
        \mathrm{Dom}(B) & := \{x \in M \,|\, \exists y \in M, x \xrightarrow{B} y\}, & \mathrm{Ker}(B) & := \{x \in M \,|\, x \xrightarrow{B} 0\},\\
        \mathrm{Im}(B) & := \{y \in M \,|\, \exists x \in M, x \xrightarrow{B} y\}, & \mathrm{Indet}(B) & := \{y \in M \,|\, 0 \xrightarrow{B} y\}.
    \end{align*}
\end{definition}

We refer to these subspaces respectively as the domain, kernel, image and indeterminacy of $B$. Clearly, we have 
\begin{align*}
    \mathrm{Ker}(B) \subseteq  \mathrm{Dom}(B), & & \mathrm{Indet}(B) \subseteq  \mathrm{Im}(B).
\end{align*}
Moreover, $B$ induces a $\sigma$-linear isomorphism 
\begin{equation*}
    \mathrm{Dom}(B)/\mathrm{Ker}(B) \xrightarrow{\sim} \mathrm{Im}(B)/\mathrm{Indet}(B).
\end{equation*}
Alternatively, one can also define these four subspaces by means of composition of relations. Indeed, we have 
\begin{align*}
    \theta B & = \theta_{\mathrm{Dom}(B)}, & \mathbf{0}B & = \theta_{\mathrm{Ker}(B)},\\
    \theta B^{\#} & = \theta_{\mathrm{Im}(B)}, & \mathbf{0}B^{\#} & = \theta_{\mathrm{Indet}(B)}.
\end{align*}
In particular, it follows that $\mathrm{Dom}(B^{\#}) = \mathrm{Im}(B)$ and $\mathrm{Ker}(B^{\#}) = \mathrm{Indet}(B)$. By replacing $B$ with $B^{\#}$, one obtains similar statements for the image and the indeterminacy of $B^{\#}$. 

\begin{example}
    If $f$ is a $\sigma$-linear endomorphism of $M$, we have 
    \begin{align*}
        \mathrm{Dom}(\Gamma_f) &= M, & \mathrm{Ker}(\Gamma_f) &= \mathrm{Ker}(f),\\
        \mathrm{Im}(\Gamma_f) &= \mathrm{Im}(f), & \mathrm{Indet}(\Gamma_f) &= \{0\}.
    \end{align*}
    If $N$ is a subspace of $M$, we have 
    \begin{align*}
        \mathrm{Dom}(\theta_N) = \mathrm{Ker}(\theta_N) = N, & & \mathrm{Im}(\theta_N) = \mathrm{Indet}(\theta_N) = \{0\}.
    \end{align*}
\end{example}

\begin{definition}\label{DefNonNull}
    A $\sigma$-linear relation $B$ is said to be \textit{null} if $\mathrm{Ker}(B) = \mathrm{Dom}(B)$. Otherwise, we say that $B$ is \textit{non-null}. 
\end{definition}

For instance, the relation $\theta_N$ is null for every subspace $N \subseteq  M$.

\begin{lemma}\label{UsefulLemma}
    For $1 \leq i \leq 2$, let $\sigma_i$ be an automorphism of $K$ and let $B_i$ be a $\sigma_i$-linear relation on $M$. We have 
    \begin{align*}
    \mathrm{Dom}(B_2B_1) & \subseteq  \mathrm{Dom}(B_1), & \mathrm{Ker}(B_1) & \subseteq  \mathrm{Ker}(B_2B_1),\\
    \mathrm{Im}(B_1B_2) & \subseteq  \mathrm{Im}(B_1), & \mathrm{Indet}(B_1) & \subseteq  \mathrm{Indet}(B_1B_2).\\
\end{align*}
\end{lemma}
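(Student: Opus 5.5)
The plan is to prove the two left-hand inclusions directly from the definition of composition, and to deduce the two right-hand inclusions from them by passing to converses.

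For the first inclusion, let $x \in \mathrm{Dom}(B_2B_1)$. Then there is some $z$ with $x \xrightarrow{B_2B_1} z$. By the definition of composition, this means there is some $y$ with $x \xrightarrow{B_1} y \xrightarrow{B_2} z$. In particular $x \xrightarrow{B_1} y$, so $x \in \mathrm{Dom}(B_1)$.

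For the second inclusion, let $x \in \mathrm{Ker}(B_1)$, so that $x \xrightarrow{B_1} 0$. Since $B_2$ is an additive subgroup of $M\oplus M$, it contains $(0,0)$. Hence
\begin{equation*}
    x \xrightarrow{B_1} 0 \xrightarrow{B_2} 0,
\end{equation*}
which gives $x \xrightarrow{B_2B_1} 0$, that is, $x \in \mathrm{Ker}(B_2B_1)$.

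For the two right-hand inclusions, I will use the identities $\mathrm{Im}(B) = \mathrm{Dom}(B^{\#})$ and $\mathrm{Indet}(B) = \mathrm{Ker}(B^{\#})$, together with $(B_1B_2)^{\#} = B_2^{\#}B_1^{\#}$, all established above. Applying the first inclusion to the pair $(B_1^{\#}, B_2^{\#})$, in place of $(B_1,B_2)$, gives
\begin{equation*}
    \mathrm{Im}(B_1B_2) = \mathrm{Dom}(B_2^{\#}B_1^{\#}) \subseteq \mathrm{Dom}(B_1^{\#}) = \mathrm{Im}(B_1).
\end{equation*}
The same substitution in the second inclusion gives
\begin{equation*}
    \mathrm{Indet}(B_1) = \mathrm{Ker}(B_1^{\#}) \subseteq \mathrm{Ker}(B_2^{\#}B_1^{\#}) = \mathrm{Indet}(B_1B_2).
\end{equation*}
Alternatively, both can be checked directly: an element $z \in \mathrm{Im}(B_1B_2)$ comes from a chain $x \xrightarrow{B_2} y \xrightarrow{B_1} z$, so $z \in \mathrm{Im}(B_1)$; and $0 \xrightarrow{B_2} 0 \xrightarrow{B_1} y$ shows that $\mathrm{Indet}(B_1) \subseteq \mathrm{Indet}(B_1B_2)$.

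I expect no real obstacle. The only points needing care are the order of the factors after taking converses, and the fact that $(0,0)$ lies in every relation.
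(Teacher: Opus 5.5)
Your proof is correct, and it differs from the paper's only in packaging. The paper encodes the four subspaces as compositions: $\theta B = \theta_{\mathrm{Dom}(B)}$, $\mathbf 0 B = \theta_{\mathrm{Ker}(B)}$, $\theta B^{\#} = \theta_{\mathrm{Im}(B)}$ and $\mathbf 0 B^{\#} = \theta_{\mathrm{Indet}(B)}$. It then applies monotonicity of composition (Proposition \ref{UsefulProp}) to a trivial inclusion. The paper writes out only the domain case: $\theta B_2 \subseteq \theta$ gives $\theta B_2B_1 \subseteq \theta B_1$, i.e.\ $\mathrm{Dom}(B_2B_1) \subseteq \mathrm{Dom}(B_1)$. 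The others follow the same pattern. For instance, the kernel statement comes from $\mathbf 0 \subseteq \mathbf 0 B_2$, and the image and indeterminacy statements come from passing to converses, just as in your reduction.

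Proposition \ref{UsefulProp} is itself proved by the kind of element chase you wrote down, so the two arguments are logically equivalent. Your version is more self-contained. It also makes explicit the one non-formal input, namely that $(0,0)$ lies in every relation; in the paper's version this is hidden inside the inclusion $\mathbf 0 \subseteq \mathbf 0 B_2$. The paper's point-free version sets up the calculus of right-multiplying an inclusion of relations by a monomial, which it reuses in Lemma \ref{LemmaRelativePositionIntervals} and later.
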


\begin{proof}
    This follows from Proposition \ref{UsefulProp}. For instance, the inclusion $\theta B_2 \subseteq  \theta$ implies that $\theta B_2B_1 \subseteq  \theta B_1$, which is equivalent to $\mathrm{Dom}(B_2B_1) \subseteq  \mathrm{Dom}(B_1)$.
\end{proof}

\begin{definition}
    Let $B$ be a $\sigma$-linear relation on $M$ and let $N$ be a subspace of $M$. We define the \textit{image} and the \textit{preimage} of $N$ by $B$ as subspaces of $M$ as follows:
    \begin{align*}
        B(N) & := \{y \in M \,|\, \exists x \in N, x \xrightarrow{B} y \},\\
        B^{-1}(N) & := \{x \in M \,|\, \exists y \in N, x \xrightarrow{B} y \}.
    \end{align*}
\end{definition}

Applying the definition with $N = \{0\}$ or $N = M$, we recover the domain, kernel, image and indeterminacy of $B$. Alternatively, one can also recover $B(N)$ and $B^{-1}(N)$ by considering $\theta_NB^{\#}$ and $\theta_NB$ respectively.

\begin{definition}
    Let $M = M_1 \oplus M_2$ be a decomposition of $M$ as a direct sum of two vector subspaces. Let $B_1,B_2$ be two $\sigma$-linear relations respectively on $M_1$ and on $M_2$. The \textit{direct sum} of $B_1$ and $B_2$ is the $\sigma$-linear relation $B = B_1\oplus B_2$ on $M$ defined by 
    \begin{equation*}
        B = \{(x_1+x_2,y_1+y_2) \in (M_1\oplus M_2)\oplus (M_1 \oplus M_2) \,|\, x_1 \xrightarrow{B_1} y_1 \text{ and } x_2 \xrightarrow{B_2} y_2\}.
    \end{equation*}
\end{definition}

The notation $B_1\oplus B_2$ is coherent since the sum $B_1 + B_2$ is direct as a subspace of $M \oplus M^{\sigma}$.

\begin{definition}
    Let $N \subseteq  M$ be a subspace, and let $B$ be a $\sigma$-linear relation on $M$. The \textit{restriction of $B$ to $N$} is the $\sigma$-linear relation 
    \begin{equation*}
        B_{|N} := B\cap (N\oplus N)
    \end{equation*}
    defined on $N$.
\end{definition}

\begin{proposition}
    Let $N \subseteq  M$ be a subspace, and let $B$ be a $\sigma$-linear relation on $M$. We have 
    \begin{align*}
        \mathrm{Im}(B_{|N}) = \mathrm{Im}(B) \cap N, & & \mathrm{Indet}(B_{|N}) = \mathrm{Indet}(B) \cap N.
    \end{align*}
\end{proposition}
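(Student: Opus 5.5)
The plan is to unwind the definitions directly, since $B_{|N} = B \cap (N\oplus N)$ is an explicit subset of $M\oplus M$. Neither composition of relations nor Proposition \ref{UsefulProp} is needed: each side of each equality is a set of vectors $y\in M$ defined by an existence condition, so it suffices to compare these conditions.

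First, the indeterminacy. By definition, $y \in \mathrm{Indet}(B_{|N})$ iff $(0,y) \in B$ and $(0,y)\in N\oplus N$. Since $0 \in N$ always, the second condition is equivalent to $y \in N$. Hence $\mathrm{Indet}(B_{|N}) = \{y \in N \,|\, 0\xrightarrow{B} y\} = \mathrm{Indet}(B)\cap N$, with no further work.

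Next, the image. The inclusion $\mathrm{Im}(B_{|N}) \subseteq \mathrm{Im}(B)\cap N$ is immediate. If $x \xrightarrow{B_{|N}} y$, then $x \xrightarrow{B} y$, so $y\in \mathrm{Im}(B)$, and $y \in N$ because $(x,y)\in N\oplus N$.

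The reverse inclusion is the main obstacle, and I expect it to fail without an extra hypothesis. Given $y \in \mathrm{Im}(B)\cap N$, we only know some $x\in M$ with $x\xrightarrow{B} y$, and nothing forces $x\in N$. Changing $x$ by elements of $\mathrm{Ker}(B)$ does not obviously help either. Concretely, let $M = K^2$, let $B = \Gamma_f$ with $f$ the linear map swapping $e_1$ and $e_2$ (a $\mathrm{id}$-linear relation), and let $N = Ke_1$. Then $\mathrm{Im}(B)\cap N = N$. However, $B_{|N}$ consists of the pairs $(x,f(x))$ with both $x$ and $f(x)$ in $Ke_1$, which forces $x=0$, so $\mathrm{Im}(B_{|N}) = 0$.

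So as literally stated, I can only prove the inclusion $\mathrm{Im}(B_{|N}) \subseteq \mathrm{Im}(B)\cap N$, together with the equality for $\mathrm{Indet}$. If equality for $\mathrm{Im}$ is needed later, the natural sufficient hypothesis is $B^{-1}(N)\subseteq N$: every preimage of a point of $N$ then lies in $N$, and the argument closes at once. The same proofs, applied to $B^{\#}$, give the symmetric statements $\mathrm{Ker}(B_{|N}) = \mathrm{Ker}(B)\cap N$ and $\mathrm{Dom}(B_{|N}) \subseteq \mathrm{Dom}(B)\cap N$, using $(B_{|N})^{\#} = (B^{\#})_{|N}$.
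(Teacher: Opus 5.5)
Your proposal is correct. It is the same direct unwinding of definitions that the paper has in mind when it calls the proof obvious. In particular, your swap example refutes $\mathrm{Im}(B_{|N})=\mathrm{Im}(B)\cap N$, so only the inclusion $\subseteq$ holds. The example works for any $\sigma$ if you take $f(ae_1+be_2)=\sigma(b)e_1+\sigma(a)e_2$.

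The paper's own remark right after the statement, that ``similar identities for the domain and the image of $B_{|N}$ do not hold in general'', indicates that $\mathrm{Im}$ is a misprint for $\mathrm{Ker}$. The intended result is then exactly the pair $\mathrm{Ker}(B_{|N})=\mathrm{Ker}(B)\cap N$ and $\mathrm{Indet}(B_{|N})=\mathrm{Indet}(B)\cap N$, which you obtain via $(B_{|N})^{\#}=(B^{\#})_{|N}$. This is also the form the paper uses later, for the kernel and indeterminacy of $D(w(j))_{|S_{w(j)}}$ in the proof of Lemma~\ref{LemmaTechnicalSecondKind}.
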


The proof is obvious. We point out that similar identities for the domain and the image of $B_{|N}$ do not hold in general.

\subsection{Stable domain, kernel, image and indeterminacy}

We start with a definition.

\begin{definition}\label{DefStableSubspaces}
    Let $B$ be a $\sigma$-linear relation on $M$. We define four subspaces of $M$ as follows:
    \begin{align*}
        \mathrm{Dom}(B^{\infty}) & := \bigcap_{n\geq 1} \mathrm{Dom}(B^n), & \mathrm{Ker}(B^{\infty}) & := \bigcup_{n\geq 1} \mathrm{Ker}(B^n),\\
        \mathrm{Im}(B^{\infty}) & := \bigcap_{n\geq 1} \mathrm{Im}(B^n), & \mathrm{Indet}(B^{\infty}) & := \bigcup_{n\geq 1} \mathrm{Indet}(B^n).
    \end{align*}
\end{definition}

We refer to these subspaces respectively as the stable domain, stable kernel, stable image and stable indeterminacy of $B$. For every $n\geq 1$, we have 
\begin{equation*}
    \mathrm{Ker}(B^n) \subseteq  \mathrm{Ker}(B^{n+1}) \subseteq  \mathrm{Dom}(B^{n+1}) \subseteq  \mathrm{Dom}(B^n),
\end{equation*}
from which it follows that $\mathrm{Ker}(B^{\infty}) \subseteq  \mathrm{Dom}(B^{\infty})$. By replacing $B$ with $B^{\#}$, we also have $\mathrm{Indet}(B^{\infty}) \subseteq  \mathrm{Im}(B^{\infty})$.

\begin{proposition}\label{PropStableKernelIndet}
Let $B$ be a $\sigma$-linear relation on $M$. 
\begin{enumerate}
    \item If $\dim\mathrm{Indet}(B^{\infty}) < \infty$, then 
    \begin{equation*}
        \mathrm{Dom}(B^{\infty}) \cap \mathrm{Indet}(B) \subseteq  \mathrm{Dom}(B^{\infty}) \cap \mathrm{Indet}(B^{\infty}) \subseteq  \mathrm{Ker}(B^{\infty}).
    \end{equation*}
    \item If $\dim \mathrm{Ker}(B^{\infty}) < \infty$, then 
    \begin{equation*}
        \mathrm{Im}(B^{\infty}) \cap \mathrm{Ker}(B) \subseteq  \mathrm{Im}(B^{\infty}) \cap \mathrm{Ker}(B^{\infty}) \subseteq  \mathrm{Indet}(B^{\infty}).
    \end{equation*}
\end{enumerate}
\end{proposition}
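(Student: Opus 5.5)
The first inclusion in (1) is immediate: $\mathrm{Indet}(B) \subseteq \mathrm{Indet}(B^{\infty})$ by definition. For (2), I would apply (1) to the $\sigma^{-1}$-linear relation $B^{\#}$. Since $(B^{\#})^n = (B^n)^{\#}$, we have $\mathrm{Dom}((B^{\#})^n) = \mathrm{Im}(B^n)$, $\mathrm{Ker}((B^{\#})^n) = \mathrm{Indet}(B^n)$ and $\mathrm{Indet}((B^{\#})^n) = \mathrm{Ker}(B^n)$. So (2) is literally (1) for $B^{\#}$, and the real content is the second inclusion of (1).

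Set $W := \mathrm{Indet}(B^{\infty})$. It is an increasing union of the subspaces $\mathrm{Indet}(B^n)$ (by Lemma \ref{UsefulLemma}, $\mathrm{Indet}(B^n) \subseteq \mathrm{Indet}(B^{n+1})$). Since $\dim W < \infty$, the union stabilizes: $W = \mathrm{Indet}(B^N)$ for some $N \geq 1$, and $W = \mathrm{Indet}(B^n)$ for all $n \geq N$.

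The key observation is that $W$ is stable under $B$: if $w \in W$ and $w \xrightarrow{B} z$, then a chain $0 \to \cdots \to w \to z$ shows $z \in \mathrm{Indet}(B^{N+1}) = W$. Moreover, every intermediate point of a chain $0 \to y_1 \to \cdots \to y_k$ lies in $\mathrm{Indet}(B^i) \subseteq W$. Let $C := B_{|W}$. Then:
\begin{itemize}
\item Any $B$-chain starting at a point of $W$ stays in $W$. Hence for $x \in \mathrm{Dom}(B^{\infty}) \cap W$ we get $x \in \mathrm{Dom}(C^N)$.
\item Every chain witnessing membership in $\mathrm{Indet}(B^N)$ lies in $W$. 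Hence $\mathrm{Indet}(C^N) = W$.
\end{itemize}

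Now $C^N$ is a $\sigma^N$-linear relation on $W$ with $\mathrm{Indet}(C^N) = W$, so $\{0\} \oplus W \subseteq C^N$. Since $x \in \mathrm{Dom}(C^N)$, there is some $y$ with $(x,y) \in C^N$. Subtracting $(0,y) \in C^N$ gives $(x,0) \in C^N$. Thus $x \in \mathrm{Ker}(C^N) \subseteq \mathrm{Ker}(B^N) \subseteq \mathrm{Ker}(B^{\infty})$, as required.

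The only delicate step is the $B$-stability of $W$, together with the fact that chains starting in $W$ remain in $W$. This is what allows a single $N$ to control both the domain chain from $x$ and the indeterminacy. It relies essentially on the stabilization $\mathrm{Indet}(B^{N+1}) = \mathrm{Indet}(B^N)$, which is exactly where the finite-dimensionality hypothesis is used.
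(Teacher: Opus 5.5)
Your proof is correct, and it takes a genuinely different route from the paper's.

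\textbf{The paper's route.} The paper first reduces to $x \in \mathrm{Dom}(B^{\infty})\cap\mathrm{Indet}(B)$, by applying that special case to the $\sigma^n$-linear relation $B^n$. For such $x = x_0$ it takes a chain $x_0 \to x_1 \to \cdots \to x_n$ with $n = \dim \mathrm{Indet}(B^{\infty})$. All the $x_i$ lie in $\mathrm{Indet}(B^{\infty})$, so it picks a minimal linear dependence $\sum \mu_i x_i = 0$. It then combines $0 \to x_0$ with $\sigma$-linearity, using twisted coefficients $\lambda_i = \sigma^{-i-1}(\mu_i/\mu_m)$, to build an explicit chain from $x_0$ to $0$.

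\textbf{Your route.} You use finite-dimensionality only to get stabilization of the ascending chain, $\mathrm{Indet}(B^n) = \mathrm{Indet}(B^N)$ for $n \geq N$. After that a single subtraction finishes the argument. This buys three things:
\begin{itemize}
\item You treat $\mathrm{Indet}(B^{\infty})$ directly, with no reduction to $B^n$.
\item The final step uses only that $B^N$ is an additive subgroup, not any twisting of scalars.
\item You get the uniform inclusion $\mathrm{Dom}(B^{\infty})\cap\mathrm{Indet}(B^{\infty}) \subseteq \mathrm{Ker}(B^N)$, with $N$ the stabilization index.
\end{itemize}
The paper's argument is more hands-on, in that it writes down the annihilating chain explicitly.

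\textbf{A simplification.} The passage to $C = B_{|W}$ can be dropped. If $x \in \mathrm{Indet}(B^N)$ and $x \xrightarrow{B^N} y$, then $0 \xrightarrow{B^{2N}} y$. Hence $y \in \mathrm{Indet}(B^{2N}) = \mathrm{Indet}(B^N)$, so $(0,y) \in B^N$, and subtracting gives $(x,0) \in B^N$.

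Your deduction of (2) from (1) via $B^{\#}$ is the same as the paper's.
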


\begin{proof}
Point (2) follows from point (1) by replacing $B$ with $B^{\#}$. We prove (1). Since $\mathrm{Indet}(B) \subseteq \mathrm{Indet}(B^{\infty})$, the first inclusion is obvious. In order to prove the second inclusion, we claim that it is enough to prove 
\begin{equation*}
    \mathrm{Dom}(B^{\infty}) \cap \mathrm{Indet}(B) \subseteq  \mathrm{Ker}(B^{\infty}).
\end{equation*}
Indeed, let us assume that this inclusion is already proved for all $\sigma$-linear relations $B$ such that $\dim\mathrm{Indet}(B^{\infty}) < \infty$, and with respect to any automorphism $\sigma$ of $K$. Let $x \in \mathrm{Dom}(B^{\infty})\cap \mathrm{Indet}(B^{\infty})$. Thus we have $x \in \mathrm{Indet}(B^n)$ for some $n\geq 1$. Using our assumption with respect to the $\sigma^n$-linear relation $B^n$, we have 
\begin{equation*}
    \mathrm{Dom}((B^n)^{\infty}) \cap \mathrm{Indet}(B^n) \subseteq  \mathrm{Ker}((B^n)^{\infty}).
\end{equation*}
It is clear that $\mathrm{Dom}((B^n)^{\infty}) = \mathrm{Dom}(B^{\infty})$ and that $\mathrm{Ker}((B^n)^{\infty}) = \mathrm{Ker}(B^{\infty})$, from which it follows that $x \in \mathrm{Ker}(B^{\infty})$ as desired.\\
Now let $x \in \mathrm{Dom}(B^{\infty}) \cap \mathrm{Indet}(B)$. Let $n := \dim \mathrm{Indet}(B^{\infty})$. Since $x \in \mathrm{Dom}(B^n)$, there exists $x_0 = x, x_1, \ldots, x_{n} \in M$ such that 
\begin{equation*}
    x = x_0 \to x_1 \to \cdots \to x_{n}.
\end{equation*} 
Moreover, we know that $0 \to x$. It follows that $x_0,\ldots ,x_{n} \in \mathrm{Indet}(B^{\infty})$. Thus, these vectors can not be linearly independent. Let $m \leq n$ be the smallest integer such that the vectors $x_0,\ldots ,x_m$ are linearly dependent. We can write 
\begin{equation*}
    \mu_mx_m + \cdots + \mu_0x_0 = 0,
\end{equation*}
for some scalars $\mu_0,\ldots ,\mu_m \in K$ that are not all identically equal to $0$. By minimality of $m$, we must have $\mu_m \not = 0$. Consider 
\begin{equation*}
\lambda_i := \sigma^{-i-1}\left(\frac{\mu_i}{\mu_m}\right), \qquad \forall\, 0 \leq i \leq m-1.
\end{equation*}
By additivity and $\sigma$-linearity of $B$, we have 
\begin{align*}
    0 \to x_0 & \to x_1 + \sigma(\lambda_{m-1}) x_0\\
    & \to x_2 + \sigma^2(\lambda_{m-1})x_1 + \sigma(\lambda_{m-2})x_0 \\
    & \to \cdots \\
    & \to x_{m} + \sigma^{m}(\lambda_{m-1})x_{m-1} + \cdots + \sigma^2(\lambda_1)x_1 + \sigma(\lambda_0)x_0.
\end{align*}
By construction, $x_{m} + \sigma^{m}(\lambda_{m-1})x_{m-1} + \cdots + \sigma(\lambda_0)x_0 = 0$. Thus, we have proved that $x_0 \in \mathrm{Ker}(B^{m}) \subseteq  \mathrm{Ker}(B^{\infty})$, which concludes the proof.
\end{proof}

We provide a counterexample to Proposition \ref{PropStableKernelIndet} (1) when $\mathrm{Inder}(B^{\infty})$ has infinite dimension. Let $M = K^{\mathbb N}$ and for $i\geq 1$, denote by $e_i = (\delta_{i,j})_{j\geq 1}$ the canonical basis ($\delta_{i,j}$ is the Kronecker delta). Let $B$ be the $\sigma$-linear relation generated by the pairs $(0,e_1)$ and $(e_i,e_{i+1})$ for all $i\geq 1$. It is clear that $\mathrm{Dom}(B^{\infty}) = \mathrm{Indet}(B^{\infty}) = M$, $\mathrm{Indet}(B) = \mathrm{Span}_K\{e_1\}$ and $\mathrm{Ker}(B^{\infty}) = \{0\}$.

\subsection{The decomposition theorem}

The contents of this section will only be used in the proof of Lemma \ref{LemmaTechnicalSecondKind} and can be skipped at first.\\

In this section, we recall the decomposition theorem for $\sigma$-linear relations $B$. We discuss two statements, a ``weak'' decomposition theorem for the restriction of $B$ to the subspace $\mathrm{Dom}(B^{\infty})$ (version 1) or to $\mathrm{Dom}(B^{\infty}) + \mathrm{Im}(B^{\infty})$ (version 2), which we assume to be finite dimensional, and a ``strong'' decomposition theorem for the whole of $B$, provided that the ambient space $M$ is finite dimensional.\\
The weak version is, as far as we can tell, due to \cite{kraft:1975} (see Zerlegungssatz in Section 7 of the Appendix). However in loc. cit. the author assumes that the whole of $M$ is finite dimensional and restricts the relation to $\mathrm{Dom}(B^{\infty})$ only, so that our versions 1 and 2 offer a mild generalization.\\
The strong version is the main theorem of \cite{towber:1971}, which deals with linear relations but which can easily be generalized to its $\sigma$-linear version, and for which we provide only a statement without proof. \\
Finally, we note that the hypotheses of both versions of the weak decomposition theorems are automatically satisfied when $M$ is finite dimensional.

\begin{theorem}[Weak decomposition - Version 1]\label{WeakDecomposition1}
    Let $B$ be a $\sigma$-linear relation such that $\mathrm{Dom}(B^{\infty})$ and $\mathrm{Indet}(B^{\infty})$ are finite dimensional, and assume that 
    $$B(\mathrm{Dom}(B^{\infty})) \subseteq  \mathrm{Dom}(B^{\infty}) + \mathrm{Inder}(B).$$ 
    There exist a subspace $S \subseteq  \mathrm{Dom}(B^{\infty}) \cap \mathrm{Im}(B^{\infty})$ and a $\sigma$-linear automorphism $T:S\xrightarrow{\sim} S$ such that 
    \begin{enumerate}
        \item $\mathrm{Dom}(B^{\infty}) = S \oplus \mathrm{Ker}(B^{\infty})$,
        \item $B_{|\mathrm{Dom}(B^{\infty})} = B_{|S} \oplus B_{|\mathrm{Ker}(B^{\infty})}$,
        \item $B_{|S} = \Gamma_T$.
    \end{enumerate}
\end{theorem}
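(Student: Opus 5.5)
The plan is to restrict $B$ to the finite dimensional space $D := \mathrm{Dom}(B^{\infty})$ and reduce the statement to a Fitting-type splitting for a relation which is "bijective modulo a nilpotent part". I read the hypothesis as $B(D) \subseteq D + \mathrm{Indet}(B)$, taking $\mathrm{Inder}$ to be a misprint for $\mathrm{Indet}$. Set $C := B_{|D}$ and $N := \mathrm{Ker}(B^{\infty})$.

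\emph{Step 1: basic properties of $C$.}
\begin{itemize}
\item Since $0 \xrightarrow{B} 0$, we have $\mathrm{Ker}(B^n) \subseteq \mathrm{Dom}(B^m)$ for all $m,n$. Hence $N \subseteq D$, and $N = \mathrm{Ker}(B^{n_0})$ for some $n_0$ because $D$ is finite dimensional.
\item $\mathrm{Dom}(C) = D$. Given $x \in D$ and $x \xrightarrow{B} y$, the hypothesis lets us write $y = d + i$ with $d \in D$ and $0 \xrightarrow{B} i$. Subtracting gives $x \xrightarrow{B} d$, which is an arrow of $C$.
\item $\mathrm{Indet}(C) = \mathrm{Indet}(B) \cap D \subseteq N$. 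This follows from the restriction proposition together with Proposition \ref{PropStableKernelIndet}~(1); the latter applies because $\mathrm{Indet}(B^{\infty})$ is finite dimensional.
\item $C(N) \subseteq N$. Let $x \in \mathrm{Ker}(B^n)$ and $x \xrightarrow{C} y$. Choose a chain $x \to y' \to \cdots \to 0$ of length $n$. Then $y' \in \mathrm{Ker}(B^{n-1}) \subseteq D$, so $y - y' \in \mathrm{Indet}(B) \cap D \subseteq N$, and therefore $y \in N$.
\item $C^{-1}(N) \subseteq N$. If $x \xrightarrow{C} y$ with $y \in N$, then clearly $x \in N$.
\end{itemize}

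\emph{Step 2: the quotient automorphism.} By Step 1, $C$ induces a $\sigma$-linear relation $\bar C$ on $\bar D := D/N$. Its domain is all of $\bar D$. It has trivial indeterminacy, since $\mathrm{Indet}(C) \subseteq N$, and trivial kernel, since $C^{-1}(N) \subseteq N$. So $\bar C = \Gamma_{\bar T}$ for an injective $\sigma$-linear map $\bar T$. A $\sigma$-linear injection of a finite dimensional space into itself is bijective, because its image is a subspace of the same dimension. Hence $\bar T$ is an automorphism.

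\emph{Step 3: an invariant complement (main obstacle).} We need a subspace $S$ with $D = S \oplus N$ such that $C$ maps $S$ bijectively onto $S$ with no indeterminacy inside $S$. The natural candidate $\bigcap_n \mathrm{Im}(C^n)$ fails. For instance, with a chain $0 \to e_1 \to e_2 \to 0$, the vector $e_1$ lies in every $\mathrm{Im}(C^n)$ and also in $N$.

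So I would instead construct $S$ as the image of a $\sigma$-compatible lift $s : \bar D \to D$. Such an $s$ is a section of the projection $D \to \bar D$ satisfying $s(x) \xrightarrow{C} s(\bar T x)$ for all $x \in \bar D$. Then $S := s(\bar D)$ gives (1), and $T := s\bar T s^{-1}$ gives (3). For (2), one checks that $C \cap (S \oplus N) \cup (N \oplus S)$ contributes nothing extra. This uses $C(N) \subseteq N$ and $C^{-1}(N) \subseteq N$, together with $\mathrm{Indet}(C) \subseteq N$, which ensures $\mathrm{Indet}(C_{|S}) = 0$.

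To find $s$, I would start from an arbitrary linear section $s_0$ and correct it iteratively. Two sections differ by a map into $N$. Define $s_{k+1}(x) :=$ some $C$-image of $s_k(\bar T^{-1}x)$; this is well defined modulo $\mathrm{Indet}(C) \subseteq N$. After $n_0$ steps, differences coming from $N$ are pushed by $C^{n_0}$ into $\mathrm{Indet}(C^{n_0})$. The delicate point is to control this residual indeterminacy. I expect it can be handled by choosing a complement of $\mathrm{Indet}(C^{\infty})\cap N$ and exploiting Proposition \ref{PropStableKernelIndet}~(2) applied to $C$ (or to $C^{\#}$) to stabilize the process. This is the step I expect to need the most care.
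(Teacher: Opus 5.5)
\textbf{There is a genuine gap at the step you flagged.} Your Steps 1 and 2 are correct and match the paper's first step, which likewise builds the $\sigma$-linear automorphism $\overline{T}$ of $\mathrm{Dom}(B^{\infty})/\mathrm{Ker}(B^{\infty})$ via Proposition~\ref{PropStableKernelIndet}. One small point: triviality of $\mathrm{Indet}(\bar C)$ really uses $C(N)\subseteq N$, not only $\mathrm{Indet}(C)\subseteq N$. Once a section $s$ with $s(x)\xrightarrow{C}s(\overline{T}x)$ exists, your derivation of (1)--(3) is also the paper's. The inclusion $S\subseteq\mathrm{Im}(B^{\infty})$, which you omit, follows from $T^{-n}(x)\to\cdots\to T^{-1}(x)\to x$.

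The existence of $s$ is the heart of the theorem, and you do not prove it. Your iteration ``$s_{k+1}(x):=$ some $C$-image of $s_k(\overline{T}^{-1}x)$'' need not converge. The differences $d_k:=s_{k+1}-s_k$ satisfy $d_k(\overline{T}^{-1}x)\xrightarrow{C}d_{k+1}(x)$, so after $n_0$ steps they land in $\mathrm{Indet}(C^{n_0})\cap N$. But $s_k$ is compatible if and only if $d_k$ takes values in $\mathrm{Indet}(C)$.

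Even a fixed linear choice of images can cycle. Take $\sigma=\mathrm{id}$, $M=D$ with basis $u,n_1,n_2$, and $B=C$ spanned by $(u,u),(0,n_1),(n_1,n_2),(n_2,0)$. Then $N=\mathrm{Span}_K\{n_1,n_2\}$ and $\overline{T}=\mathrm{id}$, and $u+an_1+bn_2$ is compatible if and only if $a=b$. Choose images through $g(u)=u$, $g(n_1)=n_2$, $g(n_2)=n_1$; the graph of $g$ lies in $C$. Starting from $s_0([u])=u+n_1$, the iterates alternate between $u+n_1$ and $u+n_2$ forever. Here $\mathrm{Indet}(C^{\infty})\cap N=N$, so your suggested remedy via a complement of $\mathrm{Indet}(C^{\infty})\cap N$ and Proposition~\ref{PropStableKernelIndet}~(2) gives no leverage.

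\textbf{The missing idea} is to make the corrections along kernel chains that end exactly at $0$, using $N=\mathrm{Ker}(B^{n_0})$ rather than anything about indeterminacy.
\begin{enumerate}
\item Fix one linear section $\iota$ and write $\iota(x)\xrightarrow{B}\iota(\overline{T}x)+\delta_0(x)$, with $\delta_0$ a $\sigma$-linear map into $N$ defined on a basis $(e_i)$ and extended.
\item For each $i$, choose a chain $\delta_0(e_i)\to z^i_1\to\cdots\to z^i_{n_0-1}\to 0$.
\item Let $\delta_j$ be the $\sigma^{j+1}$-linear map with $\delta_j(e_i)=z^i_j$, and put $\delta_{n_0}:=0$.
\item Set $f:=\sum_{j=0}^{n_0-1}\delta_j\circ\overline{T}^{-j-1}$, which is linear.
\end{enumerate}
Telescoping gives $f(x)\to f(\overline{T}x)-\delta_0(x)$. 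Adding this to the arrow for $\iota$ gives $\iota(x)+f(x)\to\iota(\overline{T}x)+f(\overline{T}x)$, so $s:=\iota+f$ is the required section.

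In your language, the stage-$k$ correction must be the prescribed image of the stage-$(k-1)$ correction along these fixed chains, so that it vanishes after $n_0$ steps. In the example, starting from $\iota([u])=u+n_1$ with chosen image $u+n_2$, this gives the chain $n_2-n_1\to -n_2\to 0$ and hence $s([u])=u$.
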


\begin{proof}
We decompose the proof in several steps, following \cite{kraft:1975} and \cite{chai:kraft2025}.\\

\textbf{Step 1:} We prove that $B$ induces a $\sigma$-linear automorphism 
\begin{equation*}
    \overline{T}:\mathrm{Dom}(B^{\infty})/\mathrm{Ker}(B^{\infty}) \xrightarrow{\sim} \mathrm{Dom}(B^{\infty})/\mathrm{Ker}(B^{\infty}).
\end{equation*}
Let $x_0 \in \mathrm{Dom}(B^{\infty})$, and let $y$ be an image of $x_0$. By hypothesis, we can write $y=x_1+z$ where $x_1 \in \mathrm{Dom}(B^{\infty})$ and $z\in \mathrm{Indet}(B)$. It follows that $x_0\to x_1$, and we consider the associated coset $[x_1] \in \mathrm{Dom}(B^{\infty})/\mathrm{Ker}(B^{\infty})$. We claim that $[x_1]$ does not depend on the choice of $y$. That is, let $y' \in M$ be another image of $x_0$, and write $y' = x_1' + z'$ as above. It follows that $y-y' = (x_1-x_1') + (z-z') \in \mathrm{Indet}(B)$. By difference, we have 
\begin{equation*}
    x_1-x_1' \in \mathrm{Dom}(B^{\infty})\cap \mathrm{Indet}(B) \subseteq  \mathrm{Ker}(B^{\infty}),
\end{equation*}
where we used Proposition \ref{PropStableKernelIndet}. Thus $[x_1] = [x_1']$, so we have a well-defined map $x_0 \mapsto [x_1]$. It is easy to check that it factors through an injective $\sigma$-linear morphism
\begin{equation*}
    \overline{T}:\mathrm{Dom}(B^{\infty})/\mathrm{Ker}(B^{\infty}) \hookrightarrow \mathrm{Dom}(B^{\infty})/\mathrm{Ker}(B^{\infty}),
\end{equation*}
and surjectivity follows as $\mathrm{Dom}(B^{\infty})$ is finite dimensional by assumption.\\

\textbf{Step 2:} Fix a linear section $\iota:\mathrm{Dom}(B^{\infty})/\mathrm{Ker}(B^{\infty}) \to \mathrm{Dom}(B^{\infty})$ of the natural quotient map. We prove that there exists a linear map $f:\mathrm{Dom}(B^{\infty})/\mathrm{Ker}(B^{\infty}) \to \mathrm{Ker}(B^{\infty})$ such that 
\begin{equation*}
     \forall\, [x] \in \mathrm{Dom}(B^{\infty})/\mathrm{Ker}(B^{\infty}), \qquad \iota([x])+f([x]) \to \iota(\overline{T}([x])) + f(\overline{T}([x])).
\end{equation*}
Fix a $K$-basis $(e_1,\ldots,e_d)$ of $\mathrm{Dom}(B^{\infty})/\mathrm{Ker}(B^{\infty})$, and let $1 \leq i \leq d$. By construction, there exists $z_i \in \mathrm{Ker}(B^{\infty})$ such that $\iota(e_i) \to \iota(\overline{T}(e_i))+z_i$. Define a $\sigma$-linear map $\delta_0:\mathrm{Dom}(B^{\infty})/\mathrm{Ker}(B^{\infty})\to \mathrm{Ker}(B^{\infty})$ by setting $\delta_0(e_i) = z_i$ and extend it by $\sigma$-linearity.\\
Since $\mathrm{Ker}(B^{\infty}) = \bigcup_{n\geq 1}\mathrm{Ker}(B^n)$ is finite dimensional, there must be some $m \geq 1$ such that $\mathrm{Ker}(B^{\infty}) = \mathrm{Ker}(B^m)$. In particular, for every $1 \leq i \leq d$ we can find vectors $z^i_1,\ldots , z^i_{m-1}$ such that 
\begin{equation*}
    \delta_0(e_i)=z_i \to z^i_1 \to \cdots \to z^i_{m-1} \to 0.
\end{equation*}
For $1 \leq j \leq m-1$, define a $\sigma^{j+1}$-linear map 
\[ \delta_{j}:\mathrm{Dom}(B^{\infty})/\mathrm{Ker}(B^{\infty})\to \mathrm{Ker}(B^{\infty}), \quad e_i \mapsto z^i_j \] 
% by setting $\delta_{j}(e_i) = z^i_j$ 
and extend it by $\sigma^{j+1}$-linearity. In doing so, we have constructed a family of maps $(\delta_j)_{j\geq 1}$ from $\mathrm{Dom}(B^{\infty})/\mathrm{Ker}(B^{\infty})$ to $\mathrm{Ker}(B^{\infty})$ such that $\delta_j$ is $\sigma^{j+1}$-linear, and
\begin{align*}
\forall\, [x]\in \mathrm{Dom}(B^{\infty})/\mathrm{Ker}(B^{\infty}), & & \iota([x]) & \to \iota(\overline{T}([x])) + \delta_0([x]),\\
\forall\, 0 \leq j \leq m-1, & & \delta_j([x]) & \to \delta_{j+1}([x]),
\end{align*}
where we put $\delta_{m} := 0$. The desired linear map $f:\mathrm{Dom}(B^{\infty})/\mathrm{Ker}(B^{\infty}) \to \mathrm{Ker}(B^{\infty})$ is given by
\begin{equation*}
    f([x]) := \sum_{j=0}^{m-1} \delta_j(\overline{T}^{-j-1}([x])).
\end{equation*}
Indeed, we have 
\begin{equation*}
    f(\overline{T}([x])) - \delta_0([x]) = \sum_{j=0}^{m-2} \delta_{j+1}(\overline{T}^{-j-1}([x])).
\end{equation*}
By construction, for all $0 \leq j \leq m-2$ we have $\delta_j(\overline{T}^{-j-1}([x])) \to \delta_{j+1}(\overline{T}^{-j-1}([x]))$. Moreover we have $\delta_{m-1}(\overline{T}^{-m}([x])) \to 0$. By addition, it follows that 
\begin{equation*}
    \forall\, [x] \in \mathrm{Dom}(B^{\infty})/\mathrm{Ker}(B^{\infty}), \qquad f([x]) \to f(\overline{T}([x])) - \delta_0([x]).
\end{equation*}
Besides, we also have $\iota([x]) \to \iota(\overline{T}([x])) + \delta_0([x])$. By addition, it follows that 
\begin{equation*}
    \iota([x]) + f([x]) \to \iota(\overline{T}([x])) + f(\overline{T}([x])),
\end{equation*}
as desired.\\

\textbf{Step 3:} We define 
\[ S := (\iota+f)(\mathrm{Dom}(B^{\infty})/\mathrm{Ker}(B^{\infty})) \] 
and prove that it meets all the requirements.\\
It is clear that $S \subseteq  \mathrm{Dom}(B^{\infty})$. Besides, if $x \in S$, it is easy to check that $x = \iota([x]) + f([x])$. Then, by construction of $f$, for every $n\geq 1$ we have 
\begin{equation*}
   \hspace*{-2em} 
   \begin{split}
       \iota(\overline T^{-n}([x])) + f(\overline T^{-n}([x])) & \to \iota(\overline T^{-n+1}([x])) + f(\overline T^{-n+1}([x])) 
       \\ 
       & \to \cdots \to \iota(\overline T^{-1}([x])) + f(\overline T^{-1}([x])) \to x,
   \end{split}
\end{equation*}
from which it follows that $x \in \mathrm{Im}(B^n)$. Thus $S \subseteq  \mathrm{Dom}(B^{\infty}) \cap \mathrm{Im}(B^{\infty})$.\\
We prove (1). If $x \in S$, since $x = \iota([x]) + f([x])$ it is clear that $[x] = 0 \implies x = 0$, thus $S \cap \mathrm{Ker}(B^{\infty}) = \{0\}$. Now let $x \in \mathrm{Dom}(B^{\infty})$, and define $s := \iota([x]) + f([x]) \in S$. Since $[s] = [\iota([x])] = [x]$, it follows that $x = s + (x-s) \in S \oplus \mathrm{Ker}(B^{\infty})$ as desired.\\
We prove (2) and (3). According to (1), the quotient map $x\mapsto [x]$ induces a linear isomorphism $\varphi: S \xrightarrow{\sim} \mathrm{Dom}(B^{\infty})/\mathrm{Ker}(B^{\infty})$. We consider $T := \varphi^{-1}\circ\overline T \circ \varphi$, which is a $\sigma$-linear automorphism of $S$. By construction, we have $x \to T(x)$ for all $x \in S$, thus $\Gamma_T \subseteq  B_{|S}$.\\
Now consider $(x,x') \in B_{|\mathrm{Dom}(B^{\infty})}$. By (1), we can decompose $x = s + z$ and $x' = s'+z'$ where $s,s' \in S$ and $z,z' \in \mathrm{Ker}(B^{\infty})$. Since $T^{-1}(s') \to s'$, we have 
\begin{equation*}
    (s-T^{-1}(s'))+z \to z'.
\end{equation*}
Since $z' \in \mathrm{Ker}(B^{\infty})$, it follows that $(s-T^{-1}(s'))+z \in \mathrm{Ker}(B^{\infty})$, so that $s-T^{-1}(s') = 0$ by (1). Hence $s' = T(s)$, we have $s \to s'$ and thus $z \to z'$. This concludes the proof of (2) and (3).
\end{proof}

\begin{remark}
The hypothesis $B(\mathrm{Dom}(B^{\infty})) \subseteq  \mathrm{Dom}(B^{\infty}) + \mathrm{Inder}(B)$ is important, as the following counterexample illustrates. Let $M$ be freely generated by a vector $f$ and a family of vectors $(e_{i,j})_{1\leq j \leq i}$. Let $B$ be the $\sigma$-linear relation generated by $(f,e_{i,1})$ and by $(e_{i,j},e_{i,j+1})$ for all $i\geq 1$ and $1 \leq j \leq i-1$. 

\begin{tikzcd}[cramped,row sep = small]
&&&& \ldots & & & &\\
&&&& e_{n,1} \arrow[r] & e_{n,2} \arrow[r] & e_{n,3} \arrow[r] & {\cdots} \arrow[r] & e_{n,n} \\
f \arrow[dddrrrr] \arrow[ddrrrr] \arrow[drrrr] \arrow[rrrr] \arrow[urrrr] \arrow[uurrrr] &&&& \cdots & & & & \\
&&&& e_{3,1} \arrow[r] & e_{3,2} \arrow[r] & e_{3,3} & & \\
&&&& e_{2,1} \arrow[r] & e_{2,2} & & & \\
&&&& e_{1,1} & & & &
\end{tikzcd}

In this case, the sable domain $\mathrm{Dom}(B^{\infty}) = K\cdot f$ is one dimensional and the stable kernel and indeterminacy are trivial. However the restriction $B_{|K\cdot f} = \{0\}$ is clearly not a graph. Indeed, the weak decomposition does not apply since $B(\mathrm{Dom}(B^{\infty})) \not \subseteq  \mathrm{Dom}(B^{\infty})$.
\end{remark}

\begin{theorem}[Weak decomposition - Version 2]\label{WeakDecomposition2}
    Let $B$ be a $\sigma$-linear relation such that $M' := \mathrm{Dom}(B^{\infty}) + \mathrm{Im}(B^{\infty})$ is finite dimensional, and assume that there exists some $m\geq 1$ such that $\mathrm{Dom}(B^{\infty}) = \mathrm{Dom}(B^m)$. Let $N = \mathrm{Ker}(B^{\infty}) + \mathrm{Indet}(B^{\infty})$. There exist a subspace $S \subseteq  \mathrm{Dom}(B^{\infty}) \cap \mathrm{Im}(B^{\infty})$ and a $\sigma$-linear automorphism $T:S\xrightarrow{\sim} S$ such that 
    \begin{enumerate}
        \item $M' = S \oplus N$,
        \item $B_{|M'}= B_{|S} \oplus B_{|N}$,
        \item $B_{|S} = \Gamma_T$.
    \end{enumerate}
\end{theorem}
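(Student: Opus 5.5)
The plan is to deduce the statement from Theorem \ref{WeakDecomposition1} and then handle the image side by a direct argument.

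\textbf{Step 1 (hypotheses of Version 1).} We have $\mathrm{Indet}(B^{\infty}) \subseteq \mathrm{Im}(B^{\infty}) \subseteq M'$, so $\mathrm{Dom}(B^{\infty})$ and $\mathrm{Indet}(B^{\infty})$ are finite dimensional. Next we check $B(\mathrm{Dom}(B^{\infty})) \subseteq \mathrm{Dom}(B^{\infty}) + \mathrm{Indet}(B)$. Let $x \in \mathrm{Dom}(B^{\infty})$ and $x \to y$. Since $x \in \mathrm{Dom}(B^{m+1})$, there is a chain $x \to y' \to \cdots$ of length $m+1$, so $y' \in \mathrm{Dom}(B^m) = \mathrm{Dom}(B^{\infty})$. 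By difference $y - y' \in \mathrm{Indet}(B)$, which gives the inclusion. Theorem \ref{WeakDecomposition1} then provides $S \subseteq \mathrm{Dom}(B^{\infty}) \cap \mathrm{Im}(B^{\infty})$ and $T$ with $\mathrm{Dom}(B^{\infty}) = S \oplus \mathrm{Ker}(B^{\infty})$, $B_{|S} = \Gamma_T$, and $B_{|\mathrm{Dom}(B^\infty)} = B_{|S}\oplus B_{|\mathrm{Ker}(B^\infty)}$. I keep this same $S$.

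\textbf{Step 2 ($M' = S \oplus N$).} For $S \cap N = 0$, write $s = k + i$ with $k \in \mathrm{Ker}(B^{\infty})$ and $i \in \mathrm{Indet}(B^{\infty})$. Then $i = s - k \in \mathrm{Dom}(B^{\infty}) \cap \mathrm{Indet}(B^{\infty}) \subseteq \mathrm{Ker}(B^{\infty})$ by Proposition \ref{PropStableKernelIndet}(1). Hence $s \in S \cap \mathrm{Ker}(B^{\infty}) = 0$.

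For spanning it suffices to show $\mathrm{Im}(B^{\infty}) \subseteq S + \mathrm{Indet}(B^{\infty})$. As $\mathrm{Ker}(B^{\infty})$ is finite dimensional, it equals $\mathrm{Ker}(B^r)$ for some $r$. Set $L := \max(m, r)$ and take $y \in \mathrm{Im}(B^{2L})$, with a chain $x_0 \to \cdots \to x_{2L} = y$. Then $x_L \in \mathrm{Dom}(B^L) \subseteq \mathrm{Dom}(B^m) = \mathrm{Dom}(B^{\infty})$, so $x_L = s + k$ with $s \in S$ and $k \in \mathrm{Ker}(B^r)$.

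Now compare chains of length $L$ out of $x_L$. Along $S$ we have $s \to Ts \to \cdots \to T^L s$. From $k$ we can choose a chain reaching $0$ at step $r$ and continuing $0 \to 0$, so $(k, 0) \in B^L$. Subtracting from the given chain $x_L \to \cdots \to y$, we get $0 \xrightarrow{B^L} y - T^L s$. Hence $y \in S + \mathrm{Indet}(B^L) \subseteq S + \mathrm{Indet}(B^{\infty})$.

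\textbf{Step 3 (splitting of the relation).} Clearly $B_{|S} \oplus B_{|N} \subseteq B_{|M'}$, and $B_{|S} = \Gamma_T$ by Step 1. For the converse, take $(x, x') \in B_{|M'}$ and write $x = s + n$, $x' = s' + n'$. Since $T^{-1}(s') \to s'$, subtracting gives
\[
(s - T^{-1}s') + n \;\to\; n'.
\]
It is enough to prove that $u := s - T^{-1}s'$ vanishes. Then $s \to s'$, and by difference $n \to n'$, which is the splitting.

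\textbf{Main obstacle.} The hard part is showing that an element $u \in S$ with $u + n \to n'$, for some $n, n' \in N$, must be zero. The difficulty is that $n$ need not lie in $\mathrm{Dom}(B^{\infty})$, so Version 1 cannot be applied directly. My plan is to study the relation $\overline B$ induced by $B$ on $M'/N \cong S$. On one hand, I want $\mathrm{Indet}(B)$ to map into $N$; this holds since $\mathrm{Indet}(B) \subseteq \mathrm{Indet}(B^{\infty})$. On the other hand, I want to show that the class of $u$ is killed by some power of $\overline B$. For this I would iterate backwards: $n'$ decomposes into a $\mathrm{Ker}(B^{\infty})$ part and an $\mathrm{Indet}(B^{\infty})$ part, and I would try to push this through $T^{-1}$. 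The goal is to show that $u$ lies in $\mathrm{Dom}(B^{\infty}) \cap \mathrm{Ker}(B^{\infty}) + \mathrm{Indet}(B^{\infty})$, and then conclude with Step 2 that $u = 0$.

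If this direct route becomes messy, the fallback is a dimension argument on $M'/N$. The relation $\Gamma_T$ already induces a graph of a $\sigma$-linear automorphism there, so any additional pair $(\bar u, 0)$ with $\bar u \neq 0$ would have to be excluded. I would try to exclude it by combining $\mathrm{Dom}(B^{\infty}) = \mathrm{Dom}(B^m)$ with Proposition \ref{PropStableKernelIndet}(2), applied with the finiteness of $\mathrm{Ker}(B^{\infty})$.
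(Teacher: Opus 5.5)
Your Steps 1 and 2 are correct. Your spanning argument, which takes $L \geq \max(m,r)$ so that the $\mathrm{Ker}(B^{\infty})$-component dies, is a clean variant of the paper's. The paper instead keeps a leftover $z_m$ and absorbs it into $\mathrm{Indet}(B^{\infty})$ using Proposition~\ref{PropStableKernelIndet}(2).

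\textbf{The gap is Step 3.} The splitting $B_{|M'} = B_{|S}\oplus B_{|N}$ is the real content of the statement, and you leave it open. The plan you sketch also aims at the wrong condition. On $M'/N$, the pair $(u+n,n')$ already gives $(\bar u,0)\in\overline B$, so looking at powers of $\overline B$ gains nothing. What you need is that $\overline B$ has zero kernel, or zero indeterminacy. The indeterminacy of $\overline B$ is the image of $B(N)\cap M'$, not of $\mathrm{Indet}(B)$, so knowing $\mathrm{Indet}(B)\subseteq N$ is not enough. Your fallback dimension count is not an argument as it stands either.

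\textbf{The missing idea is that $N$ is stable under $B$ in both directions.} Each direction takes one line.

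The paper uses $B(N)\subseteq N$. Suppose $k+i\to y$ with $k\in\mathrm{Ker}(B^{\infty})$ and $i\in\mathrm{Indet}(B^{\infty})$. Choose $k\to k_1$ with $k_1\in\mathrm{Ker}(B^{\infty})$. Then $i\to y-k_1$, so $y-k_1\in\mathrm{Indet}(B^{\infty})$. The paper then subtracts $s\to Ts$ rather than $T^{-1}s'\to s'$. This gives $n\to (s'-Ts)+n'$, hence $(s'-Ts)+n'\in N$, hence $s'-Ts\in S\cap N=\{0\}$.

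If you prefer your own orientation, the dual statement $B^{-1}(N)\subseteq N$ works just as well. Write $n'=k'+i'$ and let $a$ be the penultimate term of a chain $0\to\cdots\to i'$. Then $a\in\mathrm{Indet}(B^{\infty})$ and $a\to i'$. It follows that $u+n-a\to k'$, which forces $u+n-a\in\mathrm{Ker}(B^{\infty})$. Hence $u\in S\cap N=\{0\}$ by your Step 2.

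Either way, no iteration, no dimension count and no further use of $\mathrm{Dom}(B^{\infty})=\mathrm{Dom}(B^m)$ is needed.
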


\begin{proof}
Let us check that the hypotheses of Theorem \ref{WeakDecomposition1} are satisfied. Clearly $\mathrm{Dom}(B^{\infty})$ and $\mathrm{Indet}(B^{\infty})$ are finite dimensional. Then, let $y \in B(\mathrm{Dom}(B^{\infty}))$ so that there exists some $x_0 \in \mathrm{Dom}(B^{\infty})$ with $x_0\to y$. We can find vectors $x_1,\ldots, x_{m+1}$ such that 
\begin{equation*}
    x_0 \to x_1 \to x_2 \to \cdots \to x_{m+1}.
\end{equation*}
It follows that $x_1 \in \mathrm{Dom}(B^m) = \mathrm{Dom}(B^{\infty})$ and $y-x_1 \in \mathrm{Indet}(B)$. Thus $B(\mathrm{Dom}(B^{\infty})) \subseteq  \mathrm{Dom}(B^{\infty}) + \mathrm{Indet}(B)$ as desired.\\
Thus, we can apply the Version 1 of the weak decomposition, giving rise to a subspace $S$ together with a $\sigma$-linear automorphism $T$ satisfying (1), (2) and (3) of Theorem \ref{WeakDecomposition1}. We will prove the Version 2 by using the same subspace $S$. Note that (3) already follows from the Version 1.\\
We prove (1). Let $x \in S \cap N$, where $N$ is as in the statement. Write $x = z_1+z_2$ where $z_1 \in \mathrm{Ker}(B^{\infty})$ and $z_2 \in \mathrm{Indet}(B^{\infty})$. It follows that 
\begin{equation*}
    z_2 = x-z_1 \in \mathrm{Dom}(B^{\infty})\cap \mathrm{Indet}(B^{\infty}) \subseteq  \mathrm{Ker}(B^{\infty}),
\end{equation*}
where we used Proposition \ref{PropStableKernelIndet}. Thus $x \in S \cap \mathrm{Ker}(B^{\infty}) = \{0\}$. Therefore we have $S \cap N = \{0\}$.\\
Now let $y \in \mathrm{Im}(B^{\infty})$. We can find vectors $x_0,\ldots ,x_{m-1}$ such that 
\begin{equation*}
    x_0 \to x_1 \to \cdots \to x_{m-1} \to y.
\end{equation*}
It follows that $x_0 \in \mathrm{Dom}(B^{m}) = \mathrm{Dom}(B^{\infty})$. Let $s_0 := \iota([x_0]) + f([x_0]) \in S$, so that $x_0 = s_0 + z_0$ for some $z_0 \in \mathrm{Ker}(B^{\infty})$. We can find vectors $z_1,\ldots, z_r$ for some $r\geq 0$ such that 
\begin{equation*}
    z_0 \to z_1 \to \cdots \to z_r \to 0.
\end{equation*}
Up to adding zeroes at the end of this sequence, we can assume $r\geq m$. Thus, we have $x_0 \xrightarrow{B^m} y$ and $x_0 \xrightarrow{B^m} T^m(s_0)+z_m$. It follows that $t:= y - T^m(s_0) - z_m \in \mathrm{Indet}(B^m) \subseteq  \mathrm{Indet}(B^{\infty}).$ Moreover, we have 
\begin{equation*}
    z_m = y - T^m(s_0) - t \in \mathrm{Im}(B^{\infty}) \cap \mathrm{Ker}(B^{\infty}) \subseteq  \mathrm{Indet}(B^{\infty}),
\end{equation*}
where we used Proposition \ref{PropStableKernelIndet}. All in all, we have $y = T^m(s_0) + t + z_m$, which proves that $\mathrm{Im}(B^{\infty}) = S\oplus \mathrm{Indet}(B^{\infty})$. It follows that 
\begin{equation*}
    M' = \mathrm{Dom}(B^{\infty}) + \mathrm{Im}(B^{\infty}) = S \oplus N.
\end{equation*}
We prove (2). First observe that $B(N) \subseteq  N$. Indeed, let $y \in B(N)$ so that we have $x \to y$ for some $x \in N$. Write $x=x_1 + x_2$ where $x_1 \in \mathrm{Ker}(B^{\infty})$ and $x_2 \in \mathrm{Indet}(B^{\infty})$. There exists some $y_1 \in \mathrm{Ker}(B^{\infty})$ such that $x_1 \to y_1$. It follows that $x_2 \to y - y_1$, which implies that $y-y_1 \in \mathrm{Indet}(B^{\infty})$. Thus $y \in N$.\\
Now let $(x,y) \in B_{|M'}$. We can decompose $x = s + n$ and $y = s' + n'$ where $s,s' \in S$ and $n,n' \in N$. Since $s \to T(s)$, we have $n \to (s'-T(s)) + n'$. Thus, $(s'-T(s)) + n' \in B(N) \subseteq  N$, which implies that $s' = T(s)$. Eventually it follows that $n\to n'$, which concludes the proof. 
\end{proof}

\begin{remark}
    The hypothesis of Theorem \ref{WeakDecomposition2} is satisfied in particular when $\mathrm{Dom}(B^n)$ is finite dimensional for some $n\geq 1$. Indeed, the domains of successive powers of $B$ form a descending chain, which must stabilize if it becomes finite dimensional at some point. A fortiori, Theorem \ref{WeakDecomposition2} is true unconditionally if $M$ has finite dimension, but in which case Theorem \ref{StrongDecomposition} below is stronger.
\end{remark}

For sake of completeness, we recall the main result of \cite{towber:1971} generalized to $\sigma$-linear relations, without proof. Given $n\geq 1$ we define four $\sigma$-linear relations on $K^n$ equipped with its canonical basis $(e_i)_{1\leq i \leq n}$:
\begin{enumerate}
    \item $T(n)$ is generated by $(e_i,e_{i+1})$ for $1 \leq i \leq n-1$,
    \item $T^+(n)$ is generated by $(e_i,e_{i+1})$ for $1 \leq i \leq n-1$ and by $(e_n,0)$,
    \item ${}^+T(n)$ is generated by $(e_i,e_{i+1})$ for $1 \leq i \leq n-1$ and by $(0,e_1)$,
    \item ${}^+T^+(n)$ is generated by $(e_i,e_{i+1})$ for $1 \leq i \leq n-1$, by $(0,e_1)$ and by $(e_n,0)$.
\end{enumerate}

\begin{tikzcd}
e_1 \arrow[r,"T(n)"] & {\cdots} \arrow[r,"T(n)"] & e_n & & & 0 \arrow[r,"{}^+T(n)"] & e_1 \arrow[r,"{}^+T(n)"] & \cdots \arrow[r,"{}^+T(n)"] & e_n & \\
e_1 \arrow[r,"T^+(n)"] & \cdots \arrow[r,"T^+(n)"] & e_n \arrow[r,"T^+(n)"] & 0 & & 0 \arrow[r,"{}^+T^+(n)"] & e_1 \arrow[r,"{}^+T^+(n)"] & \cdots \arrow[r,"{}^+T^+(n)"] & e_n \arrow[r,"{}^+T^+(n)"] & 0
\end{tikzcd}

\begin{theorem}[Strong decomposition]\label{StrongDecomposition}
    Assume that $M$ is finite dimensional, and let $B$ be a $\sigma$-linear relation. There exists two subspaces $S,N \subseteq  M$, a $\sigma$-linear automorphism $T:S \xrightarrow{\sim} S$, a basis $(e_1,\ldots ,e_n)$ of $N$ where $n := \dim(N)$, and a partition $n = n_1 + \cdots + n_r$ for some $r\geq 0$ and some positive integers $n_i$'s, such that
    \begin{enumerate}
        \item $M = S \oplus N$,
        \item we have 
        \begin{equation*}
            B = B_{|S} \oplus \bigoplus_{1\leq i \leq r} B_{|N_i},
        \end{equation*}
        where $N_i := \mathrm{Span}_K\{{e_{n_1+\cdots+n_{i-1}+1}, \ldots , e_{n_1+\cdots+n_{i}}}\}$ for $1\leq i \leq r$, 
        \item $B_{|S} = \Gamma_T$,
        \item for $1 \leq i \leq r$ and after identifying $N_i$ with $K^{n_{i}}$ via the specified basis, the relation $B\cap (N_i\oplus N_i)$ is identified with one of $T(n_i), T^+(n_i), {}^+T(n_i)$ or ${}^+T^+(n_i)$.
    \end{enumerate}
\end{theorem}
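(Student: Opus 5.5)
The plan is to split off the ``automorphism part'' using the weak decomposition, and then decompose the remaining ``nilpotent part'' into strings by induction on dimension.

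\textbf{Step 1 (splitting off $S$).} Since $M$ is finite dimensional, Theorem \ref{WeakDecomposition2} applies. Put $M' = \mathrm{Dom}(B^{\infty})+\mathrm{Im}(B^{\infty})$ and $N' = \mathrm{Ker}(B^{\infty})+\mathrm{Indet}(B^{\infty})$. The theorem gives $S$ and $T$ with $M' = S\oplus N'$ and $B_{|M'} = \Gamma_T\oplus B_{|N'}$.

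We then need a complement $N\supseteq N'$ of $S$ in all of $M$ with $B = \Gamma_T\oplus B_{|N}$. To build it, project $B$ onto $S$ along a suitable complement. Any $x\in M$ with $x\to y$ should have its $S$-component determined compatibly, namely $T$ applied to the $S$-part of $x$.

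Concretely, we look for a $K$-linear retraction $\pi: M\to S$ with $\pi_{|S} = \mathrm{id}$ such that $x\to y$ implies $\pi(y) = T(\pi(x))$, and set $N = \ker\pi$. Once $\pi$ exists, $(x,y) = (\pi x, T\pi x) + (x-\pi x, y-\pi y)$ exhibits $B = \Gamma_T\oplus B_{|N}$.

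To construct $\pi$, first define it on $M'$ (projection along $N'$). Then extend it step by step along the filtrations $\mathrm{Dom}(B^k)$ and $\mathrm{Im}(B^k)$, using that $T$ is invertible. This lets us define $\pi$ on vectors outside $\mathrm{Dom}(B^{\infty})$ or $\mathrm{Im}(B^{\infty})$ by transporting along chains. This is analogous to Step 2 of Theorem \ref{WeakDecomposition1}, with maps $\delta_j$ telescoped against powers of $T^{\pm 1}$. Well-definedness is the key check: the ambiguity lies in $\mathrm{Ker}$ or $\mathrm{Indet}$ of powers of $B$, and $\pi$ must kill these. That holds on $M'$ by Theorem \ref{WeakDecomposition2}.

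\textbf{Step 2 (the nilpotent part).} Now $B_{|N}$ satisfies $\mathrm{Dom}(B^{\infty})\cap\mathrm{Im}(B^{\infty})$ having no automorphism part. By the weak decomposition this means $\mathrm{Dom}(B^{\infty}) = \mathrm{Ker}(B^{\infty})$ and $\mathrm{Im}(B^{\infty}) = \mathrm{Indet}(B^{\infty})$, so every chain is finite in some sense.

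Induct on $\dim N$. Choose a longest chain $x_1\to x_2\to\cdots\to x_n$ of nonzero vectors, where ``longest'' is measured by the depth pair (level in the filtration by $\mathrm{Dom}(B^k)$, level in $\mathrm{Im}(B^k)$). Take its ends as extreme as possible: $x_1$ either in $\mathrm{Indet}(B)$ or of maximal image-depth, and $x_n$ either in $\mathrm{Ker}(B)$ or of maximal domain-depth. Its span $N_1$ carries one of $T(n)$, $T^+(n)$, ${}^+T(n)$ or ${}^+T^+(n)$.

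Next, find a complement $N''$ of $N_1$ in $N$ with $B_{|N} = B_{|N_1}\oplus B_{|N''}$. As in Step 1, this amounts to a retraction $N\to N_1$ compatible with $B$. It is built by the standard ``maximal string splits off'' argument from the theory of string modules, taking linear functionals adapted to the chosen flags of $\mathrm{Dom}(B^k)$, $\mathrm{Im}(B^k)$, $\mathrm{Ker}(B^k)$ and $\mathrm{Indet}(B^k)$. Then apply induction to $N''$.

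\textbf{Main obstacle.} The hard step is the splitting in Step 2, that is, checking that a suitably extremal string is a direct summand. I would first try to reduce it to the case of a linear map. The idea is to view $B$ as a representation of the $\tilde A$-type (Kronecker-like) quiver $M\leftarrow B\rightarrow M$ twisted by $\sigma$, and to adapt Towber's argument verbatim. Taking $\sigma$-semilinearity into account only requires applying $\sigma^{\pm k}$ to scalars at the $k$-th position along chains, exactly as in Proposition \ref{PropStableKernelIndet}.
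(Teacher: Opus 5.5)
The paper does not prove this theorem; it states it and refers to Towber's induction on $\dim M$. So your outline has to stand on its own. As written, both substantive steps are asserted rather than proved, and the mechanism you propose for the second one does not work.

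\textbf{Step 1.} The reduction is correct. A $K$-linear retraction $\pi\colon M\to S$ with $\pi(y)=T(\pi(x))$ whenever $x\xrightarrow{B}y$ does give $M=S\oplus\ker\pi$ and $B=\Gamma_T\oplus B_{|\ker\pi}$. The problem is producing $\pi$, and that is not a well-definedness check. Chains starting outside $M'$ need not ever reach $M'$: on a $T(n)$ summand, the value $\pi(e_1)\in S$ is a free choice. So ``transporting along chains'' does not define $\pi$, and you face a global solvability problem for a system of constraints.

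One way to settle it is by duality. Apply Theorem \ref{WeakDecomposition2} to the $\sigma^{-1}$-linear relation
$B^{\dagger}:=\{(\eta,\xi)\in M^*\oplus M^* : \xi(x)=\sigma^{-1}(\eta(y)) \text{ for all } x\xrightarrow{B}y\}$.
One checks that $\mathrm{Dom}(B^\dagger)=\mathrm{Indet}(B)^\perp$, $\mathrm{Ker}(B^\dagger)=\mathrm{Im}(B)^\perp$ and $(B^\dagger)^k=(B^k)^\dagger$. Hence the automorphism part $S^\vee$ of $B^\dagger$ lies in $\mathrm{Indet}(B^\infty)^\perp$, meets $\mathrm{Im}(B^\infty)^\perp$ trivially, and has dimension $\dim S$. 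Since $\mathrm{Im}(B^\infty)=S\oplus\mathrm{Indet}(B^\infty)$ (shown in the proof of Theorem \ref{WeakDecomposition2}), $S^\vee$ pairs perfectly with $S$. Then $N:=(S^\vee)^\perp$ is the complement you want.

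\textbf{Step 2.} This is where the theorem actually lives, and here the proposal breaks down. Your deduction that $\mathrm{Dom}(B_{|N}^\infty)=\mathrm{Ker}(B_{|N}^\infty)$ and $\mathrm{Im}(B_{|N}^\infty)=\mathrm{Indet}(B_{|N}^\infty)$ is fine. But it does not make chains finite, and a longest extremal chain need not be a string.

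Take $B=T^+(2)\oplus{}^+T(2)$ on $\mathrm{Span}_K\{e_1,e_2,f_1,f_2\}$. Since $(e_2,f_1)=(e_2,0)+(0,f_1)\in B$, the chain $e_1\to e_2\to f_1\to f_2$ consists of independent vectors. It cannot be extended in either direction, because $e_1\notin\mathrm{Im}(B)$ and $f_2\notin\mathrm{Dom}(B)$, and it has maximal length. Yet its span is all of $M$, and $B\cap(N_1\oplus N_1)=B$ is none of the four model relations.

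For $B={}^+T^+(n)$ it is worse. Here $(e_n,e_1)\in B$ and $\mathrm{Dom}(B)=\mathrm{Im}(B)=M$. So every vector has infinite depth in both filtrations you propose to use, and there are arbitrarily long chains of nonzero vectors. The chain $e_1\to\cdots\to e_n\to e_1\to\cdots\to e_n$ satisfies your endpoint conditions ($e_1\in\mathrm{Indet}(B)$, $e_n\in\mathrm{Ker}(B)$) while being linearly dependent.

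In both examples the chain crosses a junction: some $x_j\in\mathrm{Ker}(B)$ is followed by $x_{j+1}\in\mathrm{Indet}(B)$. What is missing is precisely the core of Towber's argument:
\begin{itemize}
\item a selection rule that forbids such crossings, which requires working modulo $\mathrm{Ker}(B^k)$ and $\mathrm{Indet}(B^k)$, not just with $\mathrm{Dom}(B^k)$ and $\mathrm{Im}(B^k)$;
\item a proof that the chosen vectors are independent;
\item a proof that $B\cap(N_1\oplus N_1)$ is exactly one of $T(n),T^+(n),{}^+T(n),{}^+T^+(n)$;
\item a proof that a $B$-compatible complement $N''$ exists.
\end{itemize}
Deferring this to ``the standard argument'' or to ``Towber verbatim'' leaves you with an outline of the statement, not a proof.
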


The proof of \cite{towber:1971} is by induction on $\dim(M)$, and as such is quite different from the proofs of the weak decomposition Theorems given above. 

\begin{remark}
    We have not discussed uniqueness of the decomposition theorems stated above. In fact, contrary to the more familiar case where $B$ is a graph, the subspace $S$ need not be unique in general. However the ``isomorphism class'' of $(S,\Gamma_T)$ is determined by $B$. This is proved in \cite{towber:1971}, and can be applied in the context of Theorems \ref{WeakDecomposition1} and \ref{WeakDecomposition2} after restricting $B$ to the finite dimensional subspace $\mathrm{Dom}(B^{\infty}) + \mathrm{Indet}(B^{\infty})$ or $\mathrm{Dom}(B^{\infty})+\mathrm{Im}(B^{\infty})$ respectively. Moreover, in Theorem \ref{StrongDecomposition}, the numbers of copies of each of the four kinds of $\sigma$-linear relations $T(n_i), T^+(n_i), {}^+T(n_i)$ and ${}^+T^+(n_i)$ occurring in $B$, are entirely determined by $B$.
\end{remark}

\section{Kraft quivers and $(\sigma,\tau)$-linear representations} \label{Section2}

Kraft quivers have been used in a systematic way in \cite{kraft:1975}, but the idea can already be found in \cite{gelfand-ponomarev:1968} through the notion of ``open/closed sequences'' (see paragraph 4 and 5 of Chapter II). They are also an incarnation of of the notions of ``strings'' and ``bands'', which are used to classify indecomposable modules over (semilinear) string algebras. All the definitions and results in this section are taken from \cite{chai:kraft2025} Sections 3 and 4, although they might sometimes be spelled out differently here. Throughout this section, we consider three independent symbols $F, V$ and $V^{\#}$. Later, these will be interpreted as the relations associated to a $\sigma$-linear operator $F$ and a $\tau$-linear operator $V$. This labelling is inspired by Dieudonné theory, where $F$ is the Frobenius and $V$ is the Verschiebung.

\subsection{Kraft quivers}

Let $\mathcal A$ be a set, called the \textit{alphabet}. 

\begin{definition}
A \textit{finite directed graph labeled by $\mathcal A$} is a tuple $\Gamma = (\mathcal V, \mathcal E, \mathrm{label})$ where:
\begin{itemize}
    \item $\mathcal V$ is a finite set,
    \item $\mathcal E$ is a subset of $\mathcal V \times \mathcal V$,
    \item $\mathrm{label}:\mathcal E \to \mathcal A$ is a function.
\end{itemize}
The set $\mathcal V$ consists of the \textit{vertices} of $\Gamma$, and the set $\mathcal E$ consists of the \textit{directed edges} or \textit{arrows} of $\Gamma$. If $E = (v_1,v_2) \in \mathcal E$, we think of $v_1$ as the \textit{tail} of $E$ and $v_2$ as its \textit{head}. We also define two functions $\mathrm{tail}, \mathrm{head}: \mathcal E \to \mathcal V$ such that $\mathrm{tail}(E) = v_1$ and $\mathrm{head}(E) = v_2$ for all arrows $E = (v_1,v_2) \in \mathcal E$. We say that two vertices $v_1,v_2 \in \mathcal V$ are \textit{connected} if $(v_1,v_2)$ or $(v_2,v_1)$ is in $\mathcal E$.
\end{definition} 

Note that we allow graphs to have loops, that is, arrows of the form $(v,v)$ are allowed. 

\begin{definition}
If $\Gamma_i = (\mathcal V_i, \mathcal E_i, \mathrm{label}_i)$ for $i=1,\ldots , n$ are finite directed graphs labeled by $\mathcal A$, their \textit{disjoint union graph} 
\begin{equation*}
    \Gamma := \Gamma_1 \sqcup \cdots \sqcup \Gamma_n,
\end{equation*}
is the graph $\Gamma = (\mathcal V, \mathcal E, \mathrm{label})$ where 
\begin{itemize}
    \item $\mathcal V = \bigsqcup_{i=1}^n \mathcal V_i$,
    \item $\mathcal E = \bigsqcup_{i=1}^n \mathcal E_i$,
    \item $\mathrm{label}:\mathcal E \to \mathcal A$ is determined by $\mathrm{label}(E) := \mathrm{label}_i(E)$ whenever $E \in \mathcal E_i \subseteq  \mathcal E$.
\end{itemize}
\end{definition}

\begin{definition}
We say that $\Gamma$ is \textit{connected} if the underlying undirected graph is connected. In other words, it is connected if for any vertices $v,v' \in \mathcal V$, there exists a sequence of edges $E_1,\ldots , E_k \in \mathcal E$ and a sequence of vertices $v = v_1, v_2,\ldots , v_{k+1} = v'$ such that, for all $1 \leq i \leq k$, $v_i$ and $v_{i+1}$ are connected via $E_i$.
\end{definition}

Clearly, any finite directed graph $\Gamma$ labeled by $\mathcal A$ decomposes uniquely (up to ordering) as a disjoint union of connected directed graphs labeled by $\mathcal A$. These connected graphs are called the \textit{connected components} of $\Gamma$.

\begin{definition}
    For $i=1,2$, let $\Gamma_i = (\mathcal V_i, \mathcal E_i, \mathrm{label}_i)$ be two finite directed graphs labeled by $\mathcal A$. A \textit{morphism of graphs} $f:\Gamma_1 \to \Gamma_2$ is a map $\mathcal V_1 \to \mathcal V_2$, which is also denoted by $f$, such that
    \begin{align*}
        \forall\, (v_1,v_2) \in \mathcal E_1, & & (f(v_1),f(v_2)) \in \mathcal E_2 \quad \text{ and } \quad \mathrm{label}_1((v_1,v_2)) = \mathrm{label}_2(f(v_1),f(v_2)).
    \end{align*}
\end{definition}
By the first point, $f$ induces a map $\mathcal E_1 \to \mathcal E_2$ by sending an arrow $E = (v_1,v_2)$ to $f(E) := (f(v_1),f(v_2))$. The second condition can then be rewritten as $\mathrm{label}_1(E) = \mathrm{label}_2(f(E))$.\\
If a graph morphism $f$ defines a bijection between the sets of vertices \textit{and} between the sets of arrows, then the inverse map $f^{-1}:\mathcal V_2 \to \mathcal V_1$ induces a graph morphism $f^{-1}:\Gamma_2 \to \Gamma_1$. We say that $f$ is a \textit{graph isomorphism}.

\begin{notation}
    From now on, we will only consider the alphabets $\mathcal A := \{F,V\}$ and $\mathcal A^{\#} := \{F,V^{\#}\}$. If $\mathrm{label}(E) = F$, we say that $E$ is an $F$-arrow (and similarly for $V$ and $V^{\#}$).
\end{notation}

\begin{definition}\label{DefinitionConverseGraph}
    Given a directed diagram $\Gamma= (\mathcal V, \mathcal E, \mathrm{label})$ labeled by $\mathcal A$, its \textit{converse graph} $\Gamma^{\#} = (\mathcal V, \mathcal E^{\#}, \mathrm{label}^{\#})$ is the directed graph labeled by $\mathcal A^{\#} = \{F,V^{\#}\}$ which shares the same set of vertices as $\Gamma$, and such that:
    \begin{itemize}
        \item $\mathcal E^{\#} = \mathcal E^{\#}_F \sqcup \mathcal E^{\#}_{V}$ where 
        \begin{align*}
            \mathcal E^{\#}_F & = \{(v_1,v_2) \,|\, \forall\, E = (v_1,v_2) \in \mathcal E \text{ such that } \mathrm{label}(E) = F\},\\
            \mathcal E^{\#}_{V} &= \{(v_2,v_1) \,|\, \forall\, E = (v_1,v_2) \in \mathcal E \text{ such that } \mathrm{label}(E) = V\},
        \end{align*}
        \item for all $E \in \mathcal E^{\#}$, 
        \begin{equation*}
            \mathrm{label}^{\#}(E) = \begin{cases}
                F & \text{if } E \in \mathcal E^{\#}_F;\\
                V^{\#} & \text{if } E \in \mathcal E^{\#}_V.
            \end{cases}
        \end{equation*}
    \end{itemize}
\end{definition}

In other words, $\Gamma^{\#}$ is obtained from $\Gamma$ by reversing all the $V$-arrows, and relabel them with the symbol $V^{\#}$ instead. Note that $\Gamma$ is entirely determined by its converse $\Gamma^{\#}$.

\begin{definition}\label{DefinitionOppositeGraph}
    Given a directed graph $\Gamma= (\mathcal V, \mathcal E, \mathrm{label})$ labeled by $\mathcal A$, its \textit{opposite graph} $\Gamma^{\mathrm{op}} = (\mathcal V, \mathcal E^{\mathrm{op}}, \mathrm{label}^{\mathrm{op}})$ is given by 
    \begin{itemize}
        \item $\mathcal E^{\mathrm{op}} = \{(v_2,v_1) \,|\, (v_1,v_2) \in \mathcal E\}$,
        \item $\mathrm{label}^{\mathrm{op}}((v_2,v_1)) = \mathrm{label}((v_1,v_2))$ for all $(v_2,v_1) \in \mathcal E^{\mathrm{op}}$.
    \end{itemize}
\end{definition}

In other words, $\Gamma^{\mathrm{op}}$ is obtained from $\Gamma$ by reversing all the arrows without changing the labels.

\begin{definition}\label{DefinitionKraftQuiver}
    A \textit{Kraft quiver} is a finite directed graph $\Gamma= (\mathcal V, \mathcal E, \mathrm{label})$ labeled by $\{F,V\}$ such that: 
    \begin{enumerate}
        \item each vertex $v \in \mathcal V$ is the tail or the head of at most $2$ arrows,
        \item two distinct arrows with the same label have distinct heads and distinct tails, 
        \item the head of any $F$-arrow is not the tail of any $V$-arrow, and the tail of any $F$-arrow is not the head of any $V$-arrow.
    \end{enumerate}
\end{definition}

Clearly, the connected components of a Kraft quiver are again Kraft quivers. Moreover $\Gamma$ is a Kraft quiver if and only if $\Gamma^{\mathrm{op}}$ is.

\begin{figure} 
\centering
\begin{tikzpicture}[
       decoration = {markings,
                     mark=at position .5 with {\arrow{Stealth[length=2mm]}}},
       dot/.style = {circle, fill, inner sep=2.4pt, node contents={},
                     label=#1},
every edge/.style = {draw, postaction=decorate}
                        ]

\node (e1) at (0,4) [dot];
\node (e2) at (2,4) [dot];
\node (e3) at (4,4) [dot];
\node (e4) at (4,2) [dot];
\node (e5) at (6,2) [dot];
\node (e6) at (6,0) [dot];

\node (f1) at (11,4) [dot];
\node (f2) at (10,2) [dot];
\node (f3) at (10,0) [dot];
\node (f4) at (12,0) [dot];
\node (f5) at (12,2) [dot];

\path (e1) edge node[above] {$F$} (e2);
\path (e2) edge node[above] {$F$} (e3);
\path (e4) edge node[right] {$V$} (e3);
\path (e4) edge node[above] {$F$} (e5);
\path (e6) edge node[right] {$V$} (e5);

\path (f1) edge node[above,right] {$F$} (f5);
\path (f4) edge node[right] {$V$} (f5);
\path (f3) edge node[below] {$V$} (f4);
\path (f3) edge node[left] {$F$} (f2);
\path (f1) edge node[above,left] {$V$} (f2);
\end{tikzpicture}
\caption{Two examples of Kraft quivers.}
\label{Figure1}
\end{figure}

\begin{figure} 
\centering
\begin{tikzpicture}[
       decoration = {markings,
                     mark=at position .5 with {\arrow{Stealth[length=2mm]}}},
       dot/.style = {circle, fill, inner sep=2.4pt, node contents={},
                     label=#1},
every edge/.style = {draw, postaction=decorate}
                        ]

\node (e1) at (0,0) [dot];
\node (e2) at (2,0) [dot];
\node (e3) at (4,0) [dot];
\node (e4) at (2,1.5) [dot];

\path (e1) edge node[below] {$F$} (e2);
\path (e3) edge node[below] {$V$} (e2);
\path (e4) edge node[above] {$V$} (e3);
\path (e4) edge node[above] {$F$} (e1);
\path (e4) edge [loop] node[right] {$F$} (e4);
\end{tikzpicture}
\caption{A directed graph labeled by $\{F,V\}$ which is not a Kraft quiver.}
\label{Figure2}
\end{figure}

\begin{figure}
\centering
\begin{tikzpicture}[
       decoration = {markings,
                     mark=at position .5 with {\arrow{Stealth[length=2mm]}}},
       dot/.style = {circle, fill, inner sep=2.4pt, node contents={},
                     label=#1},
every edge/.style = {draw, postaction=decorate}
                        ]

\node (e1) at (0,4) [dot];
\node (e2) at (2,4) [dot];
\node (e3) at (4,4) [dot];
\node (e4) at (4,2) [dot];
\node (e5) at (6,2) [dot];
\node (e6) at (6,0) [dot];

\node (f1) at (11,4) [dot];
\node (f2) at (10,2) [dot];
\node (f3) at (10,0) [dot];
\node (f4) at (12,0) [dot];
\node (f5) at (12,2) [dot];

\path (e1) edge node[above] {$F$} (e2);
\path (e2) edge node[above] {$F$} (e3);
\path (e3) edge node[right] {$V^{\#}$} (e4);
\path (e4) edge node[above] {$F$} (e5);
\path (e5) edge node[right] {$V^{\#}$} (e6);

\path (f1) edge node[above,right] {$F$} (f5);
\path (f5) edge node[right] {$V^{\#}$} (f4);
\path (f4) edge node[below] {$V^{\#}$} (f3);
\path (f3) edge node[left] {$F$} (f2);
\path (f2) edge node[above,left] {$V^{\#}$} (f1);
\end{tikzpicture}
\caption{The converse graphs of the Kraft quivers of Figure \ref{Figure1}.}
\label{Figure3}
\end{figure}

Connected Kraft quivers fall into two disjoint categories, the linear ones and the circular ones. 

\begin{definition}\label{DefLinCirc}
    Let $\Gamma= (\mathcal V, \mathcal E, \mathrm{label})$ be a Kraft quiver, and write $n = \#\mathcal V$. 
    \begin{enumerate}
        \item We say that $\Gamma$ is a \textit{connected linear Kraft quiver} if there exists an ordering $v_1,\ldots , v_n$ of its vertices such that 
        \begin{equation*}
            \mathcal E^{\#} = \{(v_i,v_{i+1}), 1 \leq i \leq n-1\}.
        \end{equation*}
        \item We say that $\Gamma$ is a \textit{connected circular Kraft quiver} if there exists an ordering $v_1,\ldots , v_n$ of its vertices such that 
        \begin{equation*}
            \mathcal E^{\#} = \{(v_i,v_{i+1}), 1 \leq i \leq n\},
        \end{equation*}
        where $v_{n+1} := v_1$.
    \end{enumerate}
    We say that a Kraft quiver $\Gamma$ is \textit{linear} (resp.~\textit{circular}) if all its connected components are connected linear (resp.~connected circular) Kraft quivers.
\end{definition}

\begin{remark}
    A Kraft quiver is both linear and circular if and only if it is empty, that is, its set of vertices is empty.\\
    A Kraft quiver with a single vertex is trivially connected. It is linear if it contains no edge, and it is circular if it contains a loop. 
\end{remark}

\begin{lemma}
Any non-empty connected Kraft quiver is either linear or circular.    
\end{lemma}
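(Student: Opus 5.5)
The plan is to work in the converse graph $\Gamma^{\#}$ and show that every vertex has in-degree at most $1$ and out-degree at most $1$ there. A connected finite directed graph with this property is a directed path or a directed cycle, which is exactly what Definition \ref{DefLinCirc} asks for. Since $\Gamma^{\#}$ has the same vertices as $\Gamma$ and its arrows are in bijection with those of $\Gamma$, connectedness of $\Gamma$ (a property of the underlying undirected graph) passes to $\Gamma^{\#}$.

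\textbf{Degree bound.} Fix a vertex $v$. An arrow of $\Gamma^{\#}$ leaving $v$ is either an $F$-arrow of $\Gamma$ with tail $v$, or a $V$-arrow of $\Gamma$ with head $v$. By condition (2) of Definition \ref{DefinitionKraftQuiver}, there is at most one $F$-arrow with tail $v$ and at most one $V$-arrow with head $v$. By condition (3), the tail of an $F$-arrow is never the head of a $V$-arrow, so these two cannot both occur. Hence the out-degree of $v$ in $\Gamma^{\#}$ is at most $1$. In the same way, an arrow of $\Gamma^{\#}$ entering $v$ is an $F$-arrow with head $v$ or a $V$-arrow with tail $v$. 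Each occurs at most once, and they exclude each other by the first half of (3), so the in-degree is at most $1$. Loops need a brief check: an $F$-loop at $v$ uses up both the outgoing and incoming $F$-slots at $v$, and (3) rules out any $V$-arrow at $v$, so the count still works. Condition (1) is in fact implied by (2) and (3).

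\textbf{Path or cycle.} Call the in/out-degree-$\le 1$ graph $G=\Gamma^{\#}$, with $n$ vertices. If some vertex has in-degree $0$, start there and follow outgoing arrows, $v_1\to v_2\to\cdots$. No vertex can repeat: the first repeated vertex would have in-degree $2$, or would be $v_1$, which has in-degree $0$. So the walk stops at some $v_k$ with out-degree $0$. The set $\{v_1,\dots,v_k\}$ is closed under all arrows. An arrow entering some $v_i$ with $i\ge2$ must be $(v_{i-1},v_i)$ by the in-degree bound, nothing enters $v_1$, and nothing leaves $v_k$. By connectedness these are all the vertices, so $k=n$ and $\mathcal E^{\#}=\{(v_i,v_{i+1})\}_{i<n}$; the quiver is linear.

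If instead every vertex has in-degree $1$, then by counting arrows every vertex also has out-degree $1$, since the total in-degree equals the total out-degree and each out-degree is at most $1$. Following arrows from $v_1$ must eventually return to $v_1$: the first repeated vertex must be $v_1$, otherwise some vertex would have in-degree $2$. The resulting cycle is again closed under arrows, so by connectedness it covers all vertices, and we get the circular ordering.

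The only delicate point is the bookkeeping for loops and for the case $n=1$, where a loop gives the circular case and no edge gives the linear case. Everything else is routine.
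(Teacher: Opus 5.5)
Your proof is correct, and it is organized differently from the paper's. The paper works with undirected degrees. It uses condition (1) to bound the number of arrows at a vertex and treats the one-vertex case separately. It then splits into two cases: some vertex lies on exactly one arrow, or every vertex lies on exactly two. In the first case it replaces $\Gamma$ by $\Gamma^{\mathrm{op}}$ so that the first arrow of $\Gamma^{\#}$ points away from that vertex. In both cases it extends a path one arrow at a time, arguing at each step that the next arrow at the current endpoint must leave it in $\Gamma^{\#}$.

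You instead prove the directed statement once and for all: every vertex of $\Gamma^{\#}$ has in-degree and out-degree at most $1$, by (2) and (3). You then run the standard path-or-cycle argument for such digraphs, splitting on whether some vertex has in-degree $0$. This buys you three things:
\begin{itemize}
\item the orientation comes for free, with no passage to $\Gamma^{\mathrm{op}}$;
\item loops and the one-vertex case are absorbed into the general argument;
\item it is explicit which axioms matter.
\end{itemize}
On the last point, the paper justifies the step ``the next arrow must leave the current vertex'' by (3) alone. That step also needs (2), since two $F$-arrows with a common head are compatible with (1) and (3). Your degree lemma uses (2) visibly and shows in passing that (1) is redundant. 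The paper's route, for its part, stays closer to the familiar classification of connected undirected graphs of maximum degree $2$, at the cost of a bit more bookkeeping.
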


\begin{proof}
    Let $\Gamma = (\mathcal V, \mathcal E, \mathrm{label})$ be a non-empty connected Kraft quiver. We may assume that $n := \#\mathcal V \geq 2$. The connectedness of $\Gamma$ forces every vertex to be the tail or the head of at least one arrow, and $\Gamma$ can not contain any loop of the form $(v,v)$ for some $v\in \mathcal V$.\\
    
    \textbf{Case 1:} there exists $v_1 \in \mathcal V$ which is connected to exactly one single arrow in $\Gamma$. Let us denote the corresponding arrow in $\mathcal E^{\#}$ by $E_1$, so that we have $E_1 = (v_1,v_2)$ or $(v_2,v_1)$ for some $v_2 \in \mathcal V$. Up to replacing $\Gamma$ with $\Gamma^{\mathrm{op}}$, we may assume that $E_1 = (v_1,v_2)$. Assume that there are pairwise distinct vertices $v_1,\ldots ,v_{j+1} \in \mathcal V$ for some $j\geq 1$ such that $E_i := (v_i,v_{i+1}) \in \mathcal E^{\#}$ for all $1 \leq i \leq j$. By point (1) of Definition \ref{DefinitionKraftQuiver}, the vertices $v_1,\ldots , v_j$ can not be the head nor the tail of any other arrow besides $E_1,\ldots , E_j$. Assume that $j<n-1$. If $v_{j+1}$ was the head or the tail exactly one single arrow, then $v_1,\ldots ,v_{j+1}$ would form a proper connected component in $\Gamma$, which is a contradiction. Thus, there must be some arrow $E_{j+1} \in \mathcal E^{\#}$ connected to $v_{j+1}$ and such that $E_{j+1} \not = E_j$. Since $v_{j+1}$ is already the head of $E_j$, it must be the tail of $E_{j+1}$ by construction of $\Gamma^{\#}$ and point (3) of Definition \ref{DefinitionKraftQuiver}. Let us write $E_{j+1} = (v_{j+1},v_{j+2})$ for some $v_{j+2} \in \mathcal V$. Then $v_{j+2}$ must be different from $v_1,\ldots , v_{j+1}$. By induction, we have found pairwise distinct vertices $v_1,\ldots , v_{n}$ such that $(v_i,v_{i+1}) \in \mathcal E^{\#}$ for all $1 \leq i \leq n-1$. Then point (1) of Definition \ref{DefinitionKraftQuiver} implies that there can not be any other arrow in $\mathcal E^{\#}$ besides $E_1,\ldots , E_{n-1}$, so that $\Gamma$ is linear.\\
    
    \textbf{Case 2:} all vertices in $\mathcal V$ are connected to exactly two distinct arrows in $\Gamma$. Fix $v_1 \in \mathcal V$. Then $v_1$ is either the head and the tail of two arrows in $\mathcal E^{\#}$ with the same label, or the common head or tail of two arrows in $\mathcal E^{\#}$ with different labels. In all cases, one of the two arrows connected to $v$ corresponds to an arrow $E_1 \in \mathcal E^{\#}$ of the form $E_1 = (v_1,v_2)$. Assume that there are pairwise distinct vertices $v_1,\ldots ,v_{j+1} \in \mathcal V$ for some $j\geq 1$ such that $E_i := (v_i,v_{i+1}) \in \mathcal E^{\#}$ for all $1 \leq i \leq j$. By point (1) of Definition \ref{DefinitionKraftQuiver}, the vertices $v_2,\ldots , v_j$ can not be the head or the tail of any other arrow besides $E_1,\ldots , E_j$. Let $E_{j+1} \not = E_j$ be the second arrow in $\mathcal E^{\#}$ connected to $v_{j+1}$. Necessarily, $v_{j+1}$ is the tail of $E_{j+1}$. Let us write $E_{j+1} = (v_{j+1},v_{j+2})$. We have $v_{j+2} \not = v_1$, otherwise $v_1,\ldots , v_{j+1}$ would form a proper connected component in $\Gamma$, which is a contradiction. Thus $v_{j+2}$ is distinct from $v_1,\ldots , v_{j+1}$. By induction, we have found pairwise distinct vertices $v_1,\ldots , v_{n}$ such that $(v_i,v_{i+1}) \in \mathcal E^{\#}$ for all $1 \leq i \leq n-1$. Let $E_n \not = E_{n-1}$ denote the second arrow in $\mathcal E^{\#}$ which is connected to $v_{n}$. Then the head of $E_n$ can not be any vertex other than $v_1$, so that $\Gamma$ is circular.
\end{proof}

Consequently, any Kraft quiver $\Gamma$ can be uniquely decomposed as a disjoint union 
\begin{equation*}
    \Gamma = \Gamma_{\mathrm{lin}} \sqcup \Gamma_{\mathrm{circ}},
\end{equation*}
where $\Gamma_{\mathrm{lin}}$ (resp.~$\Gamma_{\mathrm{circ}}$) is the disjoint union of all the linear (resp.~circular) connected components of $\Gamma$. We call $\Gamma_{\mathrm{lin}}$ the \textit{linear part} of $\Gamma$, and $\Gamma_{\mathrm{circ}}$ the \textit{circular part} of $\Gamma$.

\begin{convention}
    When $\Gamma$ is a non-empty connected circular Kraft quiver as in Definition \ref{DefLinCirc} (2), we will always consider the indices $i$ of the vertices $v_i$ as elements of $\mathbb Z/n\mathbb Z$. 
\end{convention}

\begin{definition}\label{DefRepetition}
    Let $\Gamma$ be a non-empty connected circular Kraft quiver together with an ordering $v_1,\ldots ,v_n$ of its vertices as in Definition \ref{DefLinCirc} (2). For $i \in \mathbb Z/n\mathbb Z$, let $E_i := (v_i,v_{i+1}) \in \mathcal E^{\#}$. We say that $\Gamma$ \textit{has repetitions} if there exists some integer $1 \leq m < n$ dividing $n$ such that 
    \begin{equation*}
        \mathrm{label}^{\#}(E_i) = \mathrm{label}^{\#}(E_{m+i}),
    \end{equation*}
    for all $i \in \mathbb Z/n\mathbb Z$. If no such $m$ exists, we say that $\Gamma$ \textit{has no repetition}.
\end{definition}

Let $\Gamma$ be a non-empty connected circular Kraft quiver with repetitions as above. Assume that $m$ is the smallest proper divisor of $n$ satisfying the condition of Definition \ref{DefRepetition}. We construct a directed graph $(\Gamma')^{\#} = (\mathcal V',(\mathcal E')^{\#},(\mathrm{label}')^{\#})$ with no repetition as follows:
\begin{itemize}
    \item $\mathcal V' := \{v_1',\ldots , v_m'\}$ where the indices are taken in $\mathbb Z/m\mathbb Z$,
    \item $(\mathcal E')^{\#} := \{(v_i',v_{i+1}')\,|\, i \in \mathbb Z/m\mathbb Z\}$,
    \item $(\mathrm{label}')^{\#}((v_i',v_{i+1}')) := \mathrm{label}^{\#}((v_j,v_{j+1}))$ where $j$ is any lift of $i$ via the natural quotient map $\mathbb Z/n\mathbb Z \twoheadrightarrow \mathbb Z/m\mathbb Z$.
\end{itemize}
The graph $(\Gamma')^{\#}$ is the converse of a uniquely determined, connected circular Kraft quiver $\Gamma'$ with no repetitions. We call $\Gamma'$ the \textit{reduction} of $\Gamma$. Note that this construction does not depend on the choice of the ordering of the vertices of $\Gamma$.

\begin{example}
    Figure \ref{Figure1} exhibits a connected linear Kraft quiver on the left, and a connected circular Kraft quiver with no repetition on the right.\\
    Figure \ref{Figure4} shows some connected circular Kraft quivers with repetitions on the left, and their reductions on the right.
\end{example}

\begin{figure} 
\centering
\begin{tikzpicture}[
       decoration = {markings,
                     mark=at position .5 with {\arrow{Stealth[length=2mm]}}},
       dot/.style = {circle, fill, inner sep=2.4pt, node contents={},
                     label=#1},
every edge/.style = {draw, postaction=decorate}
                        ]
\node[regular polygon, regular polygon sides=9, minimum size=6cm] at (0,0) (P) {};
\node (e1) at (P.corner 1) [dot];
\node (e2) at (P.corner 2) [dot];
\node (e3) at (P.corner 3) [dot];
\node (e4) at (P.corner 4) [dot];
\node (e5) at (P.corner 5) [dot];
\node (e6) at (P.corner 6) [dot];
\node (e7) at (P.corner 7) [dot];
\node (e8) at (P.corner 8) [dot];
\node (e9) at (P.corner 9) [dot];

\node[regular polygon, regular polygon sides=3, minimum size = 4cm] at (8,0) (T) {};
\node (f1) at (T.corner 1) [dot];
\node (f2) at (T.corner 2) [dot];
\node (f3) at (T.corner 3) [dot];

\node[regular polygon, regular polygon sides=4, minimum size = 4cm] at (0,-6) (S) {};
\node (g1) at (S.corner 1) [dot];
\node (g2) at (S.corner 2) [dot];
\node (g3) at (S.corner 3) [dot];
\node (g4) at (S.corner 4) [dot];

\node (h) at (8,-6) [dot];

\path (e1) edge node[above] {$F$} (e2);
\path (e2) edge node[above,left] {$F$} (e3);
\path (e4) edge node[left] {$V$} (e3);
\path (e4) edge node[below,left] {$F$} (e5);
\path (e5) edge node[below] {$F$} (e6);
\path (e7) edge node[below,right] {$V$} (e6);
\path (e7) edge node[right] {$F$} (e8);
\path (e8) edge node[above,right] {$F$} (e9);
\path (e1) edge node[above] {$V$} (e9);

\path (f1) edge node[left] {$F$} (f2);
\path (f2) edge node[below] {$F$} (f3);
\path (f1) edge node[right] {$V$} (f3);

\path (g1) edge node[above] {$F$} (g2);
\path (g2) edge node[left] {$F$} (g3);
\path (g3) edge node[below] {$F$} (g4);
\path (g4) edge node[right] {$F$} (g1);

\path (h) edge [loop] node[right] {$F$} (h);

\end{tikzpicture}
\caption{Connected circular Kraft quivers with repetitions (left), and their reductions (right).}
\label{Figure4}
\end{figure}

%\begin{definition}
%    Let $\Gamma= (\mathcal V, \mathcal E, \mathrm{label})$ be a Kraft quiver. The \textit{dual Kraft quiver} of $\Gamma$ is the Kraft quiver $\Gamma^{\vee} := (\mathcal V,\mathcal E^{\vee},\mathrm{label}^{\vee})$ which shares the same set of vertices as $\Gamma$, and such that:
%    \begin{itemize}
%        \item $\mathcal E^{\vee} = \{(v_2,v_1) \,|\, \forall (v_1,v_2) \in \mathcal E\}$,
%        \item for all $E^{\vee} = (v_2,v_1) \in \mathcal E^{\vee}$, writing $E = (v_1,v_2) \in \mathcal E$, we have 
%        \begin{equation*}
%            \mathrm{label}^{\vee}(E^{\vee}) = \begin{cases}
%                F & \text{if } \mathrm{label}(E) = V,\\
%                V & \text{if } \mathrm{label}(E) = F.
%            \end{cases}
%        \end{equation*}
%    \end{itemize}
%\end{definition}
%
%In other words, the dual Kraft quiver is obtained by flipping all the arrows and their labels. Clearly, we have $\Gamma = (\Gamma^{\vee})^{\vee}$. 

\subsection{Connected Kraft quivers and words}\label{SubsectionWords}

Non empty connected Kraft quivers can be entirely described in terms of words in the alphabet $\mathcal A^{\#} = \{F,V^{\#}\}$. More precisely, linear Kraft quivers correspond to finite words, and circular Kraft quivers correspond to (infinite) periodic words.\\
A finite word (in the alphabet $\mathcal A^{\#}$) is a sequence 
\begin{equation*}
    w = w_mw_{m-1}\cdots w_2w_1,
\end{equation*}
where $m \geq 0$ and $w_i \in \mathcal A^{\#}$ for all $1 \leq i \leq m$. The integer $m$ is called the \textit{length} of the word, and is also written $l(w) := m$. The unique word of length $0$ is called the empty word, and sometimes denoted by $\emptyset$. We denote by $W$ the set of finite words. It is a monoid under the operation of concatenation, namely if $w = w_m\cdots w_1$ and $w' = w'_{m'}\cdots w_1'$ then 
\begin{equation*}
    ww' := w_m\cdots w_1w'_{m'}\cdots w_1' \in W.
\end{equation*}
Of course, we have $l(ww') = l(w) + l(w')$.

\begin{definition}\label{DefLinearKraftQuiverWithWord}
    Let $w = w_m\cdots w_1$ be a finite word. We define a directed graph $\Gamma(w)^{\#} = (\mathcal V(w),\mathcal E(w)^{\#},\mathrm{label}^{\#})$ labeled by $\mathcal A^{\#}$ as follows:
    \begin{itemize}
        \item $\mathcal V(w) = \{v_1,\ldots , v_{m+1}\}$ is a set with $m+1$ elements,
        \item $\mathcal E(w)^{\#} = \{(v_i,v_{i+1}) \,|\, 1 \leq i \leq m\}$,
        \item for $E_i := (v_i,v_{i+1}) \in \mathcal E(w)^{\#}$, we have
        \begin{equation*}
            \mathrm{label}^{\#}(E_i) = w_i. 
        \end{equation*}
    \end{itemize}
    The graph $\Gamma(w)^{\#}$ is the converse of a connected linear Kraft quiver uniquely determined up to isomorphism, which we denote by $\Gamma(w)$. 
\end{definition}

\begin{example}
    The connected linear Kraft quiver in Figure \ref{Figure1}, and its converse in Figure \ref{Figure3}, correspond to the word $w = V^{\#}FV^{\#}FF$. We note that the empty word corresponds to the Kraft quiver $\Gamma(\emptyset)$ consisting of a single vertex and no edge.
\end{example}

\begin{definition}\label{DefinitionInfiniteWords}
    An \textit{infinite word} in the alphabet $\mathcal A^{\#}$ is an infinite sequence $\tilde w = \cdots w_t w_{t-1} \cdots w_1$ of letters with $w_i \in \mathcal A^{\#}$ for all $i\geq 1$. If $w' = w_m'\cdots w_1' \in W$ is a finite word, then 
    \begin{equation*}
        \tilde w w := \cdots w_tw_{t-1} \cdots w_1w_m'\cdots w'_1
    \end{equation*}
    is again an infinite word. For $j \geq 0$, the \text{$j$-shift of $\tilde w$} is the infinite word  $\tilde w[j] := \cdots w_{t}w_{t-1}\cdots w_{j+1}$. It is characterized by the identity 
    \begin{equation*}
        \tilde w = \tilde w[j] w_j\cdots w_1.
    \end{equation*}
    An infinite word $\tilde w$ is said to be \textit{periodic} if there exists $t \geq 1$ such that $w_{i+t} = w_i$ for all $i \geq 1$. In this case, the smallest such integer $t$ is called the \textit{period} of $\tilde w$, and we write $[w]$ instead of $\tilde w$, where $w := w_t\cdots w_1 \in W$.
\end{definition}

\begin{remark}\label{RemarkInfinitePeriodicWord}
    We put the stress on the fact that with our conventions, whenever we write $[w]$ it is understood that $w$ has no repetition, so that the period of $[w]$ is $l(w)$. For instance, we do not allow ourselves to write $[FV^{\#}FV^{\#}]$ instead of $[FV^{\#}]$.
\end{remark}

\begin{definition}\label{DefCircularKraftQuiverWithWord}
    Let $[w]$ be a periodic word and let $m$ be a multiple of the period $t = l(w)$ of $[w]$. We define a directed graph $\Gamma([w],m) = (\mathcal V([w],m), \mathcal E([w],m)^{\#}, \mathrm{label}^{\#})$ as follows:
    \begin{itemize}
        \item $\mathcal V([w],m) = \{v_1,\ldots , v_m\}$ is a set whose elements $v_i$ are indexed by $i \in \mathbb Z/m\mathbb Z$,
        \item $\mathcal E([w],m)^{\#} = \{(v_i,v_{i+1}) \,|\, \forall\, 1\leq i \leq m\}$,
        \item for $E_i = (v_i,v_{i+1}) \in \mathcal E([w],m)^{\#}$, we have 
        \begin{equation*}
            \mathrm{label}^{\#}(E_i) = \tilde w_j,
        \end{equation*}
        where $1 \leq j \leq t$ is the residue of $i$ modulo $t$.
    \end{itemize}
    The graph $\Gamma([w],m)^{\#}$ is the converse of a connected circular Kraft quiver, uniquely determined up to isomorphism, which we denote by $\Gamma([w],m)$.
\end{definition}

\begin{example}
    The connected circular Kraft quiver of Figure \ref{Figure1} is $\Gamma([FV^{\#}FV^{\#}V^{\#}],5)$. Those of Figure \ref{Figure4} are respectively, from left to right and top to bottom, $\Gamma([FFV^{\#}],9)$, $\Gamma([FFV^{\#}],3)$, $\Gamma([F],4)$ and $\Gamma([F],1)$.
\end{example}

It is clear that a connected circular Kraft quiver of the form $\Gamma([w],m)$ has no repetition if and only if $m = t$ is the period of $[w]$. Given a periodic word $[w]$ and $j \geq 0$, its $j$-shift $[w][j]$ is again a periodic word with the same period as $\tilde w$. More precisely, if $t = l(w)$ and $0 \leq j \leq t-1$ we have $[w][j] = [w(j)]$ where 
\begin{equation*}
    w(j) := w_j \cdots w_2w_1w_t \cdots w_{j+2}w_{j+1}, 
\end{equation*}
and if $j>t-1$, we have $[w][j] = [w][j']$ where $0 \leq j' \leq t-1$ is the residue of $j$ modulo $t$. The connected circular Kraft quiver $\Gamma([w],m)$ and $\Gamma([w(j)],m)$ are isomorphic for all $m$ multiple of the period $t$.

\begin{proposition}\label{WordOfKraftQuiver}
    Let $\Gamma$ be a non-empty connected Kraft quiver. If $\Gamma$ is linear, there exists a unique finite word $w$ such that $\Gamma \simeq \Gamma(w)$. If $\Gamma$ is circular, there exists a periodic word $[w]$, unique up to shift, and a unique multiple $m$ of $l(w)$ such that $\Gamma \simeq \Gamma([w],m)$. 
\end{proposition}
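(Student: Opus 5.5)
Existence is read off from the ordering provided by Definition \ref{DefLinCirc}, and uniqueness is obtained by showing that any isomorphism must respect that ordering (up to rotation in the circular case). Throughout we work with the converse graph $\Gamma^{\#}$. It determines $\Gamma$. Moreover, a graph isomorphism $\Gamma\simeq\Gamma'$ is the same thing as a label-preserving isomorphism $\Gamma^{\#}\simeq(\Gamma')^{\#}$, since reversing $V$-arrows commutes with relabelled bijections of vertices.

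\textbf{Linear case.} Choose the ordering $v_1,\dots,v_n$ with $\mathcal E^{\#}=\{(v_i,v_{i+1})\}$ and put $w:=w_{n-1}\cdots w_1$ with $w_i:=\mathrm{label}^{\#}((v_i,v_{i+1}))$. The map $v_i\mapsto v_i$ is then an isomorphism $\Gamma^{\#}\simeq\Gamma(w)^{\#}$ by Definition \ref{DefLinearKraftQuiverWithWord}, hence $\Gamma\simeq\Gamma(w)$.

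For uniqueness, suppose $\Gamma(w)\simeq\Gamma(w')$. The vertex counts agree, so $l(w)=l(w')=:m$. In $\Gamma(w)^{\#}$ the vertex $v_1$ is characterised as the unique vertex that is the head of no arrow: each $v_i$ with $i\ge2$ is the head of $(v_{i-1},v_i)$, while $v_1$ is not. An isomorphism of converse graphs preserves this property, so it sends $v_1$ to $v_1'$. Inductively, $v_{i+1}$ is the unique head of an arrow with tail $v_i$, so $v_i\mapsto v_i'$ for all $i$. Since labels are preserved, $w_i=w_i'$ for all $i$.

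\textbf{Circular case.} Take the cyclic ordering with $\mathcal E^{\#}=\{(v_i,v_{i+1})\}_{i\in\mathbb Z/n}$ and labels $u_i$. Form the infinite word $\tilde u=\cdots u_{i}\cdots u_1$ with indices read modulo $n$. This word is periodic, and its period $t$ divides $n$: the set of periods of $\tilde u$ is closed under differences because $\tilde u$ extends bi-infinitely periodically, so its minimum divides $n$. Write $\tilde u=[w]$ with $l(w)=t$. Then $v_i\mapsto v_i$ gives $\Gamma\simeq\Gamma([w],n)$, and we set $m=n$.

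For uniqueness, suppose $\Gamma([w],m)\simeq\Gamma([w'],m')$. Counting vertices gives $m=m'$. Each vertex of the converse graph has exactly one outgoing arrow, so any isomorphism is forced to be a rotation $v_i\mapsto v'_{i+j}$. Comparing labels gives $\tilde w_i=\tilde w'_{i+j}$ for all $i$, with indices modulo the respective periods. Hence $[w]$ is the $j$-shift of $[w']$ (or conversely, after reducing $j$ modulo $t$), and the two periods coincide.

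\textbf{Main obstacle.} The only delicate point is the claim that the minimal period of the $n$-periodic sequence divides $n$. This is the standard argument: if $t$ and $n$ are both periods, then so is $\gcd(t,n)$, via Bézout applied to the bi-infinite periodic extension. Everything else is bookkeeping.
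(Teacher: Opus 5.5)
Your proof is correct. The paper omits this proof, and your argument is the natural one it implicitly relies on: read the word off the ordering of Definition \ref{DefLinCirc}, then observe that a label-preserving isomorphism of converse graphs must fix the directed path in the linear case and must be a rotation of the directed cycle in the circular case. One small simplification: the divisibility $t\mid n$ needs no bi-infinite extension, since if $s<t$ are periods of the one-sided word then $u_{i+t-s}=u_{i+t}=u_i$, so periods are closed under positive differences.
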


We omit the proof.

\subsection{$(\sigma,\tau)$-linear representations of Kraft quivers}

Let $K$ be any field and let $\sigma,\tau \in \mathrm{Aut}(K)$ be any field automorphisms of $K$. 

\begin{definition}\label{DefinitionSigmaLinearRepresentation}
    Let $\Gamma = (\mathcal V, \mathcal E, \mathrm{label})$ be a finite directed graph labeled by $\mathcal A = \{F,V\}$. A \textit{$(\sigma,\tau)$-linear representation} of $\Gamma$ over $K$ is a collection of the form $(U,\rho) = (U_v,\rho_E)_{v,E}$ for all $v \in \mathcal V$ and $E \in \mathcal E$, where 
    \begin{itemize}
        \item $U_v$ is a finite-dimensional $K$-vector space,
        \item $\rho_E: U_{v_1} \rightarrow U_{v_2}$ is a $\sigma$-linear (resp.~$\tau$-linear) homomorphism when $E = (v_1,v_2)$ is an $F$-arrow (resp.~a $V$-arrow).
    \end{itemize}
    We say that the representation $(U,\rho)$ is \textit{strict} if $\rho_E$ is an isomorphism for all $E \in \mathcal E$.
\end{definition}

The \textit{trivial $(\sigma,\tau)$-linear representation} of a finite directed graph $\Gamma$ labeled by $\mathcal A$ is denoted by $\mathbf 1_{\Gamma}$. It corresponds to the choice of $U_v = K$ for all $v\in \mathcal V$, and $\rho_E = \sigma$ (resp.~$\rho_E = \tau$) for all $F$-arrow $E$ (resp.~$V$-arrow $E$). Clearly $\mathbf 1_{\Gamma}$ is strict. 

\begin{definition}
    If $(U,\rho)$ and $(U',\rho')$ are two $(\sigma,\tau)$-linear representations of a finite directed graph $\Gamma$ labeled by $\mathcal A$, a \textit{morphism of representations} $f:(U,\rho) \to (U',\rho')$ is a collection of linear homomorphisms $f_v:U_v\to U_v'$ for all $v\in \mathcal V$, such that the following diagram commutes
    \begin{center}
        \begin{tikzcd}
            U_{v_1} \arrow[r,"f_{v_1}"] \arrow[d,"\rho_{E}"] & U'_{v_1} \arrow[d,"\rho'_E"] \\
            U_{v_2} \arrow[r,"f_{v_2}"] & U'_{v_2}
        \end{tikzcd}
    \end{center}
    for all arrows $E = (v_1,v_2) \in \mathcal E$. The morphism $f$ is an isomorphism if all the $f_v$ are linear isomorphisms.
\end{definition}

The direct sum of two $(\sigma,\tau)$-linear representations is given by the natural formula 
\begin{equation*}
    (U,\rho) \oplus (U',\rho') := (U_v\oplus U_{v'}, \rho_E \oplus \rho'_E)_{v,E}.
\end{equation*}
If $\Gamma_1,\Gamma_2$ are two directed graphs and $(U_1,\rho_1)$, $(U_2,\rho_2)$ are two $(\sigma,\tau)$-linear representations on $\Gamma_1$ and $\Gamma_2$ respectively, they induce naturally a $(\sigma,\tau)$-linear representation on the disjoint union $\Gamma_1\sqcup \Gamma_2$.

\begin{notation}
    We introduce the twisted free $K$-algebra $K\langle F,V\rangle_{\sigma,\tau}$ generated by two non-commutative indeterminates $F$ and $V$ subject to the following condition
    \begin{equation*}
        \forall\, a \in K, \qquad Fa = \sigma(a)F, \qquad Va = \tau(a)V.
    \end{equation*}
    We will also denote by $K[F,V]_{\sigma,\tau}$ the quotient of $K\langle F,V\rangle_{\sigma,\tau}$ by the two-sided ideal generated by $FV$ and by $VF$. When $K$ is a perfect field of characteristic $p$, $\sigma$ is the Frobenius morphism and $\tau = \sigma^{-1}$, $K[F,V]_{\sigma,\sigma^{-1}}$ is the Dieudonné ring modulo $p$. Left $K[F,V]_{\sigma,\tau}$-modules which are finite dimensional over $K$ are called \textit{twisted Gelfand-Ponomarev modules}.
\end{notation} 

\begin{definition}\label{DefinitionModuleAttachedToKraftQuiver}
    Let $(U,\rho)$ be a $(\sigma,\tau)$-linear representation of a finite directed graph $\Gamma$ labeled by $\mathcal A$. The \textit{module attached to $(U,\rho)$} is the left $K\langle F,V\rangle_{\sigma,\tau}$-module defined by 
    \begin{equation*}
        M(\Gamma,U,\rho) := \bigoplus_{v \in \mathcal V} U_v,
    \end{equation*}
    together with the operators $F$ and $V$ whose restrictions to a given summand $U_v$ are
    \begin{equation*}
        F_{|U_v} = \bigoplus_{\substack{E = (v,v') \in \mathcal E\\ \mathrm{label}(E)  = F}} \rho_E,
    \end{equation*}
    and 
    \begin{equation*}
        V_{|U_v} = \bigoplus_{\substack{E = (v,v') \in \mathcal E\\ \mathrm{label}(E)  = V}} \rho_E.
    \end{equation*}
\end{definition}

In particular, if there is no $F$-arrow (resp.~$V$-arrow) whose tail is $v$, then $F_{|U_v} = 0$ (resp.~$V_{|U_v} = 0$). Clearly, the module attached to a direct sum of $(\sigma,\tau)$-linear representations, is the direct sum of the individual modules. We also have another direct sum decomposition along the connected components of $\Gamma$.

\begin{proposition}
    Let $(U,\rho)$ be a $(\sigma,\tau)$-linear representation of a finite directed graph $\Gamma$ labeled by $\mathcal A$. Let $\Gamma_1,\ldots , \Gamma_n$ be the connected components of $\Gamma$, and let $(U_i,\rho_i)_{1\leq i \leq n}$ denote the $(\sigma,\tau)$-linear representations of the $\Gamma_i$ induced by $(U,\rho)$. There is a natural decomposition
    \begin{equation*}
        M(\Gamma,U,\rho) = \bigoplus_{i=1}^n M(\Gamma_i,U_i,\rho_i),
    \end{equation*}
    as the direct sum of left $K\langle F,V \rangle_{\sigma,\tau}$-modules.
\end{proposition}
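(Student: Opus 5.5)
The plan is to compare the two modules directly, summand by summand. As a $K$-vector space, $M(\Gamma,U,\rho) = \bigoplus_{v\in\mathcal V} U_v$. The vertex set is the disjoint union $\mathcal V = \bigsqcup_{i=1}^n \mathcal V_i$ of the vertex sets of the connected components. So the direct sum regroups as
\begin{equation*}
M(\Gamma,U,\rho) = \bigoplus_{i=1}^n \Big(\bigoplus_{v\in \mathcal V_i} U_v\Big) = \bigoplus_{i=1}^n M(\Gamma_i,U_i,\rho_i),
\end{equation*}
where $(U_i,\rho_i)$ is the restriction of $(U,\rho)$ to the vertices and arrows of $\Gamma_i$. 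This is an equality of $K$-vector spaces, by associativity of direct sums.

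The substantive step is to check that each summand $M_i := \bigoplus_{v\in\mathcal V_i} U_v$ is stable under $F$ and $V$, and that the restricted operators agree with those of $M(\Gamma_i,U_i,\rho_i)$.

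\begin{itemize}
\item Fix $v\in\mathcal V_i$. By Definition \ref{DefinitionModuleAttachedToKraftQuiver}, $F_{|U_v}$ is the direct sum of the maps $\rho_E$ over the $F$-arrows $E=(v,v')\in\mathcal E$.
\item Any arrow with tail $v$ connects $v$ to its head $v'$. Since $\Gamma_i$ is a connected component, $v'\in\mathcal V_i$ and $E\in\mathcal E_i$.
\item Hence the $F$-arrows of $\Gamma$ with tail $v$ are exactly the $F$-arrows of $\Gamma_i$ with tail $v$. So $F_{|U_v}$ lands in $M_i$ and coincides with the operator $F$ of $M(\Gamma_i,U_i,\rho_i)$ on $U_v$.
\item The same argument applies verbatim to $V$.
\end{itemize}

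It follows that each $M_i$ is a $K\langle F,V\rangle_{\sigma,\tau}$-submodule, identified with $M(\Gamma_i,U_i,\rho_i)$. The $\sigma$- and $\tau$-semilinearity relations $Fa=\sigma(a)F$ and $Va=\tau(a)V$ hold automatically, since they hold on each $U_v$.

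There is no real obstacle. The only point requiring care is that the arrows of $\Gamma$ are partitioned according to the components, i.e.\ $\mathcal E = \bigsqcup_i \mathcal E_i$. This holds because the head and tail of any arrow are connected vertices, so they lie in the same component. Naturality is clear, since the identification involves no choices beyond regrouping summands.
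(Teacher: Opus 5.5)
Your argument is correct and complete. The paper omits the proof of this proposition, and your regrouping of the summands along the vertex partition, combined with the observation that every arrow (loops included) has its head and tail in the same component, is exactly the routine verification it leaves to the reader.
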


We omit the proof.

\begin{proposition}
    Let $\Gamma$ be a Kraft quiver together with a $(\sigma,\tau)$-linear representation $(U,\rho)$. The module $M(\Gamma,U,\rho)$ is naturally a twisted Gelfand-Ponomarev module.
\end{proposition}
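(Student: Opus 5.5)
Since $M(\Gamma,U,\rho)$ is by Definition \ref{DefinitionModuleAttachedToKraftQuiver} already a left $K\langle F,V\rangle_{\sigma,\tau}$-module and is finite dimensional over $K$ (because $\mathcal V$ is finite and each $U_v$ is finite dimensional), it remains to show that $FV$ and $VF$ act as zero. The module then factors through $K[F,V]_{\sigma,\tau}$. Both operators are additive and the module is $\bigoplus_v U_v$, so it suffices to check $FV(x)=0$ and $VF(x)=0$ for $x\in U_v$, for each vertex $v$ separately. One should also confirm that the operators in Definition \ref{DefinitionModuleAttachedToKraftQuiver} are well defined and have the right semilinearity. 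By condition (2) of Definition \ref{DefinitionKraftQuiver}, at most one $F$-arrow and at most one $V$-arrow have tail $v$, so $F_{|U_v}$ is either $0$ or a single $\sigma$-linear map $\rho_E:U_v\to U_{v'}$, and likewise for $V$. Hence $F$ is $\sigma$-linear and $V$ is $\tau$-linear on the whole of $M$.

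For $VF$, take $x\in U_v$. If no $F$-arrow has tail $v$, then $F(x)=0$ and we are done. Otherwise there is a unique $F$-arrow $E=(v,v')$, and $F(x)=\rho_E(x)\in U_{v'}$. Now $V(F(x))=V_{|U_{v'}}(\rho_E(x))$ is a sum over $V$-arrows with tail $v'$. The vertex $v'$ is the head of the $F$-arrow $E$, so the first half of condition (3) of Definition \ref{DefinitionKraftQuiver} says no $V$-arrow has tail $v'$. Therefore $V_{|U_{v'}}=0$ and $VF(x)=0$.

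The case $FV$ is the mirror image. If $x\in U_v$ and $V(x)\neq 0$, there is a unique $V$-arrow $E=(v,v')$ and $V(x)\in U_{v'}$. An $F$-arrow with tail $v'$ would have its tail equal to the head of a $V$-arrow, which the second half of condition (3) forbids. So $F_{|U_{v'}}=0$ and $FV(x)=0$.

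There is no real obstacle here. The only care needed is to match the two clauses of condition (3) to the two products $VF$ and $FV$ correctly, and to note where finiteness and condition (2) are used to make the operators well defined.
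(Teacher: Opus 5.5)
Your proof is correct and follows the same route as the paper: the paper uses the first clause of condition (3) (no $V$-arrow has its tail at the head of an $F$-arrow) to get $VF=0$, and says that $FV=0$ follows likewise. Your extra checks of finite dimensionality and of the semilinearity of $F$ and $V$ are left implicit in the paper. You also correctly match the second clause of condition (3) to the product $FV$.
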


\begin{proof}
    By the definition of Kraft quivers, if $v$ is the head of an $F$-arrow then $V_{|U_v} = 0$ since there is no $V$-arrow whose tail is $v$. It follows that $VF = 0$ on $M(\Gamma,U,\rho)$. Likewise we also have $FV = 0$.
\end{proof}

In order to classify $(\sigma,\tau)$-linear representations of Kraft quivers which are strict on every connected component, one can restrict to the connected case and distinguish between linear and circular Kraft quivers. 

\begin{proposition}\label{ModuleAttachedToLinearKraftQuiver}
    Let $\Gamma = (\mathcal V,\mathcal E,\mathrm{label})$ be a connected linear Kraft quiver, and let $(\rho,U)$ be a strict $(\sigma,\tau)$-linear representation of $\Gamma$. Let $d$ denote the dimension of the $K$-vector space $U_v$ for any vertex $v$. Then $(\rho,U)$ is isomorphic to $\mathbf 1_{\Gamma}^{\oplus d}$.
\end{proposition}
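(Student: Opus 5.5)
Since $\Gamma$ is a connected linear Kraft quiver, fix an ordering $v_1,\ldots,v_n$ of its vertices as in Definition \ref{DefLinCirc} (1), so that $\mathcal E^{\#}=\{(v_i,v_{i+1})\,|\,1\leq i\leq n-1\}$. For each $i$, exactly one arrow $E_i$ of $\Gamma$ joins $v_i$ and $v_{i+1}$. If it is an $F$-arrow, then $E_i=(v_i,v_{i+1})$. If it is a $V$-arrow, then $E_i=(v_{i+1},v_i)$. Strictness means every $\rho_{E_i}$ is a bijective semilinear map. Hence all $U_{v_i}$ have the same dimension $d$, since a $\sigma$- or $\tau$-linear bijection preserves dimension: it sends a basis to a basis. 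This justifies the $d$ in the statement.

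The plan is to construct an isomorphism $f:\mathbf 1_{\Gamma}^{\oplus d}\to (U,\rho)$ by propagating a basis along the path. Here $\mathbf 1_\Gamma^{\oplus d}$ has $K^d$ at each vertex, with the coordinatewise maps $\sigma$ or $\tau$ on arrows.

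\begin{itemize}
\item Choose any basis $(u^{(1)}_1,\ldots,u^{(1)}_d)$ of $U_{v_1}$.
\item Inductively, given a basis $(u^{(i)}_k)_k$ of $U_{v_i}$, define the basis of $U_{v_{i+1}}$ as follows.
\begin{itemize}
\item If $E_i$ is an $F$-arrow, set $u^{(i+1)}_k:=\rho_{E_i}(u^{(i)}_k)$.
\item If $E_i$ is a $V$-arrow, set $u^{(i+1)}_k:=\rho_{E_i}^{-1}(u^{(i)}_k)$.
\end{itemize}
\end{itemize}
Bijective semilinear maps and their inverses send bases to bases, so each step again gives a basis.

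Define $f_{v_i}:K^d\to U_{v_i}$ by $f_{v_i}(e_k)=u^{(i)}_k$, extended $K$-linearly. Each $f_{v_i}$ is a linear isomorphism.

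It remains to check commutativity of the square for each arrow. Take an $F$-arrow $E_i=(v_i,v_{i+1})$ and $x=\sum_k\lambda_k e_k\in K^d$. By $\sigma$-linearity,
\[
\rho_{E_i}(f_{v_i}(x))=\sum_k\sigma(\lambda_k)\rho_{E_i}(u^{(i)}_k)=\sum_k\sigma(\lambda_k)u^{(i+1)}_k=f_{v_{i+1}}(\sigma(x)).
\]
For a $V$-arrow $E_i=(v_{i+1},v_i)$, the same computation with $\tau$ applies, using $\rho_{E_i}(u^{(i+1)}_k)=u^{(i)}_k$ by construction.

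The only point needing care is that the propagation is consistent, meaning no vertex receives two competing constraints. This holds because the underlying graph is a path: it has no cycles, and each vertex $v_{i+1}$ is reached from $v_i$ through exactly one edge. That is precisely where linearity, as opposed to circularity, is used. In the circular case, going once around the cycle would produce a monodromy obstruction.
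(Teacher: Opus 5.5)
Your proposal is correct and follows essentially the same route as the paper: you fix a basis of $U_{v_1}$ and propagate it along the path using $\rho_{E_i}$ or $\rho_{E_i}^{-1}$ according to the label of $E_i$. The paper concludes by decomposing $(U,\rho)$ into the $d$ rank-one subrepresentations spanned by the propagated basis vectors, each isomorphic to $\mathbf 1_{\Gamma}$, whereas you write the isomorphism $\mathbf 1_{\Gamma}^{\oplus d}\to (U,\rho)$ directly and check the commuting squares explicitly, which the paper leaves as ``clearly''.
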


\begin{proof}
    The proof is the same as \cite{chai:kraft2025} Lemma 4.4. Since $\Gamma$ is connected and $(U,\rho)$ is strict, it is clear that all the $K$-vector spaces $U_v$ have the same dimension $d$. Fix an ordering $v_1, \ldots , v_n$ of the vertices of $\Gamma$ such that $\mathcal E^{\#}$ consists of the arrows $(v_i,v_{i+1})$ for all $1 \leq i \leq n-1$. Let $E_i \in \mathcal E$ denote the arrow corresponding to $(v_i,v_{i+1}) \in \mathcal E^{\#}$. Fix a basis $(e_{1,j})_{1 \leq j \leq d}$ of $U_{v_1}$. By induction, define vectors $e_{i,j}$ for $1\leq i \leq n-1$ and $1 \leq j \leq d$ via the formula 
    \begin{equation*}
        e_{i+1,j} = \begin{cases}
            \rho_{E_{i}}(e_{i,j}) & \text{if } \mathrm{label}(E_{i}) = F;\\
            \rho_{E_i}^{-1}(e_{i,j}) & \text{if } \mathrm{label}(E_i) = V.
        \end{cases}
    \end{equation*}
    Clearly, for each $1\leq i \leq n$, the family $(e_{i,j})_{1\leq j\leq d}$ is a basis of $U_{v_i}$. For each $j$, define a $(\sigma,\tau)$-linear subrepresentation $(U_j,\rho_j)$ of $(U,\rho)$ by setting $U_{j,i} := K\cdot e_{i,j}$ and $\rho_{j,E}$ the restriction of $\rho_{E}$ to the corresponding subspace $U_{j,i}$. Then $(U_j,\rho_j) \simeq \mathbf 1_{\Gamma}$ is isomorphic to the trivial representation, and we have a decomposition
    \begin{equation*}
        (U,\rho) \simeq \bigoplus_{j=1}^{d} (U_j,\rho_j).
    \end{equation*}
\end{proof}

\begin{remark}
    It follows that if $\Gamma$ is a connected linear Kraft quiver and $(U,\rho)$ is a strict $(\sigma,\tau)$-linear representation of $\Gamma$ with $\dim(U_v) = d$ for all vertices $v$, then 
    \begin{equation*}
        M(\Gamma,U,\rho) \simeq M(\Gamma,\mathbf 1_{\Gamma})^{d},
    \end{equation*}
    as left $K[F,V]_{\sigma,\tau}$-modules.
\end{remark}

Let us now consider a connected circular Kraft quiver $\Gamma = (\mathcal V,\mathcal E,\mathrm{label})$, together with a strict $(\sigma,\tau)$-linear representation $(U,\rho)$. Fix an ordering $v_1,\ldots , v_n$ such that $\mathcal E^{\#} = \{(v_i,v_{i+1}), i \in \mathbb Z/n\mathbb Z\}$. For each $i \in \mathbb Z/n\mathbb Z$, let $E_i \in \mathcal E$ denote the arrow corresponding to $(v_i,v_{i+1}) \in \mathcal E^{\#}$, and consider the semilinear isomorphism $\phi_{v_i}: U_{v_i} \xrightarrow{\sim} U_{v_{i+1}}$ given by 
\begin{equation*}
    \phi_{v_i} = \begin{cases}
        \rho_{E_i} & \text{if } \mathrm{label}(E_i) = F;\\
        \rho_{E_i}^{-1} & \text{if } \mathrm{label}(E_i) = V.
    \end{cases}
\end{equation*}
We write $\xi_i := \sigma$ if $\mathrm{label}(E_i) = F$, and $\xi_i := \tau^{-1}$ if $\mathrm{label}(E_i) = V$. Thus $\phi_{v_i}$ is a $\xi_i$-linear isomorphism.

\begin{definition}\label{DefinitionMonodromy}
    With the notations above, the \textit{monodromy operator} of $(U,\rho)$ at a vertex $v_i$ is the $\Xi_i$-linear automorphism $\Phi_{v_i}:U_{v_i}\xrightarrow{\sim} U_{v_i}$ given by
    \begin{equation*}
        \Phi_{v_i} := \phi_{v_{i-1}} \circ \phi_{v_{i-2}} \circ \cdots \circ \phi_{v_{i+1}} \circ \phi_{v_i},
    \end{equation*}
    where $\Xi_i := \xi_{i-1}\xi_{i-2}\cdots \xi_{i+1}\xi_{i} \in \mathrm{Aut}(K)$.
\end{definition}

\begin{proposition}\label{Monodromy}
    Let $\Gamma = (\mathcal V,\mathcal E,\mathrm{label})$ be a non-empty connected circular Kraft quiver and let $n = \# \mathcal V = \# \mathcal E > 0$. Let $(U,\rho)$ and $(U',\rho')$ be two strict $(\sigma,\tau)$-linear representations of $\Gamma$. The following statements are equivalent.
    \begin{enumerate}
        \item The representations $(U,\rho)$ and $(U',\rho')$ are isomorphic.
        \item For every $v \in \mathcal V$, the monodromy operators $\Phi_v$ and $\Phi'_v$ at $v$ for $(U,\rho)$ and $(U',\rho')$ respectively are conjugate.
        \item For some $v \in \mathcal V$, the monodromy operators $\Phi_v$ and $\Phi'_v$ at $v$ for $(U,\rho)$ and $(U',\rho')$ respectively are conjugate.
    \end{enumerate}
\end{proposition}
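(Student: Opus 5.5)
We will prove the cycle of implications $(1)\Rightarrow(2)\Rightarrow(3)\Rightarrow(1)$. The implication $(2)\Rightarrow(3)$ is trivial since $\mathcal V$ is non-empty. Throughout we use the semilinear isomorphisms $\phi_{v_i}$ and $\phi'_{v_i}$ attached to $(U,\rho)$ and $(U',\rho')$, and the notion of conjugacy of two $\Xi$-linear automorphisms $\Phi$, $\Phi'$ of $U_v$ and $U'_v$. We take this to mean that there is a $K$-linear isomorphism $g:U_v\xrightarrow{\sim}U'_v$ with $\Phi' = g\circ\Phi\circ g^{-1}$.

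For $(1)\Rightarrow(2)$, let $f=(f_v)_v$ be an isomorphism of representations. The defining commutative squares give $f_{v_{i+1}}\circ\rho_{E_i} = \rho'_{E_i}\circ f_{v_i}$ when $E_i$ is an $F$-arrow. When $E_i$ is a $V$-arrow (so that $E_i=(v_{i+1},v_i)$ in $\Gamma$), they give $f_{v_i}\circ\rho_{E_i} = \rho'_{E_i}\circ f_{v_{i+1}}$. Since both $\rho_{E_i}$ and $\rho'_{E_i}$ are invertible by strictness, in either case we get $f_{v_{i+1}}\circ\phi_{v_i} = \phi'_{v_i}\circ f_{v_i}$ for every $i\in\mathbb Z/n\mathbb Z$. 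Composing these $n$ relations around the cycle starting at $v_i$ gives $f_{v_i}\circ\Phi_{v_i} = \Phi'_{v_i}\circ f_{v_i}$, so the monodromies at each vertex are conjugate via $f_{v_i}$.

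The substantive direction is $(3)\Rightarrow(1)$. After cyclically relabelling, suppose $g:U_{v_1}\xrightarrow{\sim}U'_{v_1}$ is linear with $g\Phi_{v_1}=\Phi'_{v_1}g$. We define $f_{v_1}:=g$ and, inductively for $1\le i\le n-1$,
\begin{equation*}
f_{v_{i+1}} := \phi'_{v_i}\circ f_{v_i}\circ\phi_{v_i}^{-1}.
\end{equation*}
Each $f_{v_{i+1}}$ is $K$-linear, because a $\xi_i$-linear map composed with a linear map and then a $\xi_i^{-1}$-linear map is linear. Each is also bijective. By construction, the relation $f_{v_{i+1}}\phi_{v_i}=\phi'_{v_i}f_{v_i}$ holds for $1\le i\le n-1$, and by the computation in the previous paragraph this is equivalent to the commutativity of the square for $E_i$.

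The only remaining point, which is the one step needing care, is the closing edge $E_n=(v_n,v_1)$ in $\Gamma^{\#}$. Here we must check $f_{v_1}\phi_{v_n}=\phi'_{v_n}f_{v_n}$. Unwinding the definition gives
\begin{equation*}
f_{v_n} = \phi'_{v_{n-1}}\cdots\phi'_{v_1}\, g\, \phi_{v_1}^{-1}\cdots\phi_{v_{n-1}}^{-1},
\end{equation*}
hence $\phi'_{v_n}f_{v_n}\phi_{v_n}^{-1}\cdots$ reduces to the identity $\Phi'_{v_1}\,g = g\,\Phi_{v_1}$, which is exactly our hypothesis. This includes the degenerate case $n=1$ of a loop, where no inductive step is needed. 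Since the relation for each edge translates back to the commutative square in either orientation, $f$ is an isomorphism of representations.
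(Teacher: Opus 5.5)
Your proof is correct and follows the same route as the paper. For $(1)\Rightarrow(2)$ you use the edgewise relation $f_{v_{i+1}}\phi_{v_i}=\phi'_{v_i}f_{v_i}$, and for $(3)\Rightarrow(1)$ you transport the conjugating map around the cycle via $f_{v_{i+1}}:=\phi'_{v_i}f_{v_i}\phi_{v_i}^{-1}$. You go slightly beyond the paper by actually checking the closing edge, which it leaves as ``easy to check''. That check is stated most cleanly as follows: right-multiplying the required identity $g\phi_{v_n}=\phi'_{v_n}f_{v_n}$ by the bijection $\phi_{v_{n-1}}\cdots\phi_{v_1}$ turns it into exactly $g\Phi_{v_1}=\Phi'_{v_1}g$.
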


Here, we say that $\Phi_v$ and $\Phi'_v$ are conjugate if there exists a linear isomorphism $h:U_v \xrightarrow{\sim} U'_v$ such that 
\begin{equation*}
    \Phi'_v = h \circ \Phi_v \circ h^{-1}.
\end{equation*}

\begin{proof}
    The implication (2) $\implies$ (3) is trivial.\\
    We prove (1) $\implies$ (2). Let $f:(U,\rho) \xrightarrow{\sim} (U',\rho')$ be an isomorphism, and let $v_1,\ldots , v_n$ be an ordering of the vertices of $\Gamma$ such that $\mathcal E^{\#} = \{(v_i,v_{i+1}),i\in\mathbb Z/n\mathbb Z\}$. For all $i$, let $\phi_{v_i}$ and $\phi'_{v_i}$ be the $\xi_i$-linear isomorphisms defined above for to $(U,\rho)$ and $(U',\rho')$ respectively. By definition, we have $f_{v_{i+1}}\circ \phi_{v_i} = \phi'_{v_{i}} \circ f_{v_i}$ for all $i$. From this, it follows that
    \begin{equation*}
        \Phi_{v_i} = f_{v_i}^{-1} \circ \Phi_{v_i}' \circ f_{v_i},
    \end{equation*}
    so that $\Phi_{v_i}$ and $\Phi'_{v_i}$ are conjugate.\\
    We prove (3) $\implies$ (1). Let $i \in \mathbb Z/n\mathbb Z$ such that $\Phi_{v_i} = h^{-1} \circ \Phi_{v_i}' \circ h$ for some $h: U_{v_i} \xrightarrow{\sim} U'_{v_i}$. Put $f_{v_i} := h$, and for $1 \leq k \leq n-1$ define inductively a linear isomorphism $f_{v_{i+k}}:U_{v_{i+k}} \xrightarrow{\sim} U'_{v_{i+k}}$ by
    \begin{equation*}
        f_{v_{i+k}} := \phi_{v_{i+k-1}}' \circ f_{v_{i+k-1}} \circ \phi_{v_{i+k-1}}^{-1}.
    \end{equation*}
    It is easy to check that $f := (f_{v_i})_{i \in \mathbb Z/n\mathbb Z}$ defines an isomorphism $(U,\rho) \xrightarrow{\sim} (U',\rho')$.
\end{proof}

In other words, given a connected circular Kraft quiver $\Gamma$ with $n$ vertices, there is a natural bijection
\begin{equation*}
    \hspace*{-1cm}  
    \left\{\begin{array}{c}
    \text{isomorphism classes of strict } (\sigma,\tau)\text{-linear}\\
    \text{representations of } \Gamma \text{ over } K
    \end{array}\right\} \xrightarrow{\sim}
    \left\{\begin{array}{c}
    \text{conjugacy classes of finite dimensional } K\text{-vector}\\
    \text{spaces equipped with a } \Xi \text{-linear automorphism}
    \end{array}\right\},
\end{equation*}
where $\Xi \in \mathrm{Aut}(K)$ is any one of the automorphisms $\Xi_i$ associated to the vertices of $\Gamma$. The structure of the set on the right-hand side may depend heavily on the choice of $(K,\sigma,\tau)$. We give two classical examples. 

\begin{proposition}\label{PropositionJordanForm}
    Assume that $K = \mathbb C$ and that $\sigma = \tau = \mathrm{id}$. Conjugacy classes of finite dimensional $\mathbb C$-vector spaces equipped with an automorphism correspond bijectively to finite sets of the form $X = \{(m_i,d_i,\mu_i)\}$ where, for $r := \#X$ and for every $1 \leq i \leq r$, $m_i,d_i \geq 1$ are positive integers, $\mu_i \in \mathbb C^{\times}$ and $(d_i,\mu_i) \not = (d_j,\mu_j)$ whenever $i\not = j$.
\end{proposition}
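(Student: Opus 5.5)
The plan is to reduce the statement to the Jordan normal form theorem over $\mathbb C$ and then repackage the Jordan data as a finite set of triples. When $\sigma=\tau=\mathrm{id}$, every $\Xi$ is the identity. So an object on the right-hand side of the bijection preceding the statement is simply a pair $(U,\Phi)$, where $U$ is a finite dimensional $\mathbb C$-vector space and $\Phi$ is a linear automorphism. Two such pairs are identified exactly when there is a linear isomorphism $h$ with $\Phi'=h\Phi h^{-1}$; for $U=U'$ this is ordinary similarity of matrices.

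First, I will invoke the Jordan normal form theorem. Since $\mathbb C$ is algebraically closed, $U$ decomposes as a direct sum of $\Phi$-stable subspaces, and on each summand $\Phi$ acts by a single Jordan block $J_d(\mu)$ of size $d\geq 1$ with eigenvalue $\mu$. Since $\Phi$ is invertible, every eigenvalue $\mu$ lies in $\mathbb C^{\times}$. The theorem also gives uniqueness: the multiset of pairs $(d,\mu)$ occurring is an invariant of the conjugacy class. Indeed, the number of blocks $J_d(\mu)$ is determined by the ranks of the powers $(\Phi-\mu)^k$ via
\[
\#\{\text{blocks } J_d(\mu)\} = r_{k-1}-2r_k+r_{k+1},\qquad k=d,\quad r_k:=\mathrm{rank}(\Phi-\mu)^k ,
\]
and these ranks are invariant under conjugation by $h$, including when $U$ and $U'$ are distinct spaces. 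Conversely, two automorphisms with the same multiset of blocks are conjugate, because one can map Jordan bases to Jordan bases.

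Next, I will encode the multiset as a set. Let $(d_1,\mu_1),\dots,(d_r,\mu_r)$ be the distinct pairs that occur, and let $m_i\geq 1$ be the multiplicity of $(d_i,\mu_i)$. Set $X=\{(m_i,d_i,\mu_i)\}$; the $(d_i,\mu_i)$ are pairwise distinct by construction. This assignment is a bijection between finite multisets of elements of $\mathbb Z_{\geq1}\times\mathbb C^{\times}$ and sets $X$ as in the statement. Its inverse sends $X$ to the pair $\bigl(\mathbb C^{N},\bigoplus_i J_{d_i}(\mu_i)^{\oplus m_i}\bigr)$ with $N=\sum_i m_id_i$, so every admissible $X$ is realized. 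The empty set $X$ corresponds to the zero space, with $r=0$.

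The only point requiring care is the well-definedness and injectivity direction, namely that the block multiset is a conjugacy invariant. I will cite the standard uniqueness part of the Jordan normal form theorem for this, through the rank formula above. Everything else is bookkeeping.
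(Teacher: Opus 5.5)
Your proposal is correct and follows the paper's proof. Both reduce the statement to the Jordan normal form over $\mathbb C$ and record each distinct block type $(d_i,\mu_i)$ together with its multiplicity $m_i$. You add detail that the paper leaves implicit, namely the rank formula $r_{d-1}-2r_d+r_{d+1}$ showing that the block multiset is a conjugacy invariant and the explicit inverse map, but the argument is the same.
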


\begin{proof}
    This amounts to writing the automorphism in its Jordan normal form. The number of distinct Jordan blocks is $r$, the Jordan blocks of eigenvalue $\mu_i$ have size $d_i$, and $m_i$ is the multiplicity of the Jordan block $(d_i,\mu_i)$. 
\end{proof}

\begin{proposition}\label{LangSteinbergTheorem}
    Assume that $K$ is an algebraically closed field of positive characteristic $p$, and that $\sigma$ is given by $\sigma(x) = x^p$. Let $k \geq 1$. For every $d\geq 0$, there is only a single conjugacy class of $d$-dimensional vector spaces equipped with a $\sigma^k$-linear automorphism.
\end{proposition}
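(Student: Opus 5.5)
The plan is to reduce the statement to Lang's theorem for $\mathrm{GL}_d$. Fix a $d$-dimensional $K$-vector space $U$ with a basis, so that $U \simeq K^d$. Write $\Phi$ for a $\sigma^k$-linear automorphism of $U$. In this basis $\Phi(x) = A\,\sigma^k(x)$, where $A \in \mathrm{GL}_d(K)$ and $\sigma^k$ is applied coordinatewise. Changing the basis by $g \in \mathrm{GL}_d(K)$ replaces $A$ by $g^{-1}A\,\sigma^k(g)$. It is therefore enough to show that every $A$ is $\sigma^k$-conjugate to the identity, i.e. that there exists $g$ with $A = g\,\sigma^k(g)^{-1}$. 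Equivalently, writing $q := p^k$, we need $g$ with $g^{-1}A\,g^{(q)} = 1$, where $g^{(q)}$ denotes raising all entries to the $q$-th power. The $\sigma^k$-linear automorphism is then conjugate to the standard one $x \mapsto \sigma^k(x)$, which proves the proposition. The cases $d=0$ and $d=1$ are trivial; the latter is just solvability of $x^{q-1} = a$ in the algebraically closed field $K$.

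To show surjectivity of the map $\mathscr L: g \mapsto g\,\sigma^k(g)^{-1}$, I would invoke the Lang--Steinberg theorem for the connected algebraic group $\mathrm{GL}_d$ over $K$ together with the Frobenius endomorphism $F_q$. The map $g \mapsto g^{-1}F_q(g)$ is a morphism of varieties. Its differential at the identity is $-\mathrm{id}$, since $dF_q = 0$. Hence each orbit of the action $h\cdot g = h^{-1}A\,F_q(h)$ has dimension $\dim \mathrm{GL}_d$ and is open. Two nonempty open subsets of an irreducible variety intersect, so there is a single orbit. This gives the required $g$.

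The main obstacle is this orbit-dimension argument: one must check that every stabilizer is finite, which follows from the differential computation at an arbitrary point after translation. If one prefers to keep the exposition elementary, an alternative is to solve $X^{(q)} = A^{-1}X$ directly. This system amounts to $d^2$ equations of the form $x_{ij}^q = \text{linear}$ in the entries. Such a system has finitely many solutions by a degree count, and at least one solution with $\det X \neq 0$. The standard way to obtain the latter is to find an $\mathbb F_q$-structure, i.e. a $\Phi$-fixed basis, by induction on $d$: first produce a nonzero fixed vector by solving $\Phi(x) = x$ in a one-parameter family, then pass to the quotient. I would cite Lang--Steinberg rather than reprove it.
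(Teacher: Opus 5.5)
Your proposal is correct and follows the same approach as the paper. The paper's proof simply invokes the Lang--Steinberg theorem (referring to Chai's notes), and your reduction to surjectivity of $g\mapsto g\,\sigma^k(g)^{-1}$ on $\mathrm{GL}_d(K)$, together with the open-orbit sketch, is the standard argument behind that citation. One small notational slip: the twisted-conjugation action should read $g\mapsto h^{-1}g\,F_q(h)$, and it is the orbit of $A$ (whose orbit map has bijective differential $X\mapsto -XA$ at $h=1$) that you compare with the orbit of $1$.
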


\begin{proof}
    This is the Lang-Steinberg theorem, which was first proved by Hasse and Witt. See \cite{chai:kraft2025} Proposition 2.4 for a detailed account. 
\end{proof}

\begin{corollary}\label{ModulesAttachedToCircularKraftQuiverAlgCl}
    Assume that $K$ is an algebraically closed field of positive characteristic $p$, that $\sigma$ is given by $\sigma(x) = x^p$ and that $\tau := \sigma^{-1}$. Let $\Gamma$ be a connected circular Kraft quiver and let $(U,\rho)$ be a strict $(\sigma,\tau)$-linear representation of $\Gamma$. Then $M(\Gamma,U,\rho) \simeq M(\Gamma,\mathbf{1}_{\Gamma})^d$ where $d = \mathrm{dim}(U_v)$ for any vertex $v$.
\end{corollary}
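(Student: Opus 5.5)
The plan is to reduce to Proposition \ref{Monodromy} and Proposition \ref{LangSteinbergTheorem}. First note that since $\Gamma$ is connected and $(U,\rho)$ is strict, all $U_v$ have the same dimension $d$: each $\rho_E$ is a semilinear isomorphism between the spaces at its two endpoints, and connectedness propagates equality of dimension. Next fix an ordering $v_1,\ldots,v_n$ as in Definition \ref{DefLinCirc} (2). With $\tau=\sigma^{-1}$, each $\xi_i$ equals $\sigma$, whether $E_i$ is an $F$-arrow ($\xi_i=\sigma$) or a $V$-arrow ($\xi_i=\tau^{-1}=\sigma$). Hence the monodromy operator $\Phi_{v_1}$ of Definition \ref{DefinitionMonodromy} is $\sigma^n$-linear, with $n\geq 1$.

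Then I compare $(U,\rho)$ with the representation $\mathbf 1_\Gamma^{\oplus d}$. The latter is strict, its spaces all have dimension $d$, and its monodromy at $v_1$ is also a $\sigma^n$-linear automorphism of a $d$-dimensional space. By Proposition \ref{LangSteinbergTheorem} with $k=n$, any two $\sigma^n$-linear automorphisms of $d$-dimensional $K$-vector spaces are conjugate. So the monodromy operators of $(U,\rho)$ and $\mathbf 1_\Gamma^{\oplus d}$ at $v_1$ are conjugate. The implication (3)$\implies$(1) of Proposition \ref{Monodromy} then gives an isomorphism $(U,\rho)\simeq\mathbf 1_\Gamma^{\oplus d}$.

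Finally, the module construction of Definition \ref{DefinitionModuleAttachedToKraftQuiver} is compatible with isomorphisms of representations: an isomorphism $(f_v)_v$ assembles into a $K$-linear bijection $\bigoplus_v f_v$ commuting with $F$ and $V$. It also turns direct sums of representations into direct sums of modules. Therefore $M(\Gamma,U,\rho)\simeq M(\Gamma,\mathbf 1_\Gamma^{\oplus d})=M(\Gamma,\mathbf 1_\Gamma)^d$.

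There is no real obstacle here. The only points to check carefully are that the automorphism $\Xi$ is the same nontrivial power $\sigma^n$ for both representations, so that the Lang--Steinberg input applies, and the degenerate case $d=0$, which is trivial.
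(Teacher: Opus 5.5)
Your proposal is correct and is exactly the argument the paper intends. The paper states this corollary without a written proof, directly after Propositions \ref{Monodromy} and \ref{LangSteinbergTheorem}. Since $\tau=\sigma^{-1}$ forces every $\xi_i=\sigma$, the monodromy is $\sigma^n$-linear, and Lang--Steinberg makes it conjugate to the monodromy of $\mathbf 1_\Gamma^{\oplus d}$; Proposition \ref{Monodromy}, together with the compatibility of the module construction with isomorphisms and direct sums, then gives $M(\Gamma,U,\rho)\simeq M(\Gamma,\mathbf 1_\Gamma)^d$.
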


Eventually, we note that if we are only interested in the classification of modules of the form $M(\Gamma,U,\rho)$ where $\Gamma$ is a connnected circular Kraft quiver, then we can always reduce to the case where $\Gamma$ has no repetition in virtue of the following Lemma.

\begin{lemma}\label{ReductionToCaseOfNonRepetition}
    Let $\Gamma = \Gamma([w],m)$ be a connected circular Kraft quiver as defined in Definition \ref{DefCircularKraftQuiverWithWord}, where $m = kt$ is a multiple of $t = l(w)$, the period of $[w]$. For every $(\sigma,\tau)$-linear representation $(U,\rho)$ on $\Gamma$, there exists a $(\sigma,\tau)$-linear representation $(U',\rho')$ on $\Gamma' := \Gamma([w],t)$ such that we have an isomorphism of twisted Gelfand-Ponomarev modules 
    \begin{equation*}
        M(\Gamma,U,\rho) \xrightarrow{\sim} M(\Gamma',U',\rho').
    \end{equation*}
\end{lemma}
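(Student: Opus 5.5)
The idea is to fold the $k$ copies of the period onto a single copy. Vertices of $\Gamma=\Gamma([w],m)$ are indexed by $i\in\mathbb Z/m\mathbb Z$, vertices of $\Gamma'=\Gamma([w],t)$ by $j\in\mathbb Z/t\mathbb Z$, and the reduction map $\pi:\mathbb Z/m\mathbb Z\to\mathbb Z/t\mathbb Z$ is a graph morphism $\Gamma\to\Gamma'$. This is because the label of $E_i=(v_i,v_{i+1})\in\mathcal E^{\#}$ depends only on $i$ modulo $t$, so $\pi$ sends each arrow to an arrow with the same label and orientation.

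Define the new representation by
\begin{equation*}
U'_{v'_j}:=\bigoplus_{i\in\pi^{-1}(j)}U_{v_i}.
\end{equation*}
For an arrow $E'$ of $\Gamma'$ corresponding to $(v'_j,v'_{j+1})\in(\mathcal E')^{\#}$, the $k$ arrows $E_i$ with $\pi(i)=j$ all carry the same label as $E'$ and the same orientation in $\Gamma$. Set $\rho'_{E'}:=\bigoplus_{\pi(i)=j}\rho_{E_i}$. If $E'$ is an $F$-arrow from $v'_j$ to $v'_{j+1}$, this is the $\sigma$-linear map sending the summand $U_{v_i}$ into the summand $U_{v_{i+1}}$. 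If $E'$ is a $V$-arrow from $v'_{j+1}$ to $v'_j$, it is the $\tau$-linear map sending $U_{v_{i+1}}$ into $U_{v_i}$. Since $i\mapsto i+1$ is a bijection from $\pi^{-1}(j)$ to $\pi^{-1}(j+1)$, these block maps are well defined. They are isomorphisms whenever each $\rho_{E_i}$ is, so strictness is preserved, although the lemma does not require it. There is one degenerate case to check: when $t=1$, $\Gamma'$ has a single vertex with a loop, and the formula still makes sense with $j=j+1$.

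Finally, compare the modules. As $K$-vector spaces,
\begin{equation*}
M(\Gamma,U,\rho)=\bigoplus_{i}U_{v_i}=\bigoplus_j\bigoplus_{\pi(i)=j}U_{v_i}=M(\Gamma',U',\rho'),
\end{equation*}
and I take the identity under this regrouping as the isomorphism. By Definition \ref{DefinitionModuleAttachedToKraftQuiver}, $F$ restricted to $U_{v_i}$ is the sum of $\rho_E$ over the $F$-arrows with tail $v_i$. Such an arrow exists iff the corresponding arrow at $\pi(i)$ in $\Gamma'$ exists, and on the summand $U_{v_i}\subseteq U'_{v'_{\pi(i)}}$ it is the same map $\rho_{E}$. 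The same holds for $V$. Hence the identity commutes with $F$ and $V$.

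The only real obstacle is bookkeeping: verifying that the arrows at each vertex of $\Gamma$ (at most one $F$-out and one $V$-out, by condition (2) of Definition \ref{DefinitionKraftQuiver}) correspond bijectively to those at its image in $\Gamma'$. This follows from the explicit description of $\mathcal E([w],m)^{\#}$ and $\mathcal E([w],t)^{\#}$ in Definition \ref{DefCircularKraftQuiverWithWord}. With that checked, the rest is a direct regrouping of summands.
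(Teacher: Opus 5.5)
Your proposal is correct and is essentially the paper's proof. The paper likewise sets $U'_{v'_i}=\bigoplus_{j\in\varphi^{-1}\{i\}}U_{v_j}$ along the projection $\varphi:\mathbb Z/m\mathbb Z\to\mathbb Z/t\mathbb Z$, takes $\rho'_{E'}$ to be the direct sum of the $\rho_E$ over the $k$ arrows lying over $E'$, and observes that the two modules coincide after regrouping summands; you simply spell out the bookkeeping (summand-shifting block maps, the $t=1$ loop case) that the paper dismisses as ``clear''.
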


\begin{proof}
    Let $(U,\rho)$ be as in the statement. With the same notations as Definition \ref{DefCircularKraftQuiverWithWord}, let us write $v_1,\ldots , v_{m}$ for the vertices of $\Gamma$, and $v_1',\ldots ,v_t'$ for the vertices of $\Gamma'$. The indices of the $v_i$'s (resp.~of the $v_i'$'s) are seen in $\mathbb Z/m\mathbb Z$ (resp.~in $\mathbb Z/t\mathbb Z$). Let $\varphi:\mathbb Z/m\mathbb Z \twoheadrightarrow \mathbb Z/t\mathbb Z$ denote the projection. For $i \in \mathbb Z/t\mathbb Z$, we define 
    \begin{equation*}
        U_{v_i'}' := \bigoplus_{j \in \varphi^{-1}\{i\}} U_{v_j},
    \end{equation*}
    and if $E' = (v_{i}',v'_{i+1})$ is an $F$-arrow in $\Gamma'$, we denote by $\varphi^{-1}\{E'\}$ the set of all the $F$-arrows $(v_{i_0+at},v_{i_0+at+1})$ in $\Gamma$, where $i_0 \in \mathbb Z/m \mathbb Z$ is a fixed lift of $i$ and $0 \leq a \leq k-1$. We define $\varphi^{-1}\{E'\}$ similarly if $E'$ is a $V$-arrow. Then, given any arrow $E'$ in $\Gamma'$, we define
    \begin{equation*}
        \rho_E' := \bigoplus_{E \in \varphi^{-1}\{E'\}} \rho_E.
    \end{equation*}
    It is clear that $M(\Gamma',U',\rho')$ produces a module which is isomorphic to $M(\Gamma,U,\rho)$.
\end{proof}

\section{Twisted Gelfand-Ponomarev modules of the first and second kinds} \label{Section3}

In Definition \ref{DefinitionModuleAttachedToKraftQuiver}, we have seen how to build a twisted Gelfand-Ponomarev module out of any $(\sigma,\tau)$-linear representation of a Kraft quiver. In this section, we will study the converse. Namely, given such a module $M$, how can we produce a Kraft quiver $\Gamma$ together with a $(\sigma,\tau)$-linear representation $(U,\rho)$ such that $M \simeq M(\Gamma,U,\rho)$? As it turns out, the process does not depend much on the choice of $(K,\sigma,\tau)$, so that most of the proofs here will be directly adapted from \cite{gelfand-ponomarev:1968}. We will use notations and definitions introduced in Section \ref{Section1} without recalling most of them.

\begin{remark}
    We warn the reader who might be interested in reading the original proofs in \cite{gelfand-ponomarev:1968} that there, the authors use the symbol $\subset$ for \textit{proper} inclusions. This differs from our notations, as we use $\subsetneq$ instead. 
\end{remark}

\subsection{Stabilized sequence and elementary intervals}

Let $M$ be a twisted Gelfand-Ponomarev module. Recall that $W$ denotes the monoid of finite words in the alphabet $\mathcal A^{\#} = \{F,V^{\#}\}$ as in Section \ref{SubsectionWords}.

\begin{definition}
    Let $w_1,w_2 \in W$. We say that $w_1$ is a \textit{prefix} (resp.~a \textit{suffix}) of $w_2$ if there exists some $w \in W$ such that $w_2 = w_1w$ (resp.~$w_2 = ww_1$).
\end{definition}

We consider the $\sigma$-linear relation $\Gamma_F$ and the $\tau^{-1}$-linear relation $(\Gamma_V)^{\#}$ on $M$ in the sense of Section \ref{Section1}. By abuse of notations, we still write $F$ and $V^{\#}$ instead of $\Gamma_F$ and $(\Gamma_V)^{\#}$. If $w = w_{m}w_{m-1} \cdots w_1 \in W$ is a word of length $m$, we write 
\begin{equation*}
    \xi(w)_i := \begin{cases}
        \sigma & \text{if } w_i = F;\\
        \tau^{-1} & \text{if } w_i = V^{\#},
    \end{cases}
\end{equation*} 
and we define $\xi(w) := \xi(w)_m \cdots \xi(w)_1 \in \mathrm{Aut}(W)$. Then $w$ gives rise to a $\xi(w)$-linear relation $D(w)$ on $M$. We call \textit{monomial} (in $F$ and $V^{\#}$) any semilinear relation $D$ on $M$ of the form 
\begin{equation*}
    D = F^{m_1}(V^{\#})^{m_2} \cdots F^{m_{r-1}}(V^{\#})^{m_r},
\end{equation*}
where $r \geq 1$, and $m_1,m_r \geq 0$ while $m_i > 0$ for $1 < i < r$, and $m = \sum m_i$. With $r=1$ and $m_1 = 0$, we also consider $\mathbf 1$ as a monomial. The set of monomials is denoted $\Sigma$. The application $w \mapsto D(w)$ defines a surjective morphism of monoids $W \to \Sigma$, where $\Sigma$ is equipped with the composition of relations. In general, this application is not injective. For instance, if $\Gamma = \Gamma(V^{\#}F)$, $M = M(\Gamma,\mathbf 1_{\Gamma})$ and $\tau = \sigma$, one may check that $D(FV^{\#}FV^{\#}) = D(FV^{\#})$.

\begin{remark}
    With our notations, formulas of the form ``$D(V^{\#}FF) = V^{\#}FF = V^{\#}F^2$'' make sense. On the left-hand side, $V^{\#}FF$ is seen as a word in $W$, whereas on the right-hand side it is seen as a monomial in $\Sigma$. We hope that the context is sufficiently clear throughout the exposition so that no confusion arises.
\end{remark}

\begin{remark}
    Since $FV = 0$ on $M$, we have $\mathrm{Dom}(V^{\#}) \subseteq  \mathrm{Ker}(F)$.
\end{remark}

\begin{definition}
    The \textit{stabilized sequence} of $M$ is the set 
    \begin{equation*}
        \mathcal F_M = \{\mathrm{Ker}(D), D \in \Sigma\} \cup \{\mathrm{Dom}(D), D\in \Sigma\}.
    \end{equation*}
\end{definition}

Thus $\mathcal F_M$ is a set consisting of certain subspaces of $M$. As it turns out, it is actually a flag. 

\begin{theorem}[cf.~\cite{gelfand-ponomarev:1968} Theorem 1.1]\label{TheoremStabilizedSequence}
    The stabilized sequence $\mathcal F_M$ is finite, non-empty, and there exists a unique numbering $\mathcal F = \{\beta_1, \ldots , \beta_s\}$ of its elements such that 
    \begin{equation*}
        \{0\} = \beta_0 \subsetneq \beta_1 \subsetneq \beta_2 \subsetneq \cdots \subsetneq \beta_{s-1} \subsetneq \beta_s = M.
    \end{equation*}
\end{theorem}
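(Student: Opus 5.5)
The heart of the statement is that $\mathcal F_M$ is totally ordered by inclusion. Once this is known, finiteness is immediate, since a chain of subspaces of the finite dimensional space $M$ has at most $\dim M+1$ distinct members. Non-emptiness and the extremal terms are also direct. Indeed $\mathrm{Ker}(\mathbf 1)=\{0\}$ and $\mathrm{Dom}(\mathbf 1)=M$ both lie in $\mathcal F_M$. The unique increasing numbering is then forced. So the plan is to prove that any two members of $\mathcal F_M$ are comparable.

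I would work with monomials $D$ and their words, and set up an ordering on $\Sigma$ (equivalently on $W$) that mirrors inclusion, following Gelfand--Ponomarev. The basic observations come from Lemma \ref{UsefulLemma}. For $D=D_2D_1$ we have $\mathrm{Ker}(D_1)\subseteq\mathrm{Ker}(D)\subseteq\mathrm{Dom}(D)\subseteq\mathrm{Dom}(D_1)$. By the Remark, $\mathrm{Dom}(V^{\#})=\mathrm{Im}(V)\subseteq\mathrm{Ker}(F)$. Dually, $\mathrm{Im}(F)\subseteq\mathrm{Ker}(V)=\mathrm{Indet}(V^{\#})$.

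Using these, I would prove by induction on word length the following comparison for two monomials $D=D(w)$ and $D'=D(w')$. Read the words from the right, since the rightmost letter acts first, and let $u$ be their longest common suffix. If $w=w''Fu$ and $w'=w'''V^{\#}u$, then every $\mathrm{Ker}$ and $\mathrm{Dom}$ built from $w'$ is contained in every $\mathrm{Ker}$ and $\mathrm{Dom}$ built from $w$. The base case is $u$ empty. There $\mathrm{Dom}(V^{\#}\cdots)\subseteq\mathrm{Dom}(V^{\#})\subseteq\mathrm{Ker}(F)\subseteq\mathrm{Ker}(\cdots F)$. For the inductive step, I pull the common suffix back. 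Writing $D=D_2D(u)$, we have $\mathrm{Ker}(D)=D(u)^{-1}(\mathrm{Ker}(D_2))$ and $\mathrm{Dom}(D)=D(u)^{-1}(\mathrm{Dom}(D_2))$. Preimage under a relation is monotone with respect to inclusion, so the comparison for $D_2,D_2'$ transfers to $D,D'$.

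The remaining cases are where one word is a suffix of the other, say $w=w''w'$. There $\mathrm{Ker}(D')\subseteq\mathrm{Ker}(D)\subseteq\mathrm{Dom}(D)\subseteq\mathrm{Dom}(D')$ by Lemma \ref{UsefulLemma}. This gives comparability of every mixed pair. The pairs $\mathrm{Ker}(D)$ versus $\mathrm{Dom}(D')$ and $\mathrm{Ker}(D)$ versus $\mathrm{Ker}(D')$ are all covered by these two cases. So any two elements of $\mathcal F_M$ are comparable.

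The main obstacle is getting the direction and the preimage bookkeeping right. The inclusion $\mathrm{Dom}(V^{\#}X)\subseteq\mathrm{Ker}(FY)$ must hold for arbitrary continuations $X,Y$, and both sides must be transported uniformly through the common suffix. For the transport, one must check that $\mathrm{Ker}(D_2D_1)=D_1^{-1}(\mathrm{Ker}D_2)$ and $\mathrm{Dom}(D_2D_1)=D_1^{-1}(\mathrm{Dom}D_2)$ hold with the paper's composition convention. Both follow directly from the definition of $B_2B_1$.
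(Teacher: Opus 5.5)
Your proposal is correct and is essentially the paper's proof. The paper's Lemma \ref{LemmaRelativePositionIntervals} splits on the maximal common suffix exactly as you do, uses $\mathrm{Dom}(V^{\#})\subseteq\mathrm{Ker}(F)$ together with Lemma \ref{UsefulLemma}, and transports along the common suffix by right multiplication with $D(w_0)$; this is your pullback along $D(u)$, which is a single step and needs no induction, after which the chain property gives finiteness. One notational fix: with the paper's convention (rightmost letter acts first) the key inclusion should read $\mathrm{Dom}(XV^{\#})\subseteq\mathrm{Dom}(V^{\#})\subseteq\mathrm{Ker}(F)\subseteq\mathrm{Ker}(YF)$, since $\mathrm{Dom}(V^{\#}X)\subseteq\mathrm{Ker}(FY)$ as you wrote it fails in general (already for $X=F$, $Y=\mathbf 1$).
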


We call \textit{interval} any pair $(\alpha_1,\alpha_2)$ of $K$-subspaces of $M$ such that $\alpha_1 \subseteq  \alpha_2$. The intervals of the form $(\beta_i,\beta_{i+1})$ for $0 \leq i \leq s-1$ are called the \textit{elementary intervals} of $M$. Before proving the Theorem, we start with a Lemma.

\begin{lemma}[cf.~\cite{gelfand-ponomarev:1968} Lemma 1.1 and \cite{chai:kraft2025} Lemma 6.3.2]\label{LemmaRelativePositionIntervals}
    Let $w_1,w_2 \in W$ be two words. Then, up to exchanging $w_1$ and $w_2$ if necessary, exactly one of the following assertions hold:
    \begin{enumerate}
        \item $w_1 = wV^{\#}w_0$ and $w_2 = w'Fw_0$ for some words $w,w',w_0 \in W$. In this case the interval $(\mathrm{Ker}D(w_2),\mathrm{Dom}D(w_2))$ follows $(\mathrm{Ker}D(w_1),\mathrm{Dom}D(w_1))$, that is,
        \begin{equation*}
            \mathrm{Ker}D(w_1) \subseteq  \mathrm{Dom}D(w_1) \subseteq  \mathrm{Ker}D(w_2) \subseteq  \mathrm{Dom}D(w_2),
        \end{equation*}
        \item $w_2 = ww_1$ for some word $w \in W$. In this case the interval $(\mathrm{Ker}D(w_2),\mathrm{Dom}D(w_2))$ is sandwiched inside $(\mathrm{Ker}D(w_1),\mathrm{Dom}D(w_1))$, that is,
        \begin{equation*}
            \mathrm{Ker}D(w_1) \subseteq  \mathrm{Ker}D(w_2) \subseteq  \mathrm{Dom}D(w_2) \subseteq  \mathrm{Dom}D(w_1).
        \end{equation*}
    \end{enumerate}
\end{lemma}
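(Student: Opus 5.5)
The plan splits into a combinatorial dichotomy on words followed by two inclusion arguments, both resting on Lemma \ref{UsefulLemma} and the relation $\mathrm{Dom}(V^{\#}) \subseteq \mathrm{Ker}(F)$.

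\textbf{Combinatorics.} Write $w_1 = a_m\cdots a_1$ and $w_2 = b_{m'}\cdots b_1$, reading letters from the right, i.e.\ from the first relation applied. Let $w_0$ be the longest common suffix.
\begin{itemize}
\item If $w_0$ equals $w_1$ or $w_2$, then one word is a suffix of the other. After swapping, $w_2 = ww_1$, which is case (2).
\item Otherwise the next letters to the left of $w_0$ exist and differ. Since the alphabet is $\{F,V^{\#}\}$, one of them is $V^{\#}$ and the other is $F$. After swapping, $w_1 = wV^{\#}w_0$ and $w_2 = w'Fw_0$, which is case (1).
\end{itemize}
Exactly one case holds: in case (1) neither word is a suffix of the other, because they disagree at position $l(w_0)+1$.

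\textbf{Case (2).} Here $D(w_2) = D(w)D(w_1)$, so Lemma \ref{UsefulLemma} gives directly
\[
\mathrm{Dom}D(w_2) \subseteq \mathrm{Dom}D(w_1), \qquad \mathrm{Ker}D(w_1) \subseteq \mathrm{Ker}D(w_2).
\]
The inclusion $\mathrm{Ker} \subseteq \mathrm{Dom}$ holds for any relation. This gives the chain in (2).

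\textbf{Case (1).} The outer inclusions of the chain are again automatic, so it remains to show $\mathrm{Dom}D(w_1) \subseteq \mathrm{Ker}D(w_2)$. Put $D_0 = D(w_0)$.
\begin{itemize}
\item By Lemma \ref{UsefulLemma}, $\mathrm{Dom}D(w_1) \subseteq \mathrm{Dom}(V^{\#}D_0)$.
\item Let $x$ lie in that domain. Then there is a chain $x \xrightarrow{D_0} y \xrightarrow{V^{\#}} z$, so $y \in \mathrm{Dom}(V^{\#}) = \mathrm{Im}(V) \subseteq \mathrm{Ker}(F)$, using $FV = 0$.
\item Hence $x \xrightarrow{D_0} y \xrightarrow{F} 0$, so $x \in \mathrm{Ker}(FD_0)$.
\item Finally, Lemma \ref{UsefulLemma} applied to $D(w_2) = D(w')(FD_0)$ gives $\mathrm{Ker}(FD_0) \subseteq \mathrm{Ker}D(w_2)$.
\end{itemize}
This completes the chain in (1).

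\textbf{Main subtlety.} The only point requiring care is the ``exactly one'' clause together with the reading direction of words, namely that the suffix corresponds to the relations applied first. That comes down to the bookkeeping in the combinatorial step rather than to any real obstacle.
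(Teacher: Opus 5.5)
Your proof is correct and follows essentially the same route as the paper: you split on the maximal common suffix $w_0$, settle case (2) directly with Lemma \ref{UsefulLemma}, and in case (1) reduce to $\mathrm{Dom}(V^{\#}) \subseteq \mathrm{Ker}(F)$, which comes from $FV=0$. The only difference is cosmetic: the paper first proves $\mathrm{Dom}D(wV^{\#}) \subseteq \mathrm{Ker}D(w'F)$ and then right-multiplies by $D(w_0)$ via Proposition \ref{UsefulProp}, whereas you carry $D(w_0)$ through elementwise before applying Lemma \ref{UsefulLemma}, and you also spell out the mutual exclusivity of the two cases more explicitly than the paper does.
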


\begin{proof}
    Let $w_0$ denote the maximal common suffix of $w_1$ and $w_2$, so that $w_1 = w_1'w_0$ and $w_2 = w_2'w_0$ for some possibly empty words $w_1',w_2'$. Then either $w_1'$ or $w_2'$ is the empty word, either both of them are non-empty. The former corresponds to (2) and the latter corresponds to (1).\\
    Assume first that both $w_1'$ and $w_2'$ are not empty. The maximality of $w_0$ implies that, up to exchanging $w_1$ and $w_2$, we have $w_1' = wV^{\#}$ and $w_2' = w'F$ for some words $w,w' \in W$. By Lemma \ref{UsefulLemma}, we have
    \begin{equation*}
        \mathrm{Dom}D(wV^{\#}) \subseteq  \mathrm{Dom}(V^{\#}) \subseteq  \mathrm{Ker}(F) \subseteq  \mathrm{Ker}D(w'F).
    \end{equation*}
    In particular, we have $\mathrm{Dom}D(wV^{\#}) \subseteq  \mathrm{Ker}D(w'F)$. By Proposition \ref{UsefulProp}, multiplying on the right with $D(w_0)$ preserves the inclusion, so that $\mathrm{Dom}D(w_1) \subseteq  \mathrm{Ker}D(w_2)$. We deduce that
    \begin{equation*}
        \mathrm{Ker}D(w_1) \subseteq  \mathrm{Dom}D(w_1) \subseteq  \mathrm{Ker}D(w_2) \subseteq  \mathrm{Dom}D(w_2),
    \end{equation*}
    Assume now that $w_1'$ or $w_2'$ is empty. Up to exchanging $w_1$ and $w_2$, we may assume that $w_1'$ is empty. It follows that $w_2 = w_2'w_1$. By Lemma \ref{UsefulLemma}, we readily have
    \begin{equation*}
            \mathrm{Ker}D(w_1) \subseteq  \mathrm{Ker}D(w_2) \subseteq  \mathrm{Dom}D(w_2) \subseteq  \mathrm{Dom}D(w_1).
    \end{equation*}
\end{proof}

\begin{proof}[Proof of Theorem \ref{TheoremStabilizedSequence}]
Lemma \ref{LemmaRelativePositionIntervals} implies that the set $\mathcal F$ is a chain with respect to inclusion. Since $M$ is finite dimensional, $\mathcal F$ must be finite. It is also trivially non-empty.
\end{proof}

From now on, we will always write $\mathcal F = \{\beta_0,\ldots , \beta_s\}$ as in Theorem \ref{TheoremStabilizedSequence}. One may separate the elementary intervals $(\beta_i,\beta_{i+1})$ into two families. Observe that for every $D \in \Sigma$, we have
\begin{equation}\label{CompletionInterval}
\mathrm{Ker}(D) \subseteq  \mathrm{Dom}(V^{\#}D) \subseteq  \mathrm{Ker}(FD) \subseteq  \mathrm{Dom}(D).
\end{equation}

\begin{definition}
    We define a subset $\Sigma_1 \subseteq  \Sigma$ as follows
    \begin{equation*}
        \Sigma_1 := \{D \in \Sigma, \mathrm{Dom}(V^{\#}D) \subsetneq \mathrm{Ker}(FD)\}.
    \end{equation*}
    The elements of $\Sigma_1$ are called \textit{monomials of the first kind}. We also define $W_1 := D^{-1}(\Sigma_1)$, and elements of $W_1$ are called \textit{words of the first kind}.
\end{definition}

In particular, every monomial of the first kind $D \in \Sigma_1$ is non-null in the sense of Definition \ref{DefNonNull}.

\begin{proposition}[cf.~\cite{gelfand-ponomarev:1968} Propositions 1.6 and 1.7]\label{MonomialOfFirstKind}
    For $w \in W_1$, the interval $(\mathrm{Dom}D(V^{\#}w),\mathrm{Ker}D(Fw))$ is elementary. Moreover if $w'\in W_1$, then $w \not = w'$ implies that $(\mathrm{Dom}D(V^{\#}w),\mathrm{Ker}D(Fw)) \not = (\mathrm{Dom}D(V^{\#}w'),\mathrm{Ker}D(Fw'))$.
\end{proposition}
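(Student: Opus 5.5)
To show that the interval $(\mathrm{Dom}D(V^{\#}w),\mathrm{Ker}D(Fw))$ is elementary, I will check that no element of $\mathcal F_M$ lies strictly between its endpoints. By Theorem \ref{TheoremStabilizedSequence}, $\mathcal F_M$ is a chain containing both endpoints, so this is the only thing to verify. Concretely, I will show that for every word $u\in W$, each of $\mathrm{Ker}D(u)$ and $\mathrm{Dom}D(u)$ is either contained in $\mathrm{Dom}D(V^{\#}w)$ or contains $\mathrm{Ker}D(Fw)$.

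The plan is to apply Lemma \ref{LemmaRelativePositionIntervals} twice, to the pairs $(u,V^{\#}w)$ and $(u,Fw)$.

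First pair, $(u,V^{\#}w)$. The lemma places the interval $I(u):=(\mathrm{Ker}D(u),\mathrm{Dom}D(u))$ in one of three positions relative to $I(V^{\#}w)$:
\begin{itemize}
\item If $I(u)$ lies entirely before $I(V^{\#}w)$, then both subspaces of $u$ are contained in $\mathrm{Ker}D(V^{\#}w)\subseteq\mathrm{Dom}D(V^{\#}w)$, and we are done.
\item If $I(u)$ lies after $I(V^{\#}w)$, or is sandwiched inside it (that is, $u=u'V^{\#}w$), then $\mathrm{Ker}D(u)$ and $\mathrm{Dom}D(u)$ contain $\mathrm{Ker}D(V^{\#}w)$. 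This alone does not settle the question, which is why the second pair is needed.
\item If $I(V^{\#}w)$ is sandwiched inside $I(u)$, then $V^{\#}w=u'u$, so $u$ is a suffix of $V^{\#}w$. Either $u=V^{\#}w$, in which case $\mathrm{Ker}D(u)\subseteq\mathrm{Dom}D(u)=\mathrm{Dom}D(V^{\#}w)$ and we are done, or $u$ is a suffix of $w$. In the latter case $I(Fw)$ is also sandwiched in $I(u)$, so $\mathrm{Ker}D(u)\subseteq\mathrm{Ker}D(w)\subseteq\mathrm{Dom}D(V^{\#}w)$ by \eqref{CompletionInterval}, and $\mathrm{Dom}D(u)\supseteq\mathrm{Dom}D(w)\supseteq\mathrm{Ker}D(Fw)$. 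Both conditions hold.
\end{itemize}

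The remaining cases are handled by comparing $u$ with $w$ through the maximal common suffix, as in the proof of the lemma. If $u$ does not have $w$ as a suffix, the branching letter puts $I(u)$ entirely before or entirely after $I(w)$. Since $I(w)\supseteq[\mathrm{Dom}D(V^{\#}w),\mathrm{Ker}D(Fw)]$, this gives the required dichotomy. If instead $u=u'w$ with $u'$ nonempty, then $u'$ ends in either $F$ or $V^{\#}$:
\begin{itemize}
\item If $u=u''Fw$, then $I(u)\subseteq I(Fw)$, so $\mathrm{Ker}D(u)\supseteq\mathrm{Ker}D(Fw)$ and $\mathrm{Dom}D(u)\supseteq\mathrm{Ker}D(Fw)$.
\item If $u=u''V^{\#}w$, then $I(u)\subseteq I(V^{\#}w)$, so $\mathrm{Dom}D(u)\subseteq\mathrm{Dom}D(V^{\#}w)$ and likewise for the kernel.
\end{itemize}
This suffix-based case split is cleaner, and I will organize the whole argument around it rather than around the first pair. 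The hypothesis $w\in W_1$ guarantees that the interval is nontrivial, so it genuinely is an elementary interval.

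For distinctness, let $w\neq w'$ both lie in $W_1$, and again split on their maximal common suffix.
\begin{itemize}
\item If neither is a suffix of the other, then by Lemma \ref{LemmaRelativePositionIntervals} (1), say, $\mathrm{Dom}D(w)\subseteq\mathrm{Ker}D(w')$. Hence $\mathrm{Ker}D(Fw)\subseteq\mathrm{Dom}D(w)\subseteq\mathrm{Ker}D(w')\subseteq\mathrm{Dom}D(V^{\#}w')$. Combined with strictness from $w\in W_1$, the two intervals cannot coincide.
\item If $w'=u w$ with $u$ nonempty, then $u$ ends in $F$ or $V^{\#}$. The interval for $w'$ then sits inside $I(Fw)$ or inside $I(V^{\#}w)$. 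In the first case $\mathrm{Dom}D(V^{\#}w')\supseteq\mathrm{Ker}D(Fw)$, which is strictly bigger than $\mathrm{Dom}D(V^{\#}w)$. In the second case $\mathrm{Ker}D(Fw')\subseteq\mathrm{Dom}D(V^{\#}w)$, which is strictly smaller than $\mathrm{Ker}D(Fw)$. Either way the intervals differ.
\end{itemize}

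I expect the main obstacle to be the bookkeeping in this last case: keeping track of which containments are strict. The strictness comes only from the first-kind hypothesis, applied to $w$ itself, so care is needed to invoke it at the right point.
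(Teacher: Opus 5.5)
Your proof is correct and follows essentially the same route as the paper. For elementarity you use the same case split: whether $w$ is a suffix of the word $u$ defining an element of $\mathcal F_M$, and if so, which letter immediately precedes $w$. For distinctness you use the same suffix comparison via Lemma \ref{LemmaRelativePositionIntervals}.

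There is one small slip in your final organization. When $u$ does not have $w$ as a suffix, $u$ may still be a proper suffix of $w$, in which case there is no branching letter. The case $u=w$ is also not listed. Both are covered by your earlier observation that if $u$ is a suffix of $w$, then $\mathrm{Ker}D(u)\subseteq\mathrm{Ker}D(w)\subseteq\mathrm{Dom}D(V^{\#}w)$ and $\mathrm{Dom}D(u)\supseteq\mathrm{Dom}D(w)\supseteq\mathrm{Ker}D(Fw)$.
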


\begin{proof}
    Let us consider some $\beta_i \in \mathcal F_M$ lying in between $\mathrm{Dom}D(V^{\#}w)$ and $\mathrm{Ker}D(Fw)$, that is, 
    \begin{equation*}
        \mathrm{Dom}D(V^{\#}w) \subseteq  \beta_i \subseteq  \mathrm{Ker}D(Fw).
    \end{equation*}
    We must prove that one of these two inclusions is in fact an equality. There is a word $v \in W$ such that $\beta_i$ is equal to $\mathrm{Ker}D(v)$ or $\mathrm{Dom}D(v)$. Assume first that $w$ is not a suffix of $v$. Since we have $\mathrm{Ker}D(w) \subseteq  \beta_i \subseteq  \mathrm{Dom}D(w)$, by distinguishing cases in Lemma \ref{LemmaRelativePositionIntervals} with respect to $w$ and $v$, we find out that $\beta_i = \mathrm{Ker}D(w)$ or $\beta_i = \mathrm{Dom}D(w)$. The former implies that $\beta_i = \mathrm{Dom}D(V^{\#}w)$ and the latter implies that $\beta_i = \mathrm{Ker}D(Fw)$. Thus, let now assume that we can write $v = v'w$ for some word $v' \in W$. \\

    \textbf{Case 1:} $v'$ is the empty word. If $\beta_i = \mathrm{Ker}D(v) = \mathrm{Ker}D(w)$, then we get $\beta_i = \mathrm{Dom}D(V^{\#}w)$. If $\beta_i = \mathrm{Dom}D(v) = \mathrm{Dom}D(w)$, then we get  $\beta_i = \mathrm{Ker}D(Fw)$. \\

    \textbf{Case 2:} $v' = v''V^{\#}$ for some word $v'' \in W$. In this case $V^{\#}w$ is a suffix of $v$ so that we are in Case (2) of Lemma \ref{LemmaRelativePositionIntervals}, that is,
    \begin{equation*}
        \mathrm{Ker}D(V^{\#}w) \subseteq  \mathrm{Ker}D(v) \subseteq  \mathrm{Dom}D(v) \subseteq  \mathrm{Dom}D(V^{\#}w).    
    \end{equation*}
    It follows that $\beta_i \subseteq  \mathrm{Dom}D(V^{\#}w) \subseteq  \beta_i$, so that this is in fact an equality.\\

    \textbf{Case 3:} $v' = v''F$ for some word $v'' \in \Sigma$. In this case $Fw$ is a suffix of $v$ so that we are in Case (2) of Lemma \ref{LemmaRelativePositionIntervals}, that is,
    \begin{equation*}
        \mathrm{Ker}D(Fw) \subseteq  \mathrm{Ker}D(v) \subseteq  \mathrm{Dom}D(v) \subseteq  \mathrm{Dom}D(Fw).    
    \end{equation*}
    It follows that $\beta_i \subseteq  \mathrm{Ker}D(Fw) \subseteq  \beta_i$, so that there is an equality.\\

    This proves that the interval $(\mathrm{Dom}D(V^{\#}w),\mathrm{Ker}D(Fw))$ is elementary. Let us now consider some $w' \in W_1$ with $w \not = w'$. If $w$ and $w'$ are in Case (1) of Lemma \ref{LemmaRelativePositionIntervals}, then clearly the intervals $(\mathrm{Dom}D(V^{\#}w),\mathrm{Ker}D(Fw))$ and $(\mathrm{Dom}D(V^{\#}w'),\mathrm{Ker}D(Fw'))$ are distinct. Thus, up to exchanging $w$ and $w'$, we may assume that $w$ is a suffix of $w'$. Since $w \not = w'$, we actually have that $V^{\#}w$ or $Fw$ is a suffix of $w'$.\\

    \textbf{Case 1:} $w' = w_1V^{\#}w$ for some word $w_1 \in W$. By Lemma \ref{LemmaRelativePositionIntervals} with respect to $w'$ and $V^{\#}w$, we have 
    \begin{equation*}
        \mathrm{Ker}D(V^{\#}w) \subseteq  \mathrm{Ker}D(w') \subsetneq \mathrm{Dom}D(w') \subseteq  \mathrm{Dom}D(V^{\#}w).
    \end{equation*}
    \sloppy It follows that $\mathrm{Dom}D(w')$ separates the intervals $(\mathrm{Dom}D(V^{\#}w),\mathrm{Ker}D(Fw))$ and $(\mathrm{Dom}D(V^{\#}w'),\mathrm{Ker}D(Fw'))$. Namely, we have 
    \begin{equation*}
        \mathrm{Dom}D(V^{\#}w') \subsetneq \mathrm{Ker}D(Fw')) \subseteq  \mathrm{Dom}D(w') \subseteq  \mathrm{Dom}D(V^{\#}w) \subsetneq \mathrm{Ker}D(Fw)).
    \end{equation*}

    \textbf{Case 2:} $w' = w_1Fw$ for some word $w_1 \in W$. By Lemma \ref{LemmaRelativePositionIntervals} with respect to $w'$ and $Fw$, we have 
    \begin{equation*}
        \mathrm{Ker}D(Fw) \subseteq  \mathrm{Ker}D(w') \subsetneq \mathrm{Dom}D(w') \subseteq  \mathrm{Dom}D(Fw).
    \end{equation*}
    \sloppy It follows that $\mathrm{Ker}D(w')$ separates the intervals $(\mathrm{Dom}D(V^{\#}w),\mathrm{Ker}D(Fw))$ and $(\mathrm{Dom}D(V^{\#}w'),\mathrm{Ker}D(Fw'))$. Namely, we have 
    \begin{equation*}
        \mathrm{Dom}D(V^{\#}w) \subsetneq \mathrm{Ker}D(Fw) \subseteq  \mathrm{Ker}D(w') \subseteq  \mathrm{Dom}D(V^{\#}w') \subsetneq \mathrm{Ker}D(Fw').
    \end{equation*}
    Thus, the Proposition is proved.
\end{proof}

\begin{definition}
    An elementary interval $(\beta_i,\beta_{i+1})$ is said to be \textit{of the first kind} if there exists some $D \in \Sigma_1$ such that $(\beta_i,\beta_{i+1}) = (\mathrm{Dom}(V^{\#}D),\mathrm{Ker}(FD))$. The elementary intervals which are not of the first kind are said to be \textit{of the second kind}.
\end{definition}

In particular, Proposition \ref{MonomialOfFirstKind} implies that $D:W \to \Sigma$ is injective on $W_1$, so that it induces a bijection $W_1 \xrightarrow{\sim} \Sigma_1$. Besides, the sets $W_1$ and $\Sigma_1$ are also in bijection with the set of elementary intervals of the first kind. It follows that they are in fact finite sets. 

\begin{definition}\label{DefinitionTwistedGFModuleOfFirstSecondKind}
    We say that $M$ is a twisted Gelfand-Ponomarev module \textit{of the first kind} (resp.~\textit{of the second kind}) if all its elementary intervals are of the first kind (resp.~of the second kind).
\end{definition}

Over the next two subsections, we will show that $M$ can be decomposed as a direct sum 
\begin{equation*}
    M = M_1 \oplus M_2,
\end{equation*}
where $M_1$ (resp.~$M_2$) is a submodule of the first (resp.~second) kind. 

\subsection{Submodules of the first kind and linear Kraft quivers}

In this section, we assume that $W_1 \not = \emptyset$. We denote by $m \geq 0$ the maximal length of the words $w \in W_1$ of the first kind. We start by defining the \textit{prefix order} on $W$.

\begin{definition}
    We define an ordering relation $\preceq$ on $W$ as follows: for $w_1,w_2 \in W$,
    \begin{equation*}
        w_1 \preceq w_2 \iff w_2 = w_1 w \text{ for some word } w \in W.
    \end{equation*}
\end{definition}

In other words, $w_1 \preceq w_2$ if and only if $w_1$ is a prefix of $w_2$. If $w_2$ is not empty, we say that $w_1$ is an \textit{immediate prefix} of $w_2$ if $w_1 \preceq w_2$ and $l(w_1) = l(w_2) - 1$. Clearly, each non-empty word has a unique immediate prefix.

\begin{remark}
    We note that $\preceq$ does not descend to an ordering relation on $\Sigma$ in general, since anti-symmetry might fail. For example, if $M = M(\Gamma,\mathbf 1_{\Gamma})$ with $\Gamma = \Gamma(V^{\#}F)$ and $\tau = \sigma$, we have $(FV^{\#})^2 = FV^{\#}$ as monomials in $\Sigma$. If we were to define $\preceq$ directly on $\Sigma$, we would have $FV^{\#} \preceq FV^{\#}F$ and $FV^{\#}F \preceq FV^{\#}$, but $FV^{\#} \not = FV^{\#}F$ as monomials. However, since $\Sigma_1$ is in bijection with $W_1$, the restriction of $\preceq$ to $W_1$ does define an order on $\Sigma_1$.
\end{remark}

\begin{definition}
    We define a directed graph $\Gamma(M,1\mathrm{st})^{\#} = (\mathcal V,\mathcal E^{\#},\mathrm{label})$ labeled by $\{F,V^{\#}\}$ as follows:
    \begin{itemize}
        \item $\mathcal V := W_1$, so that vertices are the words of the first kind,
        \item $\mathcal E^{\#}$ consists of all pairs $(w,w')$ where $w, w' \in W_1$ and $w$ is an immediate prefix of $w'$,
        \item for $E = (w,w') \in \mathcal E^{\#}$, we define 
        \begin{equation*}
            \mathrm{label}(E) = \begin{cases}
                F & \text{if } w' = wF;\\
                V^{\#} & \text{if } w' = wV^{\#}.
            \end{cases}
        \end{equation*}
    \end{itemize}
    We define $\Gamma(M,1\mathrm{st})$ as the opposite of the converse graph of $\Gamma(M,1\mathrm{st})^{\#}$ in the sense of Definitions \ref{DefinitionConverseGraph} and \ref{DefinitionOppositeGraph}.
\end{definition}

\sloppy In other words, $\Gamma(M,1\mathrm{st})$ is obtained from $\Gamma(M,1\mathrm{st})^{\#}$ by reversing all the $F$-arrows and by replacing the labels $V^{\#}$ with $V$. See Figure \ref{Figure5} for an example of a graph $\Gamma(M,1\mathrm{st})$ where $W_1$ consists of the words $\emptyset, F, V^{\#}, F^2, V^{\#}F, (V^{\#})^2, V^{\#}F^2, V^{\#}FV^{\#}, (V^{\#})^3$ (such examples of $M$ do exist). In order to distinguish between the labels of arrows and the names of the vertices, we wrote the latter in bold letters.

\begin{figure} 
\centering
\begin{tikzpicture}[
       decoration = {markings,
                     mark=at position .5 with {\arrow{Stealth[length=2mm]}}},
       dot/.style = {circle, fill, inner sep=2.4pt, node contents={},
                     label=#1},
every edge/.style = {draw, postaction=decorate}
                        ]

\node (e1) at (0,0) [dot,label=below:$\bm{\emptyset}$];
\node (e2) at (1,2) [dot,label=right:$\bm{F}$];
\node (e3) at (-1,2) [dot,label=left:$\bm{V^{\#}}$];
\node (e4) at (2,4) [dot,label=right:$\bm{F^2}$];
\node (e5) at (0,4) [dot,label=right:$\bm{V^{\#}F}$];
\node (e6) at (-2,4) [dot,label=left:$\bm{(V^{\#})^2}$];
\node (e7) at (1,6) [dot,label=right:$\bm{V^{\#}F^2}$];
\node (e8) at (-1,6) [dot,label=above:$\bm{V^{\#}FV^{\#}}$];
\node (e9) at (-3,6) [dot,label=left:$\bm{(V^{\#})^3}$];

\path (e2) edge node[right] {$F$} (e1);
\path (e4) edge node[right] {$F$} (e2);
\path (e5) edge node[right] {$F$} (e3);
\path (e7) edge node[right] {$F$} (e5);
\path (e1) edge node[left] {$V$} (e3);
\path (e3) edge node[left] {$V$} (e6);
\path (e5) edge node[left] {$V$} (e8);
\path (e6) edge node[left] {$V$} (e9);

\end{tikzpicture}
\caption{An example of graph $\Gamma(M,1\mathrm{st})$.}
\label{Figure5}
\end{figure}

\begin{theorem}[cf.~\cite{gelfand-ponomarev:1968} Theorem 4.1 and \cite{chai:kraft2025} Lemma 6.5.2]
    Assume that $W_1 \not = \emptyset$. The graph $\Gamma(M,1\mathrm{st})^{\#}$ is a connected tree with a unique root, and such that each vertex is the tail of at most two arrows and the head of at most one arrow. 
\end{theorem}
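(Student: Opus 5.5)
The plan is to reduce the whole statement to one structural fact: \emph{$W_1$ is closed under taking prefixes}. That is, if $w,u \in W$ and $wu \in W_1$, then $w \in W_1$. The other assertions then follow formally from the definition of $\Gamma(M,1\mathrm{st})^{\#}$.

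\textbf{Prefix closure.} First I record how domains and kernels behave under composition. For semilinear relations $A,B$ on $M$ we have $\mathrm{Dom}(AB) = B^{-1}(\mathrm{Dom}(A))$ and $\mathrm{Ker}(AB) = B^{-1}(\mathrm{Ker}(A))$. In the language of Section \ref{Section1}, these are the identities $\theta AB = \theta_{\mathrm{Dom}(A)}B$ and $\mathbf 0 AB = \theta_{\mathrm{Ker}(A)}B$, which follow from $\theta A = \theta_{\mathrm{Dom}(A)}$, $\mathbf 0A = \theta_{\mathrm{Ker}(A)}$ and associativity.

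Now let $w, u \in W$ and suppose $w \notin W_1$. By (\ref{CompletionInterval}), this means $\mathrm{Dom}(V^{\#}D(w)) = \mathrm{Ker}(FD(w))$. Since $D$ is a monoid morphism, $D(V^{\#}wu) = V^{\#}D(w)D(u)$ and $D(Fwu) = FD(w)D(u)$. Applying the preimage operation $D(u)^{-1}$ to both sides of the equality gives
\begin{equation*}
\mathrm{Dom}(V^{\#}D(wu)) = D(u)^{-1}\bigl(\mathrm{Dom}(V^{\#}D(w))\bigr) = D(u)^{-1}\bigl(\mathrm{Ker}(FD(w))\bigr) = \mathrm{Ker}(FD(wu)).
\end{equation*}
Hence $wu \notin W_1$. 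This is the contrapositive of prefix closure. I expect this to be the only step with real content, and the main point of care is the order convention: a prefix is the \emph{leftmost} part of the word, so it corresponds to the factor applied \emph{last} in the composition.

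\textbf{The tree structure.} Since $W_1 \neq \emptyset$, prefix closure applied with $w = \emptyset$ shows $\emptyset \in W_1$.
\begin{itemize}
\item \emph{Connectedness.} Given $w' = a_m\cdots a_1 \in W_1$, the chain of prefixes $\emptyset,\ a_m,\ a_ma_{m-1},\ \ldots,\ w'$ lies in $W_1$ by prefix closure. Consecutive terms form arrows of $\mathcal E^{\#}$, so every vertex is joined to $\emptyset$ and the graph is connected.
\item \emph{In-degree.} Every arrow $(w,w')$ goes from the unique immediate prefix $w$ of $w'$ to $w'$. So each non-empty vertex is the head of exactly one arrow, and $\emptyset$ is the head of none; $\emptyset$ is thus the unique root.
\item \emph{Out-degree.} The arrows with tail $w$ go only to $wF$ and $wV^{\#}$, so each vertex is the tail of at most two arrows.
\item \emph{Acyclicity.} Length strictly increases along arrows, so there are no directed cycles. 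Moreover, a connected graph with $\#W_1$ vertices and exactly $\#W_1 - 1$ arrows (one into each non-root vertex) is a tree; $W_1$ is finite by the remark after Proposition \ref{MonomialOfFirstKind}.
\end{itemize}
These four points together give the statement.
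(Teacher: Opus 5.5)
Your proposal is correct and follows essentially the same route as the paper. Both prove prefix closure of $W_1$ by pulling the equality $\mathrm{Dom}D(V^{\#}w)=\mathrm{Ker}D(Fw)$ back along $D(u)$ (the paper phrases this as right-multiplying $\theta D(V^{\#}w)=\mathbf 0 D(Fw)$ by $D(u)$), and then make the same root, connectedness and degree observations. Your vertex/edge count for the tree property is a small tightening of the paper's version. The paper's remark that length increases along arrows only excludes directed cycles, whereas excluding undirected cycles genuinely uses the in-degree-one property, as your count does.
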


\begin{proof}
    First let us show that any prefix of a word of the first kind is again of the first kind. Namely, assume that $w_1 \preceq w_2$ and $w_2 \in W_1$. Let us write $w_2 = w_1w$ for some word $w \in W$. If we had $\theta D(V^{\#}w_1) = \mathbf 0D(Fw_1)$ then multiplying by $D(w)$ on the right would yield $\theta D(V^{\#}w_2) = \mathbf 0D(Fw_2)$, that is, $\mathrm{Dom}D(V^{\#}w_2) = \mathrm{Ker}D(Fw_2)$, which is a contradiction. Thus $w_1 \in W_1$ must be of the first kind. \\
    Since $W_1$ is not empty, it contains some word $w$ and also all of its prefixes. In particular, the empty word $\emptyset$ belongs to $W_1$, and constitutes the unique root of $\Gamma(M,1\mathrm{st})^{\#}$. Since the empty word is a prefix of every word, it also follows that $\Gamma(M,1\mathrm{st})^{\#}$ is connected. If $w \not = \emptyset$ is a word of the first kind, it has a unique immediate prefix, thus it is the head of a single arrow in $\Gamma(M,1\mathrm{st})^{\#}$. Besides, if an arrow has $w$ as its tail, its head is either equal to $wV^{\#}$ or to $wF$. Thus, $w$ is the tail of at most two arrows. Since each successor of $w$ has length strictly higher than the length of $w$, the graph $\Gamma(M,1\mathrm{st})^{\#}$ has no cycle hence is a tree.
\end{proof}

We point out that in general, $\Gamma(M,1\mathrm{st})$ is not a Kraft quiver. However we will explain later in this section how it can be decomposed into linear Kraft quivers. We now proceed to define a $(\sigma,\tau)$-linear representation $(U^{1\mathrm{st}},\rho^{1\mathrm{st}})$ on $\Gamma(M,1\mathrm{st})$. 

\begin{lemma}\label{Morphisms1stKind}
    Let $w_1, w_2 \in W_1$. 
    \begin{enumerate}
        \item Assume that $w_2 = w_1F$. Then $F$ induces a $\sigma$-linear injection
        \begin{equation*}
            \rho_{(w_2,w_1)}^{1\mathrm{st}}: \mathrm{Ker}D(Fw_2)/\mathrm{Dom} D(V^{\#}w_2) \hookrightarrow \mathrm{Ker}D(Fw_1)/\mathrm{Dom}D(V^{\#}w_1).
        \end{equation*}
        \item Assume that $w_2 = w_1V^{\#}$. Then $V$ induces a $\tau$-linear surjection
        \begin{equation*}
            \rho_{(w_1,w_2)}^{1\mathrm{st}}: \mathrm{Ker}D(Fw_1)/\mathrm{Dom} D(V^{\#}w_1) \twoheadrightarrow \mathrm{Ker}D(Fw_2)/\mathrm{Dom}D(V^{\#}w_2).
        \end{equation*}
    \end{enumerate}
\end{lemma}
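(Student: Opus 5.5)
Write $D_1 := D(w_1)$. Words are composed right to left, so $D(w_1F) = D_1\,F$ means "apply $F$ first, then $D_1$". The plan is to check, by unwinding this composition, that $F$ (resp.~$V$) maps the relevant subspaces into one another. Well-definedness and injectivity (resp.~surjectivity) should then follow from the same unwinding. Throughout, $x \xrightarrow{D(uw)} z$ means that there is a $y$ with $x \xrightarrow{D(w)} y \xrightarrow{D(u)} z$. Also, $V^{\#}$ relates $x$ to $y$ exactly when $x = V(y)$.

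\emph{Case (1): $w_2 = w_1F$.} We first show $F(\mathrm{Ker}D(Fw_2)) \subseteq \mathrm{Ker}D(Fw_1)$. Take $x \in \mathrm{Ker}D(Fw_2) = \mathrm{Ker}(F D_1 F)$. Then $Fx \xrightarrow{D_1} y \xrightarrow{F} 0$ for some $y$, which says exactly that $Fx \in \mathrm{Ker}(FD_1)$.

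Next, $F(\mathrm{Dom}D(V^{\#}w_2)) \subseteq \mathrm{Dom}D(V^{\#}w_1)$ holds by the same computation, reading "$\mathrm{Dom}$" in place of "$\mathrm{Ker}$". Hence $F$ induces a $\sigma$-linear map $\rho^{1\mathrm{st}}_{(w_2,w_1)}$ between the quotients.

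For injectivity, suppose $x \in \mathrm{Ker}D(Fw_2)$ and $Fx \in \mathrm{Dom}D(V^{\#}w_1)$. Then $Fx \xrightarrow{D_1} y \xrightarrow{V^{\#}} z$ for some $y,z$. Since $F$ is a graph, $x \xrightarrow{F} Fx$, so $x \xrightarrow{V^{\#}D_1F} z$, i.e.\ $x \in \mathrm{Dom}D(V^{\#}w_2)$. This gives injectivity, and no finiteness or first-kind hypothesis is needed. The first-kind hypothesis only guarantees that the quotients are the nonzero spaces attached to vertices.

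\emph{Case (2): $w_2 = w_1V^{\#}$.} We show $V(\mathrm{Ker}D(Fw_1)) \subseteq \mathrm{Ker}D(Fw_2)$, which reads $V(\mathrm{Ker}(FD_1)) \subseteq \mathrm{Ker}(FD_1V^{\#})$. The trick is to read the relation backwards. If $x \in \mathrm{Ker}(FD_1)$, then $Vx \xrightarrow{V^{\#}} x$, hence $Vx \xrightarrow{FD_1V^{\#}} 0$. Similarly $V(\mathrm{Dom}(V^{\#}D_1)) \subseteq \mathrm{Dom}(V^{\#}D_1V^{\#})$, because $Vx \xrightarrow{V^{\#}} x$ and $x$ lies in the domain of $V^{\#}D_1$. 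So $V$ induces a $\tau$-linear map between the quotients.

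For surjectivity, let $u \in \mathrm{Ker}D(Fw_2)$. Then $u \xrightarrow{V^{\#}} x \xrightarrow{D_1} y \xrightarrow{F} 0$ for some $x,y$, so $u = Vx$ with $x \in \mathrm{Ker}(FD_1)$. Thus every class has a preimage.

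\emph{Main obstacle.} The only delicate point is keeping the composition order straight: suffixes act first, so $D(w_1F) = D(w_1)F$. With that convention every step is an immediate unwinding of the definitions, combined with Proposition \ref{UsefulProp} where convenient. No step looks genuinely hard.
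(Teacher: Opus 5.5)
Your proof is correct and follows essentially the paper's route. The paper proves the equalities $F^{-1}(\mathrm{Ker}D(Fw_1)) = \mathrm{Ker}D(Fw_2)$ and $F^{-1}(\mathrm{Dom}D(V^{\#}w_1)) = \mathrm{Dom}D(V^{\#}w_2)$, together with their $V$-analogues, by the same unwinding of compositions; you instead prove the containments needed for well-definedness and then check injectivity and surjectivity directly, which has the same content.
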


\begin{proof}
    Assume first that $w_2 = w_1F$. It is easy to check that 
    \begin{align*}
        F^{-1}(\mathrm{Ker}D(Fw_1)) = \mathrm{Ker}D(Fw_2), & & F^{-1}(\mathrm{Dom}D(V^{\#}w_1)) = \mathrm{Dom}D(V^{\#}w_2).
    \end{align*}
    Indeed, for the first equality, we have $x \xrightarrow{D(Fw_2)} 0$ if and only if there exists some $y \in M$ such that $x \xrightarrow{F} y \xrightarrow{D(Fw_1)} 0$. This, in turn, is equivalent to $x \in F^{-1}(\mathrm{Ker}D(Fw_1))$. For the second equality, given $x \in M$, there exists some $z \in M$ with $x \xrightarrow{D(V^{\#}w_2)} z$ if and only if there exists some $y, z \in M$ such that $x \xrightarrow{F} y \xrightarrow{D(V^{\#}w_1)} z$. This, in turn, is equivalent to $x \in F^{-1}(\mathrm{Dom}D(V^{\#}w_1))$. It follows that the restriction of $F$ defines a $\sigma$-linear map $\mathrm{Ker}D(Fw_2) \rightarrow \mathrm{Ker}D(Fw_1)$, which induces an injective map upon taking the quotients on both sides.\\
    The case $w_2 = w_1V^{\#}$ is similar once we prove the equalities
    \begin{align*}
        V(\mathrm{Ker}D(Fw_1)) = \mathrm{Ker}D(Fw_2), & & V(\mathrm{Dom}D(V^{\#}w_1)) = \mathrm{Dom}D(V^{\#}w_2).
    \end{align*}
\end{proof}

\begin{definition}\label{DefinitionRepnFirstKind}
    We define a $(\sigma,\tau)$-linear representation $(U^{1\mathrm{st}},\rho^{1\mathrm{st}})$ on $\Gamma(M,1\mathrm{st})$ as follows:
    \begin{itemize}
        \item for every $w \in W_1$, $U^{1\mathrm{st}}_w := \mathrm{Ker}D(Fw)/\mathrm{Dom}D(V^{\#}w)$,
        \item for every arrow $E$, $\rho^{1\mathrm{st}}_E$ is the $\sigma$-linear (resp.~$\tau$-linear) morphism defined in Lemma \ref{MonomialOfFirstKind} when $\mathrm{label}(E) = F$ (resp.~$= V^{\#}$).
    \end{itemize}
\end{definition}

\begin{proposition}\label{Gr1stIsOfTheFirstKind}
    The module $\mathrm{gr}^{1\mathrm{st}}(M) := M(\Gamma(M,1\mathrm{st}),U^{1\mathrm{st}},\rho^{1\mathrm{st}})$ associated to $(U^{1\mathrm{st}},\rho^{1\mathrm{st}})$ in the sense of Definition \ref{DefinitionModuleAttachedToKraftQuiver} is a twisted Gelfand-Ponomarev module of the first kind.
\end{proposition}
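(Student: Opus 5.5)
The plan has two parts. First check that $\mathrm{gr}^{1\mathrm{st}}(M)$ satisfies $FV=VF=0$, even though $\Gamma(M,1\mathrm{st})$ need not be a Kraft quiver. Then compute its stabilized sequence explicitly and show that every elementary interval is one of the intervals $(\mathrm{Dom}(V^{\#}D),\mathrm{Ker}(FD))$ with $D\in\Sigma_1$.

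Write $N:=\mathrm{gr}^{1\mathrm{st}}(M)=\bigoplus_{w\in W_1}U_w$ with $U_w=\mathrm{Ker}D(Fw)/\mathrm{Dom}D(V^{\#}w)$.

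\textbf{The relations $FV=VF=0$.} The operators on $N$ are as follows. $F$ maps $U_{wF}$ into $U_w$ by the injection $\rho^{1\mathrm{st}}_{(wF,w)}$ of Lemma \ref{Morphisms1stKind}, and $F$ is zero on $U_w$ when $w$ is empty or ends with $V^{\#}$. $V$ maps $U_w$ onto $U_{wV^{\#}}$, and is zero on $U_w$ if $wV^{\#}\notin W_1$.
\begin{itemize}
\item $FV=0$: the image of $V$ lies in summands $U_{wV^{\#}}$, on which $F$ vanishes.
\item $VF=0$: on $U_{wF}$ the composite $VF$ is the map $U_{wF}\to U_w\to U_{wV^{\#}}$. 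It is induced by the operator $VF$ of $M$ on subquotients, and $VF=0$ on $M$, so the composite is zero.
\end{itemize}
Hence $N$ is a twisted Gelfand--Ponomarev module.

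\textbf{A total order on $W_1$.} By Proposition \ref{MonomialOfFirstKind} and Theorem \ref{TheoremStabilizedSequence}, the intervals $(\mathrm{Dom}D(V^{\#}w),\mathrm{Ker}D(Fw))$ for $w\in W_1$ are pairwise distinct elementary intervals of $M$. They are therefore totally ordered by their position in the flag $\mathcal F_M$. Write $w\lhd w'$ when the interval of $w$ lies below that of $w'$. Lemma \ref{LemmaRelativePositionIntervals} and the case analysis in the proof of Proposition \ref{MonomialOfFirstKind} describe $\lhd$ purely combinatorially, reading words from the right:
\begin{itemize}
\item $uV^{\#}w_0\lhd u'Fw_0$;
\item $uV^{\#}w\lhd w\lhd u'Fw$.
\end{itemize}

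\textbf{Main step: the kernels and domains in $N$.} I will show by induction on $l(u)$, for every word $u\in W$, that $\mathrm{Ker}D_N(u)$ and $\mathrm{Dom}D_N(u)$ are sums of whole summands $U_w$, and in fact initial segments $\bigoplus_{w\lhd \text{(cut)}}U_w$ for a cut of $(W_1,\lhd)$. The cut is determined by $u$ through the same combinatorial rule that places $\mathrm{Ker}D(u)$ and $\mathrm{Dom}D(u)$ in $\mathcal F_M$. Concretely, for $u\in W_1$ I aim for
\begin{equation*}
\mathrm{Dom}D_N(V^{\#}u)=\bigoplus_{w\lhd u}U_w,\qquad \mathrm{Ker}D_N(Fu)=\bigoplus_{w\unlhd u}U_w .
\end{equation*}
The inductive step uses three facts:
\begin{itemize}
\item $F$-preimages and $V$-images of sums of summands are again sums of summands;
\item the injectivity of the $F$-maps lets one read off kernels;
\item the surjectivity of the $V$-maps lets one read off domains of $V^{\#}$.
\end{itemize}
The direction from $Fu$ versus $V^{\#}u$ then matches the shape of $\lhd$ above. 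I expect this bookkeeping to be the main obstacle. The delicate point is the boundary behaviour when $uF$ or $uV^{\#}$ is not in $W_1$: the relevant map then lands in $0$, and one must check that the resulting cut agrees with the one predicted by $\lhd$. I would handle this by comparing directly with the corresponding subspaces of $M$, through the defining subquotients.

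\textbf{Conclusion.} The displayed formulas give $\mathrm{Ker}D_N(Fu)/\mathrm{Dom}D_N(V^{\#}u)\simeq U_u\neq 0$, so every $u\in W_1$ is of the first kind for $N$. Moreover $\mathcal F_N$ consists exactly of the initial segments $\bigoplus_{w\lhd u}U_w$ and $\bigoplus_{w\unlhd u}U_w$, since every kernel and domain is such a cut and all cuts occur. Hence the elementary intervals of $N$ are exactly the steps that add a single $U_u$. Each such step equals $(\mathrm{Dom}D_N(V^{\#}u),\mathrm{Ker}D_N(Fu))$ and is therefore of the first kind, so $N$ is of the first kind.
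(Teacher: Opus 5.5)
Your outline is correct, with the main step only sketched; it does go through, as explained in the second paragraph. Your proof runs on the same engine as the paper's: preimages under $F$ and images under $V$ of sums of summands $U_w$ are again such sums, the $F$-maps are injective, and the $V$-maps are surjective.

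The difference lies in how the conclusion is reached. The paper proves only two things:
\begin{itemize}
\item domains of monomials on $N$ are sums of summands;
\item the local identity $\mathrm{Ker}D_N(Fw)=U_w\oplus\mathrm{Dom}D_N(V^{\#}w)$, by induction on $l(w)$ over $W_1$.
\end{itemize}
It then finishes with a dimension count. The words $w\in W_1$ give pairwise distinct elementary intervals of the first kind in $N$ (Proposition \ref{MonomialOfFirstKind} applied to $N$). Since $\sum_{w}\dim U_w=\dim N$, there is no room for any further elementary interval.

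You instead locate \emph{every} kernel and domain as a cut of $(W_1,\lhd)$. This buys an explicit description of the whole flag $\mathcal F_N$, in the same order as the first-kind intervals of $M$. The price is that you must control all words $u\in W$, and that is exactly the bookkeeping you leave unexecuted; the dimension count is what lets the paper avoid it. Your formulation over all $u\in W$ is in fact what the paper's Step 1 tacitly needs anyway. That step is applied to $\mathrm{Dom}D_N(V^{\#}w')$, and $V^{\#}w'$ need not lie in $W_1$.

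Your main step goes through more easily than you fear.
\begin{itemize}
\item Induct by extending $u$ on the \emph{right}. Use $\mathrm{Ker}D_N(uF)=F^{-1}\mathrm{Ker}D_N(u)$ and $\mathrm{Ker}D_N(uV^{\#})=V(\mathrm{Ker}D_N(u))$, the same identities for $\mathrm{Dom}$, and the base case $\mathrm{Ker}D_N(\emptyset)=0$, $\mathrm{Dom}D_N(\emptyset)=N$.
\item Your two rules say that $\lhd$ is the lexicographic order read from the right, with $V^{\#}<(\text{end of word})<F$.
\item For a down-set $C$, put $N_C:=\bigoplus_{w\in C}U_w$. Then $F^{-1}(N_C)=N_{C'}$ and $V(N_C)=N_{C''}$, where $C'=\{\emptyset\}\cup\{v\in W_1 : v \text{ ends in } V^{\#}\}\cup\{wF\in W_1 : w\in C\}$ and $C''=\{wV^{\#}\in W_1 : w\in C\}$.
\item Both $C'$ and $C''$ are again down-sets, because a prefix of a word of the first kind is of the first kind.
\end{itemize}
Hence $\mathrm{Ker}D_N(u)$ is the cut below the infinite word $\cdots V^{\#}V^{\#}u$, and $\mathrm{Dom}D_N(u)$ is the cut below $\cdots FFu$. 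Specializing to $Fu$ and $V^{\#}u$ with $u\in W_1$ gives exactly your two displayed formulas. The boundary cases $wF\notin W_1$ or $wV^{\#}\notin W_1$ simply drop out of $C'$ and $C''$, so no comparison with $M$ is needed.
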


\begin{proof}
    First, we must prove that $FV = VF = 0$ on $\mathrm{gr}^{1\mathrm{st}}(M)$. But this is obvious by construction, since the morphisms $\rho_{E}^{1\mathrm{st}}$ are induced by the operators $F$ or $V$ on $M$ depending on whether $E$ is an $F$-arrow or a $V$-arrow respectively. Then, we must prove that all the elementary intervals of $\mathrm{gr}^{1\mathrm{st}}(M)$ are of the first kind. Given a word $w \in W$, we write $D^{1\mathrm{st}}(w)$ for the monomial induced by $w$ on $\mathrm{gr}^{1\mathrm{st}}(M)$. For $w \in W_1$, let us show by induction on $l(w)$ that 
    \begin{align}\label{ElementaryIntervalsFirstKind}
          \mathrm{Ker}D^{1\mathrm{st}}(Fw) =  U^{1\mathrm{st}}_{w} \oplus \mathrm{Dom} D^{1\mathrm{st}}(V^{\#}w).
    \end{align}
    We note that the proof is formal, that is, it does not depend on the definition of $U^{1\mathrm{st}}_w$, but only on the shape of the graph $\Gamma(M,1\mathrm{st})$ and the injectivity/surjectivity of $\rho^{1\mathrm{st}}_E$. \\

    \textbf{Step 1:} We claim that for every $w\in W_1$, $\mathrm{Dom}D^{1\mathrm{st}}(w)$ is a direct sum of $U^{1\mathrm{st}}_v$'s for $v$ running through a certain subset of $W_1$. Indeed, first notice that for every subset $I \subseteq  W_1$, we have 
    \begin{align*}
        F^{-1}\left(\bigoplus_{v \in I} U^{1\mathrm{st}}_v\right) = \sum_{v \in I} F^{-1}(U^{1\mathrm{st}}_v), & & V\left(\bigoplus_{v \in I} U^{1\mathrm{st}}_v\right) = \bigoplus_{v \in I} V(U^{1\mathrm{st}}_v).
    \end{align*}
    The right equality is obvious since any vertex of $\Gamma(M,1\mathrm{st})$ is the head of at most one $V$-arrow, and the left equality holds because 
    \begin{align*}
    \mathrm{Im}(F) = \bigoplus_{\substack{v \in W_1 \text{ s.t.}\\ vF \in W_1}} \mathrm{Im}(\rho_{vF,v}^{1\mathrm{st}}), & & \mathrm{Im}(\rho_{vF,v}^{1\mathrm{st}}) \subseteq  U^{1\mathrm{st}}_v.
    \end{align*}
    Moreover, given any $v \in W_1$, we have 
    \begin{align*}
        F^{-1}(U^{1\mathrm{st}}_v) = \begin{cases}
            \mathrm{Ker}(F) & \text{if } vF \not \in W_1;\\
            \mathrm{Ker}(F) \oplus U^{1\mathrm{st}}_{vF} & \text{if } vF \in W_1,
        \end{cases}
        & &
        V(U^{1\mathrm{st}}_v) = \begin{cases}
            0 & \text{if } vV^{\#} \not \in W_1;\\
            U^{1\mathrm{st}}_{vV^{\#}} & \text{if } vV^{\#} \in W_1.
        \end{cases}
    \end{align*}
    Here, we use the surjectivity of $\rho^{1\mathrm{st}}_E$ whenever $E = (v,vV^{\#})$ is a $V$-arrow. Besides, we have 
    \begin{equation*}
        \mathrm{Ker}(F) = U^{1\mathrm{st}}_{\emptyset} \oplus \bigoplus_{\substack{v \in W_1 \text{ s.t.} \\ vV^{\#} \in W_1}} U^{1\mathrm{st}}_{vV^{\#}}.
    \end{equation*}
    Indeed, the injectivity of $\rho^{1\mathrm{st}}_E$ whenever $E = (vF,v)$ is an $F$-arrow forces the kernel of $F$ to be reduced to the sum of the $U^{1\mathrm{st}}_v$ for $v \in W_1$ which is not the tail of any $F$-arrow. Such $v$ correspond precisely to the empty word $\emptyset$ and to all words $v \not = \emptyset$ of the first kind whose last letter is a $V^{\#}$.\\
    Now, if $w = \emptyset$ is the empty word then $\mathrm{Dom}D^{1\mathrm{st}}(w) = \mathrm{Dom}(\mathbf 1) = \mathrm{gr}^{1\mathrm{st}}(M)$ is the sum of all the $U^{1\mathrm{st}}_v$'s, so the claim holds. Assume that it holds for all $w \in W_1$ with $l(w) = k$ for some $0 \leq k < m$, and let $w \in W_1$ with $l(w) = k+1$.\\
    Assume first that $w = w'F$ for some word $w'$ which is necessarily of the first kind. Thus, we have $\mathrm{Dom}D^{1\mathrm{st}}(w) = F^{-1}(\mathrm{Dom}D^{1\mathrm{st}}(w'))$. By induction, $\mathrm{Dom}D^{1\mathrm{st}}(w')$ is a direct sum of certain $U^{1\mathrm{st}}_v$'s. By our discussion above, it follows that $\mathrm{Dom}D^{1\mathrm{st}}(w)$ itself can be written as such a direct sum.\\
    \sloppy Assume now that $w = w'V^{\#}$ for some $w' \in W_1$. We have $\mathrm{Dom}D^{1\mathrm{st}}(w) = (V^{\#})^{-1}(\mathrm{Dom}D^{1\mathrm{st}}(w')) = V(\mathrm{Dom}D^{1\mathrm{st}}(w'))$. Again, by induction and the discussion above, the claim follows.\\

    \textbf{Step 2:} We prove \eqref{ElementaryIntervalsFirstKind} by induction on $l(w)$. First, observe that
    \begin{equation*}
        \mathrm{Dom}(V^{\#}) = \mathrm{Im}(V) = \bigoplus_{\substack{v \in W_1 \text{ s.t.} \\ vV^{\#} \in W_1}} U^{1\mathrm{st}}_{vV^{\#}}.
    \end{equation*}
     In particular, we see that 
    \begin{equation*}
        \mathrm{Ker}(F) =  U^{1\mathrm{st}}_{\emptyset} \oplus \mathrm{Dom}(V^{\#}),
    \end{equation*}
    which is precisely \eqref{ElementaryIntervalsFirstKind} for $w$ equal to the empty word. Now, let us assume that \eqref{ElementaryIntervalsFirstKind} is known for all $w \in W_1$ of length $k$ for some $0 \leq k < m$, and let $w \in W_1$ have length $k+1$. Assume first that $w = w'F$ for some word $w' \in W_1$. By induction, we have
    \begin{equation*}
        \mathrm{Ker}D^{1\mathrm{st}}(Fw) = F^{-1}(\mathrm{Ker}D^{1\mathrm{st}}(Fw')) = F^{-1}\left( U^{1\mathrm{st}}_{w'} \oplus \mathrm{Dom} D^{1\mathrm{st}}(V^{\#}w')\right).
    \end{equation*}
    By step 1, $F^{-1}$ preserves the sum on the right-hand side so that we have 
    \begin{equation*}
        \mathrm{Ker}D^{1\mathrm{st}}(Fw) = F^{-1}(U^{1\mathrm{st}}_{w'}) + F^{-1}(\mathrm{Dom} D^{1\mathrm{st}}(V^{\#}w')) = U_{w}^{1\mathrm{st}} + \mathrm{Dom}D^{1\mathrm{st}}(V^{\#}w).
    \end{equation*}
    \sloppy Here, we used the fact that $\mathrm{Dom}D^{1\mathrm{st}}(V^{\#}w)$ contains $\mathrm{Ker}(F)$. It remains to show that the sum is direct. So let $x \in U_{w}^{1\mathrm{st}} \cap \mathrm{Dom}D^{1\mathrm{st}}(V^{\#}w)$. Then $\rho_{w,w'}^{1\mathrm{st}}(x) \in U_{w'}^{1\mathrm{st}} \cap \mathrm{Dom} D^{1\mathrm{st}}(V^{\#}w') = \{0\}$ by induction. Since $\rho_{w,w'}^{1\mathrm{st}}$ is injective, we have $x = 0$ as desired. \\
    Assume now that $w = w'V^{\#}$ for some word $w' \in W_1$. By induction we have
    \begin{equation*}
        \mathrm{Ker}D^{1\mathrm{st}}(Fw) = V(\mathrm{Ker}D^{1\mathrm{st}}(Fw')) = V\left( U^{1\mathrm{st}}_{w'} \oplus \mathrm{Dom} D^{1\mathrm{st}}(V^{\#}w')\right).
    \end{equation*}
    By Step 1, $\mathrm{Dom} D^{1\mathrm{st}}(V^{\#}w')$ is a direct sum of some $U^{1\mathrm{st}}_v$'s. By induction, this sum does not involve $U^{1\mathrm{st}}_{w'}$. Thus, we have 
    \begin{equation*}
        \mathrm{Ker}D^{1\mathrm{st}}(Fw) = V(U^{1\mathrm{st}}_{w'}) \oplus V(\mathrm{Dom} D^{1\mathrm{st}}(V^{\#}w')) = U_{w}^{1\mathrm{st}} + \mathrm{Dom}D^{1\mathrm{st}}(V^{\#}w),
    \end{equation*}
    which concludes the proof of Step 2. \\

    Since $U^{1\mathrm{st}}_{w} \not = \{0\}$ for all $w \in W_1$, we deduce that all the monomials of the form $D^{1\mathrm{st}}(w)$ for $w \in W_1$ are monomials of the first kind in $\mathrm{gr}^{1\mathrm{st}}(M)$. Besides, if $w \in W_1$ then the corresponding elementary interval $(\beta_i^{1\mathrm{st}}, \beta_{i+1}^{1\mathrm{st}})$ of the first kind in $\mathrm{gr}^{1\mathrm{st}}(M)$ satisfies 
    \begin{equation*}
        \beta_{i+1}^{1\mathrm{st}}/\beta_i^{1\mathrm{st}} = U^{1\mathrm{st}}_{w}.
    \end{equation*}
    Since $\mathrm{gr}^{1\mathrm{st}}(M) = \bigoplus_{w \in W_1} U^{1\mathrm{st}}_{w}$, by dimensions we have already exhausted all the elementary intervals of $\mathrm{gr}^{1\mathrm{st}}(M)$. This concludes the proof.
\end{proof}

\begin{remark}
    It follows from the proof that $M$ and $\mathrm{gr}^{1\mathrm{st}}(M)$ determine the same set of words of the first kind. Moreover, the graph $\Gamma(\mathrm{gr}^{1\mathrm{st}}(M),1\mathrm{st})$ is naturally isomorphic to $\Gamma(M,1\mathrm{st})$. 
\end{remark}

Next, we explain how $\mathrm{gr}^{1\mathrm{st}}(M)$ can be decomposed as a sum of modules coming from connected linear Kraft quivers. This is also explained in \cite{gelfand-ponomarev:1968} p.48-50. Let us start with a corollary of the proof of Proposition \ref{Gr1stIsOfTheFirstKind}.

\begin{corollary}\label{CorolAllLinearKraftAreFirstKind}
    Let $\Gamma = (\mathcal V,\mathcal E,\mathrm{label})$ be a connected linear Kraft quiver together with a strict $(\sigma,\tau)$-linear representation $(U,\rho)$. The twisted Gelfand-Ponomarev module $M(\Gamma,U,\rho)$ is of the first kind.
\end{corollary}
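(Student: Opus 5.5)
The plan is to reduce to the trivial representation and then rerun the formal argument of Proposition \ref{Gr1stIsOfTheFirstKind}. As noted there, that argument uses only the shape of the graph and the injectivity of $F$-arrows and surjectivity of $V$-arrows.

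By Proposition \ref{ModuleAttachedToLinearKraftQuiver}, $(U,\rho)\simeq \mathbf 1_\Gamma^{\oplus d}$, so $M(\Gamma,U,\rho)\simeq M(\Gamma,\mathbf 1_\Gamma)^d$. All the subspaces $\mathrm{Ker}D$ and $\mathrm{Dom}D$ are compatible with direct sums, so they are computed componentwise. Hence the elementary intervals of $M^d$ are exactly those of $M$, with quotients tensored up by $d$. It therefore suffices, and in fact does no harm, to treat a strict $(U,\rho)$ directly.

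Write $\Gamma=\Gamma(w)$ with $w=w_m\cdots w_1$ and vertices $v_1,\dots,v_{m+1}$. The key step is to identify the words of the first kind. For each vertex $v_j$, I would attach a word $u_j$ describing the path in $\Gamma^{\#}$ read from $v_j$ in both directions; this should amount to organizing $\Gamma$ as a graph of the shape $\Gamma(M,1\mathrm{st})$.

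Concretely, I would define a bijection $v_j\mapsto u_j$ onto a prefix-closed set of words $W'$. The resulting directed graph should be a tree of the type in the Theorem preceding Lemma \ref{Morphisms1stKind}. In that tree, an edge $(u,uF)$ corresponds to an $F$-arrow $v'\to v$, meaning the module element at the vertex for $uF$ maps under $F$ into the vertex for $u$. An edge $(u,uV^{\#})$ corresponds to a $V$-arrow from the vertex for $u$ to the vertex for $uV^{\#}$.

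Since $\Gamma$ is a Kraft quiver, each vertex has at most one outgoing and one incoming arrow of each label, with the $F/V$ compatibility condition. So this gives at most two children per node, and each node is the head of exactly one edge. I expect the root to be the vertex killed by $F$ and not in the image of $V$.

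Once $\Gamma$ is presented in this tree shape with $F$-maps injective and $V$-maps surjective (here bijective, by strictness), Steps 1 and 2 of the proof of Proposition \ref{Gr1stIsOfTheFirstKind} apply verbatim to $M=\bigoplus U_v$. They give $\mathrm{Ker}D(Fu)=U_u\oplus\mathrm{Dom}D(V^{\#}u)$ for every $u\in W'$. Since each $U_u\neq0$, every $u\in W'$ is of the first kind, and the corresponding elementary interval has quotient $U_u$. Because $M=\bigoplus_u U_u$, a dimension count shows that these intervals exhaust all elementary intervals, so $M$ is of the first kind.

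The main obstacle is the middle step: checking that a linear Kraft quiver admits such a tree labelling, and choosing the root correctly. A naive choice would be an endpoint, but words of the first kind are read starting from $\mathrm{Ker}F$ modulo $\mathrm{Im}V$. I would first try taking as root a vertex with no outgoing $F$-arrow and no incoming $V$-arrow; such a vertex exists at a "valley" of the zigzag. The labelling would then grow outward, and I would verify that the Step 1 identities, e.g. $\mathrm{Ker}(F)=U_\emptyset\oplus\mathrm{Im}(V)$, hold. If several valleys exist, this fails, since $\mathrm{Ker}F/\mathrm{Im}V$ has dimension greater than $d$. In that case I would instead run the Step 1/Step 2 induction over the forest given by all such roots.
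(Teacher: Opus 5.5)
Your strategy is the paper's strategy:
\begin{itemize}
\item present $\Gamma$ as a prefix-closed set of words with the shape of $\Gamma(M,1\mathrm{st})$;
\item rerun Steps 1 and 2 of the proof of Proposition \ref{Gr1stIsOfTheFirstKind} to get $\mathrm{Ker}D(Fu)=U_u\oplus\mathrm{Dom}D(V^{\#}u)$;
\item conclude by counting dimensions.
\end{itemize}
The reduction to $\mathbf 1_\Gamma^{\oplus d}$ is harmless but not needed. The problem is that the one step with real content, the labelling, is left open in your proposal, and your hedging around it rests on a false picture.

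Write $w=w_m\cdots w_1$, so that $\Gamma^{\#}$ consists of the arrows $(v_i,v_{i+1})$ labelled $w_i$. Every vertex $v_i$ with $i\le m$ fails your root criterion. If $w_i=F$, then $v_i$ is the tail of the $F$-arrow $(v_i,v_{i+1})$. If $w_i=V^{\#}$, then $v_i$ is the head of the $V$-arrow $(v_{i+1},v_i)$. Only $v_{m+1}$ is neither the tail of an $F$-arrow nor the head of a $V$-arrow. This has three consequences:
\begin{itemize}
\item the root is unique, and it is a specific endpoint, not an interior ``valley'' and not the other endpoint $v_1$;
\item $\mathrm{Ker}(F)/\mathrm{Im}(V)\cong U_{v_{m+1}}$ always has dimension exactly $d$;
\item your forest contingency never arises.
\end{itemize}

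Now grow the labelling from $v_{m+1}\mapsto\emptyset$ using your edge conventions. They force $v_i\mapsto u\,w_i$ whenever $v_{i+1}\mapsto u$, in both cases $w_i=F$ and $w_i=V^{\#}$. Hence $v_{m+1-k}\mapsto w_mw_{m-1}\cdots w_{m-k+1}$. So $W'$ is the set of prefixes of $w$, and the ``tree'' is a chain with at most one child per node. This is exactly the paper's quiver $\Gamma'$, via $v_i\mapsto u_{n-i+1}$.

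With this in place, Steps 1 and 2 apply verbatim. In particular, the root identity $\mathrm{Ker}(F)=U_{\emptyset}\oplus\mathrm{Dom}(V^{\#})$ holds precisely because $v_{m+1}$ is the only vertex that is neither the tail of an $F$-arrow nor the head of a $V$-arrow. Your concluding dimension count is then correct.
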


\begin{proof}
    In this proof, we consider $M = M(\Gamma,U,\rho)$ as in the statement. If $n := \#\mathcal V = 1$, the statement is obvious. Indeed, we have $F=V=0$ on $M$ so that $\mathrm{Ker}(F) = M$ and $\mathrm{Dom}(V^{\#}) = \{0\}$. Thus there is only one elementary interval, it is of the first kind and associated to the empty word. Assume now that $n\geq 2$. Let $w$ denote the unique word such that $\Gamma \simeq \Gamma(w)$, as in Proposition \ref{WordOfKraftQuiver}. We have $l(w) = n-1 \geq 1$. Let also $v_1, \ldots , v_n$ be the ordering of the vertices of $\Gamma$ induced from Definition \ref{DefLinearKraftQuiverWithWord} and the isomorphism $\Gamma \simeq \Gamma(w)$. Eventually, let us write $w = w_{n-1}\cdots w_{2}w_{1}$, so that each $w_i$ is an element of $\{F,V^{\#}\}$. Consider the set 
    \begin{equation*}
        \mathcal V' := \{w_{n-1}\cdots w_{n-i+1}, 1 \leq i \leq n\}.
    \end{equation*}
    Thus, $\mathcal V'$ is the set of prefixes of $w$. Let us write $u_i := w_{n-1}\cdots w_{n-i+1}$. Moreover, for each $1 \leq i \leq n-1$, we define 
    \begin{equation*}
        E_i' := \begin{cases}
            (u_i,u_{i+1}) & \text{if } w_{n-i} = V^{\#};\\
            (u_{i+1},u_i) & \text{if } w_{n-i} = F,
        \end{cases}
    \end{equation*}
    and $\mathcal E' := \{E_i', 1 \leq i \leq n-1\}$. Eventually, put the label $F$ on all arrows $E_i'$ of the form $(u_{i+1},u_i)$, and the label $V$ on all the arrows $E_i'$ of the form $(u_i,u_{i+1})$. This defines a connected linear Kraft quiver $\Gamma'$, and it is easy to see that the map $v_i \mapsto u_{n-i+1}$ defines an isomorphism $\Gamma \simeq \Gamma'$. We may consider $(U,\rho)$ as a strict $(\sigma,\tau)$-linear representation over $\Gamma'$.\\
    The arguments in Step 1 and Step 2 of the proof of Proposition \ref{Gr1stIsOfTheFirstKind} still hold for $\Gamma'$ equipped with $(U,\rho)$, and show that for every $1 \leq i \leq n$,
    \begin{equation*}
        \mathrm{Ker} D(Fu_i) = U_{u_i} \oplus \mathrm{Dom} D(V^{\#}u_i).
    \end{equation*}
    This shows that $u_i$ is a word of the first kind for $M$, and that the corresponding elementary interval $(\beta_j,\beta_{j+1})$ satisfies $\beta_{j+1}/\beta_j \simeq U_{u_i}$. By dimension, these must exhaust all the elementary intervals of $M$, implying that $M$ is of the first kind. 
\end{proof}

\begin{remark}
    By construction, the Kraft quiver $\Gamma'$ defined in the proof is identic to $\Gamma(M,1\mathrm{st})$. In particular, $\Gamma(M,1\mathrm{st})$ recovers the original connected linear Kraft quiver $\Gamma$ when $M$ is associated to $\Gamma$.\\
    Conversely, a twisted Gelfand-Ponomarev module $M$ comes from a connected linear Kraft quiver if and only if $\Sigma_1$ is totally ordered under the prefix ordering relation $\preceq$ defined at the beginning of this section.
\end{remark}

\begin{proposition}[cf.~\cite{gelfand-ponomarev:1968} Theorem 4.5]\label{DecompositionSumLinearKraftQuiver}
    The twisted Gelfand-Ponomarev module $\mathrm{gr}^{1\mathrm{st}}(M)$ of the first kind can be decomposed as a finite direct sum of modules of the form $M(\Gamma_i,U^i,\rho^i)$ for certain linear connected Kraft quivers $\Gamma_i$ which are pairwise non-isomorphic, together with strict $(\sigma,\tau)$-linear representations $(U^i,\rho^i)$. 
\end{proposition}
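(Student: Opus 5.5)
The decomposition will be read off from the representation $(U^{1\mathrm{st}},\rho^{1\mathrm{st}})$ on the tree $\Gamma(M,1\mathrm{st})$. The basic point is that this representation is not strict: $F$-arrows carry injections and $V$-arrows carry surjections. The plan is therefore to split each space $U^{1\mathrm{st}}_w$ along the maximal paths of the tree, so that each piece becomes a strict representation on a linear Kraft quiver. Afterwards the pieces sitting on isomorphic quivers will be grouped together.

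\textbf{Step 1: the strings.} A vertex $w$ of $\Gamma(M,1\mathrm{st})^{\#}$ has at most two successors, $wF$ and $wV^{\#}$. In $\Gamma(M,1\mathrm{st})$ the vertex $w$ is the head of the $F$-arrow coming from $wF$ and the tail of the $V$-arrow going to $wV^{\#}$. Consider a maximal path $\emptyset=u_1\preceq u_2\preceq\cdots\preceq u_n=w$ from the root to a leaf $w$. By construction of $\Gamma(M,1\mathrm{st})$, the induced subgraph on this path is a connected linear Kraft quiver isomorphic to $\Gamma(w)$, exactly as for the quiver $\Gamma'$ in the proof of Corollary \ref{CorolAllLinearKraftAreFirstKind}. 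Distinct leaves $w\neq w'$ give non-isomorphic quivers, because $\Gamma(w)\simeq\Gamma(w')$ forces $w=w'$ by Proposition \ref{WordOfKraftQuiver}. This is the source of the non-isomorphy claim in the statement.

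\textbf{Step 2: splitting each vertex space.} I want a decomposition
\begin{equation*}
U^{1\mathrm{st}}_u=\bigoplus_{\text{leaves } w\succeq u} U^{w}_u
\end{equation*}
such that the following holds for every $F$-arrow $\rho=\rho^{1\mathrm{st}}_{(uF,u)}$ and every $V$-arrow $\rho'=\rho^{1\mathrm{st}}_{(u,uV^{\#})}$. The map $\rho$ sends $U^w_{uF}$ isomorphically onto $U^w_u$ for leaves $w\succeq uF$. The map $\rho'$ sends $U^w_u$ isomorphically onto $U^w_{uV^{\#}}$ for leaves $w\succeq uV^{\#}$, and kills $U^w_u$ for all other $w$.

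I will build this by induction from the leaves downward, meaning by decreasing length. At a leaf, set $U^w_w=U^{1\mathrm{st}}_w$. At an internal vertex $u$, the induction has already produced decompositions of $U_{uF}$ and $U_{uV^{\#}}$ (if these vertices exist). There are two cases.

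\emph{The $F$-side.} Transport the decomposition of $U_{uF}$ through the injection $\rho$. This gives independent subspaces $\rho(U^w_{uF})$ of $U_u$ with direct sum $\mathrm{Im}\,\rho$.

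\emph{The $V$-side.} The map $\rho'$ is surjective, so $U_u=\ker\rho'\oplus C$ for any complement $C$, and $C$ maps isomorphically onto $U_{uV^{\#}}$. Pulling back the decomposition of $U_{uV^{\#}}$ then decomposes $C$.

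The hard part is making these two choices compatible. I need $\mathrm{Im}\,\rho\subseteq\ker\rho'$ and $\mathrm{Im}\,\rho\cap C=0$, so that the $F$-pieces and the $V$-pieces together with a complement fit into one direct sum. The inclusion $\mathrm{Im}\,\rho\subseteq\ker\rho'$ follows from $VF=0$. The intersection condition is arranged by choosing $C$ as a complement of $\ker\rho'$ only; then $\mathrm{Im}\,\rho\subseteq\ker\rho'$ gives $\mathrm{Im}\,\rho\cap C=0$ automatically.

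The remaining piece is a complement $R$ of $\mathrm{Im}\,\rho$ inside $\ker\rho'$. I have to decide where $R$ goes, and I must check that no vertex has a leftover piece that fits no string. If $uV^{\#}\notin W_1$, then $R$ can be absorbed into the $F$-string through $u$. The problematic case is when $u$ has both successors: then $R$ belongs to no maximal path. So the plan must be refined to use all paths of $\Gamma(M,1\mathrm{st})$, not only root-to-leaf ones. A nonzero $R$ supports a string starting at $u$ and continuing only along $F$-arrows, namely $u$, $uF$, $uFF$, and so on. But a string going back through $u$'s prefix would need to meet $\rho^{-1}$, which is impossible here. I therefore expect the correct bookkeeping to be: every vertex space decomposes into pieces indexed by the linear substrings of the tree on which the piece is supported. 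The main obstacle I foresee is exactly this step: proving that every such piece lies on a full string that is itself a linear Kraft quiver, and checking that the strict-isomorphism conditions hold at both ends of each string. If this bookkeeping turns out to be too delicate, I would fall back on the Gelfand–Ponomarev recipe (Theorem 4.5, pp.~48--50), choosing compatible bases along chains in this same order.

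\textbf{Step 3: assembling the summands.} Each string $\Gamma_i$ together with its spaces $U^{i}_v$ is a strict subrepresentation, and $\mathrm{gr}^{1\mathrm{st}}(M)$ is the direct sum of the modules attached to these strings. I will group the strings by isomorphism class of the quiver. Identifying vertices along the isomorphisms, the direct sum of the representations within one class becomes a single strict representation on one quiver. This yields the required finite direct sum over pairwise non-isomorphic connected linear Kraft quivers $\Gamma_i$.
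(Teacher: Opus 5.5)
There is a genuine gap, and it is exactly the step you flag as the main obstacle: your proposed repair cannot work. Your local splitting at a vertex $u$ is the right one and matches the paper's $X^F_u\oplus X^V_u\oplus X_u$. It consists of $\mathrm{Im}\,\rho$, a complement $C$ of $\ker\rho'$, and a complement $R$ of $\mathrm{Im}\,\rho$ inside $\ker\rho'$. The global bookkeeping is what goes wrong.

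\begin{itemize}
\item Indexing summands by root-to-leaf paths fails whenever $R\neq 0$ at a non-leaf vertex. This happens whether $u$ has one successor or two; if only $uV^{\#}\in W_1$, then $R=\ker\rho'$.
\item For example, take $M(\Gamma(\emptyset),\mathbf 1_{\Gamma(\emptyset)})\oplus M(\Gamma(F),\mathbf 1_{\Gamma(F)})$. Then $W_1=\{\emptyset,F\}$, $\dim U^{1\mathrm{st}}_{\emptyset}=2$ and $\dim U^{1\mathrm{st}}_F=1$. The leftover line at the root must form a one-vertex summand $\Gamma(\emptyset)$, which is not a root-to-leaf path.
\item Your claim that $R$ can be absorbed into the $F$-string when $uV^{\#}\notin W_1$ is false. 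The $F$-arrow $(uF,u)$ would no longer map onto the string's space at $u$; in the example you would get dimensions $(2,1)$ on $\Gamma(F)$, which is not strict.
\item The string $u,uF,uFF,\dots$ is impossible too. Since $R$ is a complement of $\mathrm{Im}\,\rho$, no subspace of $U^{1\mathrm{st}}_{uF}$ maps isomorphically onto it. Since $R\subseteq\ker\rho'$, it cannot continue to $uV^{\#}$ either. So on the leaf side, any string carrying $R$ ends at $u$.
\end{itemize}

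The missing idea is to propagate $R$ toward the root, which is exactly the direction you dismissed.
\begin{itemize}
\item If $u=u''F$, the $F$-arrow $(u,u'')$ is injective, so $R$ maps isomorphically into $\mathrm{Im}\,\rho^{1\mathrm{st}}_{(u,u'')}$.
\item If $u=u''V^{\#}$, the $V$-arrow $(u'',u)$ is surjective, and you take the preimage of $R$ in the chosen complement of its kernel at $u''$.
\end{itemize}
Iterating down to $\emptyset$ gives a strict representation on the path of prefixes of $u$, i.e.\ on $\Gamma(u)$.

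So the summands are indexed by all $u\in W_1$ with $R_u\neq 0$, internal vertices as well as leaves. The correct vertex decomposition is $U^{1\mathrm{st}}_u=\bigoplus_{v\succeq u,\ R_v\neq 0}U^v_u$. This is the paper's construction, with $R_u=X_u$.

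Each summand is stable under $F$ and $V$ for two reasons. Pieces transported along $F$ lie in an image of $F$, hence in the kernel of the outgoing $V$-arrow, because $VF=0$. At the endpoint, $R_u\subseteq\ker\rho'$. Your choice of $R$ inside $\ker\rho'$ is precisely what is needed there; the paper's ``any supplementary subspace'' has to be read this way. Finally, distinct $u$ are distinct words, so the quivers $\Gamma(u)$ are already pairwise non-isomorphic and your Step 3 grouping is unnecessary.
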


\begin{proof}
    For every $w \in W_1$, we construct a compatible decomposition as follows
    \begin{equation}\label{EqnDecomposition}
        U_{w}^{1\mathrm{st}} = X_w^F \oplus X_w^V \oplus X_w.
    \end{equation}
    Recall that $m$ denotes the maximal length of the elements in $W_1$. If $l(w) = m$, set $X_w^F = X_w^V = \{0\}$ and $X_w := U_{w}^{1\mathrm{st}}$. Now, let us assume that the decomposition \eqref{EqnDecomposition} is already built for all words $w \in W_1$ with $l(w) \geq k+1$ for a certain $0 \leq k \leq m-1$. Let $w \in W_1$ with $l(w) = k$.\\
    If $wF \in W_1$, since $\rho_{wF,w}^{1\mathrm{st}}$ is injective, it defines an isomorphism onto its image. We set $X_{w}^F := \mathrm{Im}(\rho_{wF,w}^{1\mathrm{st}}) \subseteq  U_{w}^{1\mathrm{st}}$. If $wF \not \in W_1$, we set $X_w^F := \{0\}$. If $wV^{\#}\in W_1$, then $\rho_{wV^{\#},w}^{1\mathrm{st}}$ is surjective. We choose any supplementary subspace to its kernel in $U_{w}^{1\mathrm{st}}$, and call it $X_w^V$. In particular $\rho_{wV^{\#},w}^{1\mathrm{st}}$ defines an isomorphism from $X_w^V$ onto $U_{wV^{\#}}^{1\mathrm{st}}$. If $wV^{\#} \not \in W_1$, we set $X^V_w := \{0\}$. By construction, we have $X_w^F \cap X_w^V = \{0\}$. Eventually, we consider any supplementary subspace to $X_w^F \oplus X_w^V$ in $U_{w}^{1\mathrm{st}}$ and call it $X_w$.\\
    By induction, the decomposition \eqref{EqnDecomposition} is well-defined for all $w\in W_1$. Let us consider $w \in W_1$ such that $X_w \not = \{0\}$. Such $w$ always exist as soon as $W_1 \not = \emptyset$, since any $w \in W_1$ with $l(w) = m$ will do. Let $\Gamma_w$ denote the subgraph of $\Gamma(M,1\mathrm{st})$ which is cut out by all the prefixes of $w$. Then $\Gamma_w$ is a connected linear Kraft quiver which is isomorphic to the linear Kraft quiver $\Gamma(w)$ of Definition \ref{DefLinearKraftQuiverWithWord}; see also the proof of Corollary \ref{CorolAllLinearKraftAreFirstKind} where $\Gamma'$ there corresponds to $\Gamma_w$ here. We define a strict $(\sigma,\tau)$-linear representation $(U^w,\rho^w)$ on $\Gamma_w$ as follows. Set $U^w_w := X_w$, and assume that the space $U^w_{w'} \subseteq  U_{w'}^{1\mathrm{st}}$ is already defined for some non-empty prefix $w'$ of $w$. Let $w''$ denote the immediate prefix of $w'$. If $w' = w''F$, then define $U_{w''}^w$ as the image of $U^w_{w'}$ by $\rho_{w',w''}^{1\mathrm{st}}$, and $\rho^w_{w',w''}$ as the restriction of $\rho_{w',w''}^{1\mathrm{st}}$ to $U^w_{w'}$. If $w' = w''V^{\#}$, then define $U_{w''}^w$ as the preimage of $U^w_{w'}$ via the isomorphism $X_{w''}^V \xrightarrow{\sim} U_{w'}^{1\mathrm{st}}$, and $\rho^w_{w'',w'}$ as the restriction to $U^w_{w''}$ of this isomorphism. Clearly, $(U^w,\rho^w)$ defines a strict $(\sigma,\tau)$-linear representation of $\Gamma_w$.\\
    Since the isomorphism class of a connected linear Kraft quiver is determined by its associated finite word, the Kraft quivers $\Gamma_w$, for $w$ running through all the words of the first kind with $X_w \not = \{0\}$, are pairwise non-isomorphic. Moreover, by construction, the twisted Gelfand-Ponomarev modules $M_w := M(\Gamma_w,U^w,\rho^w)$ are naturally submodules of $\mathrm{gr}^{1\mathrm{st}}(M)$. We claim that $\mathrm{gr}^{1\mathrm{st}}(M)$ is equal to the direct sum of these modules.\\
    First, by induction on $l(v)$, let us prove that $U_{v}^{1\mathrm{st}}$ is included in the sum of the $M_w$'s. If $l(v) = m$ is maximal, then $U_{v}^{1\mathrm{st}} = X_v \subseteq  M_v$ by construction, so this case holds. Assume that the statement is known for all $v \in W_1$ with $l(v) = k+1$ for some $0 \leq k < m$, and let $v \in W_1$ with $l(v) = k$. Recall the decomposition \eqref{EqnDecomposition} of $U_{v}^{1\mathrm{st}}$. If $X_v \not = \{0\}$ then clearly $X_v \subseteq  M_v$. Besides, if $vF \in W_1$ then $\rho_{vF,v}^{1\mathrm{st}}$ induces an isomorphism from $U_{vF}^{1\mathrm{st}}$ to $X_v^F$. By induction, $U_{vF}^{1\mathrm{st}}$ is included in the sum of all the $M_w$'s, for which $w$ has $vF$ as prefix and $X_w \not = \{0\}$. By construction, for each such $w$, the Frobenius $F$ induces on $U_{vF}^{1\mathrm{st}}\cap M_w$ the restriction of $\rho_{vF,v}^{1\mathrm{st}}$. In particular, the image of $\rho_{vF,v}^{1\mathrm{st}}$ remains in the sum of these $M_w$'s, which proves the claim. Similarly, one may prove that $X_v^V$ is in the sum of all the $M_w$'s for which $w$ has $vV^{\#}$ as prefix and $X_w \not = \{0\}$. This proves the statement by induction. \\
    Thus, $\mathrm{gr}^{1\mathrm{st}}(M)$ is equal to the sum of all the $M_w$ for $w\in W_1$ with $X_w \not = \{0\}$. To prove that the sum is direct, it is enough to prove that $\dim\mathrm{gr}^{1\mathrm{st}}(M) = \sum \dim M_w$. Let us write 
    \begin{align*}
        d_w^{1\mathrm{st}} := \dim U_{w}^{1\mathrm{st}}, & & d^F_w := \dim X_w^F, & & d^V_w := \dim X_w^V, & & d_w := \dim X_w.
    \end{align*}
    Thus, we have $\dim \mathrm{gr}^{1\mathrm{st}}(M) = \sum_{w \in W_1} d_w^{1\mathrm{st}}$ and $d_w^{1\mathrm{st}} = d_w^F + d_w^F + d_w$ for all $w\in W_1$. Besides, we have 
    \begin{align*}
        d_w^F = \begin{cases}
            d_{wF}^{1\mathrm{st}} & \text{if } wF \in W_1;\\
            0 & \text{else},
        \end{cases}
        & & 
        d_w^V = \begin{cases}
            d_{wV^{\#}}^{1\mathrm{st}} & \text{if } wV^{\#} \in W_1;\\
            0 & \text{else}.
        \end{cases}
    \end{align*}
    Using these relations, the sum $\sum_{w \in W_1} d_w^{1\mathrm{st}}$ can be expressed as a sum involving only the integers $d_w$ with some positive multiplicities. Given a $w \in W_1$ with $d_w > 0$, each prefix of $w$ will contribute to one factor $d_w$ in the sum. In other words, we have 
    \begin{equation*}
        \sum_{w \in W_1} d_w^{1\mathrm{st}} = \sum_{\substack{w \in W_1 \text{ s.t.} \\ d_w > 0}} (l(w)+1)d_w = \sum_{\substack{w \in W_1 \text{ s.t.} \\ d_w > 0}} \dim M_w.
    \end{equation*}
    Thus the sum of the $M_w$'s is direct and the proof is over.
\end{proof}

Eventually, we prove that there exists a submodule $M_1 \subseteq  M$ which is isomorphic to $\mathrm{gr}^{1\mathrm{st}}(M)$. This is explained in \cite{gelfand-ponomarev:1968} Theorem 4.2. 

\begin{lemma}[cf.~\cite{gelfand-ponomarev:1968} Theorem 4.2 and Lemma 4.1]
    There exists a family of $K$-linear subspaces $\gamma(w) \subseteq  M$ for each $w \in W$, such that $\gamma(w) = 0$ if $w \not \in W_1$ and for every $w \in W_1$ we have 
    \begin{equation*}
        \mathrm{Ker}D(Fw) = \mathrm{Dom}D(V^{\#}w) \oplus \gamma(w),
    \end{equation*}
    and such that the action of $F$ induces a $\sigma$-linear injection $\gamma(wF) \hookrightarrow \gamma(w)$, and the action of $V$ induces a $\tau$-linear surjection $\gamma(w) \twoheadrightarrow \gamma(wV^{\#})$. 
\end{lemma}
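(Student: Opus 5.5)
We construct the subspaces $\gamma(w)$ by induction on the length of $w$, from the longest words of the first kind down to the empty word, in the spirit of the proof of Proposition \ref{DecompositionSumLinearKraftQuiver}. Set $\gamma(w) = 0$ for $w \notin W_1$. The key inputs are the identities established in the proof of Lemma \ref{Morphisms1stKind}:
\begin{align*}
F^{-1}(\mathrm{Ker}D(Fw)) &= \mathrm{Ker}D(FwF), & F^{-1}(\mathrm{Dom}D(V^{\#}w)) &= \mathrm{Dom}D(V^{\#}wF),\\
V(\mathrm{Ker}D(Fw)) &= \mathrm{Ker}D(FwV^{\#}), & V(\mathrm{Dom}D(V^{\#}w)) &= \mathrm{Dom}D(V^{\#}wV^{\#}).
\end{align*}
These say that $F$ and $V$ map the relevant intervals into each other compatibly.

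For $w \in W_1$ with $l(w) = m$ maximal, we take $\gamma(w)$ to be any complement of $\mathrm{Dom}D(V^{\#}w)$ in $\mathrm{Ker}D(Fw)$. Now suppose the $\gamma(w')$ are constructed for all $w' \in W_1$ with $l(w') > k$, and let $w \in W_1$ with $l(w)=k$. We build $\gamma(w)$ inside $\mathrm{Ker}D(Fw)$ as a sum of three pieces.

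The first piece is $G^F := F(\gamma(wF))$, or $0$ if $wF \notin W_1$. This is a subspace because $F$ is $\sigma$-linear. By the first identity, $F$ maps $\mathrm{Ker}D(FwF)$ into $\mathrm{Ker}D(Fw)$. By the second identity, an element of $\gamma(wF)$ whose $F$-image lies in $\mathrm{Dom}D(V^{\#}w)$ lies in $\mathrm{Dom}D(V^{\#}wF)\cap\gamma(wF) = 0$. Hence $F$ restricted to $\gamma(wF)$ is injective, and $G^F \cap \mathrm{Dom}D(V^{\#}w) = 0$. The image of $G^F$ in $U^{1\mathrm{st}}_w$ is exactly $\mathrm{Im}(\rho^{1\mathrm{st}}_{wF,w})$.

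The second piece handles the case $wV^{\#} \in W_1$. By the $V$-identities, $V$ maps $\mathrm{Ker}D(Fw)$ onto $\mathrm{Ker}D(FwV^{\#}) = \mathrm{Dom}D(V^{\#}wV^{\#}) \oplus \gamma(wV^{\#})$. We pick a basis $y_j$ of $\gamma(wV^{\#})$ and preimages $x_j \in \mathrm{Ker}D(Fw)$ with $Vx_j = y_j$. Let $G^V$ be their span. Then $V$ maps $G^V$ bijectively onto $\gamma(wV^{\#})$, up to the $\tau$-twist. Moreover $G^V \cap \mathrm{Dom}D(V^{\#}w) = 0$, because $V(\mathrm{Dom}D(V^{\#}w)) = \mathrm{Dom}D(V^{\#}wV^{\#})$ meets $\gamma(wV^{\#})$ trivially.

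The main obstacle is ensuring that $G^F$, $G^V$ and $\mathrm{Dom}D(V^{\#}w)$ are jointly independent. This should hold at the level of the quotient $U^{1\mathrm{st}}_w$: $G^F$ maps into $\ker(\rho^{1\mathrm{st}}_{w,wV^{\#}})$, since $VF=0$, while $G^V$ maps isomorphically onto $\mathrm{Im}(\rho^{1\mathrm{st}}_{w,wV^{\#}})$. Therefore the images of $G^F$ and $G^V$ in $U^{1\mathrm{st}}_w$ are independent and each injects. We then choose a further subspace $G$ lifting a complement of their images in $U^{1\mathrm{st}}_w$, and set $\gamma(w) := G^F \oplus G^V \oplus G$.

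By construction $\gamma(w)$ projects isomorphically onto $U^{1\mathrm{st}}_w$, so $\mathrm{Ker}D(Fw) = \mathrm{Dom}D(V^{\#}w)\oplus\gamma(w)$. In addition, $F(\gamma(wF)) = G^F \subseteq \gamma(w)$ with $F$ injective on $\gamma(wF)$. Finally, $V(\gamma(w)) \subseteq \mathrm{Ker}D(FwV^{\#})$ contains $\gamma(wV^{\#})$.

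One point remains: we need $V(\gamma(w)) = \gamma(wV^{\#})$ exactly, not merely $V(\gamma(w)) \supseteq \gamma(wV^{\#})$. Since $V(G^F) = 0$, we fix this by replacing each basis vector $g$ of $G$ with $g - x$. Here $x \in G^V$ is chosen so that $V(g - x)$ has zero $\gamma(wV^{\#})$-component. This does not change the projection to $U^{1\mathrm{st}}_w$ modulo the image of $G^V$.

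The correction only controls the $\gamma(wV^{\#})$-component, and $V(g-x)$ may still have a component in $\mathrm{Dom}D(V^{\#}wV^{\#})$. To kill it, we first modify $g$ by an element of $\mathrm{Dom}D(V^{\#}w)$, whose $V$-image is all of $\mathrm{Dom}D(V^{\#}wV^{\#})$. This modification keeps $g$ in $\mathrm{Ker}D(Fw)$ with the same class in $U^{1\mathrm{st}}_w$. After these adjustments, $V(\gamma(w)) = \gamma(wV^{\#})$, which is the required surjection.
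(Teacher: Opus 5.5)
Your proposal is correct and follows essentially the paper's construction: $\gamma(w)$ is built from $F(\gamma(wF))$, a subspace mapped by $V$ isomorphically onto $\gamma(wV^{\#})$, and a third piece lying in $\mathrm{Ker}(V)\cap\mathrm{Ker}D(Fw)$. The only difference is that the paper chooses the third piece directly inside $\mathrm{Ker}(V)$, after proving $\mathrm{Ker}D(Fw)=\gamma^V(w)\oplus\bigl(\mathrm{Dom}D(V^{\#}w)+\mathrm{Ker}(V)\cap\mathrm{Ker}D(Fw)\bigr)$ by the same subtraction you use to correct your lift $G$; your correction also covers the case $wV^{\#}\notin W_1$, where only the $\mathrm{Dom}D(V^{\#}w)$-adjustment is needed.
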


In other words, each $\gamma(w)$ is a supplementary subspace to $\mathrm{Dom}D(V^{\#}w)$ in $\mathrm{Ker}D(Fw)$, chosen compatibly so that they fit a diagram as below.
\begin{center}
    \begin{tikzcd}
        \gamma(wV^{\#})  & & \gamma(wF) \\
        & \gamma(w) \arrow[ul, twoheadrightarrow, swap, "V"] \arrow[ur, hookleftarrow, "F"] &            
    \end{tikzcd}
\end{center}

\begin{proof}
We set $\gamma(w) = 0$ for all $w\in W \setminus W_1$. We define $\gamma(w)$ for $w \in W_1$ by induction on $l(w)$. If $l(w) = m$ is maximal, we can choose $\gamma(w)$ as any $K$-linear supplementary subspace to $\mathrm{Dom}D(V^{\#}w)$ in $\mathrm{Ker}D(Fw)$. Let us now assume that all the subspaces $\gamma(w)$ have been built for $w \in W_1$ with $l(w) = k+1$ for a certain $0 \leq k < m$. Let us consider $w \in W_1$ with $l(w) = k$. We will build $\gamma(w)$ as the direct sum of three suitably chosen subspaces 
\begin{equation*}
    \gamma(w) := \gamma^V(w) \oplus \gamma^{\circ}(w) \oplus \gamma^F(w).
\end{equation*}
Recall that $F^{-1}(\mathrm{Ker} D(Fw)) = \mathrm{Ker}D(FwF)$ and $F^{-1}(\mathrm{Dom} D(V^{\#}w)) = \mathrm{Dom} D(V^{\#}wF)$. This implies that the restriction of $F$ to $\gamma(wF)$ is injective. Indeed, if we have $F(x) = 0$ for some $x \in \gamma(wF)$, then in particular $F(x) \in \mathrm{Dom} D(V^{\#}w)$ so that $x \in \mathrm{Dom} D(V^{\#}wF)$. Since $\mathrm{Dom} D(V^{\#}wF) \cap \gamma(wF) = \{0\}$, we have $x = 0$. Thus, $F$ restricts to an isomorphism from $\gamma(wF)$ to its image which we denote $\gamma^F(w) \subseteq  \mathrm{Ker}D(Fw)$. On the other hand, recall that $V(\mathrm{Ker} D(Fw)) = \mathrm{Ker}D(FwV^{\#})$ and $V(\mathrm{Dom}D(V^{\#}w)) = \mathrm{Dom}D(V^{\#}wV^{\#})$. We choose a supplementary subspace to $\mathrm{Ker}(V) \cap \mathrm{Ker} D(Fw)$ in $V^{-1}(\gamma(wV^{\#})) \cap \mathrm{Ker} D(Fw)$ and call it $\gamma^V(w)$. By construction, $V$ induces an isomorphism from $\gamma^V(w)$ to $\gamma(wV^{\#})$.\\
Before going further, we introduce the subspace $\omega := \mathrm{Dom}D(V^{\#}w) + (\mathrm{Ker}(V) \cap \mathrm{Ker} D(Fw))$. Let us prove that
\begin{equation}\label{EqnDecompoOmega}
    \mathrm{Ker} D(Fw) = \gamma^V(w) \oplus \omega.
\end{equation} 
If $x \in \gamma^V(w) \cap \omega$, we can write $x = x_1 + x_2$ where $x_1 \in \mathrm{Dom}D(V^{\#}w)$ and $x_2 \in \mathrm{Ker}(V) \cap \mathrm{Ker} D(Fw)$. It follows that $V(x) = V(x_1) \in \mathrm{Dom}D(V^{\#}wV^{\#}) \cap \gamma(wV^{\#}) = \{0\}$. Thus $x \in \mathrm{Ker}(V) \cap \gamma^V(w) = \{0\}$ as desired. Next, let $x\in \mathrm{Ker} D(Fw)$. We can decompose $V(x) = y_1 + y_2$ with $y_1 \in \mathrm{Dom}D(V^{\#}wV^{\#})$ and $y_2 \in \gamma(wV^{\#})$. Let $x_1 \in \mathrm{Dom} D(V^{\#}w) \subseteq  \omega$ and $x_2 \in \gamma^V(w)$ be preimages respectively of $y_1$ and $y_2$ under $V$. Then $z := x - x_1 - x_2 \in \mathrm{Ker}(V) \cap \mathrm{Ker} D(Fw) \subseteq  \omega$. Thus, we have $x = x_2 + (z + x_1)$ with $x_2 \in \gamma^V(w)$ and $z+x_1 \in \omega$ as desired. This proves \eqref{EqnDecompoOmega}.\\
Now, observe that $\gamma^F(w) \cap \mathrm{Dom}D(V^{\#}w) = \{0\}$. Indeed, if $x$ belongs to this intersection, then there is a unique $x' \in \gamma(wF)$ such that $F(x') = x$. But then $x' \in \mathrm{Dom} D(V^{\#}wF)$ which implies that $x' = 0$, so $x=0$ as desired. \\
We define a subspace $\gamma^{\circ}(w) \subseteq  \mathrm{Ker}D(Fw)$ as follows. Observe that $\gamma^F(w) \oplus \mathrm{Dom}D(V^{\#}w) \subseteq  \omega$ (since $VF = 0$ on $M$). We can choose a supplementary $\gamma^{\circ}(w)$ to $\gamma^F(w) \oplus \mathrm{Dom}D(V^{\#}w)$ in $\omega$, such that $\gamma^{\circ}(w) \subseteq  \mathrm{Ker}(V) \cap \mathrm{Ker} D(Fw)$. Putting things together, we have 
\begin{align*}
    \mathrm{Ker} D(Fw) & = \gamma^V(w) \oplus \omega \\
    & = \gamma^V(w) \oplus \gamma^{\circ}(w) \oplus \gamma^F(w) \oplus \mathrm{Dom}D(V^{\#}w).
\end{align*}
We define 
\begin{equation*}
    \gamma(w) := \gamma^V(w) \oplus \gamma^{\circ}(w) \oplus \gamma^F(w).
\end{equation*}
By construction, it is clear that $\gamma(w)$ meets all the requirements of the Lemma.
\end{proof}

The spaces $\gamma(w)$ for $w \in W_1$, together with the transition maps induced by $F$ and $V$, form a $(\sigma,\tau)$-linear representation on $\Gamma(M,1\mathrm{st})$ which we denote $\gamma$. By construction, the associated module $M_1 := M(\Gamma(M,1\mathrm{st}),\gamma)$ is naturally a submodule of $M$, namely $M_1$ is the internal direct sum of the $\gamma(w)$ for $w \in W_1$. Moreover, for every $w \in W_1$ the quotient map gives a $K$-linear isomorphism 
\begin{equation*}
    \gamma(w) \xrightarrow{\sim} U_{D(w)}^{1\mathrm{st}}.
\end{equation*}
This induces an isomorphism $\gamma \xrightarrow{\sim} (U^{1\mathrm{st}},\rho^{1\mathrm{st}})$ of $(\sigma,\tau)$-linear representations over $\Gamma(M,1\mathrm{st})$. Thus, we have proved the following.

\begin{theorem}\label{TheoremExistenceM1}
    There is a twisted Gelfand-Ponomarev submodule $M_1$ of $M$ which is isomorphic to $\mathrm{gr}^{1\mathrm{st}}(M)$. 
\end{theorem}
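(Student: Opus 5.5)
The plan is to realize $\mathrm{gr}^{1\mathrm{st}}(M)$ inside $M$ by means of the compatible family of complements $\gamma(w)$, $w \in W_1$, provided by the preceding Lemma, and then to check three things. First, that $M_1 := \sum_{w\in W_1}\gamma(w)$ is a $K[F,V]_{\sigma,\tau}$-submodule. Second, that this sum is internally direct. Third, that the quotient maps $\gamma(w) \to U^{1\mathrm{st}}_w = \mathrm{Ker}D(Fw)/\mathrm{Dom}D(V^{\#}w)$ assemble into an isomorphism of $(\sigma,\tau)$-linear representations of $\Gamma(M,1\mathrm{st})$. Once these hold, $M_1 \simeq M(\Gamma(M,1\mathrm{st}),\gamma) \simeq M(\Gamma(M,1\mathrm{st}),U^{1\mathrm{st}},\rho^{1\mathrm{st}}) = \mathrm{gr}^{1\mathrm{st}}(M)$.

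\textbf{Stability.} Take $w \in W_1$. By the Lemma, $V(\gamma(w)) = \gamma(wV^{\#})$, which is zero when $wV^{\#}\notin W_1$. For $F$, I need $F(\gamma(w)) \subseteq \gamma(w')$ whenever $w = w'F$, and $F(\gamma(w)) = 0$ when $w$ is empty or ends with $V^{\#}$. The vanishing cases are immediate: $\gamma(\emptyset) \subseteq \mathrm{Ker}(F)$, and if $w = w'V^{\#}$ then $\gamma(w) \subseteq \mathrm{Ker}D(Fw) \subseteq \mathrm{Dom}(V^{\#}) = \mathrm{Im}(V) \subseteq \mathrm{Ker}(F)$. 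This last inclusion is where I would refine the Lemma's construction if needed. In the remaining case $w = w'F$, the construction gives $F(\gamma(w'F)) = \gamma^F(w') \subseteq \gamma(w')$. Compatibility of the quotient maps with $\rho^{1\mathrm{st}}$ then holds by definition, since $\rho^{1\mathrm{st}}_E$ is induced by $F$ or $V$ (Lemma \ref{Morphisms1stKind}).

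\textbf{Directness.} This is the main obstacle. I would use the flag $\mathcal F_M$. By Proposition \ref{MonomialOfFirstKind}, distinct $w\in W_1$ give distinct elementary intervals $(\beta_{i_w},\beta_{i_w+1}) = (\mathrm{Dom}D(V^{\#}w),\mathrm{Ker}D(Fw))$. Moreover $\gamma(w) \subseteq \beta_{i_w+1}$ and $\gamma(w)\cap\beta_{i_w} = 0$. Order the words by $i_w$ and suppose $\sum x_w = 0$ with $x_w \in \gamma(w)$ and not all $x_w$ zero. Take the nonzero term with largest index $i_{w_0}$. All other terms lie in $\beta_{i_w+1} \subseteq \beta_{i_{w_0}}$, so $x_{w_0} \in \beta_{i_{w_0}}\cap\gamma(w_0) = 0$, a contradiction. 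So the sum is direct. For the same reason each quotient map $\gamma(w)\to U^{1\mathrm{st}}_w$ is a linear isomorphism, since $\gamma(w)$ is a complement of $\mathrm{Dom}D(V^{\#}w)$ in $\mathrm{Ker}D(Fw)$.

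\textbf{Conclusion.} The maps $\gamma(w)\xrightarrow{\sim}U^{1\mathrm{st}}_w$ commute with the arrow maps, so they give an isomorphism of strict-or-not $(\sigma,\tau)$-linear representations on $\Gamma(M,1\mathrm{st})$. Since the module attached to a representation depends functorially on it, this yields $M_1 \simeq \mathrm{gr}^{1\mathrm{st}}(M)$. The identification of $M_1$ with $M(\Gamma(M,1\mathrm{st}),\gamma)$ is exactly the content of the stability and directness checks above.
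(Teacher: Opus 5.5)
Your proposal is correct and follows the paper's route: take $M_1=\bigoplus_{w\in W_1}\gamma(w)$ from the compatible complements of the preceding Lemma, then transport the representation along the quotient isomorphisms $\gamma(w)\xrightarrow{\sim}U^{1\mathrm{st}}_w$; your flag argument for directness (distinct $w\in W_1$ give distinct elementary intervals) and your $F$/$V$-stability check supply details the paper dismisses as ``by construction''. The inclusion you hedge on needs no refinement, since $\mathrm{Ker}D(Fw'V^{\#})\subseteq\mathrm{Dom}(V^{\#})=\mathrm{Im}(V)\subseteq\mathrm{Ker}(F)$ always holds; the property that genuinely needs care is $V(\gamma(w))=0$ when $wV^{\#}\notin W_1$, which you rightly take from the Lemma's statement, but which for words of maximal length requires choosing $\gamma(w)$ inside $\mathrm{Ker}(V)$ rather than as an arbitrary complement as in the paper's base case.
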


Clearly, $M$ is of the first kind if and only if $M = M_1$. Besides, if $W_1 = \emptyset$, namely if $M$ has no elementary interval of the first kind, we set $M_1 = \{0\}$. We end this section with the following Proposition.

\begin{proposition}
    Let $M, M'$ be two twisted Gelfand-Ponomarev modules of the first kind. Let $(U^{1\mathrm{st}},\rho^{1\mathrm{st}})$ and $((U')^{1\mathrm{st}},(\rho')^{1\mathrm{st}})$ be the $(\sigma,\tau)$-linear representations of Definition \ref{DefinitionRepnFirstKind} for $M$ and for $M'$ respectively. Then $M$ and $M'$ are isomorphic if and only if they determine the same set $W_1$ of words of the first kind, and for every $w \in W_1$ we have 
    \begin{equation*}
        \dim U_{w}^{1\mathrm{st}} = \dim (U')_{w}^{1\mathrm{st}}.
    \end{equation*}
\end{proposition}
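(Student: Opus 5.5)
For the forward direction, let $f:M\xrightarrow{\sim}M'$ be an isomorphism of $K[F,V]_{\sigma,\tau}$-modules. Since $f$ is $K$-linear and commutes with $F$ and $V$, it transports the graph relations: $(x,y)\in F$ on $M$ if and only if $(f(x),f(y))\in F$ on $M'$, and likewise for $V^{\#}$. Composition of relations is compatible with such transport. Hence for every word $w\in W$ we get $f(\mathrm{Ker}D(w))=\mathrm{Ker}D'(w)$ and $f(\mathrm{Dom}D(w))=\mathrm{Dom}D'(w)$. In particular $f$ identifies $\mathrm{Dom}D(V^{\#}w)\subseteq\mathrm{Ker}D(Fw)$ with the corresponding pair for $M'$. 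So a word is of the first kind for $M$ exactly when it is for $M'$, and $f$ induces $K$-linear isomorphisms $U^{1\mathrm{st}}_w\xrightarrow{\sim}(U')^{1\mathrm{st}}_w$, which gives the equality of dimensions.

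For the converse, since $M$ and $M'$ are of the first kind, Theorem \ref{TheoremExistenceM1} gives $M=M_1\simeq\mathrm{gr}^{1\mathrm{st}}(M)$, and similarly for $M'$. Because the sets $W_1$ coincide, the graphs $\Gamma(M,1\mathrm{st})$ and $\Gamma(M',1\mathrm{st})$ are literally the same graph, since it depends only on $W_1$. By Proposition \ref{DecompositionSumLinearKraftQuiver},
\[
\mathrm{gr}^{1\mathrm{st}}(M)\simeq\bigoplus_{w:\,X_w\neq 0} M(\Gamma_w,U^w,\rho^w),
\]
where each $(U^w,\rho^w)$ is a strict representation of the connected linear Kraft quiver $\Gamma_w\simeq\Gamma(w)$ with all spaces of dimension $d_w=\dim X_w$. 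By Proposition \ref{ModuleAttachedToLinearKraftQuiver} and the remark following it, $M(\Gamma_w,U^w,\rho^w)\simeq M(\Gamma(w),\mathbf 1)^{d_w}$. Therefore
\[
M\simeq\bigoplus_{w\in W_1} M(\Gamma(w),\mathbf 1)^{d_w},
\]
and the same holds for $M'$ with integers $d'_w$. It suffices to prove $d_w=d'_w$ for all $w\in W_1$.

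The main step is to show that the $d_w$ are determined by $W_1$ and the numbers $d^{1\mathrm{st}}_w$. From the construction in the proof of Proposition \ref{DecompositionSumLinearKraftQuiver},
\[
d_w=d^{1\mathrm{st}}_w-d^{1\mathrm{st}}_{wF}-d^{1\mathrm{st}}_{wV^{\#}},
\]
where a term is dropped when the corresponding word is not in $W_1$. This holds because $X^F_w\simeq U^{1\mathrm{st}}_{wF}$ via the injective map $\rho$, and $X^V_w\simeq U^{1\mathrm{st}}_{wV^{\#}}$ via the surjective map $\rho$. The right-hand side involves only $W_1$ and the dimensions $\dim U^{1\mathrm{st}}_v$, which agree for $M$ and $M'$ by hypothesis. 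Hence $d_w=d'_w$, and $M\simeq M'$.

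The only delicate point I expect is the forward direction's claim that $f$ transports $\mathrm{Dom}$ and $\mathrm{Ker}$ of monomials. This should follow by a straightforward induction on word length using the definition of composition.
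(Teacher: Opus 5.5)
Your proposal is correct and follows the paper's proof. Both arguments reduce, via $M=M_1\simeq\mathrm{gr}^{1\mathrm{st}}(M)$, Proposition \ref{DecompositionSumLinearKraftQuiver} and Proposition \ref{ModuleAttachedToLinearKraftQuiver}, to showing that the multiplicities $d_w$ agree. You get this from the explicit formula $d_w=d^{1\mathrm{st}}_w-d^{1\mathrm{st}}_{wF}-d^{1\mathrm{st}}_{wV^{\#}}$, where the paper invokes an induction on $l(w)$ that rests on the same relations, so your version shows the induction is unnecessary; your transport-of-structure argument simply spells out the forward direction the paper calls clear.
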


\begin{proof}
    The direct implication is clear. Let us prove the converse. Since $M$ and $M'$ have the same set of words of the first kind, the graphs $\Gamma(M,1\mathrm{st})$ and $\Gamma(M',1\mathrm{st})$ are naturally identified. For $w\in W_1$, let us define integers $(d')_w^{1\mathrm{st}}$, $(d')_w^F$, $(d')_w^V$ and $d'_w$ exactly as in the proof of Proposition \ref{DecompositionSumLinearKraftQuiver} but relative to $M'$. By assumption, we have $d_w^{1\mathrm{st}} = (d')_w^{1\mathrm{st}}$ for all $w \in W_1$. Starting with $l(w) = m$, it is easy to prove by induction on $l(w)$ that the integers $(d')_w^F$, $(d')_w^V,$ and $d'_w$ are equal to $d_w^F$, $d_w^V$ and $d_w$ respectively for all $w \in W_1$. In particular, the connected linear Kraft quivers involved in the decompositions of $\mathrm{gr}^{1\mathrm{st}}(M)$ and of $\mathrm{gr}^{1\mathrm{st}}(M')$ are the same, and the strict $(\sigma,\tau)$-linear representations on them have the same dimension. By Proposition \ref{ModuleAttachedToLinearKraftQuiver}, the modules attached to these representations of Kraft quivers are isomorphic. Therefore, $M = M_1 \simeq \mathrm{gr}^{1\mathrm{st}}(M)$ is isomorphic to $\mathrm{gr}^{1\mathrm{st}}(M') \simeq M_1' = M'$.
\end{proof}

In the next section, we are going to build a supplementary submodule $M_2$ to $M_1$ in $M$ such that $M_2$ is of the second kind. 

\subsection{Submodules of the second kind and circular Kraft quivers}

In this section, we assume that $M \not = M_1$, so that there is at least one elementary interval of the second kind in $M$. We start with explaining how one can associate a unique periodic infinite word $[w]$ in the sense of Section \ref{SubsectionWords} to any elementary interval of the second kind. 

\begin{lemma}\label{ShapeOfIntervalsOfSecondKind}
    Let $(\beta_i,\beta_{i+1})$ be an elementary interval of the second kind. There exists some $w \in W$ such that 
    \begin{equation*}
        (\beta_i,\beta_{i+1}) = (\mathrm{Ker}D(w), \mathrm{Dom}D(w)).
    \end{equation*}
    Moreover, if $w' \in W$ is another word such that $(\beta_i,\beta_{i+1}) = (\mathrm{Ker}D(w'), \mathrm{Dom}D(w'))$, then one of $w$ and $w'$ is a suffix of the other.
\end{lemma}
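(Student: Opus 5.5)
For the existence part, I would look for the longest word whose interval $(\mathrm{Ker}D(w),\mathrm{Dom}D(w))$ still contains $(\beta_i,\beta_{i+1})$. Note first that $(\beta_i,\beta_{i+1})$ is contained in the interval attached to the empty word, namely $(\{0\},M)$. Suppose $(\beta_i,\beta_{i+1}) \subseteq (\mathrm{Ker}D(w),\mathrm{Dom}D(w))$. Then the chain \eqref{CompletionInterval},
\begin{equation*}
\mathrm{Ker}D(w) \subseteq \mathrm{Dom}D(V^{\#}w) \subseteq \mathrm{Ker}D(Fw) \subseteq \mathrm{Dom}D(w),
\end{equation*}
consists of members of $\mathcal F_M$. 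Since $(\beta_i,\beta_{i+1})$ is elementary, it lies in exactly one of three pieces:
\begin{enumerate}
    \item $(\mathrm{Ker}D(w),\mathrm{Dom}D(V^{\#}w))$;
    \item $(\mathrm{Dom}D(V^{\#}w),\mathrm{Ker}D(Fw))$;
    \item $(\mathrm{Ker}D(Fw),\mathrm{Dom}D(w))$.
\end{enumerate}
Two facts make this useful. First, $\mathrm{Ker}D(V^{\#}w) \supseteq \mathrm{Ker}D(w)$ and $\mathrm{Dom}D(Fw) \subseteq \mathrm{Dom}D(w)$ by Lemma \ref{UsefulLemma}. Second, $\mathrm{Ker}D(V^{\#}w)$ and $\mathrm{Dom}D(Fw)$ are themselves flag members.

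In case (2), the middle piece would have to equal $(\beta_i,\beta_{i+1})$. Its two ends differ, so $Dw \in \Sigma_1$ and the interval would be of the first kind, a contradiction. So we are in case (1) or case (3).

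In case (1), the interval lies inside $(\mathrm{Ker}D(V^{\#}w),\mathrm{Dom}D(V^{\#}w))$, and I replace $w$ by $V^{\#}w$. For this I need $\beta_i \supseteq \mathrm{Ker}D(V^{\#}w)$. Because $\mathcal F_M$ is totally ordered, it suffices to rule out $\mathrm{Ker}D(V^{\#}w) \supseteq \beta_{i+1}$. If that happened, $\beta_{i+1}$ would lie in $\mathrm{Ker}D(V^{\#}w)\cap \mathrm{Dom}D(V^{\#}w)$ but not in $\mathrm{Ker}D(w)$, which gives no immediate contradiction. I will instead keep the nesting as given, since the next step handles it. Case (3) is symmetric with $Fw$.

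The extension process cannot continue forever while keeping strictly nested intervals, because there are finitely many flag members. So I would reformulate it as follows.

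Consider the finite set $S$ of words $w$, up to the relevant equivalence, such that $\mathrm{Ker}D(w)\subseteq\beta_i$ and $\beta_{i+1}\subseteq\mathrm{Dom}D(w)$. This set contains $\emptyset$ and is closed under prefixes. Closure under prefixes holds because, if $w_2=w_1 u$, Lemma \ref{LemmaRelativePositionIntervals}(2) gives $(\mathrm{Ker}D(w_2),\mathrm{Dom}D(w_2))\subseteq(\mathrm{Ker}D(w_1),\mathrm{Dom}D(w_1))$; note that with the paper's convention the longer word sits inside the shorter one's interval.

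The trichotomy above shows that from any $w \in S$ we can extend by $V^{\#}$ or by $F$ and stay in $S$. For case (1) this is immediate: $\mathrm{Ker}D(V^{\#}w)$ is a flag member and is either at most $\beta_i$ or at least $\beta_{i+1}$, and the latter is excluded because then $\beta_{i+1}\subseteq \mathrm{Ker}D(V^{\#}w)\subseteq \mathrm{Dom}D(V^{\#}w)$… Here I expect some genuine work, and the main obstacle is exactly this point.

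So extensions exist at every length and words grow without bound. The flag is finite, so two words $w \preceq w'$ in $S$ must give the same pair $(\mathrm{Ker},\mathrm{Dom})$. I then aim to show that once $(\mathrm{Ker}D(w),\mathrm{Dom}D(w))$ stops shrinking, it must equal $(\beta_i,\beta_{i+1})$. Suppose both inclusions $\mathrm{Ker}D(w)\subseteq\beta_i$ and $\beta_{i+1}\subseteq \mathrm{Dom}D(w)$ held, with the interval strictly larger, for every long $w\in S$. Then some flag member strictly between the ends would repeatedly lie in the same extremal piece. I would get the contradiction from the stabilization combined with the fact, from Proposition \ref{MonomialOfFirstKind}, that each refinement step separates the extremal pieces.

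For uniqueness, suppose $(\beta_i,\beta_{i+1})$ equals the interval of both $w$ and $w'$ and neither word is a suffix of the other. Then Lemma \ref{LemmaRelativePositionIntervals}(1) applies and gives $\mathrm{Dom}D(w_1)\subseteq \mathrm{Ker}D(w_2)$, hence $\beta_{i+1}\subseteq\beta_i$, which is a contradiction. This part is routine.
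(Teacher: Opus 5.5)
Your uniqueness argument is correct and is the paper's. The existence part has two genuine gaps.

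\textbf{The extension step.} The first gap is the step you flagged. To pass from $w$ to $V^{\#}w$ in piece (1), or to $Fw$ in piece (3), you only use the one-sided inclusions of Lemma \ref{UsefulLemma}. With those alone you cannot exclude $\beta_{i+1}\subseteq\mathrm{Ker}D(V^{\#}w)$. The missing observation is that these inclusions are equalities:
\[
\mathrm{Ker}D(V^{\#}w)=\mathrm{Ker}D(w),\qquad \mathrm{Dom}D(Fw)=\mathrm{Dom}D(w).
\]
This holds because $y\xrightarrow{V^{\#}}0$ forces $y=V(0)=0$, and $F$ is defined everywhere. With these equalities the step is immediate:
\begin{itemize}
\item piece (1) gives $\mathrm{Ker}D(V^{\#}w)=\mathrm{Ker}D(w)\subseteq\beta_i$ and $\beta_{i+1}\subseteq\mathrm{Dom}D(V^{\#}w)$;
\item piece (3) gives $\mathrm{Ker}D(Fw)\subseteq\beta_i$ and $\beta_{i+1}\subseteq\mathrm{Dom}D(w)=\mathrm{Dom}D(Fw)$.
\end{itemize}
Separately, Lemma \ref{LemmaRelativePositionIntervals}(2) concerns $w_2=uw_1$, i.e.\ suffixes, not $w_2=w_1u$. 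So your set $S$ is closed under suffixes, not prefixes, and the words you build grow on the left. Also, $S$ is infinite as a set of words.

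\textbf{Termination.} The second gap is the end of the argument. The nested intervals $(\mathrm{Ker}D(w_n),\mathrm{Dom}D(w_n))$, with $w_{n+1}\in\{V^{\#}w_n,Fw_n\}$, do stabilize because $\mathcal F_M$ is finite. But Proposition \ref{MonomialOfFirstKind} does not show that the stable interval equals $(\beta_i,\beta_{i+1})$, and ``each step separates the extremal pieces'' is not an argument. What works is a length argument. Suppose, say, $\mathrm{Ker}D(w_n)\subsetneq\beta_i$ for all $n\ge N$, and write $\beta_i=\mathrm{Ker}D(u)$ or $\mathrm{Dom}D(u)$. Then $\beta_i$ lies strictly between $\mathrm{Ker}D(w_n)$ and $\mathrm{Dom}D(w_n)$. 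Lemma \ref{LemmaRelativePositionIntervals} then rules out its case (1) and the case where $u$ is a suffix of $w_n$. Hence $w_n$ must be a proper suffix of $u$, which is impossible once $n\ge l(u)$. The same argument handles $\beta_{i+1}$.

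With both repairs, your top-down refinement is a valid proof. As a bonus, it also produces the infinite word of Proposition \ref{PropositionWordsOfSecondKind}.

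The paper argues differently. It uses Proposition \ref{MonomialOfFirstKind} and the two identities to show that $\beta_i$ is the kernel of some monomial and $\beta_{i+1}$ the domain of some monomial. It then takes words $w$, $v$ of minimal length realizing these, which are non-null by minimality. Applying Lemma \ref{LemmaRelativePositionIntervals} to $\mathrm{Ker}D(v)\subseteq\mathrm{Ker}D(w)\subsetneq\mathrm{Dom}D(v)\subseteq\mathrm{Dom}D(w)$ shows that one of $w$, $v$ realizes the whole interval.
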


\begin{proof}
    First, we note that for any word $w \in W$, we have 
    \begin{align}\label{UsefulIdentityKerAndDom}
        \mathrm{Ker}D(V^{\#}w) = \mathrm{Ker}D(w), & & \mathrm{Dom} D(Fw) = \mathrm{Dom} D(w).
    \end{align}
    Indeed, this is due to $D(F) = \Gamma_F$ and $D(V) = \Gamma_V$ being the graphs of some semilinear maps on $M$. By definition of the stabilized sequence $\mathcal F_M$, we know that $\beta_i$ and $\beta_{i+1}$ can be expressed as the kernel or the domain of some monomial. \\
    Assume that $\beta_i = \mathrm{Dom}D(w)$ for some word $w$. Since $\beta_i \subsetneq M$, the word $w$ is not empty. By \eqref{UsefulIdentityKerAndDom}, we can assume that $w = V^{\#}w'$ for some $w' \in W$. By \eqref{CompletionInterval}, we have 
    \begin{equation*}
        \beta_i = \mathrm{Dom}D(V^{\#}w') \subseteq  \mathrm{Ker}D(Fw').
    \end{equation*}
    If this inclusion was proper, then Proposition \ref{MonomialOfFirstKind} tells us that the interval $(\beta_i, \mathrm{Ker}D(Fw'))$ is elementary of the first kind. Necessarily $\beta_{i+1} = \mathrm{Ker}D(Fw')$ and this contradicts our assumption. Thus, we have $\beta_i = \mathrm{Ker}D(Fw')$. Therefore if $\beta_i$ is the domain of some monomial, it can always be expressed as the kernel of another monomial of the same length. Likewise, one proves that if $\beta_{i+1}$ is the kernel of some monomial, then it can always be written as the domain of another monomial of the same length.\\
    Let $w$ (resp.~$v$) denote the word of shortest length such that $\beta_i = \mathrm{Ker}D(w)$ (resp.~$\beta_{i+1} = \mathrm{Dom}D(v)$). The minimality implies that the monomials $D(w)$ and $D(v)$ are non-null. Indeed, for instance if $D(w)$ were to be null, we would have $\beta_i = \mathrm{Dom}D(w)$. This would force $w$ to be non empty, and by \eqref{UsefulIdentityKerAndDom} the minimality of $w$ implies that $w = Fw'$ for some $w' \in W$. But then $\beta_i = \mathrm{Dom}D(w') = \mathrm{Ker}D(w'')$ for some other word $w'' \in W$ with $l(w'') = l(w')$, which is again a contradiction. A similar reasoning goes for $D(v)$. Thus, we have 
    \begin{equation*}
        \beta_i \subsetneq \mathrm{Dom}D(w).
    \end{equation*}
    Since $(\beta_i,\beta_{i+1})$ is elementary, we must have $\beta_i \subsetneq \beta_{i+1} \subseteq  \mathrm{Dom}D(w)$. Likewise, the proper inclusion $\mathrm{Ker}D(v) \subsetneq \beta_{i+1}$ implies that $\mathrm{Ker}D(v) \subseteq  \beta_i \subsetneq \beta_{i+1}$. In other words, we have
    \begin{equation*}
        \mathrm{Ker}D(v) \subseteq  \underbrace{\mathrm{Ker}D(w)}_{= \beta_i} \subsetneq \underbrace{\mathrm{Dom}D(v)}_{= \beta_{i+1}} \subseteq  \mathrm{Dom} D(w).
    \end{equation*}
    By Lemma \ref{LemmaRelativePositionIntervals}, this is only possible if $\beta_i = \mathrm{Ker}D(v)$, in which case $w$ is a suffix of $v$, or $\beta_{i+1} = \mathrm{Dom} D(w)$, in which case $v$ is a suffix of $w$. In both cases the first part of the Lemma is proved.\\
    Let us now assume that 
    \begin{equation*}
        (\beta_i,\beta_{i+1}) = (\mathrm{Ker}D(w), \mathrm{Dom}D(w)) = (\mathrm{Ker}D(w'), \mathrm{Dom}D(w')).
    \end{equation*}
    Since both $D(w)$ and $D(w')$ are non-null, Lemma \ref{LemmaRelativePositionIntervals} implies that $w$ or $w'$ is a suffix of the other as desired.
\end{proof}

\begin{remark}
    We point out that the converse of Lemma \ref{ShapeOfIntervalsOfSecondKind} does not hold. Namely, the fact that an elementary interval can be written in the form $(\beta_i,\beta_{i+1}) = (\mathrm{Ker}D(w), \mathrm{Dom}D(w))$ for some $w\in W$, does not necessarily imply that it is of the second kind. For instance, if $M = M(\Gamma,\mathbf 1_{\Gamma})$ where $\Gamma = \Gamma(V^{\#}F)$, one may check that
    \begin{align*}
        \mathrm{Ker}D(FV^{\#}) = \mathrm{Dom}D(V^{\#}), & & \mathrm{Dom}D(V^{\#}V^{\#}) = \mathrm{Ker}D(V^{\#}) = \{0\}.
    \end{align*}
    Therefore, the elementary interval of the first kind determined by $V^{\#} \in W_1$ can also be written as the kernel and domain of $V^{\#}$.
\end{remark}

Recall our definitions and notations regarding infinite words, cf. Definition \ref{DefinitionInfiniteWords}. If $\tilde w$ is an infinite word and $j\geq 0$, we write $\tilde w\{j\} := w_jw_{j-1}\cdots w_1 \in W$ for the $j$-th suffix of $\tilde w$, which is a finite word of length $j$. In particular we have 
    \begin{equation*}
        \tilde w = \tilde w[j] \tilde w\{j\},
    \end{equation*}
    for all $j\geq 0$.

\begin{proposition}\label{PropositionWordsOfSecondKind}
    Let $(\beta_i,\beta_{i+1})$ be an elementary interval of the second kind. There exists a unique infinite word $\tilde w$ and an integer $j_0 \geq 0$ such that for all $j\geq j_0$, 
    \begin{equation*}
        (\beta_i,\beta_{i+1}) = (\mathrm{Ker} D(\tilde w\{j\}), \mathrm{Dom} D(\tilde w\{j\})).
    \end{equation*}
\end{proposition}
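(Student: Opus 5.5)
The plan is to build the infinite word by repeatedly extending on the left. By Lemma \ref{ShapeOfIntervalsOfSecondKind}, the set
\[ S := \{ w \in W \,|\, (\beta_i,\beta_{i+1}) = (\mathrm{Ker}D(w),\mathrm{Dom}D(w)) \} \]
is nonempty, and any two of its elements are comparable for the suffix order. Hence $S$ is a chain of suffixes of one another. I then split into two cases: $S$ infinite or $S$ finite.

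If $S$ is infinite, it contains words of arbitrarily large length, all lying in one suffix chain. These words are therefore all suffixes of a unique infinite word $\tilde w$; concretely, $\tilde w\{j\}$ is the length-$j$ suffix of any element of $S$ of length $\ge j$. It remains to show that $\tilde w\{j\}\in S$ for all $j$ beyond the length $j_0$ of some fixed element of $S$. Take $j\ge j_0$ and choose $w\in S$ with $l(w)\ge j$. Then $w_0:=\tilde w\{j_0\}\in S$, and $\tilde w\{j\}$ sits between $w_0$ and $w$ in the suffix order. Applying Lemma \ref{LemmaRelativePositionIntervals}(2) twice gives
\[ \mathrm{Ker}D(w_0)\subseteq \mathrm{Ker}D(\tilde w\{j\})\subseteq \mathrm{Ker}D(w)\subseteq\mathrm{Dom}D(w)\subseteq\mathrm{Dom}D(\tilde w\{j\})\subseteq\mathrm{Dom}D(w_0). \]
The two ends coincide with $(\beta_i,\beta_{i+1})$, so equality holds throughout.

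The main obstacle is ruling out that $S$ is finite. Suppose $S$ is finite and let $w$ be its longest element. I would show that either $V^{\#}w$ or $Fw$ lies in $S$, which contradicts maximality. By \eqref{UsefulIdentityKerAndDom}, $\mathrm{Ker}D(V^{\#}w)=\mathrm{Ker}D(w)=\beta_i$ and $\mathrm{Dom}D(Fw)=\mathrm{Dom}D(w)=\beta_{i+1}$. By \eqref{CompletionInterval}, the elementary interval $(\beta_i,\beta_{i+1})$ contains $\mathrm{Dom}D(V^{\#}w)\subseteq\mathrm{Ker}D(Fw)$, and these are elements of $\mathcal F_M$. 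Since the interval is not of the first kind, $\mathrm{Dom}D(V^{\#}w)=\mathrm{Ker}D(Fw)$, and this common subspace equals either $\beta_i$ or $\beta_{i+1}$. If it equals $\beta_{i+1}$, then $V^{\#}w\in S$. If it equals $\beta_i$, then $Fw\in S$. Either way $S$ contains a longer word, so $S$ is infinite.

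For uniqueness, suppose $\tilde w'$ with threshold $j_0'$ also works. For every $j\ge\max(j_0,j_0')$, both $\tilde w\{j\}$ and $\tilde w'\{j\}$ lie in $S$. They have equal length and one is a suffix of the other, so they are equal. Letting $j\to\infty$ gives $\tilde w=\tilde w'$.
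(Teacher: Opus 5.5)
Your proof is correct and is essentially the paper's argument. Your key step is that for $w\in S$ one of $Fw$ or $V^{\#}w$ again realizes the interval, which follows from \eqref{CompletionInterval}, \eqref{UsefulIdentityKerAndDom} and the interval not being of the first kind; this is the same as the paper's ``exactly one of $D(Fw)$, $D(V^{\#}w)$ is non-null'' step, and the only difference is packaging: you phrase the iteration as a contradiction with finiteness of the suffix chain $S$ and fill in intermediate lengths by sandwiching via Lemma \ref{LemmaRelativePositionIntervals}, whereas the paper extends one letter at a time.
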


\begin{proof}
    Let $w \in W$ a word such that $(\beta_i,\beta_{i+1}) = (\mathrm{Ker}D(w), \mathrm{Dom}D(w))$. By Lemma \ref{LemmaRelativePositionIntervals}, we have a diagram as follows.
    \begin{center}
        \begin{tikzcd}[sep = small,cramped]
            & \mathrm{Ker}D(Fw) \arrow[r,phantom,"\subseteq "] & \mathrm{Dom}D(Fw) \arrow[rd,phantom,sloped,"="] & \\
            \beta_i \arrow[ru,phantom,sloped,"\subseteq "] \arrow[rd,phantom,sloped,"="] & & & \beta_{i+1} \\
            & \mathrm{Ker}D(V^{\#}w) \arrow[r,phantom,"\subseteq "] & \mathrm{Dom}D(V^{\#}w) \arrow[ru,phantom,sloped,"\subseteq "] &
        \end{tikzcd}
    \end{center}
    The equalities follows from \eqref{UsefulIdentityKerAndDom}. We claim that exactly one of $D(Fw)$ and $D(V^{\#}w)$ is non-null. Indeed, assume first that both are null. The diagram implies that $\beta_i = \mathrm{Dom}D(V^{\#}w)$ and $\beta_{i+1} = \mathrm{Ker}D(Fw)$. This would make $(\beta_i,\beta_{i+1})$ an elementary interval of the first kind, which is absurd. Assume now that both are non-null. Since $(\beta_i,\beta_{i+1})$ is elementary, the diagram implies that $\beta_i = \mathrm{Ker}D(Fw)$ and $\beta_{i+1} = \mathrm{Dom}D(V^{\#}w)$. But this would imply that $\beta_{i+1} \subseteq  \beta_i$, which is absurd. This proves the claim.\\
    If $D(Fw)$ is non null, then $D(V^{\#}w)$ is null and the diagram implies that $\beta_i = \mathrm{Ker}D(Fw)$ and $\beta_{i+1} = \mathrm{Dom}D(Fw)$. If $D(V^{\#}w)$ is non null, then $D(Fw)$ is null and the diagram implies that $\beta_i = \mathrm{Ker}D(V^{\#}w)$ and $\beta_{i+1} = \mathrm{Dom}D(V^{\#}w)$. Thus, there is a single word $w' \in  W$ of length $l(w') = l(w) + 1$ and with $w$ as a suffix, such that $(\beta_i,\beta_{i+1}) = (\mathrm{Ker}D(w'), \mathrm{Dom}D(w'))$. By repeating this process, we obtain a well-defined infinite word $\tilde w$ such that, for every $j \geq l(w) =: j_0$, we have $(\beta_i,\beta_{i+1}) = (\mathrm{Ker} D(\tilde w\{j\}), \mathrm{Dom} D(\tilde w\{j\}))$ as desired.\\
    We claim that the infinite word $\tilde w$ does not depend on the initial choice of $w$. Let $w'$ be another starting word, that is, such that $(\beta_i,\beta_{i+1}) = (\mathrm{Ker}D(w'), \mathrm{Dom}D(w'))$. Without loss of generality, let us say that $l(w) \leq l(w')$. With $j = l(w')$, we know that $(\beta_i,\beta_{i+1}) = (\mathrm{Ker} D(\tilde w\{j\}), \mathrm{Dom} D(\tilde w\{j\}))$. By Lemma \ref{ShapeOfIntervalsOfSecondKind}, we know that one of $w'$ or $w\{j\}$ must be a suffix of the other. Since both words have the same length, they must actually be equal. Then it is clear that $\tilde{w}' = \tilde w$.
\end{proof}

\begin{definition}
    An infinite word $\tilde w$ arising from an elementary interval of the second kind is called \textit{a word of the second kind}. We denote by $W_2$ the set of words of the second kind. 
\end{definition}

It follows that $W_2$ is a finite set, in bijection with the set of elementary intervals of the second kind. Beware that $W_2 \not \subseteq W$, since $W_2$ consists of infinite words by definition. 

\begin{lemma}[cf.~\cite{gelfand-ponomarev:1968} Theorem 5.1]\label{AllShiftsAreWordsOfSecondKind}
    Let $\tilde w \in W_2$ be a word of the second kind. Then for every $j \geq 0$, $\tilde w[j]$ is again a word of the second kind.
\end{lemma}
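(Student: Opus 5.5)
By induction on $j$ it suffices to treat $j=1$: if $\tilde w[1]$ is of the second kind whenever $\tilde w$ is, then iterating gives $\tilde w[j]=\tilde w[j-1][1]\in W_2$. So fix $\tilde w\in W_2$ and write $\tilde w=\tilde w[1]a$ with $a=w_1\in\{F,V^{\#}\}$. By Proposition \ref{PropositionWordsOfSecondKind} there is an elementary interval $(\beta_i,\beta_{i+1})$ with $\beta_i=\mathrm{Ker}D(\tilde w\{j\})$ and $\beta_{i+1}=\mathrm{Dom}D(\tilde w\{j\})$ for all $j\ge j_0$. Put $u_j:=\tilde w[1]\{j\}$, so that $\tilde w\{j+1\}=u_ja$ and $D(\tilde w\{j+1\})=D(u_j)D(a)$.

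\textbf{Step 1: non-nullity and stabilization.} I claim that each $D(u_j)$ is non-null. If $\theta D(u_j)=\mathbf 0D(u_j)$, then multiplying on the right by $D(a)$ (Proposition \ref{UsefulProp}) would give $\theta D(\tilde w\{j+1\})=\mathbf 0 D(\tilde w\{j+1\})$. This contradicts $\beta_i\subsetneq\beta_{i+1}$ for $j+1\ge j_0$. Next, since $u_j$ is a suffix of $u_{j+1}$, case (2) of Lemma \ref{LemmaRelativePositionIntervals} shows that the intervals $(\mathrm{Ker}D(u_j),\mathrm{Dom}D(u_j))$ are nested and decreasing. All their endpoints lie in the finite flag $\mathcal F_M$, so they stabilize to some interval $(\alpha,\alpha')$ for $j\ge j_1$. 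This interval is proper, i.e.\ $\alpha\subsetneq\alpha'$.

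\textbf{Step 2: $(\alpha,\alpha')$ is elementary.} Let $\beta\in\mathcal F_M$ with $\alpha\subseteq\beta\subseteq\alpha'$, and write $\beta=\mathrm{Ker}D(v)$ or $\beta=\mathrm{Dom}D(v)$. Choose $j\ge j_1$ with $j>l(v)$, and apply Lemma \ref{LemmaRelativePositionIntervals} to $v$ and $u_j$. The word $u_j$ cannot be a suffix of $v$, since it is longer. If $v$ is a suffix of $u_j$, the interval of $v$ contains $(\alpha,\alpha')$. Then $\mathrm{Ker}D(v)\subseteq\alpha$ forces $\beta=\alpha$ in the kernel case, and $\mathrm{Dom}D(v)\supseteq\alpha'$ forces $\beta=\alpha'$ in the domain case. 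Otherwise we are in case (1), so one interval follows the other, and $\beta$ lies either below $\alpha$ or above $\alpha'$. In every case $\beta\in\{\alpha,\alpha'\}$.

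\textbf{Step 3: it is of the second kind (main obstacle).} Suppose instead that $(\alpha,\alpha')=(\mathrm{Dom}D(V^{\#}x),\mathrm{Ker}D(Fx))$ for some $x\in W_1$. Take $j\ge j_1$ with $j>l(x)+1$ and compare $u_j$ with $x$.
\begin{itemize}
\item If $Fx$ is a suffix of $u_j$, then $\alpha'=\mathrm{Ker}D(Fx)\subseteq\mathrm{Ker}D(u_j)=\alpha$, which is absurd.
\item If $V^{\#}x$ is a suffix of $u_j$, then $\alpha'=\mathrm{Dom}D(u_j)\subseteq\mathrm{Dom}D(V^{\#}x)=\alpha$, which is also absurd.
\item Since $l(u_j)>l(x)$, the only remaining possibility is that $x$ is not a suffix of $u_j$. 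Then $u_j$ and $x$ fall into case (1) of Lemma \ref{LemmaRelativePositionIntervals}, so their intervals follow each other. But by \eqref{CompletionInterval} we have $\mathrm{Ker}D(x)\subseteq\alpha\subsetneq\alpha'\subseteq\mathrm{Dom}D(x)$, so the interval of $x$ overlaps $(\alpha,\alpha')$, the interval of $u_j$, in a proper subinterval. This contradicts the "follows" relation.
\end{itemize}
Hence $(\alpha,\alpha')$ is elementary of the second kind, and it equals $(\mathrm{Ker}D(u_j),\mathrm{Dom}D(u_j))$ for all $j\ge j_1$. By the uniqueness part of Proposition \ref{PropositionWordsOfSecondKind}, its associated word is $\tilde w[1]$, so $\tilde w[1]\in W_2$. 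This case analysis in Step 3 is the delicate point: it must be carried out carefully, including the possibility $x=\emptyset$.
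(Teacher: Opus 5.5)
Your proof is correct, and its overall structure matches the paper's. Both arguments stabilize the nested intervals $(\mathrm{Ker}D(u),\mathrm{Dom}D(u))$ along the suffixes $u$ of the shifted word, then show the limit is elementary by the length argument with Lemma \ref{LemmaRelativePositionIntervals}. The paper treats $\tilde w[j]$ directly instead of inducting on $j$, which is only a cosmetic difference.

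The real difference is how you exclude the first kind. The paper pulls a hypothetical description $(\mathrm{Dom}D(V^{\#}v),\mathrm{Ker}D(Fv))$ back along $D(\tilde w\{j\})^{-1}$. This would give $(\beta_i,\beta_{i+1})=(\mathrm{Dom}D(V^{\#}v\tilde w\{j\}),\mathrm{Ker}D(Fv\tilde w\{j\}))$, making the original interval of the first kind. The same pullback identities also supply the non-nullity the paper uses to identify the associated word as $\tilde w[j]$.

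You instead compare $u_j$ with $x$ directly via the relative-position lemma. You never use that $(\beta_i,\beta_{i+1})$ is of the second kind, only that it is proper (to get non-nullity in Step 1). This yields a stronger statement: for any infinite word $\tilde v$ whose nested intervals $(\mathrm{Ker}D(\tilde v\{j\}),\mathrm{Dom}D(\tilde v\{j\}))$ stabilize to a proper interval, that limit is elementary of the second kind and $\tilde v\in W_2$. In particular, your Steps 2--3 recover the converse direction of Proposition \ref{EveryElementaryIntervalSecondKindIsStable}, which the paper proves separately via Proposition \ref{MonomialOfFirstKind}. The paper's pullback is quicker in context because it reuses the known second-kind interval directly, while your argument is more self-contained and reusable.
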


\begin{proof}
    Let us write $(\beta_i,\beta_{i+1})$ for the elementary interval of the second kind associated to $\tilde w$. Let us fix $j_0 \geq 0$ such that $w := \tilde w\{j_0\}$ satisfies $(\beta_i,\beta_{i+1}) = (\mathrm{Ker}D(w), \mathrm{Dom}D(w))$. Eventually, let us fix $j \geq 0$ and consider 
    \begin{equation*}
        w' := \tilde w[j]\{j'\} = w_{j+j'} w_{j+j'-1} \cdots w_{j+1},
    \end{equation*}
    where $j' \geq j_0$ is an integer that we will fix later. Note that we have $w'\tilde w\{j\} = \tilde w\{j+j'\}$. In particular, it implies that
    \begin{align}
        \label{Eqnw'Product1}\beta_i & = \mathrm{Ker}D(w\{j+j'\}) = D(\tilde w\{j\})^{-1}(\mathrm{Ker}D(w')),\\
        \label{Eqnw'Product2}\beta_{i+1} &= \mathrm{Dom}D(w\{j+j'\}) = D(\tilde w\{j\})^{-1}(\mathrm{Dom}D(w')).
    \end{align}
     If the interval $(\mathrm{Ker}D(w'), \mathrm{Dom}D(w'))$ is elementary, then it must be of the second kind. Indeed, if it were elementary of the fist kind, we could write $(\mathrm{Ker}D(w'), \mathrm{Dom}D(w')) = (\mathrm{Dom}D(V^{\#}v), \mathrm{Ker}D(Fv))$ for some word $v \in W_1$. But then \eqref{Eqnw'Product1} and \eqref{Eqnw'Product2} would imply that $\beta_i = \mathrm{Dom}D(V^{\#}vw\{j\})$ and $\beta_{i+1} = \mathrm{Ker}D(Fv\tilde w\{j\})$, which contradicts the fact that $(\beta_i,\beta_{i+1})$ is of the second kind.\\ 
     Thus, if $(\mathrm{Ker}D(w'), \mathrm{Dom}D(w'))$ is elementary, it is necessarily of the second kind and therefore it determines an infinite word $\tilde w' \in W_2$. Let us rewrite this interval $(\beta_k,\beta_{k+1})$. We claim that $\tilde w' = \tilde w[j]$. Following the proof of Proposition \ref{PropositionWordsOfSecondKind}, it is enough to check that for every $j'' \geq j'$, the monomial $D(w'')$ is non null where $w'' := \tilde w[j]\{j''\}$. But the equations \eqref{Eqnw'Product1} and \eqref{Eqnw'Product2} also hold with $w''$ instead of $w'$, so that $D(w'')$ being null would imply that $\beta_i = \beta_{i+1}$, which is absurd. Thus we conclude that $\tilde w' = \tilde w[j] \in W_2$ as desired.\\
     It remains to show that we can choose $j'$ so that the interval $(\mathrm{Ker}D(w'), \mathrm{Dom}D(w'))$ is elementary. Towards a contradiction, assume that we can not. For $s \geq 0$, let us define $w'_s := \tilde w [j]\{j_0+s\}$. We have $w_{s+1}' = w_{j+j_0+s+1}w_s'$ for all $s\geq 0$. It follows that 
     \begin{equation*}
         \mathrm{Ker}D(w'_s) \subseteq  \mathrm{Ker}D(w'_{s+1}) \subseteq  \mathrm{Dom} D(w'_{s+1}) \subseteq  \mathrm{Dom} D(w'_s).
     \end{equation*}
     Since $M$ is finite dimensional, there must be some $s_0 \geq 0$ such that for all $s \geq s_0$, we have $\mathrm{Ker}D(w'_s) = \mathrm{Ker}D(w'_{s_0})$ and $\mathrm{Dom} D(w'_s) = \mathrm{Dom} D(w'_{s_0})$. By assumption, this interval is not elementary. Therefore there is some $\beta_k \in \mathcal F_M$ such that, for every $s \geq s_0$, we have 
     \begin{equation*}
         \mathrm{Ker}D(w'_s) \subsetneq \beta_k \subsetneq \mathrm{Dom} D(w'_s).
     \end{equation*}
     Let $v \in W$ such that $\beta_k$ is equal to the kernel or the domain of $D(v)$. By Lemma \ref{LemmaRelativePositionIntervals}, the only possibility for such strict inclusions to hold is that $w'_s$ is a suffix of $v$. In particular $l(v) \geq l(w'_s) = j_0+s$. Since this holds for all $s\geq s_0$, we have reached a contradiction as desired.
\end{proof}

\begin{proposition}[cf.~\cite{gelfand-ponomarev:1968} Theorem 5.2]\label{WordsArePeriodic}
    Any word $\tilde w \in W_2$ of the second kind is periodic.
\end{proposition}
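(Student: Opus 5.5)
The plan is a pigeonhole argument on the finite set $W_2$, combined with Lemma \ref{AllShiftsAreWordsOfSecondKind}. Fix $\tilde w \in W_2$. By that lemma, every shift $\tilde w[j]$, $j\geq 0$, again lies in $W_2$. Proposition \ref{PropositionWordsOfSecondKind} puts $W_2$ in bijection with the elementary intervals of the second kind, so $W_2$ is finite. Hence there exist $0 \leq a < b$ with $\tilde w[a] = \tilde w[b]$. Setting $t := b-a \geq 1$, this equality of infinite words gives $w_{i+t} = w_i$ for all $i > a$. So $\tilde w$ is eventually periodic, with preperiod $a$.

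It remains to remove the preperiod, that is, to show we may take $a = 0$. I would argue by descending on $a$, and it suffices to prove the following injectivity statement. If $\tilde u, \tilde u' \in W_2$ satisfy $\tilde u[1] = \tilde u'[1]$, then $\tilde u = \tilde u'$, i.e.\ their first letters agree. Granting this, apply it repeatedly to $\tilde w[a-1]$ and $\tilde w[b-1]$, both in $W_2$ by Lemma \ref{AllShiftsAreWordsOfSecondKind}. This yields $\tilde w[a-1] = \tilde w[b-1]$, and continuing down gives $\tilde w = \tilde w[t]$. That is exactly periodicity.

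The key step, and the expected main obstacle, is this injectivity. Suppose $\tilde u[1] = \tilde u'[1] =: \tilde v$, with $\tilde u = \tilde v F$ and $\tilde u' = \tilde v V^{\#}$. Let $(\beta_k,\beta_{k+1})$ be the interval attached to $\tilde v$. For $j$ large, write $v := \tilde v\{j\}$, so that $\tilde u\{j+1\} = vF$ and $\tilde u'\{j+1\} = vV^{\#}$. The intervals attached to $\tilde u$ and $\tilde u'$ are then $(\mathrm{Ker}D(vF),\mathrm{Dom}D(vF))$ and $(\mathrm{Ker}D(vV^{\#}),\mathrm{Dom}D(vV^{\#}))$, and by Proposition \ref{PropositionWordsOfSecondKind} both are nondegenerate. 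We also know $D(vF)$ and $D(vV^{\#})$ are non-null for all large $j$.

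Now apply Lemma \ref{LemmaRelativePositionIntervals} to the words $vF$ and $vV^{\#}$. Their maximal common suffix is empty: they end in different letters. So we are in case (1), with $vV^{\#}$ playing the role of $w_1$ and $vF$ that of $w_2$. This gives
\[
\mathrm{Dom}D(vV^{\#}) \subseteq \mathrm{Dom}(V^{\#}) \subseteq \mathrm{Ker}(F) \subseteq \mathrm{Ker}D(vF).
\]
Hence the two elementary intervals are disjoint and ordered, which is no contradiction yet. To get one, I would use the construction in the proof of Proposition \ref{PropositionWordsOfSecondKind} in reverse. Elementarity of $(\beta_k,\beta_{k+1}) = (\mathrm{Ker}D(\tilde v\{j\}),\mathrm{Dom}D(\tilde v\{j\}))$, together with the fact that exactly one of $D(F\tilde v\{j\})$ and $D(V^{\#}\tilde v\{j\})$ is non-null, pins down the next letter to the left of $\tilde v\{j\}$. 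That is the same kind of argument, read in the other direction.

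Concretely, since $\tilde v$ has a well-defined \emph{predecessor} structure, I would instead compare $\mathrm{Ker}D(vF)$ with $F^{-1}(\beta_k)$ and $\mathrm{Ker}D(vV^{\#})$ with $V(\beta_k)$. The aim is to show that the intervals attached to $\tilde u$ and $\tilde u'$ are both the $F^{-1}$-pullback, respectively the $V$-image, of $(\beta_k,\beta_{k+1})$. Both being elementary and of the second kind would then contradict the disjointness displayed above, since for large $j$ one gets $\mathrm{Ker}(F)$ both containing and contained in appropriate pieces. If this direct route stalls, I would fall back on a dimension count. The maps induced by $F$ and $V$ between the quotients $\beta_{k+1}/\beta_k$ for second-kind intervals should be isomorphisms, as in Lemma \ref{Morphisms1stKind}. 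This makes shifting a bijection on $W_2$, and a surjective self-map of a finite set is a bijection, which gives injectivity immediately.
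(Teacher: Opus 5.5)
Your reduction is sound. Pigeonhole on the finite set $W_2$, combined with Lemma \ref{AllShiftsAreWordsOfSecondKind}, gives eventual periodicity, and injectivity of the shift $\tilde u\mapsto\tilde u[1]$ on $W_2$ would remove the preperiod. But that injectivity is equivalent to the proposition, and neither of your routes proves it.

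Write $I(\tilde u)$ for the elementary interval attached to $\tilde u$, and $\dim I(\tilde u)$ for the dimension of its quotient. Take a branch point $\tilde v=\tilde u[1]=\tilde u'[1]$ with $\tilde u=\tilde vF$ and $\tilde u'=\tilde vV^{\#}$. The local picture there contains no contradiction:
\begin{itemize}
\item $I(\tilde u)=(F^{-1}\beta_k,F^{-1}\beta_{k+1})$ and $I(\tilde u')=(V\beta_k,V\beta_{k+1})$.
\item The ordering $V\beta_{k+1}\subseteq\mathrm{Ker}(F)\subseteq F^{-1}\beta_k$ is perfectly consistent, since distinct elementary intervals are always disjoint.
\item The ``reverse'' reading of Proposition \ref{PropositionWordsOfSecondKind} does not help. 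Its null/non-null dichotomy determines the letter added on the \emph{left} of $\tilde v\{j\}$, not which letter can be appended on the right.
\end{itemize}
What the local data does give is an inequality. The injection $I(\tilde u)\hookrightarrow I(\tilde v)$ induced by $F$ lands in the kernel of the surjection $I(\tilde v)\twoheadrightarrow I(\tilde u')$ induced by $V$, because $VF=0$. Hence $\dim I(\tilde v)\geq\dim I(\tilde u)+\dim I(\tilde u')$. Your fallback, that the maps induced by $F$ and $V$ between second-kind quotients are isomorphisms, is the equal-dimension statement of Corollary \ref{CorollaryAllInervalsSecondKindSameLength}. The paper deduces that from periodicity, so invoking it here is circular.

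The missing idea is a reverse inequality coming from the cycle. For $X\subseteq Y$ one has $\dim Y/X\geq\max(\dim F^{-1}Y/F^{-1}X,\dim VY/VX)$. Hence $\dim I(\tilde u)\leq\dim I(\tilde u[1])$ for every $\tilde u\in W_2$: dimensions are non-decreasing along shifts, and so constant along the cycle $\tilde w[a],\tilde w[a+1],\ldots,\tilde w[a+t]=\tilde w[a]$.

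You should not try to prove injectivity at an arbitrary branch point. Instead take $a$ minimal and argue at the junction of tail and cycle. If $a\geq 1$, then $\tilde w[a-1]\neq\tilde w[a+t-1]$ are two predecessors of $\tilde w[a]$, and $\tilde w[a+t-1]$ lies on the cycle. This gives
$\dim I(\tilde w[a+t-1])=\dim I(\tilde w[a])\geq\dim I(\tilde w[a-1])+\dim I(\tilde w[a+t-1])$.
That is absurd because $I(\tilde w[a-1])$ is a nonzero interval. The case $t=1$ is included. This is exactly the paper's argument, with $[w]=\tilde w[s]$ and the two predecessors $[w]v_s$ and $[w]w_t$.
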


\begin{proof}
    Let $\tilde w \in W_2$ be a word of the second kind. By Lemma \ref{AllShiftsAreWordsOfSecondKind}, we know that $\tilde w[j] \in W_2$ for all $j \geq 0$. Since $W_2$ is a finite set, we must have $\tilde w[j] = \tilde w[j']$ for some $j \not = j'$. It follows that $\tilde w$ can be written in the form
    \begin{equation*}
        \tilde w = [w]v,
    \end{equation*}
    where $w,v \in W$ with $w$ non empty, and $[w]$ denotes the infinite periodic word $[w] := \cdots www$ as in Definition \ref{DefinitionInfiniteWords}. Assume that $v$ has minimal length among all the words $v$ for which such a decomposition of $\tilde w$ exists. We show that $v$ is empty, from which it would follow that $\tilde w$ is periodic.\\
    Towards a contradiction, assume that $v$ is not empty. Let us write $w = w_tw_{t-1}\cdots w_1$ and $v = v_sv_{s-1}\cdots v_1$ with $s\geq 1$. By minimality of $s$, we have $v_s \not = w_t$. Observe that
    \begin{align*}
        [w] = \tilde w[s], & & [w]v_s = \tilde w[s-1], & & [w]w_t = \tilde w[s+t-1].
    \end{align*}
    Thus, by Lemma \ref{AllShiftsAreWordsOfSecondKind}, these three infinite words are of the second kind. Let $j_0 \geq 0$ be such that for all $j \geq j_0$, the kernels and domains of $[w]\{j\}$, $([w]v_s)\{j\}$ and $([w]w_t)\{j\}$ define the associated intervals of the second kind, which we denote by $(\beta_i,\beta_{i+1})$, $(\beta_{k},\beta_{k+1})$ and $(\beta_{l},\beta_{l+1})$ respectively. Since $([w]w_t)\{j+1\} = [w]\{j\}w_t$ and $([w]v_s)\{j+1\} = [w]\{j\}v_s$, we have 
    \begin{align*}
        \beta_k & = D(v_s)^{-1}\beta_i, & \beta_{k+1} & = D(v_s)^{-1}\beta_{i+1},\\
        \beta_l & = D(w_t)^{-1}\beta_i, & \beta_{l+1} & = D(w_t)^{-1}\beta_{i+1}.
    \end{align*}
    Since $v_s \not = w_t$, one of them is $F$ while the other if $V^{\#}$. Without loss of generality, we may assume that $v_s = F$. Similar to Lemma \ref{Morphisms1stKind}, we have a diagram 
    \begin{center}
        \begin{tikzcd}
            \beta_{l+1}/\beta_l  & & \beta_{k+1}/\beta_k \\
            & \beta_{i+1}/\beta_i \arrow[ul, twoheadrightarrow, swap, "V"] \arrow[ur, hookleftarrow, "F"] &            
        \end{tikzcd}
    \end{center}
    and the composition of both arrows is zero. It follows that 
    \begin{equation}\label{InequalityDimensions}
        \dim \beta_{i+1}/\beta_i \geq \dim \beta_{l+1}/\beta_l + \dim \beta_{k+1}/\beta_k.
    \end{equation}
    If $t=1$ then $[w] = [w]w_t$, so that $(\beta_i,\beta_{i+1}) = (\beta_{l+1},\beta_l)$ and a contradiction readily follows. Thus, we may assume that $t\geq 2$. Recall that $[w]w_t = \tilde w[s+t-1]$ and $[w] = \tilde w[s]$. If $u := w_{s+t-1}\cdots w_{s+2}w_{s+1} \in W$, we can write 
    $\tilde w[s] = \tilde w[s+t-1]u$, so that by a similar reasoning as above we have 
    \begin{align*}
        \beta_i = D(u)^{-1}\beta_l, & & \beta_{i+1} = D(u)^{-1}\beta_{l+1}.
    \end{align*}
    Now, if $X\subseteq  Y \subseteq  M$ are two $K$-subspaces, then
    \begin{align*}
        \dim Y/X \geq \max(\dim F^{-1}Y/F^{-1}X, \dim VY/VX).
    \end{align*}
    In particular, we have 
    \begin{align*}
        \dim \beta_{l+1}/\dim \beta_l \geq \dim D(w_{s+t-1})^{-1}\beta_{l+1}/D(w_{s+t-1})^{-1}\beta_{l} \geq \cdots & \geq \dim D(u)^{-1}\beta_{l+1}/D(u)^{-1}\beta_l \\
        & = \dim \beta_{i+1}/\beta_i.
    \end{align*}
    Combined with \eqref{InequalityDimensions}, we obtain a contradiction and this concludes the proof.
    \end{proof}

\begin{corollary}\label{CorollaryAllInervalsSecondKindSameLength}
    Let $\tilde w \in W_2$ be a word of the second kind, and let $j \geq 0$. Let $(\beta_i,\beta_{i+1})$ and $(\beta_k,\beta_{k+1})$ denote the elementary intervals of the second kind determined by $\tilde w$ and by $\tilde w[j]$ respectively. Then we have
    \begin{equation*}
        \dim \beta_{i+1}/\beta_i = \dim \beta_{k+1}/\beta_k.
    \end{equation*}
\end{corollary}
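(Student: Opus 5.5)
The plan is to exploit periodicity together with the monotonicity of dimensions of subquotients under $F^{-1}$ and $V$, exactly as in the final step of the proof of Proposition \ref{WordsArePeriodic}. By Proposition \ref{WordsArePeriodic}, $\tilde w = [w]$ is periodic with period $t = l(w)$. Hence $\tilde w[j+t'] = \tilde w[j]$ for every multiple $t'$ of $t$. I fix such a multiple $t' \geq j$, so that $\tilde w = \tilde w[t'] = \tilde w[j]\,u$ with $u := w_{j}\cdots w_1$, and also $\tilde w[j] = \tilde w[t']\,u' = \tilde w\,u'$ with $u' := w_{t'}\cdots w_{j+1}$ (a finite word of length $t'-j$).

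The first step relates the intervals. Choose $j_1$ large enough that, for all $s \geq j_1$, the pairs $(\mathrm{Ker}D(\tilde w\{s\}),\mathrm{Dom}D(\tilde w\{s\}))$ and $(\mathrm{Ker}D(\tilde w[j]\{s\}),\mathrm{Dom}D(\tilde w[j]\{s\}))$ equal $(\beta_i,\beta_{i+1})$ and $(\beta_k,\beta_{k+1})$ respectively. This is possible by Proposition \ref{PropositionWordsOfSecondKind} applied to $\tilde w$ and to $\tilde w[j]$, the latter being in $W_2$ by Lemma \ref{AllShiftsAreWordsOfSecondKind}. Since $\tilde w\{s+j\} = \tilde w[j]\{s\}\,u$, the same computation as in \eqref{Eqnw'Product1} and \eqref{Eqnw'Product2} gives
\begin{equation*}
\beta_i = D(u)^{-1}\beta_k, \qquad \beta_{i+1} = D(u)^{-1}\beta_{k+1}.
\end{equation*}
Symmetrically, from $\tilde w[j]\{s+t'-j\} = \tilde w\{s\}\,u'$ we obtain $\beta_k = D(u')^{-1}\beta_i$ and $\beta_{k+1} = D(u')^{-1}\beta_{i+1}$.

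The second step applies the elementary inequality used in Proposition \ref{WordsArePeriodic}. For subspaces $X \subseteq Y$, we have $\dim Y/X \geq \dim F^{-1}Y/F^{-1}X$ and $\dim Y/X \geq \dim VY/VX$, where $D(V^{\#})^{-1} = V$. Applying this letter by letter along $u$ yields $\dim \beta_{k+1}/\beta_k \geq \dim \beta_{i+1}/\beta_i$, and along $u'$ yields the reverse inequality, so the two dimensions are equal.

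The main point requiring care is the bookkeeping in the first step. One must check that suffixes compose in the right order, namely that $D(vu)^{-1}(\cdot) = D(u)^{-1}D(v)^{-1}(\cdot)$ for both kernel and domain. This uses $\mathrm{Ker}D(vu) = D(u)^{-1}\mathrm{Ker}D(v)$ and $\mathrm{Dom}D(vu) = D(u)^{-1}\mathrm{Dom}D(v)$, which follow directly from the definition of composition of relations. One must also check that $j_1$ can be taken uniform for both words, which holds because each stabilizes from some index onward.
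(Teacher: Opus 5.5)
Your proposal is correct and is essentially the paper's argument. Both combine periodicity of $\tilde w$ with the monotonicity $\dim Y/X \geq \max(\dim F^{-1}Y/F^{-1}X,\ \dim VY/VX)$, applied letter by letter along suffixes. The only cosmetic difference is that the paper chains the inequalities one shift at a time through $\tilde w[0],\dots,\tilde w[t]$ and closes the cycle using $\tilde w[t]=\tilde w$, whereas you compare $\tilde w$ and $\tilde w[j]$ directly in both directions via the suffixes $u$ and $u'$.
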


\begin{proof}
    By Proposition \ref{WordsArePeriodic}, we know that $\tilde w = \tilde w[t]$ for some $t > 0$. For $0 \leq i \leq t$, let us write $(\beta_0(i),\beta_1(i))$ for the elementary interval of the second kind associated to $\tilde w[i]$. Just as in the end of the proof of Proposition \ref{WordsArePeriodic}, we have 
    \begin{equation*}
        \dim \beta_1(t)/\beta_0(t) \geq \dim \beta_1(t-1)/\beta_0(t-1) \geq \cdots \geq \dim \beta_1(0) / \dim \beta_0(0).
    \end{equation*}
    Since $\tilde w = \tilde w[0] = \tilde w[t]$, we have $(\beta_0(t),\beta_1(t)) = (\beta_0(0),\beta_1(0))$ so that all the inequalities above are in fact equalities.
\end{proof}

\begin{notation}
    If $\tilde w \in W_2$, we can write $\tilde w = [w]$ for some finite word $w\in W$. In accordance with Remark \ref{RemarkInfinitePeriodicWord}, when we write $[w] \in W_2$, the period of $[w]$ is always assumed to be $l(w) \geq 1$. For $0 \leq j < l(w)$, recall the notation 
    \begin{equation*}
        w(j) := w_j\cdots w_2w_1w_{l(w)}\cdots w_{j+2}w_{j+1}.
    \end{equation*}
\end{notation}

\begin{proposition}\label{EveryElementaryIntervalSecondKindIsStable}
    Let $[w]$ be an infinite periodic word. Then 
    \begin{equation*}
        [w] \in W_2 \iff \mathrm{Ker}D(w)^{\infty} \subsetneq \mathrm{Dom}D(w)^{\infty},      
    \end{equation*}
    where $\mathrm{Ker}D(w)^{\infty}$ and $\mathrm{Dom}D(w)^{\infty}$ denote respectively the stable kernel and stable domain of $D(w)$, as introduced in Definition \ref{DefStableSubspaces}. In this case, the elementary interval of the second kind $(\beta_i,\beta_{i+1})$ determined by $[w]$ is equal to $(\mathrm{Ker}D(w)^{\infty}, \mathrm{Dom}D(w)^{\infty})$.
\end{proposition}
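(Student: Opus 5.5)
The plan is to translate both sides into the behaviour of the chain $D(w^n) = D(w)^n$, since $[w]\{nt\} = w^n$ where $t = l(w)$. By Lemma \ref{UsefulLemma} the subspaces $\mathrm{Ker}D(w^n)$ increase and $\mathrm{Dom}D(w^n)$ decrease with $n$. Because $M$ is finite dimensional, both stabilize for $n \geq n_0$. Their limits are exactly $\mathrm{Ker}D(w)^{\infty}$ and $\mathrm{Dom}D(w)^{\infty}$.

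For the forward implication, assume $[w] \in W_2$ with associated interval $(\beta_i,\beta_{i+1})$. By Proposition \ref{PropositionWordsOfSecondKind} there is $j_0$ such that for all $j \geq j_0$,
\[
(\beta_i,\beta_{i+1}) = (\mathrm{Ker}D([w]\{j\}),\mathrm{Dom}D([w]\{j\})).
\]
Taking $j = nt$ with $n$ large gives $(\beta_i,\beta_{i+1}) = (\mathrm{Ker}D(w)^{\infty},\mathrm{Dom}D(w)^{\infty})$. The strict inclusion then follows from $\beta_i \subsetneq \beta_{i+1}$, and this simultaneously proves the last assertion of the statement.

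For the converse, assume $\mathrm{Ker}D(w)^{\infty} \subsetneq \mathrm{Dom}D(w)^{\infty}$. I would first show that the stable pair $(\mathrm{Ker}D(w^{n}),\mathrm{Dom}D(w^{n}))$ for $n \geq n_0$ is an elementary interval. I would argue as in the last paragraph of the proof of Lemma \ref{AllShiftsAreWordsOfSecondKind}. Suppose some $\beta_k \in \mathcal F_M$ lies strictly between them, say $\beta_k = \mathrm{Ker}D(v)$ or $\mathrm{Dom}D(v)$. Lemma \ref{LemmaRelativePositionIntervals}, applied to $v$ and $w^n$, forces $w^n$ to be a suffix of $v$: in case (1) $\beta_k$ lies outside the interval, and if $v$ is a proper suffix of $w^n$ then $\beta_k$ contains $\mathrm{Dom}D(w^n)$ or is contained in $\mathrm{Ker}D(w^n)$. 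Since this must hold for every $n \geq n_0$, the length $l(v)$ would be unbounded, which is a contradiction.

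Next I must show this elementary interval is of the second kind, not of the first kind. Suppose it equals $(\mathrm{Dom}D(V^{\#}u),\mathrm{Ker}D(Fu))$ with $u \in W_1$. By Proposition \ref{MonomialOfFirstKind} the word $u$ is unique. I would pick $n$ with $nt > l(u)+1$ and compare $u$ with $w^n$ through Lemma \ref{LemmaRelativePositionIntervals}. In case (1), the intervals of $u$ and of $w^n$ are nested disjointly, whereas they should coincide, and the strict inclusions give a contradiction. So one word is a suffix of the other. If $u$ is a suffix of $w^n$, then $V^{\#}u$ or $Fu$ is a suffix of $w^n$, and Lemma \ref{LemmaRelativePositionIntervals} puts $(\mathrm{Ker}D(w^n),\mathrm{Dom}D(w^n))$ inside the interval of $V^{\#}u$ or of $Fu$, that is, inside $(\mathrm{Ker}D(u),\mathrm{Dom}D(V^{\#}u))$ or $(\mathrm{Ker}D(Fu),\mathrm{Dom}D(u))$. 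Neither contains the nonzero interval $(\mathrm{Dom}D(V^{\#}u),\mathrm{Ker}D(Fu))$, which is a contradiction. The case where $w^n$ is a suffix of $u$ is excluded because $nt > l(u)$; increasing $n$ does not change the stable interval.

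Having shown the interval is of the second kind, it is $(\mathrm{Ker}D(w^n),\mathrm{Dom}D(w^n))$. Proposition \ref{PropositionWordsOfSecondKind} produces its infinite word $\tilde w$ by extending $w^n$ one letter at a time. Its uniqueness part, via Lemma \ref{ShapeOfIntervalsOfSecondKind}, shows that $w^{n'}$ must be a suffix of $\tilde w$ for every $n' \geq n$. Hence $\tilde w = [w]$, so $[w] \in W_2$.

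I expect the main obstacle to be excluding the first kind in the converse, since it requires careful case analysis with Lemma \ref{LemmaRelativePositionIntervals}.
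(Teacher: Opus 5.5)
Your proof is correct. Its skeleton matches the paper's: the forward direction takes $j=nt$ in Proposition \ref{PropositionWordsOfSecondKind}, and elementarity of the stable interval comes from the observation that a word $v$ cutting it strictly would need $w^n$ as a suffix for all large $n$.

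The genuine difference is in how you exclude the first kind. The paper uses stability directly. The identities $\mathrm{Ker}D(w)^{\infty}=D(w)^{-1}\mathrm{Ker}D(w)^{\infty}$ and $\mathrm{Dom}D(w)^{\infty}=D(w)^{-1}\mathrm{Dom}D(w)^{\infty}$ translate into $\mathrm{Dom}D(V^{\#}vw)=\mathrm{Dom}D(V^{\#}v)$ and $\mathrm{Ker}D(Fvw)=\mathrm{Ker}D(Fv)$. So $vw\neq v$ is again a word of the first kind with the same elementary interval, contradicting the injectivity part of Proposition \ref{MonomialOfFirstKind}.

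You instead compare $u$ with $w^n$ for $nt>l(u)$ through Lemma \ref{LemmaRelativePositionIntervals}. Each case contradicts the chain
\[
\mathrm{Ker}D(u)\subseteq \mathrm{Dom}D(V^{\#}u)=\mathrm{Ker}D(w^n)\subsetneq \mathrm{Dom}D(w^n)=\mathrm{Ker}D(Fu)\subseteq \mathrm{Dom}D(u),
\]
used together with \eqref{UsefulIdentityKerAndDom}, and this works. Your wording in case (1), ``they should coincide'', is loose. What you actually use is that this chain is incompatible with either $\mathrm{Dom}D(u)\subseteq \mathrm{Ker}D(w^n)$ or $\mathrm{Dom}D(w^n)\subseteq \mathrm{Ker}D(u)$.

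The paper's route is shorter and reuses the injectivity already established. Yours needs only the relative-position lemma, at the cost of a few more cases; in particular, your appeal to the uniqueness of $u$ is never used. You also spell out, via Lemma \ref{ShapeOfIntervalsOfSecondKind}, that the word attached to the resulting second-kind interval is exactly $[w]$, a step the paper leaves implicit.
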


\begin{proof}
    Let $(\beta_i,\beta_{i+1})$ be an interval of the second kind, determined by a word $[w] \in W_2$. From Proposition \ref{PropositionWordsOfSecondKind}, it is not difficult to see that $(\beta_i,\beta_{i+1})=(\mathrm{Ker}D(w)^{\infty}, \mathrm{Dom}D(w)^{\infty})$. Thus the interval on the RHS is non trivial. \\
    Conversely, assume that $\mathrm{Ker}D(w)^{\infty} \subsetneq \mathrm{Dom}D(w)^{\infty}$. We claim that this interval must be elementary, with a reasoning similar to the proof of Lemma \ref{AllShiftsAreWordsOfSecondKind}. For otherwise, there would be some $\beta_k \in \mathcal F_M$ such that 
    \begin{equation*}
        \mathrm{Ker}D(w)^{\infty} \subsetneq \beta_k \subsetneq \mathrm{Dom}D(w)^{\infty}.
    \end{equation*}
    Since $M$ is finite dimensional, there is some $n_0 \geq 0$ such that for all $n\geq n_0$, we have $\mathrm{Ker}D(w)^{\infty} = \mathrm{Ker}D(w^n)$ and $\mathrm{Dom}D(w)^{\infty} = \mathrm{Dom}D(w^n)$, where $w^n := w \cdots w$ with $w$ appearing $n$ times. Let us consider such an $n$. If $v$ is a word such that $\beta_k$ is equal to the kernel or domain of $v$, then $v$ must have $w^n$ as a suffix. As this holds for all $n \geq n_0$, this is absurd. \\
    It remains to prove that the elementary interval $(\mathrm{Ker}D(w)^{\infty}, \mathrm{Dom}D(w)^{\infty})$ can not be of the first kind. Towards a contradiction, assume that there is some $v \in W_1$ such that 
    \begin{align*}
        \mathrm{Ker}D(w)^{\infty} = \mathrm{Dom}D(V^{\#}v), & & \mathrm{Dom}D(w)^{\infty} = \mathrm{Ker}D(Fv).
    \end{align*}
    Let $n_0$ be as above. We have 
    \begin{equation*}
        \mathrm{Ker}D(w)^{\infty} = \mathrm{Ker}D(w^{n_0+1}) = D(w)^{-1} \mathrm{Ker}D(w^{n_0}) = D(w)^{-1}\mathrm{Ker}D(w)^{\infty},
    \end{equation*}
    from which it follows that 
    \begin{equation*}
        \mathrm{Dom}D(V^{\#}v) = D(w)^{-1}\mathrm{Dom}D(V^{\#}v) = \mathrm{Dom}D(V^{\#}vw).
    \end{equation*}
    Likewise, we have $\mathrm{Ker}D(Fv) = \mathrm{Ker}D(Fvw)$. By Proposition \ref{MonomialOfFirstKind}, we must have $v = vw$ which is absurd.
\end{proof}

\begin{proposition}\label{AllCircularKraftAreSecondKind}
    Let $\Gamma= (\mathcal V, \mathcal E, \mathrm{label})$ be a connected circular Kraft quiver with no repetition, together with a strict $(\sigma,\tau)$-linear representation $(U,\rho)$. The twisted Gelfand-Ponomarev module $M(\Gamma,U,\rho)$ is of the second kind. Moreover, if $\Gamma = \Gamma([w],t)$ where $t = l(w)$ as in Definition \ref{DefCircularKraftQuiverWithWord}, then 
    \begin{equation*}
        \Sigma_2 = \{[w(j)], 0 \leq j \leq t-1\}.
    \end{equation*}
\end{proposition}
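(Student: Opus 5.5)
Set $M := M(\Gamma,U,\rho)$ with $\Gamma = \Gamma([w],t)$, vertices $v_1,\dots,v_t$ indexed by $\mathbb Z/t\mathbb Z$, and $\phi_{v_i}: U_{v_i}\xrightarrow{\sim} U_{v_{i+1}}$ the semilinear isomorphisms of Definition \ref{DefinitionMonodromy}; here $\Sigma_2$ is read as the set $W_2$ of words of the second kind of $M$. The plan has three steps: compute $D(w(j))$ directly on $M$ as a relation; deduce from Proposition \ref{EveryElementaryIntervalSecondKindIsStable} that each $[w(j)]$ is of the second kind with one-dimensional-in-vertices quotient; and count dimensions to see that these exhaust all elementary intervals, which forces both claims.

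\textbf{Step 1.} Since $F$ maps $U_v$ to $U_{v'}$ along $F$-arrows and $V^{\#}$ is the converse of $V$, each of $F$ and $V^{\#}$ acts on vertex-homogeneous vectors by moving one step forward along $\Gamma^{\#}$, through $\phi$, when the label matches. Off that matching edge it sends $U_v$ to $0$ (for $F$) or relates $U_v$ only to $0$ (for $V^{\#}$, since $U_v\cap \mathrm{Im}V=0$ when no $V$-arrow ends at $v$). I will check that for $0\le j\le t-1$ the monomial $D(w(j))^n$ restricted to $U_{v_{j+1}}$ is the graph of the $n$-th power of the monodromy $\Phi_{v_{j+1}}$, with the indexing of $w(j)$ matched to reading labels starting at $v_{j+1}$. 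Granting this, $U_{v_{j+1}}\subseteq \mathrm{Dom}D(w(j))^{\infty}$ and $U_{v_{j+1}}\cap \mathrm{Ker}D(w(j))^{\infty}=0$, because $\Phi$ is bijective.

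\textbf{Step 2.} I then need $\mathrm{Dom}D(w(j))^\infty = U_{v_{j+1}}\oplus \mathrm{Ker}D(w(j))^\infty$, i.e. that any other vertex $v_k$ is "killed" by high powers. Here I use that $\Gamma$ has no repetition. For $k\ne j+1$, the shifted word $w(k-1)$ differs from $w(j)$. Following a homogeneous vector from $v_k$ along the letters of $w(j)^n$, it either reaches a letter that does not match the outgoing edge and the path dies, or it matches forever. The latter forces $[w(k-1)]=[w(j)]$, hence $k-1\equiv j$ by minimality of the period.

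The obstacle is that $D(w(j))$ is a relation rather than a map: $V^{\#}$ has indeterminacy $\mathrm{Ker}V$, and non-homogeneous vectors mix. I will handle it by showing the relation $D(u)$, for any word $u$, is a direct sum over vertices of "path relations" determined by whether $u$ matches the path from each vertex. $\mathrm{Indet}$ contributions only occur at vertices where a $V$-arrow is absent and are killed after one matching failure. This is proven by induction on $l(u)$ via the explicit description $F^{-1}(\bigoplus_{v\in I}U_v)$ and $V(\bigoplus_{v\in I} U_v)$ as in Step 1 of Proposition \ref{Gr1stIsOfTheFirstKind}, now on a cycle, where both $F$ and $V$ are injective on the relevant summands.

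\textbf{Step 3.} By Proposition \ref{EveryElementaryIntervalSecondKindIsStable}, each $[w(j)]\in W_2$ with elementary interval quotient $\cong U_{v_{j+1}}$. These $t$ periodic words are pairwise distinct because there is no repetition, so the intervals are distinct. The total dimension of their quotients is $\sum_k\dim U_{v_k}=\dim M$. Since the elementary intervals partition $\dim M$, there is no room for any interval of the first kind or any further word of the second kind. Hence $M$ is of the second kind and $W_2=\{[w(j)]\}$.
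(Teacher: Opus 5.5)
Your proposal is correct and follows the paper's route: identify $D(w(j))$ on $U_{v_{j+1}}$ with the graph of the monodromy $\Phi_{v_{j+1}}$, apply Proposition \ref{EveryElementaryIntervalSecondKindIsStable} to get $[w(j)]\in W_2$, and use the dimension count (with the $t$ words pairwise distinct by non-repetition) to exhaust the stabilized sequence. The differences are minor: the paper first rules out words of the first kind by checking $\mathrm{Ker}(F)=\mathrm{Dom}(V^{\#})$ directly, whereas you obtain this from the count, and your vertex-graded analysis of $D(u)$ actually justifies $U_{v_{j+1}}\cap\mathrm{Ker}D(w(j))^{\infty}=\{0\}$, which the paper infers from the restricted relation being a graph (an inference that is not valid for general relations, since chains can leave $U_{v_{j+1}}$).
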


\begin{proof}
    In this proof we consider $M = M(\Gamma,U,\rho)$ as in the statement. Observe that $\Sigma_1 = \emptyset$ if and only if the empty word does not belong to $\Sigma_1$. This is equivalent to the condition that $\mathrm{Ker}D(F) = \mathrm{Dom}D(V^{\#})$ on $M(\Gamma,U,\rho)$. Let $\mathcal V'$ denote the set of vertices $v\in \mathcal V$ which are not the tail of any $F$-arrow. Since $(U,\rho)$ is strict, we have
    \begin{equation*}
        \mathrm{Ker}D(F) = \bigoplus_{v \in \mathcal V'} U_v.
    \end{equation*}
    But since $\Gamma$ is circular, it is easy to see that $\mathcal V'$ is also the set of vertices which are the head of some $V$-arrow. Thus, since $(U,\rho)$ is strict again, we have 
    \begin{equation*}
        \mathrm{Dom}D(V^{\#}) = \bigoplus_{v \in \mathcal V'} U_v.
    \end{equation*}
    Thus $\mathrm{Ker}D(F) = \mathrm{Dom}D(V^{\#})$ so that $M$ is of the second kind. Now let $v_1,\ldots ,v_t$ denote the vertices of $\Gamma = \Gamma([w],t)$ as in Definition \ref{DefCircularKraftQuiverWithWord}. If $0 \leq j < t$, by construction we have 
    \begin{equation*}
        D(w(j))_{|U_{v_{j+1}}} = \Gamma_{\Phi_{v_{j+1}}},
    \end{equation*}
    where $\Phi_{v_{j+1}} : U_{v_{j+1}} \xrightarrow{\sim} U_{v_{j+1}}$ is the monodromy operator of $(U,\rho)$ at $v_{j+1}$ as defined in Definition \ref{DefinitionMonodromy}, and $\Gamma_{\Phi_{v_{j+1}}}$ is its graph. Since the monodromy operator is non-singular, we have $U_{v_{j+1}} \subseteq  \mathrm{Dom}D(w(j))^{\infty}$ and $U_{v_{j+1}} \cap \mathrm{Ker}(D(w(j))^{\infty} = \{0\}$. It follows from Proposition \ref{EveryElementaryIntervalSecondKindIsStable} that that $[w(j)] \in W_2$ and that the interval $(\mathrm{Ker}(D(w(j))^{\infty}, \mathrm{Dom}D(w(j))^{\infty})$ is elementary. Moreover, the quotient $        \mathrm{Dom}D(w(j))^{\infty} / \mathrm{Ker}(D(w(j))^{\infty}$ has dimension at least equal to $\dim U_{v_{j+1}}$. Since $M$ is equal to the direct sum of all the $\dim U_{v_i}$'s, these intervals for $0 \leq j < t$ already exhaust the whole stabilized sequence of $M$. This concludes the proof.    
\end{proof} 

Define an equivalence relation on $W_2$ as follows:
\begin{equation*}
    [w] \sim [v] \iff [v] = [w(j)] \text{ for some } j \geq 0.
\end{equation*}
This defines a partition of $W_2$, where each equivalence class has the form 
\begin{equation*}
    W_2(w) := \{[w],[w(1)],\ldots , [w(t-1)]\}, t:= l(w).
\end{equation*}
To such an equivalence class $W_2(w)$, we associate the connected circular Kraft quiver $\Gamma([w],t)$ as in Definition \ref{DefCircularKraftQuiverWithWord}. In particular, the vertices of $\Gamma([w],t)$ are denoted by $v_1,\ldots ,v_t$. Of course, up to isomorphism this Kraft quiver does not depend on the choice of the representative of the equivalence class. We define a strict $(\sigma,\tau)$-linear representation $(U^{[w]}, \rho^{[w]})$ on $\Gamma([w],t)$ as follows:
\begin{itemize}
    \item $U^{[w]}_{v_{i+1}} := \mathrm{Dom}(w(i))^{\infty} / \mathrm{Ker}(D(w(i))^{\infty}$ for all $0 \leq i < t$,\\
    \item $\rho_E^{[w]}$ is the natural isomorphism induced by $F$ or by $V$ depending on whether $E = (v_i,v_{i+1})$ or $E = (v_{i+1},v_i)$ respectively.
\end{itemize}
Here, $\rho_{E}^{[w]}$ is defined just as in the proof of Proposition \ref{WordsArePeriodic}, using the identity $[w(i)] = [w(i+1)]w_{i+1}$ for all $0 \leq i < t$.\\
By construction, the Kraft quivers $\Gamma([w],t)$ are pairwise non-isomorphic when $[w]$ runs along $W_2/\sim$. Let $\Gamma^{2\mathrm{nd}}$ denote the disjoint union of these Kraft quivers, and let $(U^{2\mathrm{nd}},\rho^{2\mathrm{nd}})$ denote the induced $(\sigma,\tau)$-linear representation on $\Gamma^{2\mathrm{nd}}$. We define 
\begin{equation*}
    \mathrm{gr}^{2\mathrm{nd}}(M) := M(\Gamma^{2\mathrm{nd}},U^{2\mathrm{nd}},\rho^{2\mathrm{nd}}).
\end{equation*}
By Proposition \ref{AllCircularKraftAreSecondKind}, $\mathrm{gr}^{2\mathrm{nd}}(M)$ is a twisted Gelfand-Ponomarev module of the second kind. We prove that there exists a submodule $M_2 \subseteq  M$ which is isomorphic to $\mathrm{gr}^{2\mathrm{nd}}(M)$. This is explained in \cite{gelfand-ponomarev:1968} Theorem 5.3. For $[w] \in W_2$, recall from Theorem \ref{WeakDecomposition1} that there exists a subspace $S_w \subseteq  \mathrm{Dom}D(w)^{\infty} \cap \mathrm{Im}D(w)^{\infty}$ which is a supplement to $\mathrm{Ker}D(w)^{\infty}$ in $\mathrm{Dom}D(w)^{\infty}$, such that $D(w)_{|S_w}$ is the graph of a $\xi(w)$-linear automorphism $S_w \xrightarrow{\sim} S_w$, and such that $D(w)$ decomposes as 
\begin{equation*}
    D(w)_{|\mathrm{Dom}D(w)^{\infty}} = D(w)_{|S_w} \oplus D(w)_{|\mathrm{Ker}D(w)^{\infty}}.
\end{equation*}
We claim that these subspaces can be chosen compatibly within every equivalence class in $W_2$.

\begin{lemma}[cf.~\cite{gelfand-ponomarev:1968} Theorem 5.3]\label{LemmaTechnicalSecondKind}
    Let $[w] \in W_2$ and let $t := l(w)$. Let us write $w = w_t \cdots w_1$. The subspaces $S_{w(j)}$ for $0 \leq j < t$ can be chosen so that 
    \begin{equation*}
        \begin{cases}
            F(S_{w(j)}) = S_{w(j+1)} & \text{if } w_{j+1} = F;\\
            V(S_{w(j+1)}) = S_{w(j)} & \text{if } w_{j+1} = V^{\#}.
        \end{cases}
    \end{equation*}
\end{lemma}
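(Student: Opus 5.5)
Fix one index, say $j=0$, choose $S_w := S_{w(0)}$ via Theorem \ref{WeakDecomposition1} applied to the $\xi(w)$-linear relation $D(w)$, and then transport it around the cycle. Let $T_0 : S_{w(0)} \xrightarrow{\sim} S_{w(0)}$ be the automorphism with $D(w)_{|S_{w(0)}} = \Gamma_{T_0}$. Every $x \in S_{w(0)}$ admits a chain
\begin{equation*}
x = x_0 \xrightarrow{D(w_1)} x_1 \xrightarrow{D(w_2)} \cdots \xrightarrow{D(w_t)} x_t = T_0(x).
\end{equation*}
The natural candidate is $S_{w(j)} :=$ the set of all $x_j$ obtained from such chains. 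The danger is that $x_j$ is not uniquely determined: when $w_{k} = V^{\#}$, the step $x_{k-1}\mapsto x_k$ is only defined up to $\mathrm{Ker}(V)$. So we instead define the spaces directly from the dynamics.

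For each $j$, we would use the identity $[w(j)] = [w(j+1)]w_{j+1}$. This gives $D(w(j+1))\,D(w_{j+1}) = D(w_{j+1})\,D(w(j))$ as monomials, both equal to $D(w_{j+1}w_tw_{t-1}\cdots)$ rotated. Hence the relation $D(w_{j+1})$ intertwines $D(w(j))$ and $D(w(j+1))$. Proceed case by case.

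\textbf{Case $w_{j+1}=F$.} Set $S_{w(j+1)} := F(S_{w(j)})$. Check that $F$ is injective on $S_{w(j)}$. Indeed, $S_{w(j)} \cap \mathrm{Ker}D(w(j))^\infty = 0$, and $\mathrm{Ker}(F) \subseteq \mathrm{Ker}D(w(j))^\infty$ once $F$ appears as the first letter applied in a power of $w(j)$. Next check that $F(S_{w(j)}) \subseteq \mathrm{Dom}D(w(j+1))^\infty$, and that $D(w(j+1))$ restricted to it is the graph of the conjugate automorphism $F\circ T_j\circ F^{-1}$. For the graph property, let $Fx \xrightarrow{D(w(j+1))} y$. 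Then $y - F(T_jx)$ lies in $\mathrm{Indet}D(w(j+1))$, and we must show it can be absorbed. This is where the choice matters.

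\textbf{Case $w_{j+1}=V^{\#}$.} Here we go backwards: $S_{w(j)}$ should be $V(S_{w(j+1)})$. So it is cleaner to run the induction in the direction in which the maps are functions. Along each $F$-arrow, push forward by $F$. Along each $V^{\#}$-arrow, i.e.\ a $V$-arrow from $v_{j+2}$ to $v_{j+1}$, push forward by $V$ in the opposite direction.

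Since the cycle mixes both directions, we cannot simply propagate from one vertex. The plan is therefore to use Theorem \ref{WeakDecomposition1} together with the uniqueness up to isomorphism, but more concretely, to use the explicit construction in its proof. There, $S = (\iota+f)(\text{quotient})$ is determined by a section $\iota$ and a correction $f$ built from chains. We first choose $\iota$ arbitrarily at one vertex, then propagate. The induced maps $\rho^{[w]}$ are isomorphisms between the quotients $U^{[w]}_{v_{j+1}}$ (as in the construction of $(U^{[w]},\rho^{[w]})$ above). So a lift at $v_{j+1}$ determines a lift at $v_{j+2}$ via $F$, or at $v_j$ via $V$, modulo the stable kernels.

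The cleanest route is the following alternative. Define $S_{w(j)}$ for all $j$ simultaneously as the set of $x$ admitting a bi-infinite chain
\begin{equation*}
\cdots \to x_{-1} \to x_0 = x \to x_1 \to \cdots
\end{equation*}
for the periodic sequence of relations $D(w_{j+1}), D(w_{j+2}),\dots$, extended backwards periodically, and taken inside a chosen complement. Concretely, first pick $S_{w(0)}$ as in Theorem \ref{WeakDecomposition1}. Then set $S_{w(j)}$ to be the set of $x_j$ occurring in chains $x_0\to\cdots\to x_t$ with $x_0 \in S_{w(0)}$ and $x_t = T_0x_0$, where each step is required to stay in the $D(w(k))$-periodic part. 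Then prove:
\begin{enumerate}
\item the $x_j$ are uniquely determined by $x_0$, using Proposition \ref{PropStableKernelIndet} to show that $\mathrm{Dom}D(w(j))^\infty\cap \mathrm{Indet}$ is contained in the stable kernel, hence killed by the requirement $x_t=T_0x_0$ and $S_{w(0)}\cap\mathrm{Ker}^\infty=0$;
\item $S_{w(j)}$ complements $\mathrm{Ker}D(w(j))^\infty$ in $\mathrm{Dom}D(w(j))^\infty$, by dimension count via Corollary \ref{CorollaryAllInervalsSecondKindSameLength} and injectivity of $x_0\mapsto x_j$;
\item $D(w(j))_{|S_{w(j)}}$ is the graph of the rotated automorphism, and the decomposition property holds.
\end{enumerate}
The $F$/$V$ compatibility is then built in, since $x_{j+1} = F x_j$ or $x_j = V x_{j+1}$.

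The main obstacle is step (1), the uniqueness of intermediate chain elements when passing through $V^{\#}$-letters. I expect to handle it by showing that two chains from the same $x_0$ differ by elements of $\mathrm{Indet}D(w_{k}\cdots w_1)$ that also lie in the stable domain. Proposition \ref{PropStableKernelIndet}(1) places these differences in the stable kernel, and periodicity combined with $S_{w(0)}\cap \mathrm{Ker}^\infty = 0$ forces them to vanish. If this fails, the fallback is to make the choice explicitly: choose the $V$-preimages inside a complement of $\mathrm{Ker}(V)$ coherently, imitating the construction of $\gamma^V(w)$ in the first-kind lemma.
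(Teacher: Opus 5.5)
Your plan fails at step (1), and the failure is structural. The intermediate terms of a closing chain $x_0 \to x_1 \to \cdots \to x_t = T_0x_0$ are \emph{not} determined by $x_0$. So the set of all such $x_j$ is in general strictly larger than a complement of $\mathrm{Ker}D(w(j))^{\infty}$.

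Two closing chains with the same $x_0$ differ by a chain $0 \to d_1 \to \cdots \to d_{t-1} \to 0$, so $d_j \in \mathrm{Ker}D(w(j))$. Proposition \ref{PropStableKernelIndet} only gives back $d_j \in \mathrm{Ker}D(w(j))^{\infty}$. The condition $S_{w(0)} \cap \mathrm{Ker}D(w)^{\infty} = 0$ concerns position $0$ and says nothing about $d_j$.

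For instance, let $M = Ka \oplus Kb$ with $F(b) = V(b) = a$ and $F(a) = V(a) = 0$, and take $[w] = [FV^{\#}]$, i.e.\ $w_1 = V^{\#}$, $w_2 = F$. Then $D(w)$ is the graph of a semilinear automorphism of $Ka$ with $a \mapsto a$, so $S_w = Ka$. But $a \xrightarrow{V^{\#}} b + \alpha a \xrightarrow{F} a$ for every $\alpha \in K$, so your set of $x_1$'s is all of $M$. Meanwhile $\mathrm{Ker}D(w(1))^{\infty} = Ka$ and $\mathrm{Dom}D(w(1))^{\infty} = M$, which force $S_{w(1)}$ to be a line. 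Restricting to a ``periodic part'' such as $\mathrm{Dom}^{\infty}\cap\mathrm{Im}^{\infty}$ does not help, since here it is $M$. Requiring $x_j \in S_{w(j)}$ is circular. Your fallback, fixing one complement of $\mathrm{Ker}(V)$ and propagating, makes a choice, but nothing guarantees that the propagated chain ends at $T_0x_0 \in S_{w(0)}$. So compatibility at the wrap-around letter $w_t$ can fail.

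The missing idea is to make the choice linearly, as the paper does. Fix a basis $(e_i)$ of $S_w$. For each $i$, choose \emph{one} chain $e_i \xrightarrow{D(w_1)} e_i^{w(1)} \to \cdots \to e_i^{w(t-1)} \xrightarrow{D(w_t)} T_0(e_i)$, which exists by definition of composition. Then set $S_{w(j)} := \mathrm{Span}_K\{e_i^{w(j)}\}$. The $F$/$V$ compatibilities, including the wrap-around (since $(T_0e_i)$ is again a basis of $S_w$), then hold by construction. So your claim that one ``cannot simply propagate from one vertex'' is not right: propagation works once chains are chosen rather than collected.

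What remains, and what your step (3) asserts without argument, is that each such span satisfies (1)--(3) of Theorem \ref{WeakDecomposition1} for $D(w(j))$:
\begin{itemize}
\item inclusion in $\mathrm{Dom}D(w(j))^{\infty}$ and $S_{w(j)} \cap \mathrm{Ker}D(w(j))^{\infty} = 0$, by transporting along $D(w_t\cdots w_{j+1})$ and $D(w_j\cdots w_1)$ back to $S_w$;
\item the dimension count via Corollary \ref{CorollaryAllInervalsSecondKindSameLength};
\item the splitting (2), by producing explicit $D(w(j))$-preimages inside $S_{w(j)}$;
\item the vanishing of the indeterminacy of $D(w(j))_{|S_{w(j)}}$.
\end{itemize}
The last point needs the stronger property $S_w \cap (\mathrm{Ker}D(w)^{\infty} + \mathrm{Indet}D(w)^{\infty}) = 0$ from Theorem \ref{WeakDecomposition2}(1), an ingredient absent from your plan.
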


\begin{proof}
    We can choose $S_w$ arbitrarily, then determine suitable $S_{w(j)}$'s for $0 < j < t$ from our initial choice. Let $(e_i^w)_{1\leq i \leq d}$ be a basis of $S_w$, where $d := \dim S_w$. By construction, there exists unique vectors $f_i^w \in S_w$ such that $e_i^w \xrightarrow{D(w)} f_i^w$ for all $1\leq i \leq d$. Since $D(w) = D(w_t)\cdots D(w_1)$, by definition of the composition of relations, we can find vectors $e_i^{w(j)}$ for $1 \leq j \leq t-1$ such that for all $i$,
    \begin{equation*}
        e_i^w \xrightarrow{D(w_1)} e_1^{w(1)} \xrightarrow{D(w_2)} \cdots \xrightarrow{D(w_{t-1})} e_1^{w(t-1)} \xrightarrow{D(w_t)} f_i^{w}.
    \end{equation*}
    We define $S_{w(j)} := \mathrm{Span}_K\{e_i^{w(j)}, 1\leq i \leq d\}$. Let us show that these spaces $S_{w(j)}$ meet all the requirements. By construction, it is obvious that we have 
        \begin{equation*}
        \begin{cases}
            F(S_{w(j)}) = S_{w(j+1)} & \text{if } w_{j+1} = F;\\
            V(S_{w(j+1)}) = S_{w(j)} & \text{if } w_{j+1} = V^{\#}.
        \end{cases}
    \end{equation*}
    For $j = t-1$, this follows from the fact that $(f_i^w)_{1\leq i \leq d}$ is also a basis of $S_w$. Thus all we have to prove is that $S_{w(j)}$ satisfies the conditions (1), (2) and (3) in Theorem \ref{WeakDecomposition1}. We have 
    \begin{align*}
        [w(j)] = [w]w_t\cdots w_{j+1}, & & [w] = [w(j)]w_j\cdots w_1.
    \end{align*}
    In particular, we deduce that 
    \begin{align}
        \label{Useful4Identities1} \mathrm{Dom}D(w(j))^{\infty} & = D(w_t\cdots w_{j+1})^{-1}\mathrm{Dom}D(w)^{\infty}, \\ 
        \label{Useful4Identities2} \mathrm{Ker}D(w(j))^{\infty} & = D(w_t\cdots w_{j+1})^{-1}\mathrm{Ker}D(w)^{\infty},\\
        \label{Useful4Identities3} \mathrm{Dom}D(w)^{\infty} & = D(w_j\cdots w_1)^{-1}\mathrm{Dom}D(w(j))^{\infty}, \\ 
        \label{Useful4Identities4} \mathrm{Ker}D(w)^{\infty} & = D(w_j\cdots w_1)^{-1}\mathrm{Ker}D(w(j))^{\infty}.
    \end{align}
    Since we have $e_i^{w(j)} \xrightarrow{D(w_t\cdots w_{j+1})} f_i^w \in S_w \subseteq  \mathrm{Dom}D(w)^{\infty}$ for every $1\leq i \leq d$, it follows that $S_{w(j)} \subseteq  \mathrm{Dom}D(w(j))^{\infty}$. Let us now consider $x \in S_{w(j)} \cap \mathrm{Ker}D(w(j))^{\infty}$. There is some $y \in \mathrm{Ker}D(w)^{\infty}$ such that $x \xrightarrow{D(w_t\cdots w_{j+1})} y$. Thus, if we decompose $x = \sum \lambda_ie_i^{w(j)}$ for some scalars $\lambda_i$'s, we get 
    \begin{equation*}
        \sum_{i=1}^d (\xi(w)_j\cdots \xi(w)_1)^{-1}(\lambda_i)e_i^w \xrightarrow{D(w_j\cdots w_1)} x \xrightarrow{D(w_t\cdots w_{j+1})} y.
    \end{equation*}
    Since $S_w$ satisfies (2) of Theorem \ref{WeakDecomposition1}, we deduce that $\sum_{i=1}^d (\xi(w)_j\cdots \xi(w)_1)^{-1}(\lambda_i)e_i^w = 0$ so that all the $\lambda_i$'s are $0$. Thus $S_{w(j)} \cap \mathrm{Ker}D(w(j))^{\infty} = \{0\}$. By a similar reasoning, it is easy to prove that the vectors $e_{i}^{w(j)}$ are linearly independent. Thus $\dim S_{w(j)} = d$, which is also the the codimension of $\mathrm{Ker}D(w(j))^{\infty}$ in $\mathrm{Dom}D(w(j))^{\infty}$ according to Corollary \ref{CorollaryAllInervalsSecondKindSameLength}. Thus, we have 
    \begin{equation*}
        \mathrm{Dom}D(w(j))^{\infty} = S_{w(j)} \oplus \mathrm{Ker}D(w(j))^{\infty},
    \end{equation*}
    as desired. Before continuing, let us fix some notations. For $1 \leq i \leq d$, we can decompose $f_i^w = \sum_k \mu_{ik}e_k^w$ for some scalars $\mu_{ik}$. We define $f_i^{w(j)} := \sum_k \xi(w)_j\cdots \xi(w)_1(\mu_{ik})e_k^{w(j)} \in S_{w(j)}$. The vectors $(f_i^{w(j)})_{1\leq i\leq d}$ form another basis of $S_{w(j)}$. Moreover, we have $f_i^{w} \xrightarrow{D(w_j\cdots w_1)} f_i^{w(j)}$ for all $1 \leq j \leq t-1$.\\
    Let us now consider $x,y \in \mathrm{Dom}D(w(j))^{\infty}$ such that $x \xrightarrow{D(j)} y$. We can decompose $x = x_1 + x_2$ and $y = y_1 + y_2$ with $x_1,x_2 \in S_{w(j)}$ and $y_1,y_2 \in \mathrm{Ker}D(w(j))^{\infty}$. Let us write $y_1 = \sum \lambda_if_i^{w(j)}$ for some scalars $\lambda_i$'s. To simplify notations, we also write 
    \begin{equation*}
    z := \sum_{i=1}^d (\xi(w)_j\cdots \xi(w)_1\xi(w)_t\cdots \xi(w)_{j+1})^{-1}(\lambda_i) e_i^{w(j)}.
    \end{equation*}
    We have
    \begin{equation*}
         z \xrightarrow{D(w_t\cdots w_{j+1})} \sum_{i=1}^d (\xi(w)_j\cdots \xi(w)_1)^{-1}(\lambda_i) f_i^w \xrightarrow{D(w_j\cdots w_1)} y_1.
    \end{equation*}
    By taking the difference, we deduce that 
    \begin{equation*}
        x - z \xrightarrow{D(w(j))} y_2.
    \end{equation*}
    Thus $x - z \in D(w(j))^{-1} \mathrm{Ker}D(w(j))^{\infty} = \mathrm{Ker}D(w(j))^{\infty}$. It follows that $x_1 = z$, and therefore $x_1 \xrightarrow{D(w(j))} y_1$. This proves that
\begin{equation*}
    D(w(j))_{|\mathrm{Dom}D(w(j))^{\infty}} = D(w(j))_{|S_{w(j)}} \oplus D(w(j))_{|\mathrm{Ker}D(w(j))^{\infty}}.
\end{equation*}
    It remains to prove that $D(w(j))_{|S_{w(j)}}$ is the graph of a $\xi(w(j))$-linear automorphism of $S_{w(j)}$. By what precedes, we already saw that $D(w(j))_{|S_{w(j)}}$ has domain and image equal to $S_{w(j)}$. It only remains to prove that the kernel and the indeterminacy of $D(w(j))_{|S_{w(j)}}$ are zero. If $x \in S_{w(j)}$ is in the kernel, then in particular $x \in \mathrm{Ker}D(w(j))^{\infty}$, so that $x=0$. Let us now consider some $y \in S_{w(j)} \cap \mathrm{Indet}D(w(j))$. Let us write $y = \sum \lambda_ie_i^{w(j)}$ for some scalars $\lambda_i$'s. Since $0 \xrightarrow{D(w(j))} y$, we can find some $z \in \mathrm{Dom}D(w)^{\infty}$ such that 
    \begin{equation*}
        0 \xrightarrow{D(w_t\cdots w_{j+1})} z \xrightarrow{D(w_j\cdots w_1)} y.
    \end{equation*}
    In particular, $z \in \mathrm{Indet}D(w)$. Now, by taking the difference we also have
    \begin{equation*}
        \sum_{i=1}^d (\xi(w)_j\cdots \xi(w)_1)^{-1}(\lambda_i)e_i^w - z\xrightarrow{D(w_j\cdots w_1)} 0.
    \end{equation*}
    In particular, $(\xi(w)_j\cdots \xi(w)_1)^{-1} - z \in \mathrm{Ker}D(w)$. Let $N_w :=\mathrm{Ker}D(w)^{\infty} + \mathrm{Inder}D(w)^{\infty}$. We have just proved that $\sum_{i=1}^d (\xi(w)_j\cdots \xi(w)_1)^{-1}(\lambda_i)e_i^w \in S_w \cap N_w$. But this intersection is zero according to the stronger condition (1) of Theorem \ref{WeakDecomposition2}, which is also satisfied by $S_w$. Therefore, all the $\lambda_i$'s are zero, so that $\mathrm{Indet}D(w(j))_{|S_{w(j)}} = \{0\}$. This concludes the proof.    
\end{proof}

For any $[w] \in W_2/\sim$, the spaces $S_{w(j)}$ built in Lemma \ref{LemmaTechnicalSecondKind} form a $(\sigma,\tau)$-linear representation of $\Gamma([w],t)$ which we denote by $S^{[w]}$. Moreover, for each $0 \leq j < t$, the quotient map gives a $K$-linear isomorphism 
\begin{equation*}
    S_{w(j)} \xrightarrow{\sim} U^{[w]}_{w_{j+1}}.
\end{equation*}
This induces an isomorphism of $(\sigma,\tau)$-linear representation $S^{[w]} \xrightarrow{\sim} (U^{[w]},\rho^{[w]})$. Taking the union over all the $[w] \in W_2/\sim$, we obtain a $(\sigma,\tau)$-linear representation $S^{2\mathrm{nd}}$ of $\Gamma^{2\mathrm{nd}}$ which is isomorphic to $(U^{2\mathrm{nd}},\rho^{2\mathrm{nd}})$. The twisted Gelfand-Ponomarev module $M_2$ associated to $S^{2\mathrm{nd}}$ is naturally a submodule of $M$. Therefore, we have just proved the following. 

\begin{theorem}\label{TheoremExistenceM2}
    There is a twisted Gelfand-Ponomarev submodule $M_2$ of $M$ which is isomorphic to $\mathrm{gr}^{2\mathrm{st}}(M)$. 
\end{theorem}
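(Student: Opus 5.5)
Theorem \ref{TheoremExistenceM2} is essentially a bookkeeping consequence of Lemma \ref{LemmaTechnicalSecondKind}, and I would argue as follows. For each equivalence class $W_2(w)\in W_2/\sim$ with $t=l(w)$, fix the subspaces $S_{w(j)}$, $0\le j<t$, given by Lemma \ref{LemmaTechnicalSecondKind}. Attach $S_{w(j)}$ to the vertex $v_{j+1}$ of $\Gamma([w],t)$. On an $F$-arrow $(v_{j+1},v_{j+2})$, take the restriction $F\colon S_{w(j)}\to S_{w(j+1)}$. On a $V$-arrow $(v_{j+2},v_{j+1})$, take the restriction $V\colon S_{w(j+1)}\to S_{w(j)}$. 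The Lemma says these maps are surjective. They are also injective, by a dimension count: all $S_{w(j)}$ have dimension $d=\dim S_w$, by Corollary \ref{CorollaryAllInervalsSecondKindSameLength} and the decomposition $\mathrm{Dom}D(w(j))^\infty=S_{w(j)}\oplus\mathrm{Ker}D(w(j))^\infty$. So we obtain a strict $(\sigma,\tau)$-linear representation $S^{[w]}$.

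Next I would check that the quotient maps $S_{w(j)}\to U^{[w]}_{v_{j+1}}=\mathrm{Dom}D(w(j))^\infty/\mathrm{Ker}D(w(j))^\infty$ are linear isomorphisms, which follows from the same direct sum decomposition. These maps commute with the arrow maps, because $\rho^{[w]}_E$ is by definition induced by $F$ or $V$ on these quotients. Hence $S^{[w]}\simeq(U^{[w]},\rho^{[w]})$. Taking the disjoint union over $W_2/\sim$ gives a representation $S^{2\mathrm{nd}}$ of $\Gamma^{2\mathrm{nd}}$ isomorphic to $(U^{2\mathrm{nd}},\rho^{2\mathrm{nd}})$. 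It follows that $M(\Gamma^{2\mathrm{nd}},S^{2\mathrm{nd}})\simeq\mathrm{gr}^{2\mathrm{nd}}(M)$.

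The remaining point, and the main obstacle, is to show that the natural map $M(\Gamma^{2\mathrm{nd}},S^{2\mathrm{nd}})=\bigoplus_{[v]\in W_2}S_v\to M$ is an injective $K[F,V]_{\sigma,\tau}$-module homomorphism; its image is then $M_2$. Compatibility with $F$ and $V$ needs the vanishing conditions as well as the arrow maps. If $w_{j+1}=V^{\#}$, then $S_{w(j+1)}=V(S_{w(j)})$ up to orientation, and since $FV=0$ we get $F(S_{w(j+1)})=0$, which agrees with the absence of an outgoing $F$-arrow at that vertex. Similarly $V$ kills $F(S_{w(j)})$. Because each vertex of a circular Kraft quiver has at most one outgoing arrow of each label, this gives compatibility.

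For injectivity, I would show that the sum $\sum_{[v]\in W_2}S_v$ is direct. Each $S_v$ lies in $\beta_{i+1}(v)=\mathrm{Dom}D(v)^\infty$ and meets $\beta_i(v)=\mathrm{Ker}D(v)^\infty$ trivially (Proposition \ref{EveryElementaryIntervalSecondKindIsStable}). The intervals $(\beta_i(v),\beta_{i+1}(v))$ are distinct elementary intervals of the flag $\mathcal F_M$. Order the $S_v$ by the position of their interval in the flag. Suppose $\sum x_v=0$ with $x_v\in S_v$, and let $v_0$ be the index whose interval sits highest in the flag among those with $x_v\neq0$. Then every other $x_v$ lies in $\beta_{i+1}(v)\subseteq\beta_i(v_0)$. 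Thus $x_{v_0}\in S_{v_0}\cap\beta_i(v_0)=0$, a contradiction. This triangularity argument proves the sum is direct, so $M_2\simeq\mathrm{gr}^{2\mathrm{nd}}(M)$.
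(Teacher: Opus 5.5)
Your proposal is correct and follows the paper's route: you build $S^{[w]}$ from Lemma~\ref{LemmaTechnicalSecondKind} and identify it with $(U^{[w]},\rho^{[w]})$ via the quotient maps, and you also prove two things the paper only asserts (``naturally a submodule''), namely compatibility with $F$ and $V$ and directness of $\sum_{[v]\in W_2}S_v$, where your flag-triangularity argument is valid. One index slip in the compatibility step: when $w_{j+1}=V^{\#}$ the Lemma gives $S_{w(j)}=V(S_{w(j+1)})$, so what you need is $F(S_{w(j)})=0$ at $v_{j+1}$, the vertex with no outgoing $F$-arrow, and not $F(S_{w(j+1)})=0$, which fails whenever $w_{j+2}=F$.
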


\section{The classification of twisted Gelfand-Ponomarev modules} \label{Section4}

In this last section, we give a summary of the results we have been building up so far.

\begin{theorem}\label{TheoremDecompositionIntoM1M2}
    Let $M$ be a twisted Gelfand-Ponomarev module. There is a decomposition 
    \begin{equation*}
        M = M_1 \oplus M_2,
    \end{equation*}
    where $M_1$ and $M_2$ are submodules of the first and of the second kind respectively. Moreover, $M_1$ (resp.~$M_2$) decomposes further as a direct sum of submodules which are isomorphic to modules associated to strict $(\sigma,\tau)$-linear representations of linear (resp.~circular) connected Kraft quivers. 
\end{theorem}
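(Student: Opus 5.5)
I take the submodules $M_1 \subseteq M$ of Theorem \ref{TheoremExistenceM1} and $M_2 \subseteq M$ of Theorem \ref{TheoremExistenceM2}, and show that $M = M_1 \oplus M_2$. The refined decompositions then follow from the isomorphisms $M_1 \simeq \mathrm{gr}^{1\mathrm{st}}(M)$ and $M_2 \simeq \mathrm{gr}^{2\mathrm{nd}}(M)$:
\begin{itemize}
\item Proposition \ref{DecompositionSumLinearKraftQuiver} writes $\mathrm{gr}^{1\mathrm{st}}(M)$ as a sum of modules attached to strict representations of connected linear Kraft quivers.
\item By its very definition, $\mathrm{gr}^{2\mathrm{nd}}(M)$ is the direct sum, over $W_2/\sim$, of the modules $M(\Gamma([w],t),U^{[w]},\rho^{[w]})$, and these are strict.
\item The kinds are as claimed: $M_1$ is of the first kind by Proposition \ref{Gr1stIsOfTheFirstKind}, and $M_2$ is of the second kind by Proposition \ref{AllCircularKraftAreSecondKind}, since a direct sum of second-kind modules is again of the second kind.
\end{itemize}

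To show the sum is direct and exhausts $M$, I would first compare dimensions. By construction $M_1 = \bigoplus_{w\in W_1}\gamma(w)$ with $\dim\gamma(w) = \dim \beta_{i+1}/\beta_i$ for the corresponding first-kind interval. Similarly $M_2 = \bigoplus_{[w]\in W_2} S_w$ (running over all rotations), with $\dim S_w$ equal to the length of the associated second-kind interval. Every elementary interval is of exactly one kind, and first-kind intervals are in bijection with $W_1$ (Proposition \ref{MonomialOfFirstKind}), second-kind ones with $W_2$ (Proposition \ref{PropositionWordsOfSecondKind}). Hence
\[
\dim M_1+\dim M_2=\sum_{i}\dim\beta_{i+1}/\beta_i=\dim M .
\]
It therefore suffices to prove $M_1\cap M_2=0$, or equivalently that the whole family of subspaces $\{\gamma(w)\}_{w\in W_1}\cup\{S_w\}_{[w]\in W_2}$ is linearly independent.

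I would prove this independence with the flag $\mathcal F_M$. Each subspace $X$ in the family sits inside some $\beta_{i+1}$ and meets $\beta_i$ trivially, where $(\beta_i,\beta_{i+1})$ is its elementary interval:
\begin{itemize}
\item $\gamma(w)\subseteq \mathrm{Ker}D(Fw)$ and $\gamma(w)\cap\mathrm{Dom}D(V^{\#}w)=0$;
\item $S_w\subseteq \mathrm{Dom}D(w)^\infty$ and $S_w\cap \mathrm{Ker}D(w)^\infty=0$, using Proposition \ref{EveryElementaryIntervalSecondKindIsStable}.
\end{itemize}
Different members of the family correspond to different elementary intervals. So each member projects isomorphically onto its own graded piece $\beta_{i+1}/\beta_i$. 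Now take a nontrivial relation $\sum x_X = 0$ and pick the largest index $i$ for which some $x_X$ with interval $(\beta_i,\beta_{i+1})$ is nonzero. All other terms lie in $\beta_i$, so reducing modulo $\beta_i$ forces that $x_X=0$, a contradiction. Together with the dimension count, this gives $M=M_1\oplus M_2$.

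The main obstacle I expect is the bookkeeping: checking that the correspondence between members of the family and elementary intervals really is injective, and that the dimension equalities hold on the nose. For first-kind intervals this comes from the supplement property in the lemma constructing $\gamma(w)$. For second-kind intervals I would use Theorem \ref{WeakDecomposition1}(1) applied to $D(w(j))$, as arranged in Lemma \ref{LemmaTechnicalSecondKind}. Once these are in place, the filtration argument above is formal.
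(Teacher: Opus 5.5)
Your proof is correct, but at the one substantive step, $M_1\cap M_2=0$, it takes a different route from the paper. The paper argues intrinsically. It asserts that $M_1\cap M_2$ inherits from $M_2$ the identities $\mathrm{Dom}D(V^{\#}w)=\mathrm{Ker}D(Fw)$ and from $M_1$ the identities $\mathrm{Ker}D(w)^{\infty}=\mathrm{Dom}D(w)^{\infty}$, so it has no elementary interval and must vanish; the dimension count is then the same as yours. You never leave $M$. Your leading-term argument in the flag $\mathcal F_M$ uses only two ingredients: the transversality $X\subseteq\beta_{i+1}$, $X\cap\beta_i=0$ built into each $\gamma(w)$ and each $S_w$, and the bijections of $W_1$ and $W_2$ with the elementary intervals.

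The paper's version is shorter and never refers to the chosen supplements. However, its inheritance step is not automatic, for two reasons. First, monomials computed inside a submodule need not be restrictions of the ambient ones, since intermediate vectors must now lie in the submodule. Second, kinds do not pass to submodules. For example, take $M(\Gamma,\mathbf 1_{\Gamma})$ with $\Gamma=\Gamma(V^{\#}F)$, i.e.\ basis $e_1,e_2,e_3$ with $Fe_1=e_2=Ve_3$; it is of the first kind. Its submodule $N=K(e_1+e_3)\oplus Ke_2$ has $\mathrm{Ker}(F_{|N})=\mathrm{Im}(V_{|N})=Ke_2$, so $N$ is of the second kind. Also $e_2\xrightarrow{D(FV^{\#})}0$ holds in the ambient module via $e_3$, but not inside $N$. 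A first-kind module and a second-kind submodule of it can thus meet in $N\neq 0$. So the kinds of $M_1$ and $M_2$ alone cannot give $M_1\cap M_2=0$; the specific supplements must enter, as they do in your argument.

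Your independence statement for the whole family $\{\gamma(w)\}_{w\in W_1}\cup\{S_w\}_{[w]\in W_2}$ also proves that $\sum_w\gamma(w)$ and $\sum_{[w]}S_w$ are internal direct sums. The paper uses this tacitly when it identifies $M_1$ and $M_2$ with $\mathrm{gr}^{1\mathrm{st}}(M)$ and $\mathrm{gr}^{2\mathrm{nd}}(M)$. Your own equality $\dim M_1+\dim M_2=\sum_i\dim\beta_{i+1}/\beta_i$ presupposes it too, so that equality should be read as following from the independence rather than preceding it.
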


\begin{proof}
    Let $M_1$ and $M_2$ be given by Theorems \ref{TheoremExistenceM1} and \ref{TheoremExistenceM2}. All we have to prove is the decomposition of $M$ into the direct sum of $M_1$ and $M_2$. Let us write $D^{1}$, $D^2$ and $D^{1\cap 2}$ for the homomorphisms of monoids, sending a word $w \in W$ into the associated monomial acting respectively on $M_1$, $M_2$ and on $M_1\cap M_2$. For every $w \in W$, we have 
    \begin{equation*}
        D^{1\cap 2}(w) = D^1(w)_{|M_1\cap M_2} = D^2(w)_{|M_1\cap M_2}.
    \end{equation*} 
    Since $M_2$ is of the second kind, for every $w \in W$ we have $\mathrm{Dom}D^2(V^{\#}w) = \mathrm{Ker}D^2(Fw)$, so that the same equality holds for $D^{1\cap 2}$. Likewise, since $M_1$ is of the first kind, for every $w \in W$ we have $\mathrm{Ker}D^1(w)^{\infty} = \mathrm{Dom}D^1(w)^{\infty}$, so that the same equality holds for $D^{1\cap 2}$. In other words, $M_1\cap M_2$ has no elementary interval, thus must be zero. Eventually, since we have 
    \begin{align*}
        \dim M & = \sum_{i=1}^s \dim \beta_{i+1}/\beta_i = \\
        &= \sum_{w \in W_1} \dim \mathrm{Ker}D(Fw)/\mathrm{Dom}D(V^{\#}w) + \sum_{[w]\in W_2} \dim \mathrm{Dom}D(w)^{\infty}/\mathrm{Ker}D(w)^{\infty}\\
        & = \dim M_1 + \dim M_2,
    \end{align*}
    we conclude that $M = M_1 \oplus M_2$.
\end{proof}

\begin{theorem}\label{AllTwistedGFModulesComeFromKraftQuiver}
    Let $M$ be a twisted Gelfand-Ponomarev module. There exists 
    \begin{itemize}
        \item a Kraft quiver $\Gamma$ whose connected components are pairwise non-isomorphic, and all of whose circular connected components have no repetitions, and
        \item a strict $(\sigma,\tau)$-linear representation $(U,\rho)$ on $\Gamma$,
    \end{itemize}
    such that $M \simeq M(\Gamma,U,\rho)$ as $K[F,V]_{\sigma,\tau}$-modules. If $\Gamma'$ and $(U',\rho')$ are another Kraft quiver and strict $(\sigma,\tau)$-linear representation meeting the same conditions, then there exists an isomorphism $\Gamma \simeq \Gamma'$ making $(U,\rho)$ isomorphic to $(U',\rho')$.
\end{theorem}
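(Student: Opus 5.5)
Existence is assembled from the pieces already built. By Theorem \ref{TheoremDecompositionIntoM1M2}, write $M = M_1 \oplus M_2$. By Theorem \ref{TheoremExistenceM1} and Proposition \ref{DecompositionSumLinearKraftQuiver}, $M_1 \simeq \mathrm{gr}^{1\mathrm{st}}(M)$ is a direct sum of modules $M(\Gamma_w,U^w,\rho^w)$. These are indexed by the words $w \in W_1$ with $X_w \neq \{0\}$, and the connected linear Kraft quivers $\Gamma_w \simeq \Gamma(w)$ are pairwise non-isomorphic. By Theorem \ref{TheoremExistenceM2}, $M_2 \simeq \mathrm{gr}^{2\mathrm{nd}}(M) = M(\Gamma^{2\mathrm{nd}},U^{2\mathrm{nd}},\rho^{2\mathrm{nd}})$. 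Here $\Gamma^{2\mathrm{nd}}$ is the disjoint union of the quivers $\Gamma([w],t)$ over the classes $W_2/\sim$. Each of these has no repetition because $t=l(w)$ is the period, and they are pairwise non-isomorphic because distinct classes give distinct periodic words up to shift (Proposition \ref{WordOfKraftQuiver}). Linear and circular components are never isomorphic to each other, so taking $\Gamma := \bigsqcup_w \Gamma_w \sqcup \Gamma^{2\mathrm{nd}}$ with the induced strict representation gives $M \simeq M(\Gamma,U,\rho)$.

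For uniqueness, suppose $M \simeq M(\Gamma',U',\rho')$ with $\Gamma'$ satisfying the same conditions. The plan is to show that $(\Gamma',U',\rho')$ can be read off intrinsically from the stabilized sequence of $M$, which is an isomorphism invariant. Split $\Gamma' = \Gamma'_{\mathrm{lin}} \sqcup \Gamma'_{\mathrm{circ}}$ and set $N_1 := M(\Gamma'_{\mathrm{lin}},\ldots)$ and $N_2 := M(\Gamma'_{\mathrm{circ}},\ldots)$. By Corollary \ref{CorolAllLinearKraftAreFirstKind}, each linear component gives a module of the first kind. By Proposition \ref{AllCircularKraftAreSecondKind}, each circular component gives a module of the second kind, with $W_2$ equal to the shift class of its word. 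The next step is a lemma stating that the stabilized data is additive on direct sums. For $M = A \oplus B$, the kernels and domains of monomials split as direct sums, so $\mathrm{Ker}D(Fw)/\mathrm{Dom}D(V^{\#}w)$ and $\mathrm{Dom}D(w)^{\infty}/\mathrm{Ker}D(w)^{\infty}$ are additive in dimension. Hence $W_1(M)=W_1(A)\cup W_1(B)$ and $W_2(M)=W_2(A)\cup W_2(B)$, with dimensions adding. I expect this lemma to be the main obstacle. One must check that elementary intervals of $A\oplus B$ are detected by these quotients. The cleanest route avoids elementary intervals entirely and uses the characterisations already available: first kind by $\mathrm{Dom}D(V^{\#}w)\subsetneq \mathrm{Ker}D(Fw)$, and second kind by Proposition \ref{EveryElementaryIntervalSecondKindIsStable}. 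Both are defined via kernels and domains, which split.

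Granting this lemma, $W_2(M)$ together with the dimensions $\dim \mathrm{Dom}D(w)^\infty/\mathrm{Ker}D(w)^\infty$ determines which circular quivers occur in $\Gamma'$, since components are pairwise non-isomorphic with no repetition, and it determines $\dim U'_v$ on them. Likewise $W_1(M)$ with the numbers $d^{1\mathrm{st}}_w$ determines, via the downward induction on $l(w)$ in Proposition \ref{DecompositionSumLinearKraftQuiver}, the integers $d_w$. Now $d_w$ is exactly $\dim U'$ on the component $\Gamma(w)$ if it occurs in $\Gamma'$, and $0$ otherwise, because each linear summand $M(\Gamma(w),\mathbf 1)$ contributes only to its own prefix chain. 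Therefore $\Gamma \simeq \Gamma'$. On linear components, Proposition \ref{ModuleAttachedToLinearKraftQuiver} identifies both representations with $\mathbf 1^{\oplus d}$.

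On a circular component $\Gamma([w],t)$, the representation must be recovered up to isomorphism. By Proposition \ref{Monodromy}, it suffices to compare monodromy at one vertex. The computation in the proof of Proposition \ref{AllCircularKraftAreSecondKind} shows that $D(w)$ restricted to $U'_{v_1}$ is the graph of $\Phi'_{v_1}$. Meanwhile, in $M(\Gamma',U',\rho')$ the space $\mathrm{Dom}D(w)^\infty/\mathrm{Ker}D(w)^\infty$, with the automorphism $\overline T$ induced by $D(w)$ from Step 1 of Theorem \ref{WeakDecomposition1}, is isomorphic to $(U'_{v_1},\Phi'_{v_1})$, since the other components contribute trivially by the additivity lemma. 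That pair is intrinsic to $M$, so the conjugacy classes of $\Phi_{v_1}$ and $\Phi'_{v_1}$ both equal the conjugacy class of $\overline T$ on $M$. Hence they agree, and the representations are isomorphic.
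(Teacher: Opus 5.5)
Your proof is correct. The existence half is exactly the paper's: assemble the realization from Theorems \ref{TheoremDecompositionIntoM1M2}, \ref{TheoremExistenceM1}, \ref{TheoremExistenceM2} and Proposition \ref{DecompositionSumLinearKraftQuiver}.

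For uniqueness you take a genuinely more explicit route than the paper.

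\textbf{The paper's argument.} It argues by transport of structure: an isomorphism $M\simeq M'$ carries $\mathcal F_M$, $W_1$, $W_2$ and the constructed representations to those of $M'$. This shows that the \emph{constructed} datum is an invariant of $M$. It tacitly needs that running the construction on $M(\Gamma',U',\rho')$ returns $(\Gamma',U',\rho')$. Corollary \ref{CorolAllLinearKraftAreFirstKind} and Proposition \ref{AllCircularKraftAreSecondKind} supply this only for connected $\Gamma'$.

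\textbf{What your route adds.} Your additivity lemma bridges exactly this step for disconnected $\Gamma'$. Each $D(w)$ is a direct sum of relations over a module decomposition, hence so are its kernels, domains, stable kernels and stable domains. This is routine rather than a real obstacle. You then read off the linear multiplicities as $d_w = d^{1\mathrm{st}}_w - d^{1\mathrm{st}}_{wF} - d^{1\mathrm{st}}_{wV^{\#}}$, with $d^{1\mathrm{st}}_u:=0$ for $u\notin W_1$. You recover the circular representations from the intrinsic pair $(\mathrm{Dom}D(w)^{\infty}/\mathrm{Ker}D(w)^{\infty},\overline T)$ together with Proposition \ref{Monodromy}. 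Your version is longer but complete; the paper's is shorter but leaves the recovery step implicit.

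\textbf{Two points to state explicitly.}
\begin{enumerate}
\item $\overline T$ is well defined because the hypothesis of Theorem \ref{WeakDecomposition1} holds in finite dimension, by the first paragraph of the proof of Theorem \ref{WeakDecomposition2}.
\item On the component $\Gamma([w],t)$, the quotient attached to $[w]$ has dimension exactly $\dim U'_{v_1}$. This comes from the exhaustion count in the proof of Proposition \ref{AllCircularKraftAreSecondKind}. That count uses that the $t$ words $[w(j)]$ are pairwise distinct, which is where "no repetition" enters. The same fact pins down the vertex $v_1$, so the isomorphism $\Gamma\simeq\Gamma'$ matches the vertices at which you compare monodromies.
\end{enumerate}
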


\begin{proof}
    Let $M = M_1 \oplus M_2$ as in Theorem \ref{TheoremDecompositionIntoM1M2}. Both $M_1$ and $M_2$ decompose as the direct sum of modules associated to connected Kraft quivers which are linear or circular respectively. We build $\Gamma$ by taking the disjoint union of all these Kraft quivers, together with the induced $(\sigma,\tau)$-linear representation $(U,\rho)$. \\
    Assume that $M \simeq M'$ are two isomorphic Gelfand-Ponomarev modules. The choice of an isomorphism $\varphi$ transforms the stabilized sequence $\mathcal F_M$ into $\mathcal F_{M'}$, thus preserves the sets $W_1$ and $W_2$ of words of the first and second kinds. By construction, for $i = 1,2$ the modules $M_i$ and $M_i'$ for $M$ and $M'$ can be constructed from the same Kraft quivers, and the isomorphism $\varphi$ defines an isomorphism between the associated $(\sigma,\tau)$-linear representations. This implies that the data $(\Gamma,U,\rho)$ is well-determined by $M$ up to isomorphism.
\end{proof}

\begin{corollary}
    Assume that $K$ is an algebraically closed field of positive characteristic $p$, that $\sigma$ is given by $\sigma(x) = x^p$ and that $\tau = \sigma^{-1}$. Let $M$ be a twisted Gelfand-Ponomarev module. There exists 
    \begin{itemize}
        \item a Kraft quiver $\Gamma$ whose connected components are pairwise non-isomorphic, and all of whose circular connected components have no repetitions, and
        \item a family of positive integers $(d_i)_{1\leq i \leq r}$,
    \end{itemize}
    such that $\Gamma = \Gamma_1 \sqcup \cdots \sqcup \Gamma_r$ has $r$ connected components, and 
    \begin{equation*}
        M \simeq \bigoplus_{i=1}^r M(\Gamma_i,\mathbf 1_{\Gamma_i})^{d_i},
    \end{equation*}
     as $K[F,V]_{\sigma,\tau}$-modules. The Kraft quiver $\Gamma$ is determined by $M$ up to isomorphism, and the family $(d_i)_{1\leq i\leq r}$ is unique.
\end{corollary}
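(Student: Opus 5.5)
The plan is to derive the corollary directly from Theorem \ref{AllTwistedGFModulesComeFromKraftQuiver}, and then to trivialize the representations on each connected component.

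\textbf{Existence.} Apply Theorem \ref{AllTwistedGFModulesComeFromKraftQuiver} to $M$. This gives a Kraft quiver $\Gamma=\Gamma_1\sqcup\cdots\sqcup\Gamma_r$ whose components are pairwise non-isomorphic and whose circular components have no repetitions. It also gives a strict $(\sigma,\tau)$-linear representation $(U,\rho)$ with $M\simeq M(\Gamma,U,\rho)$. The decomposition along connected components yields
\begin{equation*}
M\simeq\bigoplus_{i=1}^r M(\Gamma_i,U^i,\rho^i),
\end{equation*}
where $(U^i,\rho^i)$ is the restriction of $(U,\rho)$ to $\Gamma_i$. Since $(U^i,\rho^i)$ is strict and $\Gamma_i$ is connected, all the $U^i_v$ have a common dimension $d_i$. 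We may discard components with $d_i=0$, so that each $d_i$ is positive. There are then two cases.
\begin{itemize}
\item If $\Gamma_i$ is linear, Proposition \ref{ModuleAttachedToLinearKraftQuiver} gives $(U^i,\rho^i)\simeq\mathbf 1_{\Gamma_i}^{\oplus d_i}$. Hence $M(\Gamma_i,U^i,\rho^i)\simeq M(\Gamma_i,\mathbf 1_{\Gamma_i})^{d_i}$.
\item If $\Gamma_i$ is circular, Corollary \ref{ModulesAttachedToCircularKraftQuiverAlgCl} gives the same conclusion.
\end{itemize}
For the circular case, the underlying mechanism is as follows. The monodromy at a vertex is $\Xi$-linear, where $\Xi$ is a product of $\sigma$'s and $\tau^{-1}=\sigma$'s, so $\Xi=\sigma^k$ with $k$ the length of the cycle, and $k\ge1$. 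Proposition \ref{LangSteinbergTheorem} then says that all such monodromies of a given dimension are conjugate. Proposition \ref{Monodromy} converts this conjugacy into an isomorphism of representations.

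\textbf{Uniqueness.} Suppose $M$ also has a decomposition $\bigoplus_j M(\Gamma'_j,\mathbf 1_{\Gamma'_j})^{d'_j}$ of the stated kind. Set $\Gamma'=\bigsqcup\Gamma'_j$ and $(U',\rho')=\bigoplus_j\mathbf 1_{\Gamma'_j}^{\oplus d'_j}$. This is a strict representation meeting the hypotheses of Theorem \ref{AllTwistedGFModulesComeFromKraftQuiver}. The uniqueness part of that theorem gives an isomorphism $\Gamma\simeq\Gamma'$ carrying $(U,\rho)$ to $(U',\rho')$. Because the components are pairwise non-isomorphic, this isomorphism matches the components bijectively, sending $\Gamma_i$ to some $\Gamma'_{j(i)}$. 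The isomorphism of representations preserves the dimensions of the vertex spaces, so $d_i=d'_{j(i)}$.

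\textbf{Main point to watch.} The step needing the most care is confirming that $\Xi$ really is a positive power of $\sigma$, so that Proposition \ref{LangSteinbergTheorem} applies. This holds because $\tau^{-1}=\sigma$ and the cycle is non-empty, which is exactly the hypothesis of Corollary \ref{ModulesAttachedToCircularKraftQuiverAlgCl}. Beyond that, the uniqueness statement is understood up to reindexing, with each $d_i$ attached to its component $\Gamma_i$.
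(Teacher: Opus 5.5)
Your proposal is correct and follows essentially the same route as the paper. The paper derives the corollary directly from Theorem \ref{AllTwistedGFModulesComeFromKraftQuiver}, using Proposition \ref{ModuleAttachedToLinearKraftQuiver} on linear components, Corollary \ref{ModulesAttachedToCircularKraftQuiverAlgCl} on circular ones, and the uniqueness part of the theorem for the uniqueness of $\Gamma$ and the $d_i$; your write-up simply spells out these steps in more detail.
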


\begin{proof}
    This follows from Theorem \ref{AllTwistedGFModulesComeFromKraftQuiver}, combined with Proposition \ref{ModuleAttachedToLinearKraftQuiver} and Corollary \ref{ModulesAttachedToCircularKraftQuiverAlgCl}.
\end{proof}

We can classify indecomposable twisted Gelfand-Ponomarev modules as well. 

\begin{lemma}\label{LemmaIndecomposableModules}
    Let $\Gamma$ be a connected Kraft quiver, which has no repetition if it is circular. Let $(U,\rho)$ be a strict $(\sigma,\tau)$-linear representation of $\Gamma$. Then $M(\Gamma,U,\rho)$ is indecomposable if and only if $(U,\rho)$ is indecomposable.
\end{lemma}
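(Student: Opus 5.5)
The forward direction is formal; the converse rests on the uniqueness in Theorem \ref{AllTwistedGFModulesComeFromKraftQuiver}. Throughout, we use that $M(\Gamma,-)$ takes direct sums of $(\sigma,\tau)$-linear representations to direct sums of modules.

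\emph{($M$ indecomposable $\Rightarrow$ $(U,\rho)$ indecomposable).} Suppose $(U,\rho)\simeq (U',\rho')\oplus(U'',\rho'')$ with both summands nonzero. Direct summands of a strict representation are strict, since each $\rho_E=\rho'_E\oplus\rho''_E$ is an isomorphism exactly when both components are. Because $\Gamma$ is connected and the summands are strict, every space $U'_v$ and $U''_v$ is nonzero. Hence $M(\Gamma,U,\rho)\simeq M(\Gamma,U',\rho')\oplus M(\Gamma,U'',\rho'')$ is a decomposition into two nonzero modules, which proves this direction by contraposition.

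\emph{($(U,\rho)$ indecomposable $\Rightarrow$ $M$ indecomposable).} Suppose $M:=M(\Gamma,U,\rho)=N'\oplus N''$ with $N',N''$ nonzero submodules. Apply Theorem \ref{AllTwistedGFModulesComeFromKraftQuiver} to each summand to get $N'\simeq M(\Gamma',U',\rho')$ and $N''\simeq M(\Gamma'',U'',\rho'')$, where each quiver has pairwise non-isomorphic connected components and circular components without repetitions. Then $M$ is also isomorphic to the module of $\Gamma'\sqcup\Gamma''$ with the representation $(U',\rho')\sqcup(U'',\rho'')$.

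This quiver may fail the hypotheses of the uniqueness statement only because a component of $\Gamma'$ is isomorphic to a component of $\Gamma''$. Whenever this happens, identify the two components and replace the two representations on them by their direct sum. The resulting module is unchanged, and the new datum $(\Gamma^\ast,U^\ast,\rho^\ast)$ satisfies the hypotheses.

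The datum $(\Gamma,U,\rho)$ also satisfies them: $\Gamma$ is connected, so there is a single component, and by assumption it has no repetition if circular. Uniqueness in Theorem \ref{AllTwistedGFModulesComeFromKraftQuiver} then gives $\Gamma^\ast\simeq\Gamma$. Hence $\Gamma^\ast$ is connected, which forces every component of $\Gamma'$ and of $\Gamma''$ to be isomorphic to $\Gamma$. Since $N'$ and $N''$ are nonzero, each of $\Gamma'$ and $\Gamma''$ is nonempty, so in fact $\Gamma'\simeq\Gamma''\simeq\Gamma$. It follows that $(U^\ast,\rho^\ast)=(U',\rho')\oplus(U'',\rho'')$ transported to $\Gamma$. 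Uniqueness also gives $(U,\rho)\simeq(U',\rho')\oplus(U'',\rho'')$ with both summands nonzero, contradicting indecomposability of $(U,\rho)$.

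The main obstacle is this reverse direction. One must check that the uniqueness clause really applies after the merging step, and that the isomorphism of quivers it provides can be taken as an automorphism of $\Gamma$ compatible with the direct-sum structure. Any automorphism of $\Gamma$ transports a direct-sum decomposition of a representation to another one, so this compatibility is harmless.
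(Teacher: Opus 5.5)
Your proposal is correct and follows the paper's proof: the forward direction uses that $M(\Gamma,-)$ preserves direct sums, and the converse applies Theorem \ref{AllTwistedGFModulesComeFromKraftQuiver} to both summands, merges isomorphic components, and invokes uniqueness to force $\Gamma'\simeq\Gamma''\simeq\Gamma$ and $(U,\rho)\simeq(U',\rho')\oplus(U'',\rho'')$. Your extra remarks (that $N',N''\neq 0$ makes $\Gamma',\Gamma''$ nonempty, and that summands of a strict representation are strict) only spell out points the paper leaves implicit.
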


\begin{proof}
    If $(U,\rho)$ decomposes in a non-trivial sum of two $(\sigma,\tau)$-linear representations, then it is clear that $M(\Gamma,U,\rho)$ decomposes as a non-trivial sum as well. Thus, we may now assume that $(U,\rho)$ is indecomposable. Towards a contradiction, assume that $M := M(\Gamma,U,\rho)$ decomposes into a non-trivial direct sum $M = M' \oplus M''$. By Theorem \ref{AllTwistedGFModulesComeFromKraftQuiver}, there exists Kraft quivers $\Gamma'$ and $\Gamma''$, and $(\sigma,\tau)$-linear representations $(U',\rho')$ and $(U'',\rho'')$ such that $M' \simeq M(\Gamma',U',\rho')$ and $M'' \simeq M(\Gamma'',U'',\rho'')$. Define a Kraft quiver $\Gamma^{\mathrm{new}}$ with a $(\sigma,\tau)$-linear representation $(U^{\mathrm{new}},\rho^{\mathrm{new}})$ as follows. For each pair $\Gamma'_i \simeq \Gamma''_j$ of isomorphic connected components of $\Gamma'$ and of $\Gamma''$, $\Gamma^{\mathrm{new}}$ has a single connected component $\Gamma^{\mathrm{new}}_k$ isomorphic to $\Gamma'_i \simeq \Gamma''_j$, bearing a $(\sigma,\tau)$-linear representation which is the direct sum of $( U',\rho')$ and $(U'',\rho'')$ restricted to $\Gamma'_i$ and $\Gamma''_j$ respectively. Else, for each connected component of $\Gamma'$ (resp.~$\Gamma''$) which is not isomorphic to any connected component of $\Gamma''$ (resp.~$\Gamma'$), $\Gamma^{\mathrm{new}}$ has a single connected component isomorphic to it, bearing a $(\sigma,\tau)$-linear representation which is isomorphic to the restriction of $(U',\rho')$ (resp.~of $(U'',\rho'')$).\\
    The Kraft quiver $\Gamma^{\mathrm{new}}$ together with its $(\sigma,\tau)$-linear representation specified above meet the conditions of Theorem \ref{AllTwistedGFModulesComeFromKraftQuiver}. By unicity, $\Gamma$ must be isomorphic to $\Gamma^{\mathrm{new}}$. Since $\Gamma$ is connected, both $\Gamma'$ and $\Gamma''$ must be connected and isomorphic to each other. But then, $(U,\rho)$ must be isomorphic to $(U^{\mathrm{new}},\rho^{\mathrm{new}})$ which is the direct sum of $(U',\rho')$ and $(U'',\rho'')$. This is a contradiction.
\end{proof}

\begin{theorem}\label{TheoremClassificationIndecomposableGFModules}
    Let $M$ be an indecomposable twisted Gelfand-Ponomarev module. Then $M$ is either of the first kind or of the second kind. 
    \begin{itemize}
        \item If $M$ is of the first kind, there exists a connected linear Kraft quiver $\Gamma$, unique up to isomorphism, such that $M \simeq M(\Gamma,\mathbf 1_{\Gamma})$.
        \item If $M$ is of the second kind, there exists a connected circular Kraft quiver with no repetitions and a strict indecomposable $(\sigma,\tau)$-linear representation $(U,\rho)$ such that $M \simeq M(\Gamma,U,\rho)$. The data $(\Gamma,U,\rho)$ is unique up to isomorphism.
    \end{itemize}
\end{theorem}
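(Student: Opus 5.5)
The plan is to combine the decomposition of Theorem \ref{TheoremDecompositionIntoM1M2} with the structure results on each kind. First, since $M = M_1 \oplus M_2$ with $M_1$ of the first kind and $M_2$ of the second kind, indecomposability forces $M_1 = 0$ or $M_2 = 0$. Hence $M$ is of the second kind or of the first kind respectively. (When $M\neq 0$ these are exclusive, since a nonzero module has at least one elementary interval.)

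Next, apply Theorem \ref{AllTwistedGFModulesComeFromKraftQuiver} to write $M \simeq M(\Gamma,U,\rho)$, where $\Gamma$ has pairwise non-isomorphic components and circular components have no repetitions. The module attached to $\Gamma$ splits as the direct sum of the modules attached to its connected components. Indecomposability therefore forces $\Gamma$ to be connected, because every component carries a strict representation with nonzero spaces: the spaces $U_v$ are nonzero, or the component could be discarded. Moreover, $(U,\rho)$ must be indecomposable, since a decomposition of the representation yields a decomposition of the module. Conversely, by Lemma \ref{LemmaIndecomposableModules}, $M(\Gamma,U,\rho)$ is indecomposable exactly when $(U,\rho)$ is. By Corollary \ref{CorolAllLinearKraftAreFirstKind} and Proposition \ref{AllCircularKraftAreSecondKind}, $\Gamma$ is linear precisely when $M$ is of the first kind, and circular without repetition precisely when $M$ is of the second kind.

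In the first-kind case, Proposition \ref{ModuleAttachedToLinearKraftQuiver} gives $(U,\rho)\simeq \mathbf 1_\Gamma^{\oplus d}$. Indecomposability forces $d=1$, so $M\simeq M(\Gamma,\mathbf 1_\Gamma)$. Uniqueness of $\Gamma$ follows from the uniqueness part of Theorem \ref{AllTwistedGFModulesComeFromKraftQuiver}, since $(\Gamma,\mathbf 1_\Gamma)$ satisfies its hypotheses. Alternatively, by the remark after Corollary \ref{CorolAllLinearKraftAreFirstKind}, $\Gamma \simeq \Gamma(M,1\mathrm{st})$ is recovered from $W_1$. In the second-kind case, the existence of $(\Gamma,U,\rho)$ with $\Gamma$ connected circular without repetitions and $(U,\rho)$ strict and indecomposable follows from the first step. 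Uniqueness of the datum up to isomorphism is again the uniqueness clause of Theorem \ref{AllTwistedGFModulesComeFromKraftQuiver}.

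The only delicate point is the reduction to connected $\Gamma$. One must note that the uniqueness and existence in Theorem \ref{AllTwistedGFModulesComeFromKraftQuiver} produce no components with zero representation spaces, or else discard them harmlessly. Everything else is bookkeeping with the cited results.
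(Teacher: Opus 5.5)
Your proposal is correct and takes essentially the same route as the paper, which deduces the theorem in one line from Theorem \ref{AllTwistedGFModulesComeFromKraftQuiver} and Lemma \ref{LemmaIndecomposableModules}. You simply make the bookkeeping explicit: connectedness of $\Gamma$, matching the kind of $M$ via Corollary \ref{CorolAllLinearKraftAreFirstKind} and Proposition \ref{AllCircularKraftAreSecondKind}, forcing $d=1$ via Proposition \ref{ModuleAttachedToLinearKraftQuiver}, and noting the implicit nonvanishing of the spaces $U_v$ that the uniqueness clause requires.
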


\begin{proof}
    This follows easily from Theorem \ref{AllTwistedGFModulesComeFromKraftQuiver} and Lemma \ref{LemmaIndecomposableModules}.
\end{proof}

\section{Application} \label{Section5}

As an application of Theorem \ref{AllTwistedGFModulesComeFromKraftQuiver} on the classification of twisted Gelfand-Ponomarev modules, we give a new proof of Theorem 6.1 of \cite{kottwitzrapoport:2003}. First, let us recall a few notions from \cite{aryapoor:2024}. Given a field $K$ and an automorphism $\sigma \in \mathrm{Aut}(K)$, we denote by $K[T;\sigma]$ the ring of \textit{skew polynomials} over the pair $(K,\sigma)$. In other words, $K[T;\sigma]$ is the $K$-algebra generated by an indeterminate $T$ which satisfies 
\begin{equation*}
    \forall x \in K, \quad Tx = \sigma(x)T.
\end{equation*}
Given $x \in K$, there is a unique $K$-linear form $\mathrm{eval}_x:K[T;\sigma] \to K$ which satisfies 
\begin{equation*}
    \mathrm{eval}_x(T^n) := \begin{cases}
        1 & \text{if } n =0;\\
        x\sigma(x)\cdots \sigma^{n-1}(x) & \text{if } n>0.
    \end{cases}
\end{equation*}
If $P(T) \in K[T;\sigma]$, we write $P(x) := \mathrm{eval}_x(P(T))$. We say that $x \in K$ is a \textit{root} of $P(T)$ if $P(x) = 0$. 

\begin{definition}[\cite{aryapoor:2024} Section 4.1]
    The pair $(K,\sigma)$ is said to be \textit{$1$-algebraically closed} if every non-constant skew polynomial $P(T) \in K[T;\sigma]$ has a root in $K$.
\end{definition}

For instance, if $K$ is algebraically closed then $(K,\mathrm{id})$ is $1$-algebraically closed. If $K$ is algebraically closed of characteristic $p>0$, and $\sigma$ is a power (positive or negative) of the Frobenius morphism $x \mapsto x^p$, then $(K,\sigma)$ is $1$-algebraically closed by \cite{aryapoor:2024} Section 4.3. Besides, Theorem 4.1 of \cite{aryapoor:2024} states that for any pair $(K,\sigma)$, there exists a $1$-algebraically closed pair $(L,\tau)$ such that $K$ can be embedded in $L$ in such a way that $\tau$ induces $\sigma$ upon restriction. This provides many more abstract examples of such pairs. 

\begin{remark}
    As pointed out in \cite{aryapoor:2024} Proposition 4.3, if $(K,\sigma)$ is $1$-algebraically closed and $\sigma \not = \mathrm{id}$, then $\sigma$ has infinite order in $\mathrm{Aut}(K)$.
\end{remark}

\begin{proposition}\label{Equivalence1-algebraicallyclosed}
    Let $K$ be a field and let $\sigma \in \mathrm{Aut}(K)$. The following statements are equivalent:
    \begin{enumerate}
        \item the pair $(K,\sigma)$ is $1$-algebraically closed,\\
        \item any $\sigma$-linear automorphism $f$ of any non-zero $K$-vector space $V$ fixes a line, that is we have $f(\ell) = \ell$ for some $1$-dimensional subspace $\ell \subseteq V$.
    \end{enumerate}
\end{proposition}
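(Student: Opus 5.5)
Both implications reduce to the dictionary between $\sigma$-linear endomorphisms of finite dimensional spaces and modules over the skew polynomial ring $K[T;\sigma]$. For a vector $v\neq 0$ and $x\in K$ (with $T$ acting by $f$), we compute
\begin{equation*}
f^n(v)=\Big(\prod_{k=0}^{n-1}\sigma^k(x)\Big)v=\mathrm{eval}_x(T^n)\,v \qquad \text{whenever } f(v)=xv .
\end{equation*}
This is a direct induction using $f(\lambda v)=\sigma(\lambda)f(v)$. Consequently $P(f)(v)=P(x)v$ for every $P(T)\in K[T;\sigma]$ acting on the left. A fixed line $\ell=Kv$ with $f(v)=xv$ is exactly an eigenvector in this sense. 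I will treat $V$ as finite dimensional, since that is the setting used later; for the infinite dimensional case I would first restrict to the $f$-stable subspace spanned by the $f^n(v)$, which is finite dimensional if some nonzero $P$ kills $v$. This point needs care, and I will note the caveat.

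For (2) $\Rightarrow$ (1), take a non-constant $P(T)$. Right-multiplying by $T^{-k}$ is not available, so I first reduce to nonzero constant term. If $P=QT^k$ with $Q(0)\neq 0$, then $0$ is a root of $P$ when $k\ge 1$, because $\mathrm{eval}_0(T^n)=0$ for $n>0$. So assume $P$ is monic (scaling by a constant on the left does not change roots) with nonzero constant term and degree $d\ge 1$. Consider the cyclic module $V:=K[T;\sigma]/K[T;\sigma]P$. It has dimension $d$ over $K$, and left multiplication by $T$ induces a $\sigma$-linear endomorphism $f$. Because the constant term is nonzero, $f$ is a companion-type map that is bijective, so it is an automorphism. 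By (2), $f$ has an eigenvector $v=\bar Q$ with $f(v)=xv$. Then $(T-x)Q\in K[T;\sigma]P$, and comparing degrees gives $(T-x)Q=cP$ with $c\in K^\times$. Hence $P=(c^{-1}T-c^{-1}x)Q$ has a right factor of the form $T-y$. I still have to check the left/right conventions and the sign: the root criterion should read $P(y)=0\iff P\in K[T;\sigma](T-y)$ under this evaluation convention, and if the linear factor lands on the wrong side I will use the opposite module $K[T;\sigma]/PK[T;\sigma]$ or replace $\sigma$ by $\sigma^{-1}$. Justifying this remainder theorem, namely that $P(T)-P(y)\in K[T;\sigma](T-y)$, is the key lemma. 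I will prove it on monomials: $T^n-\mathrm{eval}_y(T^n)$ should be a left multiple of $T-y$, by induction using $T\cdot\mathrm{eval}_y(T^{n-1})=\sigma(\mathrm{eval}_y(T^{n-1}))T$. This is the step where I expect the main friction.

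For (1) $\Rightarrow$ (2), take $f$ on $V\neq 0$ and a nonzero $v$. The vectors $v,f(v),f^2(v),\dots$ are linearly dependent, so some monic $P$ of minimal degree $d\ge1$ satisfies $P(f)v=0$. By (1) and the factor theorem, $P$ has a root $y$, so $P=Q(T-y)$ with $\deg Q=d-1$. Set $u:=(f-y)v$, meaning $u=f(v)-yv$. Then $Q(f)u=0$. If $u=0$, then $Kv$ is fixed. Otherwise $u$ is annihilated by a polynomial of smaller degree, and induction on $d$ produces a fixed line inside the span of the $f^n(v)$, which is $f$-stable. The base case $d=1$ is immediate. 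For this to work the evaluation convention must match right factors, and I will settle that consistently in the key lemma above.
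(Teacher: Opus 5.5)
Your direction (1)$\Rightarrow$(2) is sound. With $T$ acting through $f$, your monomial induction gives $P-P(y)\in K[T;\sigma](T-y)$; this uses commutativity of $K$, so that $\sigma(\mathrm{eval}_y(T^{n-1}))\,y=\mathrm{eval}_y(T^n)$. Hence $P=Q(T-y)$, $Q$ kills $u=f(v)-yv$, and the induction on the degree of the annihilating polynomial closes.

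The gap is in the key step of (2)$\Rightarrow$(1). From an eigenvector $\bar Q$ of left multiplication by $T$ on $K[T;\sigma]/K[T;\sigma]P$ you correctly get $(T-x)Q=cP$, i.e.\ $P=c^{-1}(T-x)Q$. But here $T-x$ is a \emph{left} factor of $P$. Your own remainder theorem says $P(y)=0$ if and only if $P\in K[T;\sigma](T-y)$, i.e.\ $T-y$ is a \emph{right} factor. So the sentence ``hence $P$ has a right factor of the form $T-y$'' is false as written, and no root follows from it directly. Fixed lines in the cyclic module are one-dimensional \emph{submodules} and produce left linear factors; roots correspond to one-dimensional \emph{quotients}.

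Your suggested remedies point the right way but are incomplete. On $K[T;\sigma]/PK[T;\sigma]$ one has $(Aa)T=(AT)\sigma^{-1}(a)$, so right multiplication by $T$ is $\sigma^{-1}$-linear. Hypothesis (2) can only be applied to its inverse, which is $\sigma$-linear and has the same fixed lines. The same observation is what makes ``replace $\sigma$ by $\sigma^{-1}$'' legitimate.

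The cheapest repair stays inside your setup: prove that every non-constant $P$ has a root by induction on $d=\deg P$. For $d=1$ this is clear. For $d\ge 2$ with nonzero constant term, $P=c^{-1}(T-x)Q$ with $\deg Q=d-1\ge 1$. By induction $Q$ has a root $y$, so $Q\in K[T;\sigma](T-y)$, hence $P\in K[T;\sigma](T-y)$ and $P(y)=0$. Alternatively, apply (2) to the $\sigma$-linear automorphism $\lambda\mapsto\sigma\circ\lambda\circ f^{-1}$ of $V^{*}$. A fixed line there is an $f$-stable hyperplane $H$, and the image of $\bar 1$ in $V/H$ is killed by some $T-y$, which forces $P\in K[T;\sigma](T-y)$.

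With this fix your argument is correct and genuinely different from the paper's. The paper restricts to a cyclic subspace, writes $f$ in companion form, and solves $f(v)=xv$ coordinate by coordinate; this shows the eigen-equation is equivalent to $x^{-1}$ being a root of an explicit skew polynomial $P_f$. For the converse it realizes every $P$ with $P(0)\neq 0$ as some $P_f$. Your route replaces that computation by the remainder theorem and the $K[T;\sigma]$-module dictionary, which is more conceptual; the paper's is fully explicit and needs no ring theory.

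Finally, your caveat on infinite-dimensional $V$ is justified. The shift $\sum a_ie_i\mapsto\sum\sigma(a_i)e_{i+1}$ on $K^{(\mathbb Z)}$ is a $\sigma$-linear automorphism with no fixed line, so (2) must be read for finite-dimensional $V$, as the paper tacitly does.
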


\begin{proof}
    We prove $(1) \implies (2)$. Let $v \in V$ be any non-zero vector, and let $W \subseteq V$ be the subspace generated by the vectors $f^n(v)$ for $n \geq 0$, where $f^n := f \circ \cdots \circ f$. Clearly $f(W) \subseteq W$, and since $f$ is injective it determines an automorphism on $W$ upon restriction. Upon replacing $V$ by $W$, we may assume that $V$ has a basis $e_1,\ldots , e_n$ such that $f(e_i) = e_{i+1}$ for all $1 \leq i \leq n-1$, and 
    \begin{equation*}
        f(e_n) = \sum_{i=1}^n a_ie_i,
    \end{equation*}
    for some $a_1, \ldots , a_n \in K$. The surjectivity of $f$ forces $a_1 \not = 0$. We consider the equation 
    \begin{equation} \label{eq:eigenvalue}
        f(v) = xv,
    \end{equation}
    for some $v = \sum_{i=1}^n v_ie_i \in V$ with $v_1 = 1$, and $x \in K^{\times}$. It is equivalent to the system 
    \begin{equation} \label{eq:systemeigenvalue}
        \begin{cases}
            \sigma(v_n)a_1 = x, & \\
            \sigma(v_n)a_i + \sigma(v_{i-1}) = xv_i & \text{for all } 2 \leq i \leq n.
        \end{cases}
    \end{equation}
    By substitution, we get $xv_i = \frac{a_i}{a_1}x + \sigma(v_{i-1})$ for all $2 \leq i \leq n$. By induction, we deduce that 
    \begin{equation} \label{eq:conditionv_n}
        v_n = \frac{a_n}{a_1} + \sigma\left(\frac{a_{n-1}}{a_1}\right)\frac{1}{x} + \cdots + \sigma^{n-2}\left(\frac{a_2}{a_1}\right)\frac{1}{x\sigma(x)\cdots \sigma^{n-3}(x)}+\frac{1}{x\sigma(x)\cdots \sigma^{n-2}(x)}.
    \end{equation}
    Applying $\sigma$ to \eqref{eq:conditionv_n} and substituing $\sigma(v_n)$ with the first line of \eqref{eq:systemeigenvalue}, we deduce that 
    \begin{equation*}
        - \frac{1}{a_1} + \sigma\left(\frac{a_n}{a_1}\right)\frac{1}{x} + \sigma^2\left(\frac{a_{n-1}}{a_1}\right)\frac{1}{x\sigma(x)} + \cdots + \sigma^{n-1}\left(\frac{a_2}{a_1}\right)\frac{1}{x\sigma(x)\cdots \sigma^{n-2}(x)}+\frac{1}{x\sigma(x)\cdots \sigma^{n-1}(x)} = 0.
    \end{equation*}
    In other words, equation \eqref{eq:eigenvalue} is equivalent to $x^{-1}$ being a root of the skew polynomial $P_f(T) \in K[T;\sigma]$ given by 
    \begin{equation*}
        P_f(T) := -\frac{1}{a_1} + \sigma\left(\frac{a_n}{a_1}\right)T + \sigma^2\left(\frac{a_{n-1}}{a_1}\right)T^2 + \cdots + \sigma^{n-1}\left(\frac{a_2}{a_1}\right)T^{n-1}+T^n,
    \end{equation*}
    and the $v_i$'s are uniquely determined by $x$ and the system \eqref{eq:systemeigenvalue}. Since $(K,\sigma)$ is $1$-algebraically closed, $P_f(T)$ has a root in $K$, which necessarily is non-zero, and any $v$ solution of \eqref{eq:eigenvalue} determines a line which is fixed by $f$. \\
    
    We prove $(2) \implies (1)$. Fix a non-constant skew polynomial $P(T) \in K[T;\sigma]$. Up to dividing by the leading coefficient, we may assume that
    \begin{equation*}
        P(T) = b_1 + b_2T + \cdots + b_{n}T^{n-1} + T^n,
    \end{equation*}
    for some $b_1,\ldots ,b_n \in K$. If $b_1 = 0$ then $0$ is clearly a root of $P(T)$, thus we may assume that $b_1 \not = 0$. The system of equations 
    \begin{equation*}
        \begin{cases}
            b_1 = -\frac{1}{a_1}, & \\
            b_i = \sigma^{i-1}\left(\frac{a_{n+2-i}}{a_1}\right) & \text{for all } 2 \leq i \leq n,
        \end{cases}
    \end{equation*}
    has a unique solution $a_1, \ldots , a_n \in K$ with $a_1 \not = 0$. Consider the space $V := K^n$ with canonical basis $e_1,\ldots ,e_n$, and let $f: V \to V$ be the $\sigma$-linear automorphism determined by $f(e_i) = e_{i+1}$ for $1 \leq i \leq n-1$, and $f(e_n) = \sum_{i=1}^n a_ie_i$. By assumption, there exists some non-zero vector $v = \sum_{i=1}^n v_ie_i$ and some $x \in K^{\times}$ such that $f(v) = xv$. Up to rescaling, we may assume that $v_1=1$. Then the arguments of the previous implication show that $x$ is a root of $P(T)$.
\end{proof}

We consider the following situation. Let $K$ be a field and let $f \geq 1$. For all $i \in \mathbb Z/f\mathbb Z$, let $M_i$ be a non-zero finite dimensional $K$-vector space. Assume that the $M_i$'s are equidimensional, that is, we have $\dim M_i = m$ for all $i$ and some $m > 0$. Let $(\sigma_i,\tau_i)_{i \in \mathbb Z/f\mathbb Z}$ be a family of automorphisms of $K$, and let $(\varphi_i,\psi_i)_{i \in \mathbb Z/f\mathbb Z}$ be a collection of semilinear morphisms such that 
\begin{align*}
    \varphi_i:M_{i-1} \to M_{i}, & & \psi_i:M_i \to M_{i-1},
\end{align*}
with $\varphi_i$ being $\sigma_i$-linear and $\psi_i$ being $\tau_i$-linear. This situation is summed up in Figure \ref{Figure6}. Eventually, we impose the so-called ``Orpheus condition''\footnote{This name is coined in \cite{kottwitzrapoport:2003} with the following explanation: ``Whenever you turn back while traveling through this diagram you are killed'', p.~176.}, that is, 
\begin{equation*}
    \forall i \in \mathbb Z/f\mathbb Z, \quad \varphi_i \circ \psi_i = 0, \quad \psi_i\circ\varphi_i = 0.
\end{equation*}

\begin{figure} 
\centering
\begin{tikzcd}
    & M_0 \arrow[r,shift left,"\varphi_1"] \arrow[dl,shift left,"\psi_0"] & M_1 \arrow[l,shift left,"\psi_1"] \arrow[dr,shift left,"\varphi_2"] & \\
    M_{f-1} \arrow[ur,shift left,"\varphi_0"] \arrow[d,shift left,"\psi_{f-1}"] & & & M_2 \arrow[ul,shift left, "\psi_2"] \arrow[d,shift left,"\varphi_3"] \\
    \vdots \arrow[u,shift left,"\varphi_{f-1}"] & & & \vdots \arrow[u,shift left,"\psi_3"]
\end{tikzcd}
\caption{The configuration of Theorem \ref{MaintheoremC}.}
\label{Figure6}
\end{figure}

% The statement is the following.

\begin{theorem}\label{MaintheoremC}
%[\cite{kottwitzrapoport:2003} Theorem 6.1] \label{TheoremKottwitzRapoport}
Let $\mathcal M := (M_i,\sigma_i,\tau_i,\varphi_i,\psi_i)_{i\in \mathbb Z/f\mathbb Z}$ be an algebraic datum underlying Figure \ref{Figure6} as above.  Assume that the family $(\sigma_i,\tau_i)_{i \in \mathbb Z/f\mathbb Z}$ of automorphisms of $K$  satisfies both of the following conditions:
\begin{enumerate}[label=\Alph*]
    \item \label{Condition1-algebraicallyclosedA} the pair $(K,\Xi)$ is $1$-algebraically closed for all automorphisms $\Xi \in \mathrm{Aut}(K)$ of the form $\Xi = \sigma_{a}\xi_{a-1}\cdots\xi_{a+1}$ for some $a \in \mathbb Z/f\mathbb Z$, and where $\xi_{a-i} = \sigma_{a-i}$ or $\tau_{a-i}^{-1}$ for all $1 \leq i \leq f-1$;
    \item \label{Condition1-algebraicallyclosedB} the pair $(K,\Xi)$ is $1$-algebraically closed for all automorphisms $\Xi \in \mathrm{Aut}(K)$ of the form $\Xi = \tau_{a}\xi_{a+1} \cdots \xi_{a-1}$ for some $a \in \mathbb Z/f\mathbb Z$, and where $\xi_{a+i} = \tau_{a+i}$ or $\sigma_{a+i}^{-1}$ for all $1 \leq j \leq f-1$,
\end{enumerate}
     Then there exists a collection of lines $\ell_i \subseteq M_i$ such that 
    \begin{equation*}
        \forall i \in \mathbb Z/f\mathbb Z, \quad \varphi_i(\ell_{i-1}) \subseteq \ell_i, \quad \psi_{i}(\ell_i) \subseteq \ell_{i-1}.
    \end{equation*}
\end{theorem}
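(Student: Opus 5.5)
The plan is to reduce the statement to the classification of Theorem \ref{AllTwistedGFModulesComeFromKraftQuiver}, and then to find the lines either inside a single connected component of the resulting Kraft quiver or, if that fails, as an eigenline of a monodromy operator.

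\textbf{Step 1: a graded twisted Gelfand--Ponomarev module.} Put $M:=\bigoplus_{i\in\mathbb Z/f\mathbb Z}M_i$, with $F:=\bigoplus_i\varphi_i$ and $V:=\bigoplus_i\psi_i$. The Orpheus condition gives $FV=VF=0$. However, $F$ is only semilinear with respect to the family $(\sigma_i)$, which varies from summand to summand, so the theorems of Sections \ref{Section3}--\ref{Section4} do not apply verbatim. I would check that they go through for this $\mathbb Z/f\mathbb Z$-graded variant, where each homogeneous map carries its own automorphism. The arguments in Section \ref{Section1} only use that $B$ is additive and that scalars are transported through some automorphism. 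So each $\mathrm{Ker}D(w)$, $\mathrm{Dom}D(w)$ and $\gamma(w)$, as well as each $S_{w(j)}$, can be chosen homogeneous (a direct sum of its intersections with the $M_i$) by running every construction degree by degree. The outcome should be a Kraft quiver $\Gamma$ in which every vertex $v$ sits over a degree $\deg v\in\mathbb Z/f\mathbb Z$, with $U_v\subseteq M_{\deg v}$. An $F$-arrow goes from degree $i-1$ to degree $i$ and a $V$-arrow from degree $i$ to degree $i-1$. In the converse graph $\Gamma^{\#}$ every arrow therefore raises the degree by exactly one. By Proposition \ref{ModuleAttachedToLinearKraftQuiver}, linear components may be taken with one-dimensional spaces.

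\textbf{Step 2: the window criterion.} Take $f$ consecutive vertices $u_1\to\cdots\to u_f$ along a component of $\Gamma^{\#}$; they lie over all $f$ degrees exactly once. Choose lines $\ell_{\deg u_k}\subseteq U_{u_k}$ transported along the internal arrows. The required inclusions hold along internal arrows. At the boundary, where degree $a-1$ meets degree $a$ for $a:=\deg u_1$, we need $\varphi_a(U_{u_f})=0$ and $\psi_a(U_{u_1})=0$. Concretely: the converse arrow leaving $u_f$ is absent or labelled $V^{\#}$, and the converse arrow entering $u_1$ is absent or labelled $F$. I call such a window \emph{good}.

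\begin{itemize}
\item On a linear component with one-dimensional spaces, a good window makes the lines automatically compatible.
\item On a circular component, with strict but higher-dimensional spaces, a good window still works: pick any line at $u_1$ and push it forward by the isomorphisms $\phi_{u_k}$.
\end{itemize}

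Equidimensionality enters here. Each component of $\Gamma^{\#}$ contributes, degree by degree, a run of consecutive vertices, and the totals must be equal in every degree. I would use this to show that some component has at least $f$ vertices, or a circular length that is a multiple of $f$, so that windows exist.

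\textbf{Step 3: no good window (expected main obstacle).} If no window on any component is good, then along every sufficiently long word of $\Gamma^{\#}$ a letter $F$ entering a window position forces the letter $f$ steps later to be $F$, and symmetrically for $V^{\#}$. I expect this to force the component to be circular and its word to be $f$-periodic. Since circular components have no repetition, the component then has exactly $f$ vertices, one in each degree. The monodromy operator of Definition \ref{DefinitionMonodromy} at a vertex of degree $a$ is then $\Xi$-linear. Starting from an $F$-arrow gives $\Xi=\sigma_a\xi_{a-1}\cdots\xi_{a+1}$ with $\xi\in\{\sigma,\tau^{-1}\}$, as in condition \ref{Condition1-algebraicallyclosedA}. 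If the relevant arrows are all $V$, one reads the cycle backwards and gets the inverse monodromy, of the form in condition \ref{Condition1-algebraicallyclosedB}. Proposition \ref{Equivalence1-algebraicallyclosed} then gives a monodromy-stable line at that vertex. Transporting it by the $\phi$'s around the cycle gives compatible lines, and stability under monodromy closes the loop.

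The delicate points are making the combinatorial dichotomy of Step 3 rigorous (including the boundary cases of linear strings that are too short) and the degree-by-degree bookkeeping in Step 1. The monodromy argument itself should be routine.
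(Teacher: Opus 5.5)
\textbf{There is a genuine gap in Steps 2--3, and it remains even if you grant yourself the graded classification of Step 1.}

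You only look for the lines inside a single connected component: either a good window, or a monodromy-stable line on a band with $f$ vertices. You then expect equidimensionality to force some component with at least $f$ vertices, or a circular one. That is false.

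\emph{First example.} Take $f=2$, $M_0=M_1=K$ and all $\varphi_i,\psi_i$ equal to $0$. The graded quiver consists of two isolated vertices, one in each degree. There is no window and no circular component, yet $(\ell_0,\ell_1)=(M_0,M_1)$ is a solution.

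\emph{Second example.} A long component does not rescue the dichotomy. Take $f=2$, $M_0=\langle e_1,e_3\rangle$, $M_1=\langle e_2,e_4\rangle$, with $\psi_1(e_2)=e_1$, $\varphi_0(e_2)=e_3$, and every other basis vector sent to $0$. The component $u_1\xrightarrow{V^{\#}}u_2\xrightarrow{F}u_3$ has exactly two windows, and both are bad. The other component is the isolated vertex $u_4$. The solution $(Ke_1,Ke_4)$ uses both components.

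So the expectation of Step 3, that the absence of good windows forces an $f$-periodic band, is wrong. The missing idea is that the lines must be allowed to jump from one component to another. A jump is permitted at any degree $a$ where the line in degree $a-1$ is killed by $\varphi_a$ and the line in degree $a$ is killed by $\psi_a$. You then need an argument that such jumps can be chained consistently once around the cycle.

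This is where equidimensionality genuinely enters in the paper:
\begin{itemize}
\item If some $\psi_i$ is injective, it is an isomorphism. The paper then identifies $M_{i-1}$ with $M_i$ and inducts on $f$; conditions A and B survive this collapse.
\item Otherwise every $\mathrm{Ker}(\psi_j)$ is non-zero. In Case 3 the paper concatenates $\varphi$-strings of vectors taken in the various $\mathrm{Ker}(\psi_j)$ until it has gone once around the cycle. All lines then lie in $\mathrm{Ker}(\psi_j)$, so every junction is automatically compatible.
\end{itemize}

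\textbf{Step 1 is also far from a routine degree-by-degree rerun.}
\begin{itemize}
\item In $\bigoplus_i M_i$ the quotients $\mathrm{Ker}D(Fw)/\mathrm{Dom}D(V^{\#}w)$ spread over several degrees; in the first example $U_{\emptyset}=M_0\oplus M_1$. So vertices must be split by degree.
\item A graded band with all $\varphi_i$ bijective and all $\psi_i=0$ has ungraded word $[F]$ but $f$ vertices. It therefore has repetitions in the sense of Definition \ref{DefRepetition}.
\item Theorem \ref{WeakDecomposition1} is proved for a single automorphism $\sigma$.
\end{itemize}

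The paper never needs a graded classification. It applies Theorem \ref{AllTwistedGFModulesComeFromKraftQuiver} only to a single $M_i$ with the composites $\Phi_i=\varphi_i\circ\cdots\circ\varphi_{i+1}$ and $\Psi_i=\psi_{i+1}\circ\cdots\circ\psi_i$. These really are $\sigma_{[i]}$- and $\tau_{[i]}$-linear. Even there it uses the classification only to find a subspace of $\mathrm{Ker}(\Psi_i)$ on which $\Phi_i$ is bijective; Proposition \ref{Equivalence1-algebraicallyclosed} then supplies a stable line.

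Your monodromy argument for a band with exactly $f$ vertices is fine on its own, but it handles only one of the cases that must be covered.
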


%Let us write $\mathcal M := (M_i,\sigma_i,\tau_i,\varphi_i,\psi_i)_{i\in \mathbb Z/f\mathbb Z}$ for the algebraic datum underlying Figure \ref{Figure6}, satisfying both conditions \ref{Condition1-algebraicallyclosedA} and \ref{Condition1-algebraicallyclosedB}, and the Orpheus condition. 

We say that an algebraic datum $\mathcal M$ \textit{has a solution} if there exists a collection of lines $(\ell_i)_{i \in \mathbb Z/f\mathbb Z}$ satisfying Theorem~\ref{MaintheoremC}. 

\begin{corollary}[\cite{kottwitzrapoport:2003} Theorem 6.1] \label{TheoremKottwitzRapoport}
Assume that $K$ is an algebraically closed field of characteristic $p$, and all the automorphisms $\sigma_i$ and $\tau_i$ are some power (positive, zero or negative) of the Frobenius morphism $x \mapsto x^p$. Then $\calM$ has a solution.   
\end{corollary}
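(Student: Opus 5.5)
The statement to prove is Corollary \ref{TheoremKottwitzRapoport}. I would deduce it directly from Theorem \ref{MaintheoremC}: it suffices to check that conditions \ref{Condition1-algebraicallyclosedA} and \ref{Condition1-algebraicallyclosedB} hold under the present hypotheses.

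Write $\mathrm{Fr}: x\mapsto x^p$. Since $K$ is perfect, $\mathrm{Fr}$ is an automorphism. By hypothesis each $\sigma_i$ and $\tau_i$ is a power $\mathrm{Fr}^{k}$ with $k\in\mathbb Z$. These powers form the abelian subgroup $\langle \mathrm{Fr}\rangle$ of $\mathrm{Aut}(K)$, which is closed under inversion and composition. Hence every automorphism $\Xi$ appearing in conditions \ref{Condition1-algebraicallyclosedA} and \ref{Condition1-algebraicallyclosedB} is of the form $\Xi=\mathrm{Fr}^{N}$ for some integer $N$, whatever the choices of the factors $\xi_{a\mp i}$.

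It remains to show that $(K,\mathrm{Fr}^N)$ is $1$-algebraically closed for every $N\in\mathbb Z$. For $N=0$, $\Xi=\mathrm{id}$, and $(K,\mathrm{id})$ is $1$-algebraically closed because $K$ is algebraically closed. In that case $\mathrm{eval}_x(T^n)=x^n$, so skew polynomials are ordinary polynomials. For $N\neq 0$, $\Xi$ is a nonzero power, positive or negative, of the Frobenius. The paper already records that $(K,\Xi)$ is then $1$-algebraically closed, citing \cite{aryapoor:2024} Section 4.3.

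If one prefers an argument internal to the paper, one can use Proposition \ref{Equivalence1-algebraicallyclosed} instead. It suffices to show that every $\mathrm{Fr}^N$-linear automorphism $g$ of a nonzero finite-dimensional space $V$ fixes a line. By the Lang–Steinberg theorem (Proposition \ref{LangSteinbergTheorem}, applied to $\sigma^{|N|}$, and to $g^{-1}$ when $N<0$), $V$ has a basis of $g$-fixed vectors. Any such vector spans a line fixed by $g$. The infinite-dimensional case is not needed, since only finite-dimensional $V$ occurs in Theorem \ref{MaintheoremC}; alternatively, it reduces to the finite-dimensional case via the cyclic subspace used in the proof of Proposition \ref{Equivalence1-algebraicallyclosed}.

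With both conditions verified, Theorem \ref{MaintheoremC} gives the desired lines $\ell_i$. There is no real obstacle. The only point needing care is the negative-exponent case, which is handled by passing to $g^{-1}$ or by quoting \cite{aryapoor:2024}.
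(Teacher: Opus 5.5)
Your proof is correct and follows the paper's own route. Every $\Xi$ occurring in conditions \ref{Condition1-algebraicallyclosedA} and \ref{Condition1-algebraicallyclosedB} is a power $\mathrm{Fr}^N$, such pairs (including $N=0$) are $1$-algebraically closed as recorded in Section \ref{Section5} via \cite{aryapoor:2024}, and Theorem \ref{MaintheoremC} then applies. Your Lang--Steinberg check through the implication $(2)\Rightarrow(1)$ of Proposition \ref{Equivalence1-algebraicallyclosed} is a valid self-contained substitute, because that implication only uses $V=K^n$.

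Do delete your aside that the infinite-dimensional case reduces to the finite-dimensional one through the cyclic subspace. That subspace can be infinite-dimensional, and then the fixed-line property genuinely fails: the $\mathrm{Fr}$-semilinear shift $e_n\mapsto e_{n+1}$ on $\bigoplus_{n\in\mathbb Z}Ke_n$ fixes no line. This does not affect your proof, since only finite-dimensional spaces are needed.
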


%Our goal is to show that any such $\mathcal M$ has a solution. 

%\begin{remark}
%    One situation where both conditions \ref{Condition1-algebraicallyclosedA} and \ref{Condition1-algebraicallyclosedB} are naturally satisfied is when $K$ is algebraically closed of positive characteristic $p$, and all the automorphisms $\sigma_i$ and $\tau_i$ are some power (positive, zero or negative) of the Frobenius morphism $x \mapsto x^p$. This situation is precisely the one considered in \cite{kottwitzrapoport:2003} for applications towards the theory $F$-isocrystals. The notion of $1$-algebraically closedness defined in \cite{aryapoor:2024} gives a natural generalization of this particular setting.
% \end{remark}

\begin{remark}\label{RemarkTheorem6.1}
    The original proof of Corollary~\ref{TheoremKottwitzRapoport} in \cite{kottwitzrapoport:2003} is indirect, as it uses algebraic geometry and a ``density argument''. About the possibility of a direct proof, the authors state that already with the case $f=2$, ``a large number of case distinctions has to be made and this approach quickly gets out of hand for a larger number of vector spaces''. In 2005, Ringel managed to give a direct proof during a lecture at Bielefeld University; see \cite{ringellecture:2005}. His proof was based on an explicit classification of the algebraic datum $\mathcal M$, which can be understood as a module over a certain semilinear string algebra, generalizing the algebra $K[F,V]_{\sigma,\tau}$ studied in the previous sections of this paper. However, Ringel's proof has apparently never been published, and no public lecture notes remain. The classification result used by Ringel can be recovered as a special case of the main theorem of \cite{clannishalgebras:2024}, applied to the quiver described by the diagram of Figure \ref{Figure6}, together with the Orpheus relation. \\
    In what follows, we give a direct and elementary proof, using only the classification of twisted Gelfand-Ponomarev modules, and which should be somewhat close in flavor to Ringel's proof.
\end{remark}

\begin{lemma}\label{LemmaCasef=1}
    Assume that $f=1$. A solution exists for any datum $\mathcal M$.
\end{lemma}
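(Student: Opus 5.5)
When $f=1$ the datum reduces to a single nonzero finite dimensional space $M:=M_0$ together with a $\sigma_0$-linear map $\varphi:=\varphi_0$ and a $\tau_0$-linear map $\psi:=\psi_0$, both endomorphisms of $M$, with $\varphi\psi=\psi\varphi=0$. So $M$ is a twisted Gelfand-Ponomarev module for $(K,\sigma,\tau)=(K,\sigma_0,\tau_0)$, with $F=\varphi$ and $V=\psi$. We need a line $\ell\subseteq M$ with $F(\ell)\subseteq\ell$ and $V(\ell)\subseteq \ell$. The conditions reduce to: $(K,\sigma_0)$ is $1$-algebraically closed (condition A with no intermediate factors), and $(K,\tau_0)$ is $1$-algebraically closed (condition B).

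\textbf{Step 1 (decomposition).} By Theorem \ref{AllTwistedGFModulesComeFromKraftQuiver}, we have $M\simeq M(\Gamma,U,\rho)$, and it suffices to find an invariant line in one nonzero summand $M(\Gamma_i,U^i,\rho^i)$. Choose one connected component $\Gamma'$ of $\Gamma$.

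\textbf{Step 2 (linear case).} If $\Gamma'$ is linear, Proposition \ref{ModuleAttachedToLinearKraftQuiver} lets us take the summand $M(\Gamma',\mathbf 1_{\Gamma'})$. Order its vertices along $\Gamma'^{\#}$ as $v_1,\dots,v_n$. The endpoint $v_n$ is not the tail of any $F$-arrow: an arrow $(v_n,v_{n+1})$ in $\Gamma'^{\#}$ would be needed, and a $V$-arrow into $v_n$ is excluded because it would correspond to an arrow $(v_n,\cdot)$ in $\Gamma'^{\#}$. Consequently $F$ kills $U_{v_n}=K$. Dually, $v_1$ is not the tail of any $V$-arrow, and $V$-arrows out of $v_n$ would appear in $\Gamma'^{\#}$ as arrows into $v_n$, which is the $V^{\#}$-arrow $(v_{n-1},v_n)$. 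This needs care, so we choose more carefully. Take a vertex $v$ that is the tail of no arrow of $\Gamma'$. Such a vertex exists because $\Gamma'$ is a finite tree-path of arrows with no directed cycle, so a sink exists. Then $F$ and $V$ both vanish on $U_v$, and $\ell:=U_v$ (one-dimensional) is a solution. For $n=1$ the single vertex works.

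\textbf{Step 3 (circular case).} Suppose $\Gamma'=\Gamma([w],t)$ is circular with a strict representation. If some vertex is a sink of $\Gamma'$, Step 2's argument applies, since any nonzero vector of $U_v$ spans a line killed by $F$ and $V$. A circular quiver has no sink only if all arrows point the same way around the cycle. Since an $F$-head cannot be a $V$-tail, all arrows are then $F$-arrows, or all are $V$-arrows; by no repetition, $\Gamma'$ is a single loop. If it is an $F$-loop, the summand is $U_v$ with $V=0$ and $F$ a $\sigma_0$-linear automorphism, and Proposition \ref{Equivalence1-algebraicallyclosed} applied to $(K,\sigma_0)$ gives an $F$-stable line. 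If it is a $V$-loop, the same proposition applied to $(K,\tau_0)$ gives a $V$-stable line, with $F=0$ there.

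\textbf{Main obstacle.} The only delicate point is the combinatorial claim that a connected Kraft quiver without a sink must be a single-letter loop. The plan is to check this directly from Definition \ref{DefinitionKraftQuiver}(3): going around the cycle, each vertex has exactly one outgoing arrow, and condition (3) forces consecutive arrows to carry the same label.
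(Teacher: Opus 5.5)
Your proof is correct, but it follows a different route from the paper's.

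\textbf{Your route.} You push $M$ through the full classification (Theorem \ref{AllTwistedGFModulesComeFromKraftQuiver}) and then argue component by component.
A linear component is a finite directed graph with no directed cycles, so it has a sink $v$, and any nonzero vector of $U_v$ is killed by both $F$ and $V$.
A circular component without a sink has every out-degree equal to $1$, since it has as many arrows as vertices. Its arrows are therefore coherently oriented around the cycle. Definition \ref{DefinitionKraftQuiver}(3) then forces all labels to agree, and the no-repetition condition leaves a single $F$- or $V$-loop. There, Proposition \ref{Equivalence1-algebraicallyclosed} for $(K,\sigma_0)$ or $(K,\tau_0)$ finishes.

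\textbf{The paper's route.} The paper mentions the Kraft quiver $\Gamma$ only as a heuristic gloss on its two cases; the actual argument never uses the classification.
If $\mathrm{Ker}(\varphi)\cap\mathrm{Ker}(\psi)\neq\{0\}$, any line in this intersection works.
Otherwise one kernel, say $\mathrm{Ker}(\psi)$, is nonzero, because the two maps cannot both be bijective when $\varphi\psi=0$ and $M\neq 0$. Take the $\varphi$-cyclic span $N$ of a nonzero $x\in\mathrm{Ker}(\psi)$. It lies in $\mathrm{Ker}(\psi)$ because $\psi\varphi=0$, so $N\cap\mathrm{Ker}(\varphi)=\{0\}$. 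Hence $\varphi$ restricts to a $\sigma_0$-linear automorphism of $N$, and Proposition \ref{Equivalence1-algebraicallyclosed} applies.

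\textbf{Comparison.} The paper's route is much lighter and independent of the long proof of the classification. Yours explains conceptually where the two cases come from (sinks versus single loops, with $1$-algebraic closedness needed only on loops). It is the same style of reasoning the paper itself uses in Case 1 of the proof of Theorem \ref{MaintheoremC}.

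\textbf{Presentation.} Delete the abandoned first attempt in Step 2 about the endpoint $v_n$, which (as you noticed) may be the tail of a $V$-arrow, and keep only the sink argument. Say explicitly that you choose a component on which the representation is nonzero. The detour through Proposition \ref{ModuleAttachedToLinearKraftQuiver} is unnecessary, since any nonzero vector at a sink already works.
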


\begin{proof}
    When $f=1$, a datum $\mathcal M$ is equivalent to the datum of a $K$-vector space $M$ of dimension $m>0$, together with a $\sigma$-linear endomorphism $\varphi$ and a $\tau$-linear endomorphism $\psi$, such that $\varphi\psi = \psi\varphi = 0$, and the pairs $(K,\sigma)$ and $(K,\tau)$ are both $1$-algebraically closed. In particular, $M$ is a twisted Gelfand-Ponomarev module over the algebra $K[F,V]_{\sigma,\tau}$, where $F$ acts via $\varphi$ and $V$ via $\psi$. A solution for $\mathcal M$ consists in a line $\ell \subseteq M$ stable under both $\varphi$ and $\psi$. Let $\Gamma$ and $(U,\rho)$ be the Kraft quiver and the $(\sigma,\tau)$-linear representation of $\Gamma$ such that $M \simeq M(\Gamma,U,\rho)$ as in Theorem \ref{AllTwistedGFModulesComeFromKraftQuiver}.\\

    \textbf{Case 1:} We have $\mathrm{Ker}(\varphi) \cap \mathrm{Ker}(\psi) \not = \{0\}$.\\ 
    In terms of the Kraft quiver $\Gamma$, this condition means that there is some vertex $v$ which is not the tail of any arrow. This happens precisely when $\Gamma$ contains a linear connected component, as illustrated in Figure \ref{Figure7}, or when $\Gamma$ contains a circular connected component with no repetition and containing at least $2$ vertices, in which case we can find an $F$-arrow followed by a $V$-arrow.\\
    In this situation, any line $\ell$ within the intersection of both kernels is trivially a solution to $\mathcal M$.\\

    \textbf{Case 2:} We have 
    \begin{equation}\label{KernelsNotIntersect}
        \mathrm{Ker}(\varphi) \cap \mathrm{Ker}(\psi) = \{0\}.
    \end{equation}
    This condition implies that $\Gamma$ can not contain any linear connected component, nor any circular connected component with no repetition and containing $2$ vertices or more. By elimination, $\Gamma$ must consist of a single vertex with a loop, so that in fact one of $\varphi$ and $\psi$ vanishes while the other is an isomorphism. We will not need this fact though.\\
    At least one of $\mathrm{Ker}(\varphi)$ or $\mathrm{Ker}(\psi)$ must be non-zero, for otherwise both $\varphi$ and $\psi$ would be isomorphisms, which is incompatible with the fact that $\varphi\psi = \psi\varphi = 0$. Without loss of generality, let us assume that $\mathrm{Ker}(\psi) \not = \{0\}$, and pick any non-zero vector $x$ in this kernel. Let $N \subseteq M$ be the $K$-linear subspace generated by the vectors $\varphi^k(x)$ for $k \geq 0$ (where $\varphi^k$ is $\varphi$ composed $k$-times with itself). Clearly, we have $N \subseteq \mathrm{Ker}(\psi)$ and $\varphi(N) \subseteq N$. By \eqref{KernelsNotIntersect}, the restriction of $\varphi$ to $N$ is injective, so that $\varphi$ induces a $\sigma$-linear automorphism of $N$. Since $(K,\sigma)$ is $1$-algebraically closed, by Proposition \ref{Equivalence1-algebraicallyclosed} there is a line $\ell \subseteq N$ such that $\varphi(\ell) = \ell$. Then clearly $\ell$ is a solution to $\mathcal M$.
\end{proof}

\begin{figure} 
\centering
\begin{tikzpicture}[
       decoration = {markings,
                     mark=at position .5 with {\arrow{Stealth[length=2mm]}}},
       dot/.style = {circle, fill, inner sep=2.4pt, node contents={},
                     label=#1},
every edge/.style = {draw, postaction=decorate}
                        ]

\node (e1) at (0,2) [dot,label=above:$v_1$];
\node (e2) at (2,2) [dot,label=above:$v_2$,label=right:$\cdots$];
\node (e3) at (6,2) [dot,label=above:$v_m$,label=left:$\cdots$];
\node (e4) at (8,2) [dot,label=above:$v_{m+1}$];
\node (e5) at (1,0) [dot,label=above:$v_1$];
\node (e6) at (5,0) [dot,label=above:$v_{i}$,label=left:$\cdots$];
\node (e7) at (7,0) [dot,label=above:$v_{i+1}$];
\node (e8) at (9,0) [dot,label=above:$v_{i+2}$,label=right:$\cdots$];

\path (e2) edge node[above] {$V$} (e1);
\path (e3) edge node[above] {$F$} (e4);
\path (e6) edge node[above] {$F$} (e7);
\path (e8) edge node[above] {$V$} (e7);

\end{tikzpicture}
\caption{In any connected linear Kraft quiver $\Gamma(w)$ where $w = w_m\cdots w_1$, there is always some vertex which is not the tail of any arrow (top left: if $w_1 = V^{\#}$; top right: if $w_m = F$; bottom left: if $w = \emptyset$; bottom right: if $w_i = F$ and $w_{i+1} = V^{\#}$ for some $i$).}
\label{Figure7}
\end{figure}

\begin{proof}[Proof of Theorem~\ref{MaintheoremC}.]
    We use induction on $f \geq 1$, the case $f = 1$ being already checked in Lemma \ref{LemmaCasef=1}. Thus, we may now assume that $f \geq 2$, and that any datum with $f-1$ vector spaces has a solution. Let us fix a datum $\mathcal M$ with $f$ vector spaces.\\
    First, assume that $\psi_{i_0}$ is an isomorphism for some $0 \leq i_0 \leq f-1$. In this case, $\varphi_{i_0} = 0$ by the Orpheus condition. By identifying $M_{i_0-1} \simeq M_{i_0}$ via $\psi_{i_0}$, we are reduced to a datum $\mathcal M'$ consisting of $f-1$ vector spaces, and such that any solution for $\mathcal M$ corresponds to a solution for $\mathcal M'$. Explicitly, up to renumbering we may assume that $i_0 = f-1$. We define $M_i' := M_i$ for $0 \leq i \leq f-2$. We also define 
    \begin{align*}
        \sigma_i' & := \begin{cases}
            \sigma_i & \text{if } 1 \leq i \leq f-2;\\
            \sigma_{0}\tau^{-1}_{f-1} & \text{if } i = 0;
        \end{cases}
        & \tau_i' & := \begin{cases}
            \tau_i & \text{if } 1 \leq i \leq f-2;\\
            \tau_{f-1}\tau_0 & \text{if } i=0;\\
        \end{cases}
        \\
        \varphi_i' & := \begin{cases}
            \varphi_i & \text{if } 1 \leq i \leq f-2;\\
            \varphi_0\circ \psi_{f-1}^{-1} & \text{if } i=0;
        \end{cases}
        & \psi_i' & := \begin{cases}
            \psi_i & \text{if } 1 \leq i \leq f-2;\\
            \psi_{f-1}\circ\psi_0 & \text{if } i=0.
        \end{cases}
    \end{align*}
    Clearly, $\varphi_i'$ (resp. $\psi_i'$) is a $\sigma_i'$-linear (resp. $\tau'_i$-linear) morphism, they satisfy the Orpheus condition, and the collection $(\sigma_i',\tau_i')_{i \in \mathbb Z/(f-1)\mathbb Z}$ satisfies the conditions \ref{Condition1-algebraicallyclosedA} and \ref{Condition1-algebraicallyclosedB}. It determines a datum $\mathcal M'$ for which a solution $(\ell_j)_{j \in \mathbb Z/(f-1)\mathbb Z}$ exists by the induction hypothesis. Setting $\ell_{f-1} := \psi_{f-1}^{-1}(\ell_{f-2})$, we obtain a solution for the original datum $\mathcal M$.\\
    Thus, from now on we can assume that 
    \begin{equation}\label{ConditionKernelNonTrivial}
        \forall i \in \mathbb Z/f\mathbb Z, \quad \mathrm{Ker}(\psi_i) \not = \{0\}.
    \end{equation}
   Under the assumption \eqref{ConditionKernelNonTrivial}, we are going to build a solution $(\ell_j)_{j \in \mathbb Z/f\mathbb Z}$ to $\mathcal M$ for which $\psi_{j}(\ell_j) = \{0\}$ for all $j$.\\
    
    For every $i \in \mathbb Z/f\mathbb Z$, we define 
    \begin{align*}
        \Phi_i := \varphi_i \circ \varphi_{i-1} \circ \cdots \circ \varphi_{i+2} \circ\varphi_{i+1}, & & \Psi_i := \psi_{i+1} \circ \psi_{i+2} \circ \cdots \circ \psi_{i-1} \circ \psi_i.
    \end{align*}
    Similarly, we define $\sigma_{[i]} := \sigma_i \sigma_{i-1} \cdots \sigma_{i+2}\sigma_{i+1}$ and $\tau_{[i]} := \tau_{i+1} \tau_{i+2} \cdots \tau_{i-1}\tau_i$, so that $\Phi_i$ and $\Psi_i$ are respectively $\sigma_{[i]}$-linear and $\tau_{[i]}$-linear endomorphisms of $M_i$. Clearly, we have $\Phi_i\Psi_i = \Psi_i\Phi_i = 0$, so that $M_i$ has the structure of a twisted Gelfand-Ponomarev module over $K[F,V]_{\sigma_{[i]},\tau_{[i]}}$. Let $\Gamma_i$ and $(U^i,\rho^i)$ be the Kraft quiver and the $(\sigma_{[i]},\tau_{[i]})$-linear representation of $\Gamma_i$ such that $M_i \simeq M(\Gamma_i,U^i,\rho^i)$ as in Theorem \ref{AllTwistedGFModulesComeFromKraftQuiver}.\\

    \textbf{Case 1:} There exists some $i \in \mathbb Z/f\mathbb Z$ and some $x \in \mathrm{Ker}(\psi_i)$ such that $\Phi_i^k(x) \not = 0$ for all $k \geq 0$ (where $\Phi_i^k$ is $\Phi_i$ composed $k$-times with itself). \\
    This condition implies that $\Gamma_i$ has some connected component of the form $\Gamma([F],1)$, that is,  a single vertex $v$ together with an $F$-loop. Let $N := U^i_v \subseteq M_i$. We have $\Psi_i(N) = \{0\}$ (since $v$ is not the tail of any $V$-arrow), and $\Phi_i$ induces a $\sigma_{[i]}$-linear automorphism of $N$. By condition \ref{Condition1-algebraicallyclosedA}, the pair $(K,\sigma_{[i]})$ is $1$-algebraically closed, so that by Proposition \ref{Equivalence1-algebraicallyclosed} we can find some line $\ell_i \subseteq N$ such that $\Phi_i(\ell_i) = \ell_i$. For all $0 \leq r \leq f-2$, define 
    \begin{equation*}
        \ell_{i+1+r} := \varphi_{i+1+r} \circ \cdots \circ \varphi_{i+1}(\ell_i).
    \end{equation*}
    Then clearly $\dim(\ell_j) = 1$ for all $j \in \mathbb Z/f\mathbb Z$, we have $\varphi_{j}(\ell_{j-1}) = \ell_{j}$ and $\psi_j(\ell_j) =\{0\}$. It follows that $(\ell_j)_j$ is a solution for $\mathcal M$.\\

    \textbf{Case 2:} There exists some $i \in \mathbb Z/f\mathbb Z$ and some $x \in \mathrm{Ker}(\psi_i)$ such that $\Phi_i(x) \not = 0$ and $\Phi_i^k(x) = 0$ for some $k>1$. \\
    This condition implies that $\Gamma$ contains some vertex $v$ which is not the tail of any $V$-arrow, but is the tail of an $F$-arrow whose head is another vertex $v' \not = v$. Let $x$ be as above and assume that $k \geq 2$ is the smallest integer such that $\Phi_i^k(x) = 0$. Let us write $y := \Phi^{k-1}_i(x)$ and $z := \Phi_i^{k-2}(x)$, so that $y = \Phi_i(z)$ and $\Phi_i(y) = 0$. Let $0 \leq r \leq f-1$ denote the smallest integer such that 
    \begin{equation*}
        \varphi_{i+1+r} \circ \cdots \circ \varphi_{i+1}(y) = 0.
    \end{equation*}
    The fact that $r \leq f-1$ follows from $\Phi_i(y) = 0$. We define $\ell_i := K \cdot y \subseteq M_i$, and for every $0 \leq r' \leq f-2$ we define 
    \begin{equation*}
        \ell_{i+1+r'} := \begin{cases}
            K \cdot \varphi_{i+1+r'} \circ \cdots \circ \varphi_{i+1}(y) & \text{if } 0 \leq r' < r;\\
            K \cdot \varphi_{i+1+r'} \circ \cdots \circ \varphi_{i+1}(z) & \text{if } r \leq r' \leq f-2.
        \end{cases}
    \end{equation*}
    Clearly, we have $\dim(\ell_j) = 1$ for all $j \in \mathbb Z/f\mathbb Z$. We claim that $(\ell_j)_j$ is a solution for $\mathcal M$. Indeed, we have $\varphi_{i+1+r'}(\ell_{i+r'}) = \ell_{i+1+r'}$ for all $0 \leq r' < r$ and all $r < r' \leq f-1$ (for $r' = f-1$, this is due to $y = \Phi_i(z)$). Besides $\varphi_{i+1+r}(\ell_{i+r}) = \{0\}$. On the other hand, we have $\psi_j(\ell_j) = \{0\}$ since $\ell_j \subseteq \mathrm{Im}(\varphi_j)$ for all $j \in \mathbb Z/f\mathbb Z$.\\

    \textbf{Case 3:} For every $i \in \mathbb Z/f\mathbb Z$, we have 
    \begin{equation}\label{ConditionPhiVanishesOnKernel}
        (\Phi_i)_{|\mathrm{Ker}(\psi_i)} = 0.
    \end{equation}
    Let us pick any $i_0 \in \mathbb Z/f\mathbb Z$ and any non-zero vector $x_0 \in \mathrm{Ker}(\psi_{i_0}) \subseteq M_{i_0}$, which we are allowed to do by \eqref{ConditionKernelNonTrivial}. Let $0 \leq r_0 \leq f-1$ be the smallest integer such that 
    \begin{equation*}
        \varphi_{i_0+1+r_0} \circ \cdots \circ \varphi_{i_0+1}(x_0) = 0.
    \end{equation*}
    The fact that $r_0 \leq f-1$ follows from \eqref{ConditionPhiVanishesOnKernel}. If $r_0 = f-1$, we can define $\ell_{i_0} = K\cdot x_0$, and $\ell_{i_0+1+r} = K\cdot \varphi_{i_0+1+r} \circ \cdots \circ \varphi_{i_0+1}(x_0)$ for all $0 \leq r \leq f-2$. Clearly we have $\dim(\ell_j) = 1$ for all $j$, and $\varphi_{j}(\ell_{j-1}) = \ell_j$ except for $j = i_0$ in which case we have $\varphi_{i_0}(\ell_{i_0-1}) = \{0\}$. Besides, we have $\psi_j(\ell_j) = \{0\}$ for all $j$, so that $(\ell_j)_j$ is a solution for $\mathcal M$.\\
    Thus, we may assume that $r_0 < f-1$. We pick some non-zero vector $x_1 \in \mathrm{Ker}(\psi_{i_0+(1+r_0)}) \subseteq M_{i_0+(1+r_0)}$. Let $0 \leq r_1 \leq f-1$ be the smallest integer such that 
    \begin{equation*}
        \varphi_{i_0+(1+r_0)+(1+r_1)} \circ \cdots \circ \varphi_{i_0+(1+r_0)+1}(x_1) = 0.
    \end{equation*}
    If $(1+r_0)+(1+r_1) \geq f$, we stop here. Otherwise, we continue by picking some non-zero vector $x_2 \in \mathrm{Ker}(\psi_{i_0+(1+r_0)+(1+r_1)})$, and repeat the construction. \\
    In doing so, we have built two finite sequences $(x_i)_{0\leq i \leq k}$ and $(r_i)_{0 \leq i \leq k}$ for some $k\geq 1$ such that the $r_i$'s are integers between $0$ and $f-1$, and if we write $s_0 = 0$ and $s_i := \sum_{j=0}^{i-1}(1+r_j)$ for $1 \leq i \leq k+1$, then $x_i$ is a non-zero vector in $\mathrm{Ker}(\psi_{i_0+s_i})$ and 
    \begin{equation*}
        \varphi_{i_0+s_{i+1}}\circ \cdots \circ \varphi_{i_0+s_{i}+1}(x_{i}) = 0
    \end{equation*}
    for all $0 \leq i \leq k$. Moreover, we assume that $s_{k+1} \geq f$ but $s_{k} < f$. Since $r_{k} \leq f-1$, we have $0 \leq s_{k+1}-f \leq s_k-1$. Thus, there is a unique $0 \leq t \leq k-1$ such that 
    \begin{equation*}
        s_t \leq s_{k+1}-f < s_{t+1}.
    \end{equation*}
    Let us write $a := s_{k+1}-f$. We define 
    \begin{equation*}
        \ell_{i_0+s_t+r} := K\cdot \varphi_{i_0+s_t+r} \circ \cdots \circ \varphi_{i_0+s_t+1}(x_t),
    \end{equation*}
    for all $a - s_t \leq r \leq s_{t+1}-s_t-1$. Then, for $t+1 \leq t' \leq k$, we define 
    \begin{equation*}
        \ell_{i_0+s_{t'}+r} := K \cdot \varphi_{i_0+s_{t'}+r} \circ \cdots \circ \varphi_{i_0+s_{t'}+1}(x_{t'}),
    \end{equation*}
    for all $0 \leq r \leq s_{t'+1}-s_{t'}-1$. Since $i_0 + s_{k+1}-1 = i_0 + a - 1$ modulo $f$, we have successfully defined a sequence of lines 
    \begin{equation*}
        \ell_{i_0+a}, \ell_{i_0+a+1}, \cdots , \ell_{i_0+a-1},
    \end{equation*}
    such that $\ell_j \subseteq \mathrm{Ker}(\psi_j)$ for all $j$. Moreover, we have 
    \begin{equation*}
        \varphi_{j+1}(\ell_j) = \begin{cases}
            \{0\} & \text{if } j = i_0 + s_{t'+1}-1\mod f \text{ for some } t \leq t' \leq k;\\
            \ell_{j+1} & \text{else}.
        \end{cases}
    \end{equation*}
    It follows that $(\ell_j)_j$ is a solution for $\mathcal M$. \\

    Since cases 1, 2 and 3 are mutually exclusive and cover all the possibilities, the proof is over.\end{proof}

    \begin{remark}
        The principle of the proof in Case 3 is simple, even though the notations might hinder the intuition. Let us rephrase the idea in an informal tone. Starting with a non-zero vector $x_0$ in some $M_{i_0}$ which is mapped to $0$ in the $\psi$-direction, we take its successive images in the $\varphi$-direction until we reach $0$. By assumption, we are guaranteed to reach $0$ before completing a full turn in the diagram of Figure \ref{Figure6}. When we reach $0$, we pick another non-zero vector $x_1$ which is mapped to $0$ in the $\psi$-direction, and we take its successive images in the $\varphi$-direction until we reach $0$ again. We keep going until we complete a full turn, namely the vector $x_k$ is the first to reach $0$ after passing the starting point $i_0$. We let $i_0+a$ be the position where $x_k$ reaches $0$, so that $0 \leq a \leq f-1$. By construction, the vector space $M_{i_0+a}$ contains some non-zero vector $x_t$ ($t \leq k-1$), or a non-zero translate of $x_t$ in the $\varphi$-direction. This defines a line $\ell_{i_0+a}$, and the subsequent lines are generated by the corresponding non-zero translates of $x_{t'}$ (for some $t \leq t' \leq k$) in the $\varphi$-direction. 
    \end{remark}

%\bibliographystyle{amsplain}
%\bibliography{BibTeX}
\providecommand{\bysame}{\leavevmode\hbox to3em{\hrulefill}\thinspace}
\providecommand{\MR}{\relax\ifhmode\unskip\space\fi MR }
% \MRhref is called by the amsart/book/proc definition of \MR.
\providecommand{\MRhref}[2]{%
  \href{http://www.ams.org/mathscinet-getitem?mr=#1}{#2}
}
\providecommand{\href}[2]{#2}

\end{document}